\renewcommand{\a}{\alpha}
\renewcommand{\b}{\beta}
\newcommand{\e}{\epsilon}
\renewcommand{\l}{\lambda}
\renewcommand{\O}{\Omega}
\newcommand{\la}{\langle}
\newcommand{\ra}{\rangle}
\newcommand{\leqs}{\leqslant}
\newcommand{\geqs}{\geqslant}
\newcommand{\rad}{\operatorname{Rad}}
\newcommand{\Sym}{\operatorname{S}}
\newcommand{\GL}{\operatorname{GL}}
\newcommand{\SL}{\operatorname{SL}}
\newcommand{\Sp}{\operatorname{Sp}} 
\newcommand{\SO}{\operatorname{SO}} 
\newcommand{\OO}{\operatorname{O}} 
\newcommand{\Spin}{\operatorname{Spin}} 
\newcommand{\Z}{\mathbb{Z}}
\newcommand{\NN}{\mathbb{N}}
\newcommand{\PP}{\mathbb{P}} 
\newcommand{\vs}{\vspace{3mm}}
\newcommand{\imod}[1]{\allowbreak\mkern4mu({\operator@font mod}\,\,#1)}
\newenvironment{nalign}{
    \begin{equation}
    \begin{aligned}
}{
    \end{aligned}
    \end{equation}
    \ignorespacesafterend
}
\renewcommand{\arraystretch}{1.2}
\theoremstyle{plain}
\newtheorem{theorem}{Theorem} 
\newtheorem{conj}[theorem]{Conjecture} 
\newtheorem{corol}[theorem]{Corollary}
\newtheorem{thm}{Theorem}[section] 
\newtheorem{lem}[thm]{Lemma}
\newtheorem{prop}[thm]{Proposition} 
\newtheorem{cor}[thm]{Corollary} 
\newtheorem*{theorem*}{Theorem} 
\newtheorem*{conj*}{Conjecture}
\theoremstyle{definition}
\newtheorem{rem}[thm]{Remark}
\newtheorem{defn}[thm]{Definition}
\newtheorem{remk}{Remark}
\begin{document}

\title[Topological generation of simple algebraic groups]{Topological generation of simple  \\ algebraic groups}
 
\author{Timothy C. Burness}
\address{T.C. Burness, School of Mathematics, University of Bristol, Bristol BS8 1UG, UK}
\email{t.burness@bristol.ac.uk}

\author{Spencer Gerhardt}
\address{S. Gerhardt, Department of Mathematics, University of Southern California, Los Angeles, CA 90089-2532, USA}
\email{sgerhard@usc.edu} 
 
\author{Robert M. Guralnick}
\address{R.M. Guralnick, Department of Mathematics, University of Southern California, Los Angeles, CA 90089-2532, USA}
\email{guralnic@usc.edu}

\makeatletter
\@namedef{subjclassname@2020}{\textup{2020} Mathematics Subject Classification}
\makeatother

\subjclass[2020]{20G15, 20F05, 20E32}
\keywords{Algebraic groups; topological generation; classical groups; random generation}

\date{\today} 

\thanks{We thank two anonymous referees for their helpful comments and suggestions. The third author was partially supported by the NSF
grant DMS-1901595 and a Simons Foundation Fellowship 609771.} 

\begin{abstract}   
Let $G$ be a simple algebraic group over an algebraically closed field and let $X$ be an irreducible subvariety of $G^r$ with $r \geqslant 2$. In this paper, we consider the general problem of determining if there exists a tuple $(x_1, \ldots, x_r) \in X$ such that $\la x_1, \ldots, x_r \ra$ is Zariski dense in $G$. We are primarily interested in the case where $X = C_1 \times \cdots \times C_r$ and each $C_i$ is a conjugacy class of $G$ comprising elements of prime order modulo the center of $G$. In this setting, our main theorem gives a complete solution to the problem when $G$ is a symplectic or orthogonal group. By combining our results with earlier work on linear and exceptional groups, this gives an almost complete solution for all simple algebraic groups.  We also present several applications. For example, we use our main theorem to show that many faithful representations of symplectic and orthogonal groups are generically free. We also establish new asymptotic results on the probabilistic generation of finite simple groups by pairs of prime order elements, completing a line of research initiated by Liebeck and Shalev over 25 years ago.
\end{abstract}

\maketitle

\setcounter{tocdepth}{1}
\tableofcontents

\section{Introduction} \label{s:intro}

Let $G$ be a simple algebraic group over an algebraically closed field $k$ of characteristic $p \geqs 0$. Let $r$ be a positive integer and let $X$ be a (locally closed) irreducible subvariety of $G^r = G \times \cdots \times G$ ($r$ factors). For $x = (x_1, \ldots, x_r) \in X$, let $G(x)$ denote the Zariski closure of $\langle x_1, \ldots, x_r \rangle$, so 
\begin{equation}\label{e:delta}
\Delta = \{ x \in X \,:\, G(x) = G\}
\end{equation}
is the set of tuples in $X$ that topologically generate $G$. Note that $G$ is locally finite if $k$ is algebraic over a finite field, in which case $\Delta$ is empty. Given this observation, we will be interested in the case where $k$ is not algebraic over a finite field. 

Let us observe that the existence of a tuple in $\Delta$ does not depend on the isogeny type of $G$. Indeed, the center of $G$ is contained in the Frattini subgroup, so a subgroup $H$ is dense in $G$ if and only if $HZ/Z$ is dense in $G/Z$, where $Z$ is any central subgroup of $G$. By a general theorem of Tits \cite{Tits}, every semisimple algebraic group over $k$ contains a Zariski-dense free subgroup on two generators, which of course implies that $G$ is topologically $2$-generated.

In this paper, we are interested in determining if $\Delta$ is nonempty for specific irreducible subvarieties $X$. If $p=0$ then a theorem of Guralnick \cite{GNATO} implies that $\Delta$ is nonempty if and only if it contains a nonempty open subvariety of $X$. In the general setting, we will work with \emph{generic} sets, which are subsets of $X$ containing the complement of a countable union of proper closed subvarieties. Note that the intersection of countably many generic subsets is generic. If $k$ is an uncountable algebraically closed field, then every generic subset of $X$ is dense (see \cite[Lemma 2.4]{BGGT}, for example), whereas a generic subset may be empty if $k$ is countable. In particular, if $k$ is uncountable then $\Delta$ is nonempty if it contains the intersection of countably many generic subsets.

In \cite[Theorem 2]{BGG} we proved that $\Delta$ is nonempty if and only if it is a dense subset of $X$. In view of Theorem \ref{t:gen} below, this is also equivalent to the property that $\Delta$ is generic.

\begin{theorem}\label{t:bgg}  
Let $k$ be an algebraically closed field that is not algebraic over a finite field. Then the following are equivalent:
\begin{itemize}\addtolength{\itemsep}{0.2\baselineskip}
\item[{\rm (i)}] $\Delta$ is nonempty.
\item[{\rm (ii)}] $\Delta(k')$ is nonempty for some extension $k'/k$. 
\item[{\rm (iii)}] $\Delta$ is a dense subset of $X$. 
\item[{\rm (iv)}] $\Delta$ is a generic subset of $X$.
\end{itemize} 
\end{theorem}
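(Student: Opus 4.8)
The plan is to obtain the equivalences from \cite[Theorem~2]{BGG}, which already supplies (i)$\,\Leftrightarrow\,$(iii), by adjoining one geometric observation and a short descent, and then running the cycle $\text{(i)}\Rightarrow\text{(iii)}\Rightarrow\text{(iv)}\Rightarrow\text{(ii)}\Rightarrow\text{(i)}$. The observation is that $X\setminus\Delta$ is a \emph{constructible} subset of $X$ whose formation commutes with extension of the algebraically closed ground field. To see this, I would first note that every proper closed subgroup of $G$ lies in a maximal one: given a chain of proper closed subgroups, pass to a term $K$ of largest dimension, put $L=K^{\circ}$, and observe that $N_{G}(L)$ is a proper (as $G$ is simple) closed overgroup of $K$ that cannot be enlarged without raising the dimension, hence is maximal — the degenerate case in which all terms are finite being handled via the finiteness, up to conjugacy, of the Lie primitive finite subgroups. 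Consequently
\[
X\setminus\Delta\;=\;\bigcup_{M}\bigl(X\cap M^{r}\bigr)\;=\;\bigcup_{[M]}\;\bigcup_{g\in G}\bigl(X\cap (gMg^{-1})^{r}\bigr),
\]
where $M$ runs over maximal closed subgroups and $[M]$ over conjugacy classes. By the classification of the maximal subgroups of simple algebraic groups there are only finitely many classes $[M]$, and for each one $\bigcup_{g}(X\cap(gMg^{-1})^{r})$ is the image under the projection $X\times G\to X$ of the closed set $\{(x,g):g^{-1}x_{i}g\in M\ (1\leq i\leq r)\}$, hence is constructible by Chevalley's theorem. Since the finitely many classes may be realised over the algebraic closure of the prime field, this description — and hence $\Delta$ — is a constructible set whose formation is insensitive to the choice of $k$.

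Granting this, I would run the cycle as follows. The arrow (i)$\,\Rightarrow\,$(iii) is \cite[Theorem~2]{BGG}. For (iii)$\,\Rightarrow\,$(iv): a dense constructible set contains a dense open subvariety of $X$, whose complement is a proper closed subvariety, so $\Delta$ is generic. For (iv)$\,\Rightarrow\,$(ii): if $k$ is uncountable then a generic subset of $X$ is dense by \cite[Lemma~2.4]{BGGT}, hence nonempty, so one may take $k'=k$; the general case reduces to this one on passing to an uncountable extension. For (ii)$\,\Rightarrow\,$(i): if $\Delta(k')\neq\emptyset$ for some algebraically closed extension $k'/k$, then \cite[Theorem~2]{BGG} applied over $k'$ shows $\Delta(k')$ is dense in $X_{k'}$, so the constructible set $X_{k'}\setminus\Delta(k')$ is not dense; as this set is the base change of $X\setminus\Delta$, the latter is not dense in $X$, whence $\Delta$ contains the nonempty open subvariety $X\setminus\overline{X\setminus\Delta}$ and in particular is nonempty. (The resulting equivalence of (i)--(iv) with the genericity of $\Delta$ is exactly what is recorded in Theorem~\ref{t:gen}.)

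The crux — and the step I expect to demand the most care — is the geometric observation of the first paragraph: that $G$ has only finitely many conjugacy classes of maximal closed subgroups, and that this classification is unchanged under extension of the algebraically closed base field. The finiteness rests on the full classification of maximal subgroups of simple algebraic groups, together with (for the finite maximal subgroups) the finiteness up to conjugacy of the Lie primitive finite subgroups; the base-field independence, which is what makes the descent for (ii)$\,\Rightarrow\,$(i) and the scalar extension in (iv)$\,\Rightarrow\,$(ii) legitimate, uses that representatives of the classes may be chosen over the algebraic closure of the prime field and that maximality is preserved under base change. Everything else is formal, given these facts and the two cited results \cite[Theorem~2]{BGG} and \cite[Lemma~2.4]{BGGT}.
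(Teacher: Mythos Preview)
Your central claim—that $X\setminus\Delta$ is constructible because there are only finitely many conjugacy classes of maximal closed subgroups—fails in positive characteristic. When $p>0$, the finite groups $G(q)$ for $q$ a power of $p$ are Lie primitive in $G$ once $q$ is large enough, and they form an infinite ascending chain $G(p)<G(p^2)<G(p^3)<\cdots$ whose union is Zariski dense. Thus there are infinitely many conjugacy classes of Lie primitive finite subgroups, and more fundamentally, a group such as $G(p)$ is \emph{not} contained in any maximal closed subgroup of $G$: any proper closed overgroup is finite (by Lie primitivity) and hence sits strictly below some $G(p^m)$. So your decomposition $X\setminus\Delta=\bigcup_{[M]}\bigcup_g\bigl(X\cap(gMg^{-1})^r\bigr)$ over maximal $M$ is not valid, and with it the constructibility of $\Delta$ is unjustified (and almost certainly false in general). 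This breaks both (iii)$\Rightarrow$(iv) and your descent step (ii)$\Rightarrow$(i).

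The paper's proof avoids this by \emph{not} attempting to show $\Delta$ is constructible. It separates the complement of $\Delta$ into two pieces: the tuples $x$ with $G(x)$ finite, handled by the countably many closed sets $X_m=\{x:|G(x)|\leqslant m\}$; and the tuples with $G(x)$ contained in a positive-dimensional maximal closed subgroup $H$, of which there are only finitely many conjugacy classes by \cite[Corollary~3]{LS04}. For parabolic $H$ the set $X_H$ is closed (completeness of $G/H$), while for nonparabolic $H$ one only shows that $X_H$ is not dense when $\Delta\neq\emptyset$, by choosing an irreducible $kG$-module on which $H$ acts reducibly. The upshot is that $\Delta$ is the complement of a \emph{countable} union of proper closed subvarieties—generic rather than constructible—and the implications (iv)$\Rightarrow$(ii) and (ii)$\Leftrightarrow$(i) then follow by passing to an uncountable extension and invoking \cite[Theorem~2]{BGG}, essentially as you wrote. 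Your argument would go through in characteristic $0$ (where Jordan's theorem does give the required finiteness), but the theorem is stated for all $p$.
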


In (ii), $\Delta(k')$ is the set of elements in the variety $X(k')$ over $k'$ that topologically generate $G(k')$ (note that there is no need to assume that $k'$ is algebraically closed; if $k''$ is the algebraic closure of $k'$, then $\Delta(k') \subseteq \Delta(k'')$). In light of Theorem \ref{t:bgg}, we are free to assume that $k$ is uncountable in the proof of our main results on the topological generation of classical algebraic groups (see Theorem \ref{t:main2} below). 

The general set up applies in many different situations.  For example, if $H$ is a finitely generated group with a presentation $F/R$, where $F$ is a free group of rank $r$ and $R$ is a set of defining relations, then we can take $X$ to be an irreducible component of 
\[
\{(x_1, \ldots, x_r) \in G^r \, :\,  \varphi(x_1, \ldots, x_r) = 1 \mbox{ for all $\varphi \in R$}\}.
\]
of the representation variety  of $H$.

Another example that arises in this paper is the following. Given a locally closed irreducible subvariety $Y \subseteq G^m$ and words $w_1, \ldots, w_r$ in a free group of rank $m$, we may view each $w_i$ as a map from $G^m$ to $G$ and we can take
\[
X = \{ (w_1(y), \ldots, w_r(y)) \in G^r \,:\, y \in Y\},
\]
which is irreducible since it is the image of $Y$ under a morphism. Further examples include products of irreducible normal subsets of $G$, with $X = C_1 \times \cdots \times C_r$ an important special case, where each $C_i$ is a conjugacy class of $G$. We can also take $X$ to be an irreducible component of the subset of $C_1 \times \cdots \times C_r$ consisting of $r$-tuples satisfying some relations (for example, the product of the elements in each tuple is $1$).

The case where $X = C_1 \times \cdots \times C_r$ is a product of conjugacy classes was studied by Gerhardt \cite{Ger} for $G=\SL_n(k)$ (see Theorem \ref{t:sl}). A detailed treatment of this problem for exceptional algebraic groups was presented in \cite{B23, BGG} (see below for further details), where several more general results are established (including \cite[Theorem 2]{BGG}, as mentioned above). 
Our main goal in this paper is to extend the results in \cite{BGG, Ger} to all simple  algebraic groups.

In \cite{BGG}, the primary tool for studying the topological generation of exceptional algebraic groups by elements in specified conjugacy classes is encapsulated in \cite[Theorem 5]{BGG}, which involves computing the dimensions of fixed point spaces of elements acting on coset varieties of the form $G/H$, where $H$ is a maximal closed subgroup of $G$. While similar computations do arise in this paper, our approach is closer to the inductive method employed by Gerhardt in \cite{Ger}.  As explained below, several significant complications arise for the groups considered here. 

Let $G$ be a classical group with natural module $V$ and set 
$X = C_1 \times \cdots \times C_r$,
where each $C_i$ is a noncentral conjugacy class in $G$. By arguing inductively and applying  Gerhardt's result for $\SL_n(k)$, our aim is to identify certain generic subsets $Y\subseteq X$ such that the subgroups $G(y)$ for $y\in Y$ have restrictive properties. For example, $G(y)$ may be forced to contain a large subgroup of $G$ (typically defined in terms of the rank of $G$), or $G(y)$ may have to act irreducibly or primitively on $V$. Then by considering the maximal subgroups of $G$, our goal is to show that no proper subgroup of $G$ can simultaneously satisfy all of these conditions. If we can do this, then we deduce that  the intersection of these generic sets is contained in $\Delta$, which in turn allows us to conclude that $\Delta$ is nonempty (recall that we are free to assume $k$ is uncountable).

In this paper, we first consider topological generation in the general setting and we present a new result (Theorem \ref{t:main1}), which generalizes the observation that $\Delta$ is either empty or generic. We then turn our attention to the classical algebraic groups and we completely determine when one can generate topologically with elements from prescribed conjugacy classes, extending the earlier work in \cite{BGG, Ger} to all simple algebraic groups. We present some corollaries (also see Section \ref{s:corol}) and we then apply our results to obtain bounds on the dimensions of (not necessarily irreducible) $kG$-modules with a nontrivial generic stabilizer  (Theorem \ref{t:main3}).  In addition, we establish new asymptotic results on the random generation of finite simple groups of Lie type by a pair of elements of prime order, completing a line of research initiated by  Liebeck and Shalev in \cite{LiSh0} (see Theorem \ref{t:main4}).  
 
Let $G$ be a simple algebraic group over an algebraically closed field $k$ of characteristic $p \geqs 0$. In order to state our first result, recall that a closed subgroup $H$ of $G$ is \emph{$G$-irreducible} if it is not contained in a proper parabolic subgroup of $G$.  Also recall that the \emph{rank} of a closed subgroup $H$ of $G$, denoted ${\rm rk}\, H$, is the dimension of a maximal torus of the connected component $H^0$ (in particular, ${\rm rk}\, H = 0$ if $H$ is finite). 

Note that we allow $k$ to be algebraic over a finite field in the statement of Theorem \ref{t:main1}. In (ii), the subset $Y \subseteq X$ is generic and thus $Y(k)$ might be empty (but if $k'$ is an uncountable algebraically closed field containing $k$, then $Y(k')$ will be dense in $X(k')$). In addition, the set $Z$ is nonempty open and defined over $k$, so $Z(k)$ will be dense in $X(k)$ even when $k$ is algebraic over a finite field.  

\begin{theorem} \label{t:main1}   
Let $G$ be a simple algebraic group over an algebraically closed field $k$, let $r$ be a positive integer and let $X$ be a locally closed irreducible subvariety of $G^r$. 
Then one of the following holds:
\begin{itemize}\addtolength{\itemsep}{0.2\baselineskip}
\item[{\rm (i)}] For all $x \in X$, $G(x)$ is contained in a proper parabolic subgroup of $G$.
\item[{\rm (ii)}] There exists a unique (up to conjugacy) closed $G$-irreducible subgroup $H \leqs G$, a generic subset $Y$ and a
nonempty open subset $Z$  with $Y \subseteq Z \subseteq X$ such that 

\vspace{1mm}

\begin{itemize}\addtolength{\itemsep}{0.2\baselineskip}
\item[{\rm (a)}] ${\rm rk} \, G(x) \leqs {\rm rk}\, H$ for all $x \in X$; 
\item[{\rm (b)}] $G(y)$ is conjugate to $H$ for all $y \in Y$; and
\item[{\rm (c)}] $G(z)$ is contained in a conjugate of $H$ for all $z \in Z$.
\end{itemize}
\end{itemize}
\end{theorem}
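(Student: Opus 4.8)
The plan is to stratify $X$ according to the behavior of the map $x \mapsto G(x)$, using the fact that the closed subgroups of $G$ of bounded dimension fall into finitely many conjugacy classes of "shapes". First I would set up the dichotomy: either every $x \in X$ has $G(x)$ inside a proper parabolic subgroup of $G$ (case (i)), or there is some $x_0 \in X$ with $G(x_0)$ $G$-irreducible. So assume we are in the latter situation. The key point is that $G$-irreducibility is a generic condition: the set of $x$ with $G(x)$ contained in a proper parabolic of $G$ is a countable union of proper closed subvarieties of $X$ (for each conjugacy class of parabolic $P$ with coset variety $G/P$ complete, the locus where $x \in (gPg^{-1})^r$ for some $g$ is closed, being the image of a closed subset of $G/P \times X$ under the projection, and it is proper since $x_0$ avoids it). Hence there is a generic subset $X_0 \subseteq X$ on which $G(x)$ is $G$-irreducible.

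Next I would bound the rank. The function $x \mapsto \dim \overline{\langle x_1,\dots,x_r\rangle}$ is upper semicontinuous in a suitable sense — more precisely, one can argue via the commutator-word maps: the subvariety of $G \times X$ consisting of pairs $(g,x)$ with $g$ in the closure of the group generated by the coordinates of $x$ can be analyzed using that $G(x)$ is the closure of the union of images of word maps $w\colon G^r \to G$ evaluated at $x$, and the dimension of $G(x)$ is controlled by finitely many of these words on each irreducible piece. From the finiteness of conjugacy classes of reductive subgroups of each dimension, one extracts that there is a maximum value $d = \max_{x \in X} \mathrm{rk}\, G(x)$, attained on a nonempty open subset, and — crucially — among all $x$ achieving a subgroup of this maximal rank that is moreover $G$-irreducible, the isomorphism type and even the $G$-conjugacy class of $G(x)$ is constant on a generic set. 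This last step is where I would invoke (or re-derive in this setting) the structure theory: a $G$-irreducible subgroup is reductive, reductive subgroups of a fixed dimension lie in finitely many $\mathrm{Aut}(G)$-classes, and on the irreducible variety $X$ the "generic" conjugacy class must be unique by irreducibility (two distinct generic behaviors would partition a dense set of $X$ into two pieces that are each dense, forcing them to meet, contradiction). Call the resulting class $H$; this gives (b) with $Y = X_0 \cap \{x : G(x) \text{ is } G\text{-conjugate to } H\}$, and (a) follows since $d = \mathrm{rk}\, H$ is the maximum.

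For (c), I would show that $G(z) \leqslant H$-conjugate holds on a nonempty open set $Z$. The idea: consider the incidence variety $\mathcal{I} = \{(g, x) \in G \times X : \langle x_1,\dots,x_r\rangle \subseteq gHg^{-1}\}$; the condition $\langle x_1,\dots,x_r\rangle \subseteq gHg^{-1}$ is closed in $G \times X$ (intersection over coordinates of the closed conditions $x_i \in gHg^{-1}$), so $\mathcal{I}$ is closed, and its image under projection to $X$ is a constructible set containing the generic set $Y$, hence dense, hence contains a nonempty open subset $Z$ of $X$. That $H$ is $G$-irreducible is immediate from (b) and the fact that $Y$ is nonempty (indeed $Y \subseteq X_0$). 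Uniqueness of $H$ up to conjugacy: if $H'$ also satisfied (b) with some generic $Y'$, then $Y \cap Y'$ is generic, hence nonempty, and any $x$ in it forces $G(x)$ conjugate to both $H$ and $H'$, so $H \sim H'$.

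The main obstacle I anticipate is the second paragraph: making rigorous the claim that $\mathrm{rk}\, G(x)$ is bounded and that the generic $G$-conjugacy class of $G(x)$ (subject to $G$-irreducibility and maximal rank) is well-defined and unique. The subtlety is that $G(x)$ is not the image of a morphism — it is a closure of a countable union of word-map images — so one cannot directly apply Chevalley's theorem; instead one must argue that on an irreducible base only finitely many words "matter generically", combine this with the finiteness of conjugacy classes of reductive subgroups of bounded dimension (a standard but nontrivial input), and use irreducibility of $X$ to collapse to a single class. Everything else is a fairly routine manipulation of closed conditions and countable unions of proper subvarieties, exactly in the spirit of the generic-set formalism already set up before Theorem \ref{t:bgg}.
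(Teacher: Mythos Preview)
Your overall shape is right --- set up the parabolic dichotomy, find a generic $G$-irreducible $H$, then handle the rank bound --- but two of your key inputs are wrong or unjustified, and the paper's route around them is genuinely different.

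First, your claim that ``reductive subgroups of a fixed dimension lie in finitely many $\mathrm{Aut}(G)$-classes'' is false: $G$-irreducible subgroups need not be connected, and already the finite $G$-irreducible subgroups give infinitely many conjugacy classes. The paper instead invokes Martin's result \cite{Martin} that there are only \emph{countably} many conjugacy classes of $G$-irreducible subgroups. One then passes to an uncountable extension of $k$, writes the $G$-irreducible locus as a countable union $\bigcup_j X_{H_j}$, and uses irreducibility of $X$ to find some $j$ with $\overline{X_{H_j}} = X$; the morphism $G \times (X \cap H_j^r) \to X$ then shows $X_{H_j}$ contains an open set. This also gives uniqueness and the generic conjugacy statement (b). (A side point: the paper first replaces $X$ by the closure of its $G$-saturation to make $X$ $G$-invariant, which you do not mention but is needed for this morphism to land in $X$.)

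Second, your rank argument does not work as written. You want to show $d := \max_{x \in X} \mathrm{rk}\, G(x)$ is attained on an open set and equals $\mathrm{rk}\, H$, but you have not established either: rank of $G(x)$ is not obviously semicontinuous in $x$, and your ``finitely many words suffice generically'' sketch does not control rank (as opposed to dimension) without further input. The paper does something quite different (Theorem~\ref{t:toralrank}): having already found $H$ with $X_H$ containing an open set $Z$, fix a faithful irreducible module $V$ and let $f\colon G \to \mathcal{M}_{\dim V}(\mathbf{x})$ send $g$ to its characteristic polynomial on $V$. Then $\dim f(H) = \mathrm{rk}\, H$, and for any word $w$ and any $x \in X$ one has $w(x) \in \overline{w(Z)}$, hence $f(w(x)) \in \overline{f(H)}$. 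Since this holds for every word, $f(G(x)) \subseteq \overline{f(H)}$, which forces $\mathrm{rk}\, G(x) \leqs \mathrm{rk}\, H$. This closure-under-word-maps argument is the missing idea in your proposal.
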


It is worth noting that if (i) holds, then each $G(x)$ is contained in a conjugate of a fixed proper parabolic subgroup of $G$ (see Remark \ref{r:parab}).

\begin{remk}\label{r:1}
Let us highlight the special case in Theorem \ref{t:main1} when the conclusion in part (ii) holds with $H=G$, in which case $G(x)=G$ for all $x$ in a generic subset of $X$.
\begin{itemize}\addtolength{\itemsep}{0.2\baselineskip}
\item[{\rm (a)}] If $k$ is not algebraic over a finite field, then $\Delta$ is dense in $X$ (and hence nonempty) by Theorem \ref{t:bgg}.   
\item[{\rm (b)}] Now assume $k$ is the algebraic closure of a finite field, so $p>0$ and each $G(x)$ is finite, whence $\Delta(k)$ is empty. 
Let us assume $G$ is simply connected and let $k'$ be any algebraically closed field properly containing $k$. Note that $\Delta(k')$ is dense in $X(k')$ by Theorem \ref{t:bgg}. Fix a finite collection $S$ of rational irreducible $G$-modules, each of which is defined over $k$, and define
\[
W = \{ x \in X  \,:\, \mbox{$G(x)$ acts irreducibly on each module in $S$} \}.
\]
Note that $W$ is open in $X$ and is defined over $k$. Clearly, $W(k')$ contains $\Delta(k')$, so $W(k')$ is a dense open subset of $X(k')$ and we deduce that 
$W(k)$ is a dense open subset of $X(k)$. By choosing the modules in $S$ appropriately, and by arguing as in \cite{BGG} (or \cite{GT}), one can show that if $x \in W(k)$, then $G(x)$ contains a conjugate of $G(q)$ for some sufficiently large $p$-power $q$, where the finite group $G(q)$ is possibly twisted. We can exploit this observation to study the asymptotic generation properties of the finite groups of Lie type. For example, see Theorem \ref{t:main4} below.    
\end{itemize}
\end{remk}

Let us now specialize to the case where $G$ is a simple classical algebraic group with natural module $V$ and $k$ is not algebraic over a finite field. Set
\begin{equation}\label{e:conj}
X= C_1 \times \cdots \times C_r = x_1^G \times \cdots \times x_r^G
\end{equation}
with each $C_i = x_i^G$ a noncentral conjugacy class. Write $n = \dim V$ and let $d_i$ be the dimension of the largest eigenspace of $x_i$ on $V$. In this setting, there are two natural  obstructions to the existence of an element $x \in X$ with $G(x) = G$:
\begin{itemize}\addtolength{\itemsep}{0.2\baselineskip}
\item[{\rm (a)}] If $\sum_{i} d_i > n(r-1)$,  then $G(x)$ fixes a $1$-space in $V$ for all $x \in X$ and thus $\Delta$ is empty. 
\item[{\rm (b)}] We say that $x_i$ is \emph{quadratic} if it has a quadratic minimal polynomial on $V$ (and \emph{non-quadratic} otherwise). If $r=2$ and $x_1,x_2$ are quadratic, then every composition factor of $G(x)$ on $V$ is at most $2$-dimensional (see Lemma \ref{l:quadratic}) and thus $\Delta$ is empty if $n \geqs 3$.
\end{itemize}

By the following theorem of Gerhardt \cite[Theorem 1.1]{Ger}, these are the only obstructions  for linear groups $G = {\rm SL}_n(k)$ with $n \geqs 3$.   

\begin{theorem}[Gerhardt]\label{t:sl}  
Let $G=\SL_n(k)$, where $n \geqs 3$ and $k$ is an algebraically closed field that is not algebraic over a finite field. Define $X = C_1 \times \cdots \times C_r$ as in \eqref{e:conj}, where each $x_i$ is noncentral. Then $\Delta$ is empty if and only if 
\begin{itemize}\addtolength{\itemsep}{0.2\baselineskip}
\item[{\rm (i)}] $\sum_i d_i > n(r-1)$; or
\item[{\rm (ii)}] $r=2$ and $x_1,x_2$ are quadratic.
\end{itemize}
\end{theorem}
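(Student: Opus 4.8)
The plan is to derive the ``only if'' direction from Theorems~\ref{t:bgg} and~\ref{t:main1}, the ``if'' direction being exactly obstructions (a), (b) above: if $\sum_i d_i > n(r-1)$ then $G(x)$ fixes a $1$-space for every $x \in X$, and if $r = 2$ with $x_1, x_2$ quadratic then by Lemma~\ref{l:quadratic} every composition factor of $G(x)$ on $V$ is at most $2$-dimensional, so since $n \geqs 3$ the group $G(x)$ is reducible on $V$ in both cases and $\Delta = \emptyset$. So I would assume neither (i) nor (ii) holds and aim to prove $\Delta \neq \emptyset$. By Theorem~\ref{t:bgg} I may assume $k$ is uncountable, so that generic subsets of $X$ are dense, and I would then apply Theorem~\ref{t:main1} to $G = \SL_n(k)$ and $X$. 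The first step is to rule out conclusion (i) of that theorem --- which would force every $G(x)$ into a proper parabolic, hence to be reducible on $V$ --- by showing that $G(y)$ is irreducible on $V$ for all $y$ in a dense open subset of $X$. Then conclusion (ii) holds, giving a closed $G$-irreducible subgroup $H \leqs G$ with $G(y)$ conjugate to $H$ for all $y$ in a generic set $Y$, and it remains to show $H = G$; by Remark~\ref{r:1}(a), $\Delta$ is then dense in $X$, hence nonempty.

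For the generic irreducibility, I would fix $m$ with $1 \leqs m \leqs n-1$ and study the locus $R_m \subseteq X$ of tuples stabilising a common $m$-dimensional subspace. This is the image in $X$ of the incidence variety $\{(W,x) \in \mathrm{Gr}_m(V) \times X : x_i W = W \text{ for all } i\}$, of dimension at most $\dim \mathrm{Gr}_m(V) + \sum_i \dim(x_i^G \cap P_W)$ with $P_W = \mathrm{Stab}_G(W)$; comparing with $\dim X = \sum_i \dim x_i^G$ and using the standard relation between $\dim(x_i^G \cap P_W)$, $\dim x_i^G$ and $\dim \mathrm{Fix}_{x_i}(G/P_W)$, the non-density of $R_m$ reduces to the inequality $\sum_i \dim \mathrm{Fix}_{x_i}(\mathrm{Gr}_m(V)) < (r-1)m(n-m)$, whose left-hand side is computed from the eigenvalue multiplicities of the $x_i$. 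The key point is that, given $\sum_i d_i \leqs n(r-1)$, this inequality holds for every $m$ unless $r = 2$ and $x_1, x_2$ are both quadratic, in which case it genuinely fails (taking $m = \lceil n/2 \rceil$, say) --- consistently with $\Delta = \emptyset$ there. So in the remaining cases $G(y)$ is irreducible on $V$ generically, and analogous (easier) counts over flag varieties and over varieties parametrising tensor decompositions then upgrade this to: $G(y)$ is irreducible, primitive and tensor-indecomposable on $V$ for all $y$ in a generic subset. Intersecting with $Y$, I may take the subgroup $H$ above to have all three of these properties.

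Next, suppose for contradiction that $H \neq G$. Since $G(y)$ is conjugate to $H$ on the dense set $Y$, the constructible set $\Sigma_H = \{x \in X : G(x) \subseteq H^g \text{ for some } g \in G\}$ is dense, so $\dim \Sigma_H = \dim X$; bounding $\dim \Sigma_H \leqs \dim(G/H) + \sum_i \dim(x_i^G \cap H)$ and rearranging yields $\sum_i \dim \mathrm{Fix}_{x_i}(G/H) \geqs (r-1)\dim(G/H)$, together with $x_i^G \cap H \neq \emptyset$ for all $i$; the plan is to contradict this for every admissible $H$. If $H$ is finite then the inequality forces $\dim X \leqs \dim G$; but an elementary Jordan-form estimate gives $\dim Z_{\GL_n}(x_i) \leqs n d_i$, so $\sum_i \dim Z_G(x_i) \leqs n \sum_i d_i - r \leqs n^2(r-1) - r$, and hence $\dim X = r \dim G - \sum_i \dim Z_G(x_i) \geqs n^2 > \dim G$, a contradiction. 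So $H$ is positive-dimensional, and since it is closed, $G$-irreducible and irreducible, primitive and tensor-indecomposable on $V$, the Liebeck--Seitz classification leaves, up to conjugacy, only $\Sp(V)$, $\SO(V)$ (when $n$ and $p$ permit) and finitely many almost simple $H$ with $H^0$ simple acting irreducibly and tensor-indecomposably on $V$ (for example via exterior or symmetric square modules, or spin modules). For $\Sp(V)$ and $\SO(V)$, $G/H$ is the variety of nondegenerate alternating (resp.\ symmetric) forms on $V$, so $\dim \mathrm{Fix}_{x_i}(G/H)$ is the dimension of the space of such forms preserved by $x_i$, which is controlled by the multiplicities of the eigenvalues $\lambda$ and $\lambda^{-1}$ of $x_i$; here one verifies $\sum_i \dim \mathrm{Fix}_{x_i}(G/H) < (r-1)\dim(G/H)$ using $\sum_i d_i \leqs n(r-1)$, the failure of (ii) being used in the tight configurations. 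The finitely many remaining $H$ are handled by the same fixed-point estimate case by case. As each possibility contradicts the displayed inequality, we conclude $H = G$.

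The hard part will be the boundary regime $\sum_i d_i = n(r-1)$. Away from it, all the dimension counts and the fixed-point estimates for $\Sp(V)$, $\SO(V)$ and the exceptional overgroups are comfortably strict and the argument is essentially formal; but at the boundary --- precisely where the quadratic exception (ii) lives --- both the generic-irreducibility count and the estimates ruling out $\Sp(V)$ and $\SO(V)$ become tight, and one must carry out a careful class-by-class analysis, including an inductive step on $n$ (passing to the tuples induced on a stabilised subspace $W$ and on $V/W$) and the explicit treatment of small $n$, to separate exactly those tuples of conjugacy classes that topologically generate from those that do not.
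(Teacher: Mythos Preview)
This theorem is not proved in the present paper; it is quoted from Gerhardt \cite{Ger} (specifically \cite[Theorem 1.1]{Ger}) and used as a black box throughout. So there is no proof here to compare your proposal against directly. The paper does, however, describe Gerhardt's method: it is inductive on $n$, passing to the stabiliser of a $1$-dimensional subspace of $V$ (or a hyperplane) and invoking the result for $\SL_{n-1}(k)$; see the discussion following Remark~\ref{r:main2}, and note that this inductive scheme is precisely the template the present paper adapts for the symplectic and orthogonal groups in Sections~\ref{s:ort} and~\ref{s:symp}.

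Your proposed route is genuinely different: a global argument via fixed-point dimension counts on all Grassmannians simultaneously, followed by elimination of the maximal irreducible overgroups using a Liebeck--Seitz-type classification. The outline is coherent and the finite-$H$ step is fine, but two of the steps you pass over quickly are where essentially all the content lies. First, your assertion that $\sum_i \dim \mathrm{Fix}_{x_i}(\mathrm{Gr}_m(V)) < (r-1)m(n-m)$ holds for \emph{every} $1 \leqs m \leqs n-1$ under the hypothesis $\sum_i d_i \leqs n(r-1)$ (away from the quadratic exception) is not established and is not obvious; the present paper only proves the analogous bounds for $m=1$ and $m=2$ (Lemmas~\ref{l:1-spaces} and~\ref{l:2vs1}), and for general $m$ the combinatorics of how the eigenspace profile controls the component dimensions in $\mathrm{Gr}_m(V)^{x_i}$ is substantially harder. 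Second, ruling out $\Sp(V)$ and $\SO(V)$ by a fixed-point estimate on $G/H$ is exactly the delicate point: the inequality you need degenerates to an equality in the boundary regime $\sum_i d_i = n(r-1)$, and it is precisely there that Gerhardt's argument relies on the inductive passage to $\SL_{n-1}$ that you relegate to your final paragraph. In short, your sketch is not wrong, but the work it defers is essentially the whole proof; the inductive approach is what makes it tractable, and it is the primary engine rather than a boundary patch.
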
  

This implies the same result for $G = \GL_n(k)$ if one replaces $\Delta$ by the set of $x \in X$ such that $G(x)$ contains $\SL_n(k)$. There is a similar result for $G={\rm SL}_{2}(k)$ which states that $\Delta$ is empty if and only if $r=2$ and $x_1,x_2$ are involutions modulo the center of $G$ (see \cite[Theorem 4.5]{Ger}).     

We refer the reader to \cite{BGG} for detailed results on the analogous problem for exceptional algebraic groups $G$. For example, \cite[Theorem 7]{BGG} states that if $X$ is defined as in \eqref{e:conj} then $\Delta$ is nonempty (and therefore dense) whenever $r \geqs 5$ (or $r \geqs 4$ if $G = G_2$). As explained in \cite{BGG}, it is easy to construct examples that  demonstrate the sharpness of both bounds.

In order to complete our study of topological generation for simple algebraic groups, it remains to extend the analysis to the orthogonal and symplectic groups, which is the main goal of this paper. Recall that the center of $G$ is contained in the Frattini subgroup of $G$, so the isogeny type of $G$ is not relevant. For convenience, we will work with the matrix groups ${\rm SO}_{n}(k)$ and ${\rm Sp}_{n}(k)$, where ${\rm SO}_{n}(k)$ is an index-two subgroup of the isometry group ${\rm O}_{n}(k)$ and $k$ is an algebraically closed field of characteristic $p \geqs 0$ that is not algebraic over a finite field.

Our main result is Theorem \ref{t:main2} below. Here $G = {\rm SO}_{n}(k)$ or ${\rm Sp}_{n}(k)$ and we will assume $n \geqs N$, where
\begin{equation}\label{e:N}
N = \left\{\begin{array}{ll}
10 & \mbox{if $G = \SO_n(k)$, $n$ even} \\
3 & \mbox{if $G = \SO_n(k)$, $n$ odd} \\
4 & \mbox{if $G = \Sp_n(k)$.} 
\end{array}\right.
\end{equation}
In order to justify this assumption, first recall that $\SO_4(k)$ is not simple and $\Sp_2(k) = \SL_2(k)$. In addition, the groups ${\rm SO}_6(k)$ and ${\rm SL}_{4}(k)$ are isogenous, so the result for $\SO_6(k)$ can be read off from Theorem \ref{t:sl} (see Theorem \ref{t:so6} for a version of Theorem \ref{t:main2} for $G = {\rm SO}_6(k)$ in terms of the $6$-dimensional natural module). The case $G = {\rm SO}_8(k)$ requires special attention because there are three restricted irreducible $8$-dimensional $kG$-modules and one needs to consider the eigenspaces of each $x_i$ on all three modules (see Theorems \ref{t:so88} and \ref{t:so8}). In addition, since there are isogenies between the classical groups of type $B_m$ and $C_m$ in characteristic $2$, we may assume $p \ne 2$ when $G = \SO_n(k)$ and $n$ is odd. 

In the statement of Theorem \ref{t:main2} we assume each $x_i$ in \eqref{e:conj} has prime order modulo the center $Z(G)$ of $G$ (if $p=0$ we allow $x_i$ to be an arbitrary nontrivial unipotent element). Our methods can be extended to handle more general conjugacy classes (as in Theorem \ref{t:sl} for $\SL_n(k)$), but the analysis turns out to be considerably more complicated and many exceptions arise. Furthermore, the case where the elements in $C_i$ have prime order (modulo the center) is sufficient for our applications.  However, it is worth noting that with only minor modifications to the proof, one could replace the prime order assumption by a more general hypothesis where we assume each $x_i$ is either unipotent or semisimple and the following two conditions are satisfied:
\begin{itemize}\addtolength{\itemsep}{0.2\baselineskip}
\item[{\rm (a)}] If $x_i$ is semisimple and $p \ne 2$, then either $x_i$ is an involution, or $-1$ is not an eigenvalue of $x_i$ on the natural module. 
\item[{\rm (b)}] If $x_i$ is unipotent and $p=2$, then $x_i$ is an involution.
\end{itemize}

\begin{remk}\label{r:2}
Notice that if $p=0$ then nontrivial unipotent elements have infinite order. In order to avoid the need to repeatedly highlight this special situation, we will simply view all nontrivial unipotent elements in characteristic $0$ as having prime order. Alternatively, we could assume $p>0$ throughout and then deduce the corresponding results in characteristic $0$ by a standard compactness argument, but we prefer to adopt the former approach.
\end{remk}

\begin{remk}\label{r:orth}
Suppose $G = \Sp_n(k)$ with $n \geqs 4$ and $p=2$. Let $e_i = \dim V^{x_i}$ be the dimension of the $1$-eigenspace of $x_i$ on $V$. As noted above, if $\sum_ie_i > n(r-1)$ then each $G(x)$ acts reducibly on $V$ and thus $\Delta$ is empty. In fact, it turns out that $\Delta$ is also empty if $\sum_ie_i = n(r-1)$ (see Lemma \ref{l:oinsp}), which explains the additional condition in Theorem \ref{t:main2} in this special  case.
\end{remk}

We are now in a position to state our main result. 
 
\begin{theorem}\label{t:main2} 
Let $G=\SO_{n}(k)$ or $\Sp_n(k)$, where $n \geqs N$ and $k$ is an algebraically closed field of characteristic $p \geqs 0$ that is not algebraic over a finite field. Define 
$X = C_1 \times \cdots \times C_r$ as in \eqref{e:conj}, where each $x_i$ has prime order modulo $Z(G)$. Assume the following conditions are satisfied:
\[
\sum_i d_i \leqs n(r-1), \mbox{ and also } \sum_i e_i < n(r-1) \mbox { if $G = \Sp_n(k)$ and $p=2$.}
\]
Then $\Delta$ is empty if and only if one of the following holds:
\begin{itemize}\addtolength{\itemsep}{0.2\baselineskip}
\item[{\rm (i)}] $r=2$ and $x_1, x_2$ are quadratic on the natural $kG$-module.
\item[{\rm (ii)}] $r \in \{2,3,4\}$ and the $x_i$ are recorded in Table \ref{tab:main} or \ref{tab:main2} (up to ordering of the $x_i$).
\end{itemize}
\end{theorem}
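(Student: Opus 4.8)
\medskip
\noindent\textbf{Outline of the proof.}
As indicated in the introduction, the argument proceeds by induction on $n = \dim V$: Gerhardt's Theorem~\ref{t:sl}, together with the analogous statement for $\SL_2(k)$, is the base case whenever $G(y)$ is forced to act on a subspace of $V$ as a linear group, and the low-dimensional orthogonal groups excluded by~\eqref{e:N} are handled in Theorems~\ref{t:so6}, \ref{t:so88} and~\ref{t:so8}. By Theorem~\ref{t:bgg} the emptiness of $\Delta$ is unaffected by extending $k$, and a generic subset of an irreducible variety over an uncountable algebraically closed field is dense; we may therefore assume $k$ is uncountable, so that in order to prove $\Delta \ne \varnothing$ it suffices to exhibit finitely many generic subsets of $X$ whose intersection --- which is again generic, hence nonempty --- is contained in $\Delta$.

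Consider first the reverse implication. If $r = 2$ and $x_1, x_2$ are quadratic on $V$, then by Lemma~\ref{l:quadratic} every composition factor of $G(x)$ on $V$ has dimension at most $2$, whence $G(x) \ne G$ for all $x \in X$ because $n \geqs N \geqs 3$; so $\Delta = \varnothing$. For each configuration in Tables~\ref{tab:main} and~\ref{tab:main2} we exhibit a proper closed subgroup $H < G$ --- in all but a few cases a member of one of the standard geometric families of subgroups of $G$ (the stabiliser of a non-degenerate subspace, an imprimitive subgroup $\mathrm{Cl}_m(k) \wr \Sym_t$, a tensor product subgroup such as $\Sp_a(k) \otimes \SO_b(k)$, or else an almost simple subgroup acting irreducibly on $V$) --- such that $G(y)$ lies in a conjugate of $H$ for all $y$ in a generic subset of $X$. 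This is verified by the fixed point dimension estimate of~\cite[Theorem~5]{BGG}: from the Jordan form and eigenvalues of $x_i$ one computes $\dim(x_i^G \cap H)$ and hence $\dim \mathrm{Fix}_{G/H}(x_i)$, and one checks that $\sum_i \dim \mathrm{Fix}_{G/H}(x_i) \geqs (r-1) \dim(G/H)$, which forces $x_1, \dots, x_r$ to share a fixed point on the coset variety $G/H$ for generic $(x_1, \dots, x_r) \in X$. Thus $\Delta$ is not generic, and so $\Delta = \varnothing$ by Theorem~\ref{t:bgg}.

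For the forward implication we argue by contraposition: assuming that neither (i) nor (ii) holds, we show $\Delta \ne \varnothing$. By Theorem~\ref{t:main1}, either $G(x)$ lies in a proper parabolic subgroup of $G$ for every $x \in X$, or there is a unique $G$-irreducible closed subgroup $H \leqs G$ with $G(y)$ conjugate to $H$ for all $y$ in a generic subset $Y_0 \subseteq X$ --- and in the second case $\Delta \ne \varnothing$ precisely when $H = G$. The hypothesis $\sum_i d_i \leqs n(r-1)$ (together with $\sum_i e_i < n(r-1)$ when $G = \Sp_n(k)$ and $p = 2$, via Lemma~\ref{l:oinsp}) excludes the first alternative: a dimension count shows that the set of $x \in X$ for which $x_1, \dots, x_r$ have a common eigenvector has dimension at most $\dim X + \sum_i d_i - (r-1)n - 1 < \dim X$, so for generic $x$ the group $G(x)$ fixes no $1$-space of $V$ and in particular no proper parabolic subgroup contains it. It remains to prove $H = G$. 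If not, then $H$, being proper and $G$-irreducible, is contained in a maximal closed subgroup $M$ which is itself $G$-irreducible, hence reductive and non-parabolic; by Aschbacher's theorem and the classification of maximal closed subgroups of classical algebraic groups, $M$ belongs to one of a short list of families. For each such $M$ we derive a contradiction for $y$ in $Y_0 \cap Y_1 \cap Y_2$, where $Y_1$ is a generic subset on which $G(y)$ acts irreducibly, primitively and tensor-indecomposably on $V$, and $Y_2$ is a generic subset on which $G(y)$ contains a classical subgroup $\SL_m(k)$, $\Sp_m(k)$ or $\SO_m(k)$ with $n - m$ bounded; the sets $Y_1, Y_2$ are produced using the inductive hypothesis, Theorem~\ref{t:sl} and dimension counts that exploit the fact that each $x_i$ has prime order modulo $Z(G)$. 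Indeed, if $M$ stabilises a non-degenerate or totally singular decomposition of $V$, restriction to the summands (or to the $\GL$-factor of the relevant Levi subgroup) reduces to smaller classical or linear groups, where $G(y)$ is generically full and reassembling the pieces contradicts $G(y) \leqs M$ outside small cases; the imprimitive and tensor-type families contradict the properties of $G(y)$ on $Y_1$; the extraspecial-type family consists of finite subgroups and is excluded by $Y_2$; and the almost simple maximal subgroups $M$ are ruled out by applying Theorem~\ref{t:main1} to the variety of tuples lying in a fixed conjugate of $M$, together with the lower bound on ${\rm rk}\, G(y)$ afforded by $Y_2$. Carrying out this analysis, the only surviving possibilities are precisely the configurations listed in Tables~\ref{tab:main} and~\ref{tab:main2}; as these are excluded by hypothesis, $H = G$, and so $\Delta \ne \varnothing$.

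The main difficulty is to calibrate these dimension estimates and inductive reductions finely enough that the case analysis terminates either with $G(y) = G$ or with one of the explicitly listed exceptions, and to deal with the delicate low-rank situations: $\SO_6(k)$, where Theorem~\ref{t:sl} must be transported across the isogeny with $\SL_4(k)$ (Theorem~\ref{t:so6}); $\SO_8(k)$, where triality yields three inequivalent $8$-dimensional modules and one must track the eigenspace data of each $x_i$ on all of them (Theorems~\ref{t:so88} and~\ref{t:so8}); and the classical groups in characteristic $2$, where the isogenies between types $B_m$ and $C_m$, and the containments $\OO_n(k) \leqs \Sp_n(k)$ accounting for the extra hypothesis on the $e_i$, require separate treatment. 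A further source of special behaviour --- and the reason Tables~\ref{tab:main} and~\ref{tab:main2} are needed --- is that of quadratic elements and of small $r$, where $G(y)$ is generated by few elements whose eigenspaces on $V$ have small codimension and may consequently be confined to a tensor product, imprimitive or almost simple subgroup; such configurations are disposed of by ad hoc arguments, typically by identifying the resulting subgroup through its action on $V$.
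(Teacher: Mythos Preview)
Your outline captures the overall inductive strategy and the role of Gerhardt's theorem, but there are two genuine gaps.

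First, for the reverse implication (the configurations in Tables~\ref{tab:main} and~\ref{tab:main2}), you claim that verifying $\sum_i \dim (G/H)^{x_i} \geqs (r-1)\dim(G/H)$ ``forces $x_1,\dots,x_r$ to share a fixed point on $G/H$ for generic $x\in X$''. This implication is false: Lemma~\ref{l:basic} only gives the contrapositive direction, i.e.\ a \emph{strict} inequality the other way implies $X_H$ is not dense, and its failure is merely a necessary condition for $X_H$ to be dense. The paper does not prove emptiness of $\Delta$ by fixed point bounds. For the $\SO_{2m}(k)$ entries in Table~\ref{tab:main} (Lemmas~\ref{l:ex:so} and~\ref{l:ex:so2}) it constructs an explicit morphism $\phi\colon G\times (D_1\times D_2)\to C_1\times C_2$ from the Levi $L=\GL_m(k)$, computes the dimension of a generic fibre, and checks that $\phi$ is dominant---so $X_L$ contains an open subset of $X$. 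For the $\SO_{2m+1}(k)$ entry it appeals to Corollary~\ref{c:adjreducible} (a consequence of Scott's theorem on the adjoint module), and for the $\Sp_4(k)$ and $\Sp_6(k)$, $\Sp_8(k)$ entries it exhibits a common $1$-space on a different module (the $5$-dimensional orthogonal module) or again uses a dominant morphism into a Levi. None of these follow from your proposed inequality.

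Second, in the forward direction you assert that $\sum_i d_i\leqs n(r-1)$ implies $G(x)$ generically fixes no $1$-space of $V$, ``and in particular no proper parabolic subgroup contains it''. The second clause does not follow: for $\Sp_n$ and $\SO_n$ there are maximal parabolics stabilising totally isotropic or totally singular subspaces of every dimension up to $\lfloor n/2\rfloor$, and excluding these is precisely where most of the work lies. The paper handles this by a separate dimension count for totally singular $1$-spaces (Lemma~\ref{l:sing1}) and nondegenerate $2$-spaces (Lemma~\ref{l:nondeg2}), then uses the inductive step to force a composition factor of dimension at least $n-2$ (or $m$), which together with the $1$- and $2$-space results shows $G(x)^0$ is generically irreducible; finally Lemma~\ref{l:largeranksubs} (rank bounds for proper connected irreducible subgroups) and the strongly regular element machinery of Lemma~\ref{l:sr} finish the argument. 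When the inductive step lands on one of the Table configurations for the smaller group, the paper has to argue further, often by switching to a different parabolic (e.g.\ the stabiliser of a totally singular $m$-space) or by exhibiting an explicit $y\in\bar X$ with $G(y)^0$ acting irreducibly on a nondegenerate subspace. Your single appeal to Theorem~\ref{t:main1} and Aschbacher-type maximal subgroups does not substitute for this case analysis, because the difficult step is precisely showing that $G(x)$ is $G$-irreducible to begin with.
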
 

\renewcommand{\arraystretch}{1.1}
\begin{table}
\begin{center}
\[
\begin{array}{lllll}\hline
G & \mbox{Conditions} & \multicolumn{1}{c}{x_1} & \multicolumn{1}{c}{x_2} \\ \hline
{\rm SO}_{2m}(k) & \mbox{$m \geqs 5$ odd} & \begin{array}{l}
(I_2, \l I_{m-1}, \l^{-1} I_{m-1}) \\
(J_3^2,J_2^{m-3}), \, p \ne 2
\end{array} & (J_2^{m-1},J_1^2)^*   \\ 
& & &  \\
{\rm SO}_{2m}(k) & \mbox{$m \geqs 6$ even} & \begin{array}{l}
(I_2, \l I_{m-1}, \l^{-1} I_{m-1}) \\
(J_3^2,J_2^{m-4}, J_1^2), \, p \ne 2 \\
(J_3,J_2^{m-2},J_1), \, p \ne 2
\end{array} & (J_2^{m})^*  \\ 
& & &  \\
{\rm SO}_{2m+1}(k) &  \mbox{$m \geqs 2$ even} & \hspace{1.8mm} (I_1,  \l I_m, \l^{-1}I_m)  & (J_2^m,J_1)   \\ 
& & & \\
{\rm Sp}_{4}(k) & p \ne 2 & \hspace{1.8mm} (-I_2,I_2) & \mbox{non-regular}  \\ \hline
\end{array}
\]
\caption{Some special cases with $r=2$ in Theorem \ref{t:main2}}
\label{tab:main}
\end{center}
\end{table}
\renewcommand{\arraystretch}{1}

\renewcommand{\arraystretch}{1.1}
\begin{table}
\begin{center}
\[
\begin{array}{lcccccc}\hline
G & p & r & \multicolumn{1}{c}{x_1} & \multicolumn{1}{c}{x_2} & \multicolumn{1}{c}{x_3} & \multicolumn{1}{c}{x_4} \\ \hline
{\rm SO}_5(k)  & \ne 2 & 3 & (J_2^2,J_1) & (J_2^2,J_1) &  (J_2^2,J_1) & \\
{\rm Sp}_8(k)  & \ne 2 & 3 & (-I_2,I_6) &  (-I_2,I_6) & (-I_4,I_4) & \\
{\rm Sp}_6(k)  & \ne 2 & 3 & (-I_2,I_4) & (-I_2,I_4) & (-I_2,I_4) & \\
{\rm Sp}_4(k)  & \ne 2 & 3 & (-I_2,I_2) & (-I_2,I_2) & \mbox{quadratic} & \\
& & 4 & (-I_2,I_2) &(-I_2,I_2) &(-I_2,I_2) &(-I_2,I_2) \\
& 2 & 3 & (J_2^2)^* & (J_2^2)^* & \mbox{quadratic} \\
& & 4 & (J_2^2)^* & (J_2^2)^* & (J_2^2)^* & (J_2^2)^* \\ \hline
\end{array}
\]
\caption{The special cases with $r \in \{3,4\}$ in Theorem \ref{t:main2}}
\label{tab:main2}
\end{center}
\end{table}
\renewcommand{\arraystretch}{1}

\begin{remk}\label{r:main2}
Let us record some comments on the statement of Theorem \ref{t:main2}.
\begin{itemize}\addtolength{\itemsep}{0.2\baselineskip}
\item[{\rm (a)}] Recall that $\Delta$ is empty if $\sum_i d_i > n(r-1)$, or if $r=2$ and $x_1,x_2$ are quadratic, so the theorem shows that these are essentially the only obstructions to the existence of a tuple $x \in X$ with $G(x)=G$, apart from a handful of special cases with $r \leqs 4$ (see Remark \ref{r:orth} for the additional condition $\sum_ie_i < n(r-1)$ when $G = \Sp_n(k)$ and $p=2$). As previously noted, Theorem \ref{t:bgg} states that $\Delta$ is nonempty if and only if it is generic and dense in $X$.
\item[{\rm (b)}] The elements $x_i$ appearing in Tables \ref{tab:main} and \ref{tab:main2} are presented up to conjugacy in $G$ and scalars in $Z(G)$. For unipotent elements, we give the Jordan form of $x_i$ on the natural module $V$ for $G$, where $J_m$ denotes a unipotent Jordan block of size $m$. Similarly, we describe semisimple elements $x_i$ in terms of their eigenvalues on $V$, where $I_m$ is the identity matrix of size $m$ and $\l$ is any nonzero scalar in $k$ with $\l^2 \ne 1$.
\item[{\rm (c)}] In the first two rows of Table \ref{tab:main}, the asterisk in the final column indicates that if $p=2$ then we take $x_2$ to be an $a$-type involution with the given Jordan form. Here we are using the standard Aschbacher-Seitz notation from \cite{AS} for unipotent involutions in classical groups (see Remark \ref{r:as}). In this notation, the elements appearing in the final two rows of Table \ref{tab:main2} are involutions of type $a_2$ (i.e. short root elements).
\item[{\rm (d)}] In the final row of Table \ref{tab:main}, we can choose $x_2 \in \Sp_4(k)$ to be any non-regular element of prime order modulo $Z(G)$ (note that this is equivalent to the condition $d_2 \geqs 2$). Similarly, for the two cases in Table \ref{tab:main2} with $G = {\rm Sp}_{4}(k)$ and $r=3$ we can take $x_3$ to be any quadratic element, which means that $x_3$ is either semisimple of the form $(-I_2,I_2)$ or $(\l I_2,\l^{-1}I_2)$, or unipotent with Jordan form $(J_2^2)$ or $(J_2,J_1^2)$.
\item[{\rm (e)}] As noted above, the corresponding result for $\SO_6(k)$ can be read off from the result for the isogenous group $\SL_4(k)$ (see Theorems \ref{t:sl} and \ref{t:so6}). The case $G = \SO_8(k)$ requires special attention and we refer the reader to Theorems \ref{t:so88} and \ref{t:so8}.
\end{itemize}
\end{remk}

It is worth noting that several new difficulties arise in the analysis of orthogonal and symplectic groups, in comparison to the linear groups handled in \cite{Ger}. For instance, we have to consider subspace stabilizers of both totally singular and nondegenerate spaces. Similarly, we need to distinguish the eigenspaces of a semisimple element, noting that a 
$\l$-eigenspace is nondegenerate if $\l = \pm 1$, otherwise it is totally singular. The unipotent conjugacy classes are also more complicated, especially in characteristic $2$ when the class of a unipotent element is not always uniquely determined by its Jordan form on the natural module $V$. Several key features of the proof are also more difficult in this setting. For example, there are considerably more special cases to consider and there are additional complications in applying the main induction argument. Indeed, the main idea in the proof of Theorem \ref{t:sl} for $\SL_n(k)$ involves passing to the stabilizer of a $1$-dimensional subspace of $V$ (or a hyperplane). But in an orthogonal or  symplectic group,  the largest irreducible composition factor of the stabilizer of a totally singular $1$-space has codimension $2$ in $V$, rather than codimension $1$.  

\begin{remk}\label{r:lie}
Similar results (although not quite as precise) are obtained in \cite{GG1} on the generation of Lie algebras. For example, \cite[Proposition 6.4]{GG1} gives essentially the same conditions for the Lie algebra $\mathfrak{sl}_n(k)$ of type A as obtained in Theorem \ref{t:sl} when $p \ne 2$ and 
the generating elements are all contained in the same $\SL_n(k)$-orbit (with the additional condition that the elements are either nilpotent or semisimple).  There are some advantages in working with Lie algebras (for instance, one can multiply by scalars and take closures more easily), but serious issues arise due to the existence of special isogenies when $p=2$ or $3$. Indeed, for Lie algebras one cannot ignore the issue of isogenies.   
\end{remk}

\begin{remk}\label{r:gs}
Let us also highlight related results of Guralnick and Saxl \cite[Theorems 8.1, 8.2]{GS}, which give an upper bound on the number of conjugates of a given noncentral element in a simple algebraic group required to generate a Zariski dense subgroup (in addition, they establish similar results for the corresponding finite groups of Lie type). For example, if $G$ is a classical group with an $n$-dimensional natural module, then $n$ conjugates of a given noncentral element $x \in G$ will topologically generate, unless $(G,x)$ is one of a handful of known cases (for instance, $n+1$ conjugates are needed if $G = \Sp_n(k)$, $p=2$ and $x$ is a transvection). The bounds in \cite{GS} are best possible (for classical groups), but they are not sensitive to the choice of element $x$ and they do not extend to the more general situation we consider here, where $C_1, \ldots, C_r$ are arbitrary conjugacy classes of noncentral elements (containing elements of prime order modulo $Z(G)$).
\end{remk}

In Section \ref{s:corol} we will prove the following corollary. In the statement, we define
\begin{equation}\label{e:M}
M = \left\{\begin{array}{ll}
10 & \mbox{if $G = \SO_n(k)$, $n$ even} \\
7 & \mbox{if $G = \SO_n(k)$, $n$ odd} \\
6 & \mbox{if $G = \Sp_n(k)$} \\
3 & \mbox{if $G = \SL_n(k)$} 
\end{array}\right.
\end{equation}

\begin{corol}\label{c:cor2} 
Let $G=\SL_n(k)$, $\SO_{n}(k)$ or $\Sp_n(k)$, where $n \geqs M$ and $k$ is an algebraically closed field of characteristic $p \geqs 0$ that is not algebraic over a finite field. Define $X$ as in \eqref{e:conj}, where each $x_i$ has prime order modulo $Z(G)$, and assume there exists $y \in X$ such that $G(y)$ acts irreducibly
 on the natural $kG$-module. Then $\Delta$ is empty if and only if $G=\Sp_{n}(k)$, $p=2$ and $G(x)^0 \leqs \SO_{n}(k)$ for generic $x \in X$. 
\end{corol}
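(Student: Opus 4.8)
The plan is to combine Theorems~\ref{t:bgg}, \ref{t:main1}, \ref{t:sl} and~\ref{t:main2} with the soft observation that reducibility of $G(x)$ on $V$ is a closed condition on $X$. Throughout we may assume $k$ is uncountable (Theorem~\ref{t:bgg} and the remark following it), so that every generic subset of $X$ is dense. The reverse implication is immediate: if $G=\Sp_n(k)$, $p=2$ and $G(x)^0\leqslant\SO_n(k)$ for all $x$ in a generic set $S$, then $\dim G(x)\leqslant\dim\SO_n(k)<\dim\Sp_n(k)$ on $S$, so $S\cap\Delta=\emptyset$; since $S$ is generic and $\Delta$ is generic if nonempty, while the intersection of two generic subsets is nonempty here, $\Delta$ must be empty.

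For the forward implication, suppose $\Delta$ is empty. First I would note that $R=\{x\in X: G(x)\text{ is reducible on }V\}$ is closed, being the union over $1\leqslant d\leqslant n-1$ of the images in $X$ of the closed subvarieties $\{(x,W)\in X\times\mathrm{Gr}(d,V): x_iW=W\text{ for all }i\}$, each image closed since the Grassmannian $\mathrm{Gr}(d,V)$ is complete. By hypothesis $X\setminus R\neq\emptyset$, so $X\setminus R$ is a nonempty open, hence dense, subset of $X$. Since some $G(y)$ lies in no proper parabolic subgroup of $G$ (parabolic subgroups stabilize proper nonzero subspaces of $V$), case~(i) of Theorem~\ref{t:main1} fails, so case~(ii) holds with a unique $G$-irreducible subgroup $H$, a generic set $Y$ and a nonempty open set $Z$ as stated. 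As $X\setminus R$ is dense it meets $Y$, so $G(y)$ is conjugate to $H$ for some $y$ with $G(y)$ irreducible on $V$; thus $H$ acts irreducibly on $V$. Also $H\neq G$, since $H=G$ would give $Y\subseteq\Delta$ with $Y\neq\emptyset$. Hence $H$ is a proper, $G$-irreducible, $V$-irreducible closed subgroup of $G$. Finally, irreducibility of $G(y)$ rules out the two standard obstructions: if $\sum_i d_i>n(r-1)$ then the largest eigenspaces of the $x_i$ have codimensions summing to less than $n$, so their intersection is nonzero and $G(x)$ fixes a line for every $x$ --- contradicting irreducibility of $G(y)$; and if $r=2$ with $x_1,x_2$ quadratic, then every composition factor of $G(y)$ on $V$ has dimension at most $2$ by Lemma~\ref{l:quadratic}, impossible since $n\geqslant M\geqslant 3$.

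Next I would dispose of the linear and $\SO\leqslant\Sp$ cases. If $G=\SL_n(k)$ then $n\geqslant M=3$ and Theorem~\ref{t:sl} forces one of the two obstructions above, a contradiction; so $G=\SO_n(k)$ or $\Sp_n(k)$. If $G=\Sp_n(k)$ with $p=2$ and $\sum_i e_i=n(r-1)$, then Lemma~\ref{l:oinsp} (see Remark~\ref{r:orth}) yields $G(x)^0\leqslant\SO_n(k)$ for generic $x$, which is exactly the desired conclusion. In every other case we have $n\geqslant M\geqslant N$, $\sum_i d_i\leqslant n(r-1)$, and $\sum_i e_i<n(r-1)$ when $G=\Sp_n(k)$, $p=2$, so Theorem~\ref{t:main2} applies; as $\Delta$ is empty and the quadratic case is excluded, the $x_i$ must occur (up to ordering) in Table~\ref{tab:main} or~\ref{tab:main2}, with $n\geqslant M$. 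A short inspection shows that the only table entries compatible with $n\geqslant M$ are those with $G=\SO_{2m}(k)$ or $\SO_{2m+1}(k)$ in Table~\ref{tab:main}, and $G=\Sp_6(k)$ or $\Sp_8(k)$ in Table~\ref{tab:main2}; all the $\Sp_4(k)$ and $\SO_5(k)$ rows fall outside the range $n\geqslant M$.

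The hard part is to eliminate these remaining table entries, and this is where the detailed structural analysis of the exceptional cases in the body of the paper is needed. For each such entry I would argue that $G(x)$ is reducible on $V$ for \emph{every} $x\in X$ --- equivalently, that the generic subgroup $H$ is reducible on $V$ --- contradicting the penultimate paragraph. In each configuration the generic subgroup is a subspace stabilizer (typically a conjugate of $\GL_{n/2}(k)$ fixing a pair of complementary totally singular subspaces, or a product $\SO_a(k)\times\SO_b(k)$ fixing a nondegenerate decomposition), or else $G(x)$ lies in a proper parabolic for every $x$ (when Theorem~\ref{t:main1}(i) already contradicts the existence of an irreducible $G(y)$); this generic containment can be checked by comparing $\dim X$ with the dimension of the variety of tuples lying in the relevant subgroup. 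For the $r=2$ entries one can begin concretely, since $\langle x_1,x_2\rangle$ always stabilizes $[V,x_1]+[V,x_2]=(V^{x_1}\cap V^{x_2})^{\perp}$ together with its perpendicular space, which reduces matters to the borderline situation $V^{x_1}\cap V^{x_2}=0$ --- precisely where the fine information about the prescribed unipotent and semisimple classes enters. Completing this case-by-case verification gives the contradiction that finishes the proof.
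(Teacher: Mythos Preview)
Your overall architecture matches the paper's: establish $\sum_i d_i\leqslant n(r-1)$ from the existence of an irreducible $G(y)$, isolate the $\Sp_n(k)$, $p=2$, $\sum_i e_i=n(r-1)$ situation, and then for every remaining exceptional configuration in Tables~\ref{tab:main} and~\ref{tab:main2} (with $n\geqslant M$) show that \emph{every} $G(x)$ is reducible on $V$, contradicting the hypothesis. The paper's proof is exactly this, though it does not invoke Theorem~\ref{t:main1}; it simply quotes the relevant reducibility statements already proved in the body of the paper.

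Two points need correction. First, your citation for the $\Sp_n(k)$, $p=2$ step is not quite enough: Lemma~\ref{l:oinsp} and Remark~\ref{r:orth} only assert $\Delta=\emptyset$ when $\sum_i e_i\geqslant n(r-1)$; they do not directly give $G(x)^0\leqslant\SO_n(k)$ generically. The paper supplies the missing bridge: when $\sum_i e_i=n(r-1)$, the intersection $\bigcap_i W^{x_i}$ is $1$-dimensional in the $(n{+}1)$-dimensional indecomposable module $W$, and since $G(x)$ does not generically fix a line in the socle $V$, it generically fixes a complement to $V$ in $W$, i.e.\ lies in $\OO_n(k)$.

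Second, and more seriously, your concrete proposal for the $r=2$ table entries is wrong. The subspace $[V,x_1]+[V,x_2]$ is \emph{not} in general $\langle x_1,x_2\rangle$-invariant: $x_1$ stabilises $[V,x_1]$ but has no reason to stabilise $[V,x_2]$, and for generic conjugates in the classes appearing in Table~\ref{tab:main} this sum is typically all of $V$. The actual mechanism is different in each family and is established earlier in the paper: Lemmas~\ref{l:ex:so} and~\ref{l:ex:so2} show that every $G(x)$ preserves a maximal totally singular subspace in the $\SO_{2m}$ cases; the proof of Theorem~\ref{t:oddorthogonal}(ii) uses $\dim X=\dim G$ together with Corollary~\ref{c:adjreducible} for $\SO_{2m+1}$ ($m\geqslant 4$ even); and the proofs of Theorems~\ref{l:sp6odd}(ii),(iii) and~\ref{t:spodd}(ii),(iii) show that every $G(x)$ fixes a maximal totally isotropic subspace in the $\Sp_6$ and $\Sp_8$ cases. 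Replace your final paragraph with pointers to these specific results and the proof is complete.
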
 

In addition, by combining the above results with Theorem \ref{t:so88} on $G = \SO_8(k)$, one can  obtain the following corollary (note that noncentral is the only condition on the $x_i$).

\begin{corol}\label{c:main}
Let $G$ be a simple algebraic group over an algebraically closed field of characteristic $p \geqs 0$ that is not algebraic over a finite field. Define $X$ as in \eqref{e:conj}, where $r \geqs 5$ and each $x_i$ is noncentral. If $G$ is a classical group, then define $d_i$ and $e_i$ as above. Then $\Delta$ is empty if and only if $G$ is classical and either 
\begin{itemize}\addtolength{\itemsep}{0.2\baselineskip}
\item[{\rm (i)}] $\sum_i d_i > n(r-1)$; or 
\item[{\rm (ii)}] $G = \Sp_n(k)$, $p=2$ and $\sum_ie_i = n(r-1)$,
\end{itemize}
where $n$ is the dimension of the natural $kG$-module.
\end{corol}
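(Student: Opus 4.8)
The plan is to separate the easy implication, reduce the hard implication to the prime-order results already available, and isolate the cases in which that reduction is lossy.

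For the \emph{only if} direction there is nothing new: if (i) holds then $\sum_i d_i > n(r-1)$ forces every $G(x)$ with $x \in X$ to fix a $1$-dimensional subspace of $V$, so $\Delta = \emptyset$; and if (ii) holds then $\Delta = \emptyset$ by Lemma \ref{l:oinsp} (cf.\ Remark \ref{r:orth}). For the converse, assume $\Delta = \emptyset$. If $G$ is exceptional then \cite[Theorem 7]{BGG} gives $\Delta \ne \emptyset$, since $r \geqs 5$ exceeds all the bounds there (including the bound $4$ for $G_2$), a contradiction; so $G$ is classical, with natural module $V$ of dimension $n$. It remains to show that (i) or (ii) holds, so I may assume $\sum_i d_i \leqs n(r-1)$ and that we are not in case (ii), the goal now being $\Delta \ne \emptyset$. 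If $G = \SL_n(k)$ with $n \geqs 3$ this is immediate from Theorem \ref{t:sl}, which applies to arbitrary noncentral classes and whose two obstructions cannot occur under our hypotheses when $r \geqs 5$; the case $G = \SL_2(k) = \Sp_2(k)$ is the same using \cite[Theorem 4.5]{Ger}, noting that a noncentral element of $\SL_2(k)$ has $d_i = 1$ and $\dim V^{x_i} \leqs 1$. So I am left with $G = \SO_n(k)$ or $\Sp_n(k)$, to be matched against Theorems \ref{t:main2}, \ref{t:so6} (for $\SO_6(k)$) and \ref{t:so88} (for $\SO_8(k)$); recall also that isogenies are harmless here, so for $\SO_n(k)$ with $n$ odd and $p=2$ we may pass to $\Sp_{n-1}(k)$.

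To reduce from arbitrary noncentral classes to the prime-order case I would use the following mechanism: if $w \in C_i$ and $z \in \overline{\langle w \rangle}$ then $\langle z \rangle \leqs \overline{\langle w \rangle}$, so if one picks $z_i \in \overline{\langle x_i \rangle}$ for each $i$ and finds a tuple $(z_1', \dots, z_r') \in z_1^G \times \cdots \times z_r^G$ topologically generating $G$, then writing $z_i' = h_i z_i h_i^{-1}$ and $w_i = h_i x_i h_i^{-1} \in C_i$ gives $z_i' \in \overline{\langle w_i \rangle}$, hence $(w_1, \dots, w_r) \in X$ topologically generates $G$ and $\Delta \ne \emptyset$. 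Thus it suffices to choose, for each $i$, a noncentral $z_i \in \overline{\langle x_i \rangle}$ of prime order modulo $Z(G)$ (with the convention of Remark \ref{r:2}) for which none of the obstructions of Theorems \ref{t:main2}, \ref{t:so6}, \ref{t:so88} hold for the classes $z_i^G$. Since $r \geqs 5$, the tabulated exceptions there (all with $r \leqs 4$) and the quadratic exception ($r = 2$) are automatically avoided, so one needs only $\sum_i d(z_i^G) \leqs n(r-1)$, together with $\sum_i \dim V^{z_i} < n(r-1)$ when $G = \Sp_n(k)$ and $p = 2$. If $x_i$ has an eigenvalue on $V$ that is not a root of unity, then $\overline{\langle x_i \rangle}$ contains a positive-dimensional torus, in which one can take $z_i$ generic of prime order with $d(z_i^G)$ as small as possible; if the eigenvalues of $x_i$ are roots of unity of order divisible by a not-too-small prime $\ell$, the power of $x_i$ of order $\ell$ modulo $Z(G)$ again has a small eigenspace. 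Checking that such choices respect the two inequalities — comparing $d(z_i^G)$ with $d_i$ and $\dim V^{z_i}$ with $\dim V^{x_i}$ — is the routine but somewhat delicate core of the argument.

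The one genuine obstacle is the residual case in which \emph{every} prime-order (modulo $Z(G)$) element of $\overline{\langle x_i \rangle}$ has a large eigenspace on $V$: this forces $x_i$ to be semisimple of prime-power order with widely spread eigenvalues (the worst case being a regular semisimple element of $2$-power order), and then reducing $C_i$ to prime order can violate $\sum_i d(z_i^G) \leqs n(r-1)$ even though $\sum_i d_i \leqs n(r-1)$. For classes of this type I would not reduce; instead, using that $d_i$ is then small, the fixed-point-space dimension estimates underlying the proof of Theorem \ref{t:main2} (and of Corollary \ref{c:cor2}) show that generic $G(x)$ already acts irreducibly on $V$, after which the maximal-subgroup analysis of that proof — which has ample slack for $r \geqs 5$ — eliminates every proper $G$-irreducible overgroup of $G(x)$ except, when $G = \Sp_n(k)$ and $p = 2$, the excluded possibility $G(x)^0 \leqs \SO_n(k)$; equivalently, one reduces only the well-behaved classes to prime order and then invokes Corollary \ref{c:cor2}, the irreducibility hypothesis there being supplied by the spread-out classes. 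Assembling all cases gives $\Delta \ne \emptyset$, the contradiction sought. The two substantive steps — and hence where the real work lies — are the eigenspace bookkeeping under reduction and the verification that spread-out classes force irreducibility.
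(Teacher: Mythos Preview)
The paper does not prove this corollary. Immediately after stating it the authors write: ``The proof of this corollary relies on extending our basic set up to a slightly more general situation. We will do this in a sequel and so we do not give a proof in this paper.'' So there is no proof here to compare against; the authors' intended approach is to \emph{extend} Theorem~\ref{t:main2} to arbitrary noncentral classes, not to \emph{reduce} arbitrary classes to the prime-order case.

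Your reduction strategy is the natural first attempt and it does dispose of a large part of the problem cleanly: exceptional groups (via \cite[Theorem 7]{BGG}), $\SL_n$ (Theorem~\ref{t:sl}), and ${\rm Spin}_8$ (Theorem~\ref{t:so88}) are all stated for arbitrary noncentral classes, so no reduction is needed there, and your conjugation argument for passing from $(z_1',\ldots,z_r')$ back to $(w_1,\ldots,w_r)$ is correct. The difficulty is genuinely confined to $\Sp_n$ and the remaining orthogonal cases.

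However, the reduction fails more broadly than you suggest, and your fallback does not repair it. The obstruction is not only the ``all classes spread-out'' situation. Take $G=\Sp_{10}(k)$, $r=5$, with $x_1,\ldots,x_4$ transvections ($d_i=9$) and $x_5$ regular semisimple of order $27$, every eigenvalue a primitive $27$th root of unity (so $d_5=1$). Then $\sum d_i=37\leqs 40=n(r-1)$, and $\sum e_i=36<40$, so neither (i) nor (ii) holds and the corollary predicts $\Delta\ne\emptyset$. But $\overline{\langle x_5\rangle}=\langle x_5\rangle$ is cyclic of order $27$; its only prime-order elements are $x_5^{9}$ and $x_5^{18}$, each of the form $(\omega I_5,\omega^{-1}I_5)$ with $d=5$. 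Hence every admissible choice gives $\sum d(z_i)=41>40$, and Theorem~\ref{t:main2} is unavailable. Here only one class is ``spread-out'' and the other four are as far from spread-out as possible, so your proposed dichotomy (reduce the well-behaved classes, use irreducibility from the spread-out ones) does not separate the example into tractable pieces. Moreover, your suggestion to ``invoke Corollary~\ref{c:cor2}'' cannot work as stated: Corollary~\ref{c:cor2} also carries the hypothesis that each $x_i$ has prime order modulo $Z(G)$, so it does not accept the unreduced $x_5$.

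In short, you have correctly located the obstruction, but the residual case is not a corner case amenable to a quick fixed-point estimate; handling it requires exactly the extension of Theorem~\ref{t:main2} beyond prime order that the paper explicitly postpones.
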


The proof of this corollary relies on extending our basic set up to a slightly more general situation. We will do this in a sequel and so we do not give a proof in this paper.  

Let us now turn to some applications. Recall that if $G$ is an algebraic group acting on a variety $V$, then $G$ has a \emph{generic stabilizer} on $V$ if there is a nonempty open subvariety $V_0$ of $V$ and a closed subgroup $H$ of $G$ such that the $G$-stabilizer of each point $v \in V_0$ is conjugate to $H$. Richardson \cite[Theorem A]{Ri} proved that generic stabilizers exist in characteristic $0$ if $V$ is a smooth affine irreducible variety.  However, this result does not extend to semisimple groups in positive characteristic (for example, see \cite[Theorem 1(ii)]{GL}). It is true that generic stabilizers always exist (in any characteristic) when $G$ is simple and $V$ is an irreducible $kG$-module, even as group schemes  \cite{GGL,GL}. Moreover, in the latter situation, the generic stabilizers have been determined in all cases \cite{GL} (and also for group schemes \cite{GGL}); the generic stabilizer is typically trivial whenever $\dim V > \dim G$. Indeed, by the results in \cite{GL} we deduce that if $G$ is simple, $V$ is irreducible and $\dim V > \dim G$, then the generic stabilizer is always a finite group scheme (but not necessarily smooth).  

Let $G$ be a simple algebraic group over an algebraically closed field $k$ of characteristic $p \geqs 0$ and let $V$ be a $kG$-module (possibly reducible). Set
\[
V^G = \{v \in V \,:\, gv = v \mbox{ for all $g \in G$}\}.
\]
By combining Theorem \ref{t:main2} with the main results in \cite{BGG, Ger}, we can show that if $\dim V/V^G$ is sufficiently large, then the generic stabilizer is trivial. The analogous result for Lie algebras was proved in \cite{GG1}. Moreover, when combined with the results in \cite{GG1} we can prove that generic stabilizers are trivial as a group scheme under suitable hypotheses (see Corollary \ref{c:main3} below).

In the statement of the following result, we say that $V$ is \emph{generically free} if the generic stabilizer for the action of $G$ on $V$ is trivial. Note that if $G$ is an exceptional type group, then $d(G) = 3(\dim G - {\rm rk}\, G)$.

\begin{theorem} \label{t:main3}   
Let $G$ be a simple algebraic group over an algebraically closed field $k$ of characteristic $p \geqs 0$. Let $V$ be a finite 
dimensional faithful rational $kG$-module and define $d(G)$ as in Table \ref{tab:dG}. If $\dim V/V^G > d(G)$, then $V$ is generically free.    
\end{theorem}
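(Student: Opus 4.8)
The plan is to reduce the statement about the (possibly reducible) module $V$ to the topological generation results encapsulated in Theorem~\ref{t:main2} (together with the linear and exceptional cases from \cite{Ger,BGG}). The key observation is that if $V$ is not generically free, then on a nonempty open subvariety $V_0 \subseteq V$ every stabilizer $G_v$ is conjugate to a fixed nontrivial closed subgroup $H$, since generic stabilizers exist here (when $G$ is simple and $V$ is irreducible this is \cite{GGL,GL1}; for reducible $V$ one works component-wise, or passes to a suitable irreducible constituent, to produce a generic stabilizer). Writing $V = V^G \oplus W$ with $W$ a module having no trivial constituent, one has $\dim W = \dim V/V^G > d(G)$, and it suffices to show $W$ is generically free; so I would replace $V$ by $W$ from the outset and assume $V^G = 0$.

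First I would set up the link between generic stabilizers and topological generation. Fix $r = \dim V$ (or something comparable — in fact one takes $r$ large enough, e.g. $r \geqslant 5$, which is harmless by Corollary~\ref{c:main}-type reasoning since adding further classes only helps). Consider a generic tuple $(v_1,\dots,v_r) \in V^r$ and let $x \in \Delta$ be a topological generating tuple whose components lie in prescribed conjugacy classes $C_1,\dots,C_r$ chosen so that the dimension obstruction $\sum_i d_i \leqslant n(r-1)$ is satisfied and none of the ``bad'' configurations in Tables~\ref{tab:main},~\ref{tab:main2} occurs; the point is that because $V^G = 0$ one can arrange, by a counting/dimension argument on eigenspaces, that a generic point of $V$ is \emph{not} fixed by $G(x)$ for such $x$. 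More precisely: if the generic stabilizer $H$ were nontrivial, then every generating tuple $x$ would have $G(x) = G$ fixing a common open set of points, forcing $G$ itself to fix a nonzero vector of $V$, contradicting $V^G = 0$ — so the real content is to show $\Delta \neq \emptyset$ for an appropriate $X = C_1 \times \cdots \times C_r$, and then to verify that a generic point of $V$ has trivial stabilizer under the resulting $G$.

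The crux is the dimension bookkeeping that turns the hypothesis $\dim V > d(G)$ into the inequality needed to invoke Theorem~\ref{t:main2} (and avoid its exceptions). For $G$ of exceptional type, $d(G) = 3(\dim G - \operatorname{rk} G)$ and one uses the bound $r \geqslant 5$ (resp.\ $r \geqslant 4$ for $G_2$) from \cite[Theorem 7]{BGG}: the claim becomes that if $\dim V > 3(\dim G - \operatorname{rk} G)$ then $V^G = 0$ implies generic freeness, which follows because the fixed space of a generic element of $G$ on $V$ has dimension roughly $\operatorname{rk} G \cdot (\dim V)/(\dim G - \operatorname{rk} G)$-ish and the three-class generating condition then kills all common fixed vectors. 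For classical $G$ the value $d(G)$ (which will be tabulated in Table~\ref{tab:dG}) is calibrated exactly so that $\dim V/V^G > d(G)$ forces the eigenspace dimensions $d_i$ of suitably chosen generators to satisfy $\sum_i d_i \leqslant n(r-1)$ strictly, with room to spare to dodge the finitely many exceptional tuples. One then applies Theorem~\ref{t:main2} to get $x$ with $G(x) = G$, and finally checks that the generic $G$-orbit on $V$ is free: if not, $H = G_v \neq 1$ is normalized by nothing large enough, and one derives a contradiction either from $V^G = 0$ or from the classification of which modules $V$ admit a positive-dimensional generic stabilizer (here the results of \cite{GL1} showing the generic stabilizer is a finite group scheme once $\dim V > \dim G$ do the heavy lifting, and a short extra argument handles $d(G) < \dim V \leqslant \dim G$).

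The main obstacle, I expect, is precisely the last point: ruling out a nontrivial \emph{finite} generic stabilizer. Unlike the positive-dimensional case, a finite generic stabilizer cannot be excluded purely by the topological generation machinery — Theorem~\ref{t:main2} controls the Zariski closure $G(x)$, and a finite normal-ish subgroup of $G$ fixing a generic vector is invisible to ``$G(x) = G$''. Here one must argue more carefully: for a generic $v \in V$, the stabilizer $G_v$ is contained in the stabilizer of each $v_i$ in a generic tuple, and one shows that the common fixed-point set of elements from $r$ generic conjugates $C_1,\dots,C_r$ in $G$ acting on $V$ is trivial once $\dim V/V^G > d(G)$, using the fixed-space dimension estimates for elements of prime order on classical and exceptional modules (as in the eigenspace analysis underlying the obstructions (a),(b) in the introduction). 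Equivalently, one reduces to showing that the action map $G \times V_0 \to V$ has generically trivial fibres by a differential/dimension count, $\dim G + \dim V_0 = \dim(G\cdot v) + \dim V_0$, combined with the finiteness of the generic stabilizer from \cite{GGL,GL1}; the delicate part is the characteristic-$p$ phenomena (non-smooth stabilizer group schemes), which is exactly where the companion results in \cite{GG1} on Lie algebras are brought in to conclude generic freeness as a group scheme in Corollary~\ref{c:main3}. I would isolate the classical-group fixed-space estimate as a separate lemma, prove the exceptional cases by direct appeal to \cite[Theorem 7]{BGG} plus a short table check, and leave the group-scheme refinement to its corollary.
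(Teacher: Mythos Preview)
Your proposal circles around the right ingredients but has the architecture inverted, and this leads to a genuine gap. You try to (i) establish that a generic stabilizer $H$ exists, (ii) use topological generation to show it cannot be positive dimensional, and then (iii) worry separately about ruling out a nontrivial finite $H$. Step (i) is already problematic: for reducible $V$ in positive characteristic, generic stabilizers need not exist, and your suggestion to ``work component-wise, or pass to a suitable irreducible constituent'' does not actually produce one. More seriously, you correctly diagnose that step (iii) is the hard part in your framework---a finite stabilizer is indeed invisible to the statement ``$G(x)=G$ for some $x$''---but you then offer only a vague gesture toward fixed-space estimates and a differential/dimension count, without a mechanism that actually forces triviality.

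The paper's proof bypasses all of this with a single clean observation that you almost reach in your final paragraph but never isolate as the engine. For each conjugacy class $C$ of prime-order elements, set $V(C)=\{v\in V: gv=v\text{ for some }g\in C\}$; then $V$ is generically free as soon as every $V(C)$ is a proper subvariety (a generic $v$ then has stabilizer containing no element of prime order, hence trivial). The connection to topological generation is the bound
\[
\dim V(C)\leqslant \dim C + \dim V^g \leqslant \dim C + \bigl(1-\tfrac{1}{r}\bigr)\dim V,
\]
where $r$ is the minimal number of $C$-conjugates needed to topologically generate $G$: if $g_1,\dots,g_r\in C$ generate and $V^G=0$, then $\bigcap_i V^{g_i}=0$, forcing $\dim V^g\leqslant (1-1/r)\dim V$. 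Thus $V(C)$ is proper whenever $\dim V>r\dim C$, and the whole proof reduces to verifying the inequality $r\dim C\leqslant d(G)$ class by class, using Theorem~\ref{t:main2} to read off $r$. This is the content of Lemma~\ref{l:gf} and Propositions~\ref{p:spgen}, \ref{p:sogen}. No generic-stabilizer existence, no finite-versus-positive-dimensional dichotomy, no appeal to \cite{GL1} is needed: the $V(C)$ argument kills finite and infinite stabilizers simultaneously.
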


\renewcommand{\arraystretch}{1.1}
\begin{table}
\begin{center}
\[
\begin{array}{llll}\hline
G & d(G) & d'(G) & \mbox{Conditions} \\ \hline
{\rm SL}_n(k) &  6 &  9 & n=2  \\
 &  \frac{9}{4}n^2 & \frac{9}{4}n^2  & n \geqs 3 \\ 
{\rm Sp}_n(k) & \frac{9}{8}n^2+2 &\frac{3}{2}n^2 & \mbox{$n = 4$ or $(n,p) = (6,2)$} \\
&  \frac{9}{8}n^2 & \frac{3}{2}n^2 & \mbox{$n \geqs 6$ and $(n,p) \ne (6,2)$} \\
{\rm SO}_{n}(k) &  \frac{9}{8}n^2 & 2(n-1)^2 & n \geqs 7 \\ 
E_8(k) & 720 & 1200 & \\
E_7(k) & 378 & 630 & \\
E_6(k) & 216 &360& \\
F_4(k) & 144 &240 & \\
G_2(k) & 36 & 48  & \\ \hline
 \end{array}
\]
\caption{The values of $d(G)$ and $d'(G)$ in Theorem \ref{t:main3} and Corollary \ref{c:main3}}
\label{tab:dG}
\end{center}
\end{table}
\renewcommand{\arraystretch}{1}

\begin{remk}\label{r:generic}
Note that if $\dim V < \dim G$, then the generic stabilizer is clearly positive dimensional. Here we highlight some interesting examples with $\dim V > \dim G$.
\begin{itemize}\addtolength{\itemsep}{0.2\baselineskip}
\item[{\rm (a)}] Let $G = \SL(W) = \SL_n(k)$ and $V = \Sym^2(W) \oplus \Sym^2(W)$ with $p \ne 2$, so 
\[
\dim V = n(n+1)>\dim G.
\] 
Here the generic stabilizer is equal to the intersection of two generic conjugates of the orthogonal subgroup ${\rm SO}(W)$, which is an elementary abelian $2$-group of rank $n-1$ (see \cite[Theorem 8]{BGS}).   
\item[{\rm (b)}] If $G=\Sp(W)= \Sp_n(k)$ and we take $V$ to be the direct sum of $n-1$ copies of $W$, then $\dim V = n(n-1)$ and the generic stabilizer is positive dimensional. Indeed, if $G$ fixes a generic point of $V$, then it acts trivially on a hyperplane of $W$ and hence fixes the $1$-dimensional radical $R$ of this hyperplane. The stabilizer of $R$ is a maximal parabolic subgroup $P = QL$, where the unipotent radical $Q$ is the subgroup of $P$ which acts trivially on the hyperplane $R^{\perp}$. So the generic stabilizer is $Q$, which  has dimension $2n-1$. 
\item[{\rm (c)}] If $G=\SO(W) = \SO_n(k)$, $p \ne 2$ and $V=L(\omega_2)$ is the nontrivial composition factor of $\Sym^2(W)$, then
the generic stabilizer is a nontrivial elementary abelian $2$-group.  
\end{itemize}
\end{remk}
 
By combining Theorem \ref{t:main3} with \cite[Theorem A]{GG1}, we obtain the following corollary. Recall that $p$ is \emph{special} for a simple algebraic group $G$ if $p=3$ and $G=G_2$, or if $p=2$ and $G$ is of type $B_n$, $C_n$ or $F_4$.  Let $\mathfrak{g}$ be the Lie algebra of $G$, with derived subalgebra $[\mathfrak{g},\mathfrak{g}]$.

\begin{corol} \label{c:main3}  
Let $G$ be a simple algebraic group over an algebraically closed field $k$ of characteristic $p \geqs 0$. Let $V$ be a finite 
dimensional faithful rational $kG$-module and define $d'(G)$ as in Table \ref{tab:dG}.   Let $V'$ be the subspace of $V$  annihilated by $[\mathfrak{g},\mathfrak{g}]$.
If $\dim V/V' > d'(G)$ and $p$ is not special for $G$, then there exists a nonempty open subset $V_0$ of $V$ such that the stabilizer of each $v \in V_0$ is trivial as a group scheme.  
\end{corol}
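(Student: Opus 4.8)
The plan is to derive Corollary~\ref{c:main3} by combining the ``topological'' statement of Theorem~\ref{t:main3} (which produces a trivial abstract generic stabilizer) with the corresponding ``infinitesimal'' statement about the Lie algebra action, namely \cite[Theorem A]{GG1}, which handles the scheme-theoretic subtleties (i.e. whether the generic stabilizer is \'etale). The point is that a closed subgroup scheme $S \leqs G$ is trivial precisely when $S(k)$ is trivial \emph{and} $\operatorname{Lie}(S) = 0$, so one needs to control both the group of $k$-points and the Lie algebra of the generic stabilizer. The first half is already essentially Theorem~\ref{t:main3}, and the second half is what the hypothesis on $[\mathfrak{g},\mathfrak{g}]$ and the exclusion of special primes is designed to feed into \cite[Theorem A]{GG1}.

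First I would reduce to the case $V^G = 0$, or more precisely replace $V$ by a $G$-complement to the (trivial) isotypic part; here one must be slightly careful in positive characteristic, since a trivial submodule need not have a $G$-stable complement, but the relevant statement is about the subspace $V'$ annihilated by $[\mathfrak g,\mathfrak g]$, and one works with $W = V/V'$ (on which $\mathfrak g = [\mathfrak g,\mathfrak g]$ acts, since $G$ is simple and $[\mathfrak g,\mathfrak g]$ has at most codimension one in $\mathfrak g$, with equality only in the special-prime cases that are excluded). Since $V \to W$ is $G$-equivariant, the stabilizer in $G$ of a point of $V$ lying over a generic point of $W$ is contained in the stabilizer of that generic point of $W$, so it suffices to show the generic stabilizer for $G$ acting on $W$ is trivial as a group scheme; note also $\dim W = \dim V/V' > d'(G) \geqs d(G)$ by inspection of Table~\ref{tab:dG}, so Theorem~\ref{t:main3} applies to $W$ and already gives that the generic $k$-point stabilizer is trivial. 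It then remains to show the Lie algebra of the generic stabilizer vanishes.

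The Lie algebra of the stabilizer of $w \in W$ is $\{Y \in \mathfrak g : Y\cdot w = 0\}$, i.e. the kernel of the evaluation map $\mathfrak g \to W$, $Y \mapsto Y\cdot w$; this is exactly the ``stabilizer'' computed by the Lie-algebra analogue of our problem, and \cite[Theorem A]{GG1} gives conditions under which this vanishes for generic $w$ — precisely, conditions on $\dim W$ relative to the quantity tabulated as $d'(G)$, together with the requirement that $p$ not be special (this is where the special-prime hypothesis enters, since for special $p$ the module $\mathfrak g$ itself or the relevant action degenerates and the Lie-algebra generic stabilizer can be nonzero). So with $\dim V/V' > d'(G)$ and $p$ not special, \cite[Theorem A]{GG1} yields a nonempty open $W_0 \subseteq W$ on which $\operatorname{Lie}(G_w) = 0$. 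Intersecting the preimage of $W_0$ with the open set from Theorem~\ref{t:main3} on which $G_w(k) = 1$, and pulling back to $V$, produces the desired nonempty open $V_0 \subseteq V$ on which the stabilizer is trivial both as an abstract group and on Lie algebras, hence trivial as a group scheme.

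The main obstacle I anticipate is the bookkeeping around $V'$ versus $V^G$ and the passage between $V$ and the quotient $W = V/V'$: one has to check that $[\mathfrak g,\mathfrak g]$ annihilates exactly the ``trivial part'' in the relevant sense, that $W$ is still faithful (or at least that infidelity is harmless for the stabilizer computation), and that the numerology of Table~\ref{tab:dG} is consistent — i.e. that $d'(G) \geqs d(G)$ always, so that a single hypothesis $\dim V/V' > d'(G)$ simultaneously powers both Theorem~\ref{t:main3} (applied to $W$) and \cite[Theorem A]{GG1}. A secondary subtlety is verifying that ``trivial $k$-points plus trivial Lie algebra'' really does imply ``trivial as a group scheme'' for the generic stabilizer in this setting; this is standard for stabilizers in smooth group schemes over a field (the stabilizer scheme is a closed subscheme of $G$, smoothness is automatic since we are over an algebraically closed field of the generic point, and a smooth group scheme with trivial points and trivial Lie algebra is trivial), but it should be stated carefully, perhaps invoking the relevant result from \cite{GGL,GL1} that the generic stabilizer is a finite group scheme to begin with.
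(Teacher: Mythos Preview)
Your overall strategy is correct and matches the paper's: combine Theorem~\ref{t:main3} (trivial $k$-points) with \cite[Theorem A]{GG1} (trivial Lie algebra), then conclude that the generic stabilizer is trivial as a group scheme. The paper's proof is essentially a three-line version of what you outline.

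Two points are worth noting. First, the detour through $W = V/V'$ is unnecessary and introduces the complications you worry about (faithfulness, whether $W^G=0$, etc.). The paper simply applies both results directly to $V$: since $V^G \subseteq V'$ (if $G$ fixes $v$ then $\mathfrak{g}$, and hence $[\mathfrak{g},\mathfrak{g}]$, annihilates $v$), we have $\dim V/V^G \geqs \dim V/V' > d'(G) \geqs d(G)$, so Theorem~\ref{t:main3} applies to $V$ itself. This sidesteps all of your anticipated bookkeeping.

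Second, your justification for ``trivial $k$-points plus trivial Lie algebra implies trivial group scheme'' is incorrect as stated: smoothness of the stabilizer is \emph{not} automatic over an algebraically closed field in positive characteristic (think of $\mu_p$ or $\alpha_p$), and indeed the entire point of the corollary is to rule out such non-reduced infinitesimal stabilizers. The correct argument is that if $H$ is a group scheme of finite type over $k$ with $H(k)=1$, then $H$ is infinitesimal; if also $\mathrm{Lie}(H)=0$, then Nakayama forces the maximal ideal of its coordinate ring to vanish. The paper cites \cite[Proposition~3.16]{Milne} for this.
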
 

Note that the condition on $p$ in Corollary \ref{c:main3} is necessary. Indeed, if $p$ is special then we refer the reader to \cite{GG3} for examples where $\dim V$ is arbitrarily large, $V'=0$ and the 
generic stabilizer is nontrivial. The proof of Theorem \ref{t:main3} is presented in Section \ref{s:generic}, together with a short argument for  Corollary \ref{c:main3}.

Finally, let us present a completely different application of our results to a problem on the random generation of finite simple groups, which was originally studied by Liebeck and Shalev (see \cite{LiSh0}). Let $L$ be a finite group, let $r,s$ be prime divisors of $|L|$ and let $I_m(L)$ be the set of elements in $L$ of order $m$. Then  
\begin{equation}\label{e:Prs}
\mathbb{P}_{r,s}(L) = \frac{|\{(x,y) \in I_r(L) \times I_s(L) \,:\, L = \la x,y \ra\}|}{|I_r(L)||I_s(L)|}
\end{equation}
is the probability that $L$ is generated by a random pair of elements $(x,y) \in L \times L$, where $x$ has order $r$ and $y$ has order $s$. We say that $L$ is \emph{$(r,s)$-generated} if $\mathbb{P}_{r,s}(L)>0$.

Recall that every finite simple group is $2$-generated. With this result in hand, it is natural to ask how the generating pairs for a simple group are distributed across the group; the related problem of determining the existence (and abundance) of generating pairs of elements of prime order has been studied for more than a century. In this direction there has been a particular interest in understanding the simple groups that are $(2,3)$-generated, noting that they coincide with the finite simple quotients of the modular group ${\rm PSL}_{2}(\Z) = \Z_2 \ast \Z_3$. As far back as 1901, Miller \cite{Miller} proved that every alternating group of degree $n \geqs 9$ is $(2,3)$-generated. The main theorem of \cite{LiSh0} states that if $G \ne {\rm PSp}_{4}(q)$ is a finite simple classical or alternating group, then $\mathbb{P}_{2,3}(G) \to 1$ as $|G| \to \infty$, and this result was recently extended to the exceptional groups of Lie type in \cite[Theorem 8]{GLLS} (excluding the Suzuki groups, which do not contain elements of order $3$). It is interesting to note that the $4$-dimensional symplectic groups are genuine exceptions (see \cite{LiSh0}):
\[
\lim_{f \to \infty}\mathbb{P}_{2,3}({\rm PSp}_{4}(p^f)) = \left\{\begin{array}{ll} 0 & \mbox{if $p=2,3$} \\
1/2 & \mbox{if $p \geqs 5$}
\end{array}\right.
\]
Indeed, none of the groups ${\rm PSp}_{4}(p^f)$ with $p \in \{2,3\}$ are $(2,3)$-generated \cite[Theorem 1.6]{LiSh0}.

For fixed primes $r,s$ (with $s > 2$), the main theorem of \cite{LiSh} states that $\mathbb{P}_{r,s}(G) \to 1$ for all finite simple classical groups $G$ of sufficiently large rank (where the bound on the rank depends on $r$ and $s$) and for all alternating groups of sufficiently large degree. Gerhardt \cite[Theorem 1.4]{Ger} has recently proved that if $(G_i)$ is a sequence of linear or unitary groups of fixed rank, where $|G_i| \to \infty$ and each $|G_i|$ divisible by $r$ and $s$, then $\mathbb{P}_{r,s}(G_i) \to 1$. An analogous result for exceptional groups of Lie type was established in \cite[Theorem 12]{BGG}. As an application of Theorem \ref{t:main2}, we can extend the above results to all finite simple groups (using the earlier work in \cite{LiSh0, LiSh} to reduce the problem to Lie type groups of bounded rank).  

\begin{theorem} \label{t:main4}  
Fix primes  $r,s$ with $s > 2$ and let $\mathcal{S}_{r,s}$ be the set of finite simple 
groups whose order is divisible by both $r$ and $s$. Let $(G_i)$ be a sequence of simple groups in $\mathcal{S}_{r,s}$ with $|G_i| \to \infty$. Then either $\mathbb{P}_{r,s}(G_i) \to 1$, or 
$(r,s) \in \{ (2,3), (3,3)\}$ and there is an infinite subsequence of groups of the form ${\rm PSp}_{4}(q)$.
\end{theorem}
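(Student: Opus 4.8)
The plan is to reduce the statement to the corresponding assertion for algebraic groups via the $\mathbb{P}_{r,s}(L) > 0$ criterion and then leverage Theorem \ref{t:main2} together with the asymptotic machinery already available for the remaining families. First, note that alternating and sporadic groups contribute only finitely many members to any sequence $(G_i)$ with $|G_i| \to \infty$ once we fix $r,s$, and for alternating groups of large degree the result $\mathbb{P}_{r,s}(A_n) \to 1$ is classical (going back to Miller \cite{Miller} for $(2,3)$ and covered in general by \cite{LiSh}). So we may assume each $G_i$ is a finite simple group of Lie type. For exceptional types, \cite[Theorem 12]{BGG} already gives $\mathbb{P}_{r,s}(G_i) \to 1$ (the Suzuki groups have no elements of order $3$ and are excluded once $3 \mid |G_i|$ is imposed whenever $3 \in \{r,s\}$, and otherwise do not lie in $\mathcal{S}_{r,s}$ for $s>2$). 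For linear and unitary groups of bounded rank, \cite[Theorem 1.4]{Ger} gives the conclusion, and for classical groups of unbounded rank it follows from \cite{LiSh}. Hence the only remaining case is a sequence of \emph{symplectic or orthogonal} groups of \emph{bounded} rank over fields of growing size, and this is precisely where Theorem \ref{t:main2} enters.

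For that case the strategy is the standard dense-subgroup argument: write $G_i = G(q_i)^+$ (or $O^{p'}$ thereof) for $G$ a fixed simple symplectic or orthogonal algebraic group over $\bar{\mathbb{F}}_p$, where $p$ may vary over the subsequence but only finitely many primes $p$ occur for each fixed residue of the rank, so we may pass to a further subsequence on which $p$ is constant and $q_i \to \infty$. An element of order $r$ in $G_i$ lies in (the finite group of) a conjugacy class $C$ of $G$ consisting of elements of order $r$ (or order $r$ modulo $Z(G)$); there are finitely many such classes, so by passing to a subsequence we may assume the order-$r$ elements of $G_i$ we count all come from a single class $C_1$ of $G$, and similarly the order-$s$ elements from a single class $C_2$. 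Now apply Theorem \ref{t:main2} to $X = C_1 \times C_2$. If $X$ is \emph{not} one of the exceptional configurations listed there — in particular if $(C_1, C_2)$ is not a quadratic pair and we are not in Table \ref{tab:main} — then $\Delta$ is nonempty and hence generic; by Remark \ref{r:1}(b) there is a nonempty open subvariety $Z \subseteq X$ such that for $x \in Z$ the group $G(x)$ is contained in no proper Lie primitive subgroup, forcing $G(x) \supseteq O^{p'}(G(q))$ for all sufficiently large $q$. A Lang--Weil point count on $X(q_i)$ versus the proper closed subvarieties defining the complement of $Z$ then shows that the proportion of pairs $(x,y) \in C_1(q_i) \times C_2(q_i)$ lying on those subvarieties is $O(q_i^{-1})$, so the proportion generating (a group containing) $O^{p'}(G(q_i))$ tends to $1$; a bounded-index correction, also $O(q_i^{-1})$ since the relevant subgroup structure between $O^{p'}(G(q_i))$ and $G_i$ is of bounded order, upgrades this to $\mathbb{P}_{r,s}(G_i) \to 1$.

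It remains to dispose of the exceptional pairs $(C_1, C_2)$ from Theorem \ref{t:main2}. The quadratic case (i) forces $r = s = 2$, which is excluded by the hypothesis $s > 2$. Scanning Tables \ref{tab:main} and \ref{tab:main2} for pairs where \emph{both} elements have prime order and the ranks are unbounded: the entries $(I_2, \lambda I_{m-1}, \lambda^{-1} I_{m-1})$ and $(I_1, \lambda I_m, \lambda^{-1} I_m)$ are semisimple of order $r$ for any prime $r$, and $(J_2^{m-1}, J_1^2)^*$, $(J_2^m)^*$, $(J_2^m, J_1)$ are unipotent involutions; so the obstruction is exactly $r$ arbitrary, $s = 2$ — again excluded by $s > 2$. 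The unipotent entries of order $3$ (the $(J_3^a, J_2^b, \ldots)$ classes, valid only for $p \neq 2$) only ever pair, in the tables, with an $a$-type \emph{involution}; thus they too require $s = 2$ to appear. Consequently, under the hypothesis $s > 2$, for the symplectic/orthogonal subsequence the only surviving exceptional families are the bounded-rank ones ($\mathrm{Sp}_4$, $\mathrm{Sp}_6$, $\mathrm{Sp}_8$, $\mathrm{SO}_5$ in Table \ref{tab:main2}), and among these the genuine non-generating configurations with $s > 2$ occur only for $G = \mathrm{Sp}_4$ with $(r,s) \in \{(2,3),(3,3)\}$ — yielding the stated subsequence of groups $\mathrm{PSp}_4(q)$ — while all other bounded-rank families have $\Delta$ nonempty for all but finitely many class-pairs and the Lang--Weil argument again applies. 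The main obstacle is the bookkeeping in this last paragraph: one must verify that for $\mathrm{Sp}_4$ the only $(r,s)$ with $s>2$ that can produce an infinite non-generating subsequence are $(2,3)$ and $(3,3)$ (using the $\mathrm{PSp}_4$ rows of Table \ref{tab:main2}, where $x_3$ ``quadratic'' with $x_1, x_2$ quadratic forces small element orders), and that no other bounded-rank family contributes — this is a finite but delicate case check rather than a conceptual difficulty.
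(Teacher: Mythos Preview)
Your reduction to symplectic and orthogonal groups of bounded rank is correct and matches the paper, but the subsequent argument has a genuine gap. The step ``pass to a subsequence on which the order-$r$ elements all come from a single class $C_1$'' does not give what you need: for a fixed $q$, the set $I_r(G(q))$ meets several $G$-classes simultaneously, and $\mathbb{P}_{r,s}(G(q))$ is a weighted average over all class pairs $(C_1,C_2)$. To conclude that this average tends to $1$ you would need $\Delta$ nonempty for \emph{every} such pair, and that is false. Two concrete failures: first, your claim ``the quadratic case (i) forces $r=s=2$'' confuses the number of factors in $X=C_1\times\cdots\times C_r$ (which equals $2$ here) with element order; a quadratic semisimple element $(\lambda I_m,\lambda^{-1}I_m)\in\Sp_{2m}(k)$ can have any prime order, so for $r=s=3$ and $\lambda$ a primitive cube root of unity one gets $\Delta=\emptyset$. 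Second, in Table \ref{tab:main} with $p$ odd, the pair $x_1=(J_3^2,J_2^{m-3})$, $x_2=(J_2^{m-1},J_1^2)$ consists of two \emph{unipotent} elements of order $p$, not an order-$p$ element paired with an involution; so this gives $(r,s)=(p,p)$ exceptions for every odd prime $p$, contradicting your Table analysis.

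The paper's fix is to restrict attention to the classes $C_1\in\mathcal{C}(G,r,q)$, $C_2\in\mathcal{C}(G,s,q)$ of \emph{maximal dimension}. Propositions \ref{p:maximal} and \ref{p:spin88} establish, via explicit eigenspace bounds (e.g.\ $d\leqs n/2-1$ for odd prime order in $\Sp_n$ with $n\geqs 6$), that for these maximal classes the hypotheses of Theorem \ref{t:main2} are satisfied and $\Delta$ is nonempty. The non-maximal class pairs may well have $\Delta=\emptyset$, but their contribution to $\mathbb{P}_{r,s}(G(q))$ is $O(q^{-1})$ by Lang--Weil since $|C(q)|\sim q^{\dim C}$; this is exactly the content of \cite[Lemma 6.4]{Ger}, which the paper invokes to finish. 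Your overall architecture is right, but you need this maximal-class reduction (and the accompanying eigenspace estimates) in place of the case-by-case Table inspection.
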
  

\begin{remk}\label{r:sp4}
As noted above, the anomaly of the groups ${\rm PSp}_4(q)$ when $(r,s) = (2,3)$ was originally observed by Liebeck and Shalev \cite{LiSh0}. In Theorem \ref{t:main4} we see that the case $(r,s) = (3,3)$ is also noteworthy. Indeed, if we write $q=p^f$ with $p$ a prime, then we will show that 
\[
\lim_{f \to \infty}\mathbb{P}_{3,3}({\rm PSp}_{4}(p^f)) = \left\{\begin{array}{ll} 0 & \mbox{if $p=3$} \\
1/2 & \mbox{if $p=2$} \\
3/4 & \mbox{if $p \geqs 5$}
\end{array}\right.
\]
See Theorem \ref{t:sp4rs} for a proof (we also include a new proof of the 
result from \cite{LiSh0} when $(r,s) = (2,3)$). 
\end{remk}

As an application of Theorem \ref{t:main4}, we prove Corollary \ref{c:sylow} below on the  generation of simple groups by two Sylow subgroups (see the end of Section \ref{s:random} for the proof). Our main motivation stems from the following conjecture. 

\begin{conj}\label{c:syl}
Let $G$ be a finite simple group and let $r$ and $s$ be primes dividing $|G|$. Then there exists a Sylow $r$-subgroup $P$ and a Sylow $s$-subgroup $Q$ of $G$ such that $G = \la P, Q\ra$.
\end{conj}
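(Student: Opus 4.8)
The plan is to reduce Conjecture~\ref{c:syl} to $(r,s)$-generation wherever the asymptotic machinery applies, and to dispose of a residual list of cases by direct structural arguments. The elementary but essential first observation is that if a finite simple group $G$ is $(r,s)$-generated then Conjecture~\ref{c:syl} holds for $G$ with the primes $r,s$: choosing $x \in I_r(G)$ and $y \in I_s(G)$ with $G = \la x,y\ra$, and any Sylow $r$-subgroup $P$ containing $x$ and Sylow $s$-subgroup $Q$ containing $y$, we get $G = \la x,y\ra \leq \la P,Q\ra$. So it suffices to verify the conjecture for the groups $G \in \mathcal{S}_{r,s}$ (in the notation of Theorem~\ref{t:main4}) that are \emph{not} $(r,s)$-generated, and by the classification of finite simple groups we split this into the alternating, sporadic and Lie type cases.

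For groups of Lie type, Theorem~\ref{t:main4} shows that the set of $G \in \mathcal{S}_{r,s}$ of Lie type that fail to be $(r,s)$-generated is finite, with the sole caveat that for $(r,s) \in \{(2,3),(3,3)\}$ it may also contain infinitely many groups ${\rm PSp}_4(q)$: otherwise we could extract a sequence $(G_i)$ avoiding such groups, with $|G_i| \to \infty$ and $\mathbb{P}_{r,s}(G_i) = 0$ for all $i$, contradicting Theorem~\ref{t:main4}. By Remark~\ref{r:sp4}, the only ${\rm PSp}_4(q)$ that can fail to be $(r,s)$-generated for infinitely many $q$ occur when $(r,s) = (2,3)$ and $p \in \{2,3\}$, or $(r,s) = (3,3)$ and $p = 3$; in each of these cases one of $r,s$ equals the defining characteristic $p$. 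Hence, apart from a finite and explicit list of small-rank and small-field groups (together with the alternating and sporadic groups treated below), it remains only to handle these families ${\rm PSp}_4(q)$.

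For ${\rm PSp}_4(q)$ with $q = p^f$ I would exploit the fact that a Sylow $p$-subgroup is a maximal unipotent subgroup $U$; by the theorem of Borel--Tits, any proper subgroup of $G = {\rm PSp}_4(q)$ containing $U$ lies in a proper parabolic subgroup, so the only maximal subgroups of $G$ containing $U$ are the two maximal parabolics $P_1, P_2$. If $r = s = p$ (so $p = 3$), take $Q = U^{-}$, an opposite maximal unipotent subgroup (a $G$-conjugate of $U$, hence again a Sylow $s$-subgroup): since $U$ and $U^{-}$ between them contain all root subgroups, $\la U, Q\ra = G$ by the Bruhat decomposition. If exactly one of $r,s$ equals $p$, say $s = p$ (the other case being symmetric), pick a Sylow $r$-subgroup $P$; then $\la U, P\ra = G$ provided $P$ is contained in neither $P_1$ nor $P_2$, and for most congruence classes of $f$ this is automatic from $|P_i|_r < |G|_r$, while in the remaining classes (where a parabolic already contains a full Sylow $r$-subgroup) one chooses $P$ inside the normalizer of a maximal torus that is not conjugate into either parabolic, which is possible once $q$ exceeds a small bound, the finitely many small $q$ being absorbed into the residual list.

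The part I expect to be hardest is precisely this finite residual list. Theorem~\ref{t:main4} is silent about small groups, and the excluded set is not negligible: it contains the low-rank classical groups and the small exceptional groups over small fields, where $(r,s)$-generation can fail for accidental reasons, together with the alternating groups of small degree (for large degree one invokes the classical results of Miller and Liebeck--Shalev) and all $26$ sporadic groups. For each of these one must exhibit a generating pair of Sylow subgroups directly, using explicit conjugacy class, Sylow and maximal subgroup information --- a finite but substantial verification, and finding a uniform argument that bypasses it, or even pinning down the precise exceptional list, is the genuine obstacle. A natural intermediate milestone, which should follow cleanly from the reduction above and Theorem~\ref{t:main4}, is that Conjecture~\ref{c:syl} holds for all but finitely many finite simple groups for each fixed pair $r,s$; this is presumably the content of Corollary~\ref{c:sylow}.
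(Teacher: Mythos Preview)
The first thing to flag is that Conjecture~\ref{c:syl} is stated in the paper as an \emph{open conjecture}; the paper does not prove it. What the paper proves is the asymptotic statement, Corollary~\ref{c:sylow}, and you correctly identify this as the achievable milestone. Your outline for that asymptotic result is essentially the paper's own argument: reduce to $(r,s)$-generation via the trivial containment $\la x,y\ra \leqs \la P,Q\ra$, invoke Theorem~\ref{t:main4}, and deal separately with the ${\rm PSp}_4(q)$ families where $\mathbb{P}_{r,s}$ does not tend to $1$. Your handling of those families differs slightly from the paper's: you argue constructively (opposite unipotents when $r=s=p$, then a Sylow-in-torus argument when exactly one prime equals $p$), whereas the paper fixes a Sylow $p$-subgroup $U$, notes that the only maximal overgroups of $U$ are the two parabolics, and observes that the proportion of elements of the other prime order lying in a fixed parabolic tends to $0$ as $q\to\infty$. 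The paper's version is cleaner and avoids the case split on congruences of $f$ that you sketch.

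There is, however, a genuine gap in your reduction: Theorem~\ref{t:main4} is stated only for primes $r,s$ with $s>2$, so it says nothing about the case $r=s=2$. Your outline applies Theorem~\ref{t:main4} without this restriction, and hence silently omits the $(2,2)$ case from both the asymptotic claim and the full conjecture. In the paper this case is handled separately by quoting the main theorem of \cite{Gur}, which shows that every nonabelian finite simple group is generated by a Sylow $2$-subgroup together with a single involution (and hence by two Sylow $2$-subgroups). For the full conjecture, your assessment of the difficulty is accurate: beyond the asymptotic statement, one is left with a finite but nontrivial residual list (plus the alternating and sporadic groups, which the paper notes have been verified by Breuer and Guralnick), and no uniform argument for these is offered in the paper either.
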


By the main theorem of \cite{Gur}, this conjecture holds if $r=s=2$, and more generally if $r=2$ by \cite{BG23}.  It has also been verified for all sporadic  and alternating groups by Breuer and Guralnick. In addition, \cite[Theorem 1.8]{BrG} shows that if $G$ is simple and $r$ is any prime divisor of $|G|$, then there exists a prime divisor $s$ of $|G|$ such that $G = \la P,Q\ra$ for some Sylow $r$-subgroup $P$ and Sylow $s$-subgroup $Q$ of $G$.  Here we establish the following asymptotic version, which verifies Conjecture \ref{c:syl} for all sufficiently large finite simple groups.

\begin{corol}\label{c:sylow}
Let $r$ and $s$ be primes. Then for all sufficiently large finite simple groups $G$ with $|G|$ divisible by $r$ and $s$, there exists a Sylow $r$-subgroup $P$ and a Sylow $s$-subgroup $Q$ of $G$ such that $G = \la P,Q \ra$.
\end{corol}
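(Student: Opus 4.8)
The goal is to deduce Corollary \ref{c:sylow} from Theorem \ref{t:main4}, so the plan is to reduce the statement about Sylow subgroups to the statement about random generation by elements of prime order. The key elementary observation is that if $x \in G$ has order $r$ then $x$ lies in some Sylow $r$-subgroup $P$ of $G$, so if a pair $(x,y) \in I_r(G) \times I_s(G)$ generates $G$, then a fortiori the subgroup generated by a Sylow $r$-subgroup containing $x$ and a Sylow $s$-subgroup containing $y$ is all of $G$. Hence $\mathbb{P}_{r,s}(G) > 0$ immediately implies that $G$ is generated by some Sylow $r$-subgroup and some Sylow $s$-subgroup. So for all the groups to which Theorem \ref{t:main4} applies with $\mathbb{P}_{r,s}(G_i) \to 1$ (in particular, with $\mathbb{P}_{r,s}(G) > 0$ for $|G|$ large), we are done immediately.

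First I would dispose of the case where one of the primes, say $r$, equals $2$ and $s$ equals $3$, or $(r,s) = (3,3)$: here Theorem \ref{t:main4} allows an exceptional infinite subsequence of groups $\mathrm{PSp}_4(q)$. For these groups I would argue directly. For $\mathrm{PSp}_4(q)$ the relevant Sylow subgroups are small and well understood; one can either cite \cite[Theorem 1.8]{BrG} (which already handles one prime against \emph{some} second prime) or, more to the point, observe that the failure of $(2,3)$-generation of $\mathrm{PSp}_4(q)$ for $p \in \{2,3\}$ is a statement about elements of order exactly $2$ and $3$, whereas a Sylow $2$-subgroup of $\mathrm{PSp}_4(q)$ contains elements of order $4$ (and a Sylow $3$-subgroup elements of order $9$ when $9 \mid |G|$), so the obstruction to $(r,s)$-generation need not be an obstruction to generation by the two Sylow subgroups. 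Concretely, for $\mathrm{PSp}_4(q)$ one checks using the subgroup structure (the maximal subgroups are classified) that there do exist a Sylow $r$-subgroup and a Sylow $s$-subgroup generating the group; alternatively, since there are only finitely many $\mathrm{PSp}_4(q)$ below any given bound and the statement is asymptotic, one only needs it for large $q$, where the Sylow subgroups are not contained in any common maximal subgroup for suitable choices.

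The remaining issue is the symmetry in $r$ and $s$: Theorem \ref{t:main4} is stated for $s > 2$, so if both primes are $2$ we must invoke the theorem of \cite{Gur} instead, which establishes $(2,2)$-generation (indeed generation by two Sylow $2$-subgroups) for all finite simple groups; and if exactly one of $r,s$ is $2$ we relabel so that it plays the role of $r$. This covers all pairs $(r,s)$. Assembling the pieces: if $\{r,s\} = \{2,2\}$ use \cite{Gur}; otherwise apply Theorem \ref{t:main4} with the larger prime (or $3$) in the role of $s$, conclude $\mathbb{P}_{r,s}(G) > 0$ for all large $G \in \mathcal{S}_{r,s}$ outside a possible $\mathrm{PSp}_4(q)$-subsequence, and handle that subsequence by the direct argument above. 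In each case a generating pair of elements of orders $r$ and $s$ yields a generating pair of Sylow subgroups, completing the proof.

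The main obstacle is the $\mathrm{PSp}_4(q)$ exceptional family. Everything else is a one-line implication from Theorem \ref{t:main4} plus \cite{Gur}; but for $\mathrm{PSp}_4(q)$ with $(r,s) \in \{(2,3),(3,3)\}$ one genuinely has to produce the generating pair of Sylow subgroups by hand, using the list of maximal subgroups of $\mathrm{PSp}_4(q)$ (and its almost simple overgroups) to verify that no proper subgroup contains a full Sylow $r$-subgroup together with a full Sylow $s$-subgroup for a suitable conjugate. I expect this to be a short but case-by-case verification, easing in the large-$q$ limit because the orders of the relevant maximal subgroups grow too slowly to accommodate both Sylow subgroups.
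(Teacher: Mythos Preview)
Your overall strategy matches the paper's exactly: reduce to $\mathbb{P}_{r,s}(G)>0$ via Theorem~\ref{t:main4}, invoke \cite{Gur} for $r=s=2$, and treat the ${\rm PSp}_4(q)$ anomaly separately. The one soft spot is your handling of that anomaly, which you leave as a ``case-by-case verification'' using the maximal subgroup list. The paper's argument here is both shorter and more structural, and it is worth knowing.

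First, the paper uses Theorem~\ref{t:sp4rs} (not just Theorem~\ref{t:main4}) to cut down the residual ${\rm PSp}_4(q)$ cases further: for $(r,s)=(2,3)$ one has $\mathbb{P}_{2,3}({\rm PSp}_4(q))\to 1/2$ when $p\geqslant 5$, and for $(r,s)=(3,3)$ one has $\mathbb{P}_{3,3}\to 3/4$ for $p\geqslant 5$ and $\to 1/2$ for $p=2$. So the only genuinely problematic cases are $q=3^f$ for $(3,3)$, and $q=2^f$ or $3^f$ for $(2,3)$ --- precisely the cases where $p\in\{r,s\}$. Second, once $p\in\{r,s\}$, the Sylow $p$-subgroup is a maximal unipotent subgroup, and by Tits' lemma the only maximal subgroups containing it are the (two) maximal parabolics. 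So one fixes a Sylow $p$-subgroup $P$, and then needs only that a random element of the other prime order lies outside both of those two fixed parabolics; the probability of landing in a fixed parabolic tends to $0$ as $f\to\infty$, and the result follows. No inspection of the full maximal subgroup list is needed, and no appeal to elements of order $4$ or $9$ is required.

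So your proof is correct in outline but the ${\rm PSp}_4(q)$ step, as written, is a promissory note. Replacing it with the two observations above (Theorem~\ref{t:sp4rs} plus the parabolic argument) closes the gap cleanly.
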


We refer the reader to Remark \ref{r:sylow} for some additional comments on the probability that a simple group is generated by two randomly chosen Sylow subgroups corresponding to fixed primes $r$ and $s$.

We close the introduction with some brief comments on the organization of the paper. In Section \ref{s:thm1}, we study the general set up and we prove Theorem \ref{t:main1}. Here we also present several additional results that will play a key role in the proof of Theorem \ref{t:main2} (for example, see Lemmas \ref{l:closures} and \ref{l:parabolic}). Section \ref{s:prel2} covers a wide range of preliminary results that we will need in the proof of Theorem \ref{t:main2}, most of which are set up specifically for the case we are interested in, where $G$ is a classical group and $X$ is a product of conjugacy classes. In particular, this section includes various results that allow us to deduce that the groups $G(x)$ satisfy a certain property on a generic subset of $X$ just from the existence of such a group for a specific tuple $x \in X$. The proof of Theorem \ref{t:main2} is presented in Sections \ref{s:ort} (orthogonal groups) and \ref{s:symp} (symplectic groups), with the analysis partitioned in to various subcases. The main arguments are inductive on the rank of $G$, with Gerhardt's theorem for ${\rm SL}_n(k)$ in \cite{Ger} playing a key role. Finally, our main applications are discussed in Sections \ref{s:generic} (generic stabilizers) and \ref{s:random} (random generation), including the proofs of Theorems \ref{t:main3} and \ref{t:main4}. We close by presenting a proof of Corollary \ref{c:cor2} in Section \ref{s:corol}.

\section{Proof of Theorem \ref{t:main1}}\label{s:thm1}

In this section we prove Theorem \ref{t:main1}. Unless stated otherwise, $G$ is a simply connected simple algebraic group over an algebraically closed field $k$ of characteristic $p \geqs 0$. Let $r \geqs 2$ be an integer and let $X$ be a locally closed irreducible subvariety of $G^r = G \times \cdots \times G$ (with $r$ factors). Recall that if $x =(x_1, \ldots, x_r)  \in X$, then $G(x)$ denotes the Zariski closure of $\langle x_1, \ldots, x_r \rangle$ and we define 
\[
\Delta = \{ x \in X \,:\, G(x) = G \}
\]
as in \eqref{e:delta}. For a closed subgroup $H$ of $G$, we set
\begin{equation}\label{e:XH}
X_H = \{ x \in X \,:\, \mbox{$G(x) \leqs H^g$ for some $g \in G$}\},
\end{equation}
which coincides with the set of elements $x \in X$ such that $G(x)$ has a fixed point on the coset variety $G/H$. Note that if $k'$ is a field extension of $k$, then we can consider $G(k')$, $X(k')$, $\Delta(k')$, etc., which are defined in the obvious way.  

Recall that a subset of $X$ is \emph{generic} if it contains the complement of a countable union of proper closed subvarieties of $X$. We say that $G(x)$ is \emph{generically $\mathcal{P}$} for some property $\mathcal{P}$ if $G(x)$ has the relevant property for all $x$ in a nonempty generic subset of $X$. Although a generic subset of $X$ may have no points over $k$, \cite[Lemma 2.4]{BGGT} implies that every generic subset is dense if $k$ is an uncountable algebraically closed field.

We begin by recording some consequences of \cite[Theorem 2]{BGG}. The following result was stated as Theorem \ref{t:bgg} in Section \ref{s:intro}. 

\begin{thm}\label{t:gen}  Let $k$ be an algebraically closed  field that is not algebraic over a finite field. Then the following are equivalent:
\begin{itemize}\addtolength{\itemsep}{0.2\baselineskip}
\item[{\rm (i)}] $\Delta(k')$ is a nonempty subset of $X(k')$ for some field extension $k'/k$.
\item[{\rm (ii)}] $\Delta$ is a dense subset of $X$.
\item[{\rm (iii)}] $\Delta$ is a generic subset of $X$.
\end{itemize} 
\end{thm}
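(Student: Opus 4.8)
The key input is \cite[Theorem 2]{BGG}, which asserts that $\Delta$ is nonempty if and only if it is dense in $X$ (for $k$ not algebraic over a finite field). The plan is to prove the cyclic chain (ii)$\Rightarrow$(iii)$\Rightarrow$(i)$\Rightarrow$(ii); the implication (ii)$\Rightarrow$(i) is trivial since one may take $k'=k$. Throughout, the structural device is the decomposition of the non-generating locus in terms of maximal closed subgroups: since $x\notin\Delta$ precisely when $G(x)$ is contained in some maximal closed subgroup of $G$, we have $X\setminus\Delta=\bigcup_M X_M$, where $M$ ranges over the maximal closed (proper) subgroups of $G$ and $X_M$ is as in \eqref{e:XH}. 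Each $X_M$ is constructible, being the image under the first projection of the closed subvariety $\{(x,gM)\in X\times (G/H)\,:\,x_i\in M^g \text{ for all }i\}$ (here $H=M$); moreover only countably many $M$ are needed up to conjugacy, since there are finitely many conjugacy classes of positive-dimensional maximal closed subgroups of $G$ and at most countably many conjugacy classes of finite (Lie primitive) maximal subgroups, all of which are realised already over a fixed countable algebraically closed subfield $k_0\subseteq k$ (a finite subgroup and a faithful embedding of it are defined over such a field, and conjugacy of subgroups defined over $k_0$ is detected over $k_0$).

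For (ii)$\Rightarrow$(iii): assume $\Delta$ is dense. For each of the countably many relevant $M$ we have $\Delta\cap X_M=\emptyset$, so the constructible set $X_M$ contains no nonempty open subset of the irreducible variety $X$; hence its closure $\overline{X_M}$ is a proper closed subvariety. Therefore $X\setminus\Delta\subseteq\bigcup_M\overline{X_M}$ is contained in a countable union of proper closed subvarieties, i.e. $\Delta$ is generic. For (iii)$\Rightarrow$(i): choose an uncountable algebraically closed field $k'$ containing $k$. Because the conjugacy classes of maximal closed subgroups of $G$ in a fixed characteristic do not depend on the algebraically closed base field, the same countable family of subvarieties $\overline{X_M}$ (taken over $k$) covers $X_{k'}\setminus\Delta(k')$ after base change, so $\Delta(k')$ is generic in $X_{k'}$; since a generic subset of a variety over an uncountable algebraically closed field is dense (\cite[Lemma 2.4]{BGGT}), and in particular nonempty, we obtain $\Delta(k')\neq\emptyset$, which is (i).

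For (i)$\Rightarrow$(ii): we may assume $k'$ is algebraically closed (any extension embeds in one), and $k'$ is again not algebraic over a finite field, so \cite[Theorem 2]{BGG} applies over $k'$ and gives that $\Delta(k')$ is dense in $X_{k'}$. Running the argument of the first two paragraphs over $k'$ shows that each $\overline{X_M}$ is proper in $X_{k'}$; since each $M$ (and hence each $X_M$) is defined over $k$, each $\overline{X_M}$ descends to a proper closed $k$-subvariety $Y_M\subsetneq X$, and $X\setminus\Delta\subseteq\bigcup_M Y_M$. Thus $\Delta$ is generic over $k$, and a final application of \cite[Theorem 2]{BGG} (passing once more to an uncountable algebraically closed extension to see that $\Delta$ is nonempty, then descending) upgrades genericity to density, giving (ii). The main obstacle is precisely this bookkeeping of fields of definition: one must ensure that a single countable family of $k$-defined subvarieties $\overline{X_M}$ controls $X\setminus\Delta$ simultaneously over $k$ and over all algebraically closed extensions, which relies on the base-change invariance of the maximal subgroup structure together with the delicate interplay between ``nonempty'', ``dense'' and ``generic'' over possibly countable base fields that \cite[Theorem 2]{BGG} is designed to encapsulate.
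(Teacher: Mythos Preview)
Your argument is correct and follows the same overall plan as the paper's: write $X\setminus\Delta$ as a countable union of constructible pieces indexed by maximal closed subgroups, show each piece has proper closure when $\Delta$ is dense, and handle the passage between $k$ and $k'$ by noting that this decomposition is defined over $k$ and base-changes. Two differences are worth flagging. First, rather than enumerating \emph{all} maximal closed subgroups (which obliges you to argue that the Lie primitive ones fall into countably many conjugacy classes realised over a countable subfield), the paper covers the finite-subgroup contribution more cheaply via the manifestly closed sets $X_m=\{x\in X:|G(x)|\leqs m\}$, leaving only the finitely many conjugacy classes of maximal closed positive-dimensional subgroups (finiteness by \cite{LS04}); this sidesteps the field-of-definition bookkeeping for finite maximal subgroups entirely. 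Second, for nonparabolic positive-dimensional $H$ the paper shows $\overline{X_H}\ne X$ by picking an irreducible $kG$-module on which $H$ acts reducibly and observing that reducibility is a closed condition, so $\overline{X_H}$ sits inside a proper closed set whenever $\Delta\ne\emptyset$; your constructibility-plus-density argument reaches the same conclusion more abstractly. Finally, for (i)$\Leftrightarrow$(ii) the paper simply invokes \cite[Theorem~2]{BGG}, which already incorporates the passage to an extension field; your separate descent argument for (i)$\Rightarrow$(ii) is therefore not needed.
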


\begin{proof}
The equivalence of (i) and (ii) is \cite[Theorem 2]{BGG}. Suppose $\Delta$ is a generic subset of $X$ and let $k'$ be an uncountable field extension of $k$. Then $\Delta(k')$ is dense in $X(k')$ and thus (i) holds. Therefore, to complete the proof of the theorem it suffices to show that if $\Delta$ is nonempty then it is generic.

Let $\mathcal{M}$ be a set of representatives of the conjugacy classes of maximal positive dimensional closed subgroups of $G$, which is finite by \cite[Corollary 3]{LS04}. For $m \in \mathbb{N}$ and $H \in \mathcal{M}$, set
\[
X_m = \{x \in X \, : \, |G(x)| \leqs m\}
\]
and define $X_H$ as in \eqref{e:XH}. Note that $\Delta$ is the complement in $X$ of the countable union 
\[
\bigcup_{m \in \mathbb{N}}X_m \cup \bigcup_{H \in \mathcal{M}}X_H.
\]
Here each $X_m$ is a closed subvariety of $X$ and Lemma \ref{l:parabolic} below implies that the same conclusion holds for $X_H$ when $H$ is a parabolic subgroup. So it remains to show that if $\Delta$ is nonempty and $H \in \mathcal{M}$ is nonparabolic, then $X_H$ is not dense in $X$.  

Let $H \in \mathcal{M}$ be a nonparabolic subgroup and let $V$ be an irreducible finite dimensional rational $kG$-module on which $H$ acts reducibly (as noted in the proof of \cite[Lemma 2.5]{BGG}, there exists a finite collection of such modules with the property that each $H \in \mathcal{M}$ acts reducibly on at least one of them). Then $G(x)$ is reducible on $V$ for all $x$ in the closure of $X_H$, so $X_H$ is not dense if $\Delta$ is nonempty. The result follows.
\end{proof}

\begin{lem} \label{l:closures}   
Let $\bar{X}$ be the Zariski closure of $X$ in $G^r$ and assume $G(x) = G$ for some $x \in \bar{X}$. Then $\Delta$ is a dense and generic subset of $X$.
\end{lem}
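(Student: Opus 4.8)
The plan is to push the genericity/density statement of Theorem~\ref{t:gen} from the closed variety $\bar X$ back down to the locally closed variety $X$, exploiting that $X$ is open and dense in $\bar X$. First I would dispose of a trivial case: if $k$ is algebraic over a finite field then $G$ is locally finite, so $G(x)$ is finite for every $x$ whereas $G$ is infinite, and the hypothesis $G(x)=G$ for some $x\in\bar X$ cannot hold; there is then nothing to prove. So we may assume $k$ is not algebraic over a finite field, which is exactly the setting in which Theorem~\ref{t:gen} applies.

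Since $\bar X$ is a closed (hence locally closed) irreducible subvariety of $G^r$, Theorem~\ref{t:gen} is available with $\bar X$ in place of $X$. Writing $\Delta_{\bar X}=\{y\in\bar X : G(y)=G\}$, the hypothesis gives $\Delta_{\bar X}\neq\emptyset$; taking the field extension $k'=k$ in condition~(i) of Theorem~\ref{t:gen}, we conclude that $\Delta_{\bar X}$ is a generic subset of $\bar X$. Thus there are proper closed subvarieties $Z_1,Z_2,\ldots\subsetneq\bar X$ with $\bar X\setminus\bigcup_i Z_i\subseteq\Delta_{\bar X}$.

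It remains to restrict to $X$. As $X$ is locally closed it is open in its closure $\bar X$, and it is dense in $\bar X$ by definition; in particular $X\not\subseteq Z_i$ for each $i$ (otherwise $\bar X=\overline{Z_i}=Z_i$), so each $Z_i\cap X$ is a proper closed subvariety of the irreducible variety $X$. Since $\Delta_{\bar X}\cap X=\Delta$, we get $X\setminus\bigcup_i(Z_i\cap X)\subseteq\Delta$, so $\Delta$ is a generic subset of $X$, and then Theorem~\ref{t:gen} applied to $X$ gives that $\Delta$ is also dense in $X$. I do not expect a genuine obstacle here: all the content sits in Theorem~\ref{t:gen}, and the only points requiring (routine) checking are that a locally closed set is open, hence dense, in its closure, and that consequently the trace on $X$ of a generic subset of $\bar X$ is again generic.
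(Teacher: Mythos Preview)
Your proof is correct and follows essentially the same route as the paper: apply Theorem~\ref{t:gen} to $\bar X$, pull the conclusion back to the open dense subset $X$, and then invoke Theorem~\ref{t:gen} once more for $X$. The only cosmetic difference is that the paper pulls back \emph{density} of $\Delta_{\bar X}$ (a dense set meets the nonempty open $X$, so $\Delta\ne\emptyset$) whereas you pull back \emph{genericity}; since Theorem~\ref{t:gen} makes these equivalent, the arguments are interchangeable.
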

 
\begin{proof}  
First recall that $X$ is a locally closed subset of $G^r$, which means that $X$ is open in $\bar{X}$. Let us also note that $\bar{X}$ is irreducible. Set $\Delta' = \{x \in \bar{X} \,:\, G(x) = G\}$, which we are assuming is nonempty (so in particular, $k$ is not algebraic over a finite field). By \cite[Theorem 2]{BGG}, it follows that $\Delta'$ is a dense subset of $\bar{X}$, whence 
$\Delta = \Delta' \cap X$ is nonempty and we conclude by applying Theorem \ref{t:gen}.
\end{proof} 

Next we consider the action of $G$ on a complete variety. If $G$ acts on a variety $Y$, then we write $Y^g$ to denote the subvariety of fixed points of $g \in G$. Similarly, if $H$ is a closed subgroup of $G$ then we define
\[
Y^H = \{ y \in Y \,:\, \mbox{$hy = y$ for all $h \in H$}\} = \bigcap_{h \in H}Y^h.
\] 

\begin{lem}\label{l:complete}   
Suppose $G$ acts on a complete variety $Y$ and $S$ is a subset of $G$.
\begin{itemize}\addtolength{\itemsep}{0.2\baselineskip}
\item[{\rm (i)}] We have $\dim Y^g \geqs  \min\{\dim Y^s \,:\,  s \in S\}$ for all $g \in \bar{S}$.
\item[{\rm (ii)}] If every $g \in S$ has a fixed point on $Y$, then the same is true for all $g \in \bar{S}$.
\end{itemize} 
\end{lem}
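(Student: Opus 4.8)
The plan is to prove both parts simultaneously by exhibiting the fixed-point loci as closed subsets and invoking semicontinuity of fibre dimension. For part (i), fix a positive integer $d$ and consider the incidence variety
\[
\Gamma_d = \{ (g, y) \in G \times Y \,:\, gy = y \mbox{ and } \dim_y Y^g \leqs d\}.
\]
The condition $gy = y$ is closed in $G \times Y$, and by Chevalley's upper semicontinuity of fibre dimension (applied to the first projection restricted to the fixed-point scheme $\{(g,y) : gy=y\}$), the locus where $\dim_y Y^g \leqs d$ is open in that closed set; the key point, however, is that $Y$ is complete, so the first projection $\Gamma_d \to G$ is a \emph{closed} map. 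Hence $\pi_d := \{ g \in G : \dim Y^g \leqs d \}$, which is the image of $\{(g,y) : gy = y,\ \dim_y Y^g \leqs d\}$ under a projection from a complete variety, is closed in $G$. (More carefully: $\{g : Y^g \ne \emptyset\}$ is closed since it is the image of the closed set $\{(g,y): gy=y\}$ under projection along the complete factor $Y$, and on this set $g \mapsto \dim Y^g$ is upper semicontinuous, so each sublevel set $\pi_d$ is closed.)

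Now set $d_0 = \min\{\dim Y^s : s \in S\}$. Then $S$ is disjoint from the closed set $\pi_{d_0 - 1} = \{g : \dim Y^g \leqs d_0 - 1\}$ (interpreting $\dim \emptyset = -\infty$, so this set also contains the $g$ with no fixed point). Since $\pi_{d_0-1}$ is closed, $\bar S$ is likewise disjoint from it, which says precisely that $\dim Y^g \geqs d_0$ for all $g \in \bar S$. This gives (i).

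For part (ii), if every $g \in S$ has a fixed point on $Y$, then $S$ lies in the set $\{g \in G : Y^g \ne \emptyset\}$, which as noted above is the image of the closed subvariety $\{(g,y) \in G \times Y : gy = y\}$ under the projection $G \times Y \to G$; since $Y$ is complete this projection is closed, so $\{g : Y^g \ne \emptyset\}$ is closed. Therefore $\bar S$ is contained in it as well, i.e.\ every $g \in \bar S$ has a fixed point. (Alternatively, (ii) is the special case of (i) with $d_0 \geqs 0$, once one checks that $\dim Y^g \geqs 0$ forces $Y^g \ne \emptyset$.)

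The only genuinely substantive ingredient is the fact that the projection $G \times Y \to G$ is closed when $Y$ is complete, together with Chevalley's theorem on upper semicontinuity of fibre dimension for the fixed-point scheme; everything else is bookkeeping. I would expect the main obstacle to be purely expository: stating the semicontinuity argument cleanly over an arbitrary algebraically closed field (so that one does not need the base to be uncountable, and so that $S$ is allowed to be an arbitrary subset rather than a constructible one), and being careful that "$\dim Y^g \leqs d$" defines a closed condition on the locus $\{g : Y^g \neq \emptyset\}$ rather than on all of $G$. Since $\bar S$ only sees the closure, no constructibility hypothesis on $S$ is needed.
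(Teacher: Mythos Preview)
Your argument for part (ii) is correct and is exactly the paper's argument: the fixed-point locus $W = \{(g,y) : gy = y\}$ is closed in $G \times Y$, and since $Y$ is complete the projection $\pi(W) = \{g : Y^g \neq \emptyset\}$ is closed; hence $S \subseteq \pi(W)$ implies $\bar S \subseteq \pi(W)$.

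Your argument for part (i), however, has the direction of semicontinuity reversed, and this breaks the logic in two places. Upper semicontinuity of fibre dimension says that the \emph{superlevel} sets $\{(g,y) \in W : \dim_y Y^g \geqs d\}$ are closed in $W$; the sublevel sets are open. Consequently your $\pi_d = \{g : \dim Y^g \leqs d\}$ is \emph{open} in $\pi(W)$, not closed (for instance, $\mathbb{G}_m$ acting on $\mathbb{P}^1$ by scaling one coordinate has $\pi_0 = \mathbb{G}_m \setminus \{1\}$). And even if $\pi_{d_0-1}$ were closed, the implication ``$S$ disjoint from a closed set $\Rightarrow$ $\bar S$ disjoint from it'' is false in general; what passes to closures is containment in a closed set, not avoidance of one.

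The fix is to swap the inequality: set $G(d) = \{g \in G : \dim Y^g \geqs d\}$. This is closed, because it equals the image under the (closed) projection of the closed set $\{(g,y) \in W : \dim_y Y^g \geqs d\}$, and $\dim Y^g \geqs d$ holds iff some local dimension $\dim_y Y^g$ is at least $d$. Then $S \subseteq G(d_0)$ with $d_0 = \min_{s \in S} \dim Y^s$, so $\bar S \subseteq G(d_0)$, which is (i). This is precisely the route the paper takes.
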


\begin{proof}   
Let $d \geqs 0$ be an integer and set $G(d) = \{ g \in G \,:\, \dim Y^g  \geqs d\}$. Consider the closed subvariety $W:=\{(g,y) \in G \times Y \,:\,  gy = y\}$. 
The projection map $\pi: G \times Y \to G$ is closed since $Y$ is complete, so $\pi(W)$ is a closed subset of $G$ and it follows that $G(d)$ is also closed. 

Set $d =  \min\{\dim Y^s \,:\, s \in S\}$. Then $S \subseteq G(d)$ and thus $\bar{S} \subseteq G(d)$, proving (i). Similarly, if $S$ is contained in $\pi(W)$, then so is $\bar{S}$ and we deduce that (ii) holds. 
\end{proof}

\begin{rem}\label{r:linear} 
With a minor modification, we can extend Lemma \ref{l:complete} to the case where $Y$ is a $kG$-module. To do this, we consider the induced action of $G$ on the projective space $Y_0 = \mathbb{P}^1(Y)$, which is a complete variety. For $g \in G$, let $\a(g)$ be the dimension of the largest eigenspace of $g$ on $Y$. Then $\dim Y_0^g = \a(g) - 1$ and by applying Lemma \ref{l:complete}(i) we deduce that if $S$ is a subset of $G$, then
$\a(g) \geqs \min\{\a(s) \,:\, s \in S\}$
for all $g \in \bar{S}$.
\end{rem} 

Recall that if $P$ is a parabolic subgroup of $G$, then the homogeneous space $Y = G/P$ is a projective (and hence complete) variety.  

\begin{lem} \label{l:parabolic} 
If $P$ is a proper parabolic subgroup of $G$, then $X_P$ is a closed subvariety of $X$.
\end{lem}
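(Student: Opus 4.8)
The plan is to show that $X_P$ is the preimage of a closed set under a morphism, using the parabolic coset variety $Y = G/P$, which is projective. First I would recall that $x \in X_P$ precisely when $G(x) = \overline{\langle x_1, \ldots, x_r\rangle}$ fixes a point on $Y = G/P$; since the fixed point set of a group equals the intersection of the fixed point sets of its generators, and fixed point sets are closed, $G(x)$ has a fixed point on $Y$ if and only if the finitely many generators $x_1, \ldots, x_r$ have a common fixed point on $Y$. Thus
\[
X_P = \{ x = (x_1, \ldots, x_r) \in X \,:\, \textstyle\bigcap_{i=1}^r Y^{x_i} \ne \emptyset \}.
\]

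Next I would make this into a closedness statement via an incidence variety argument, exactly as in the proof of Lemma \ref{l:complete}. Consider
\[
W = \{ (x, y) \in X \times Y \,:\, x_i y = y \text{ for all } i = 1, \ldots, r \},
\]
which is a closed subvariety of $X \times Y$ (it is cut out by the closed conditions $x_i y = y$, using that the action map $G \times Y \to Y$ is a morphism). Let $\pi \colon X \times Y \to X$ be the first projection. Since $Y = G/P$ is projective, hence complete, the projection $\pi$ is a closed map; therefore $\pi(W)$ is closed in $X$. But $\pi(W)$ is exactly $X_P$: a point $x$ lies in $\pi(W)$ iff there exists $y \in Y$ fixed by every $x_i$, iff $\bigcap_i Y^{x_i} \ne \emptyset$, iff $G(x)$ fixes a point on $G/P$, iff $G(x)$ is conjugate into $P$. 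Hence $X_P$ is closed, and being a subset of the irreducible variety $X$ it is a closed subvariety.

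The only genuinely delicate point is the identification of the fixed-point condition for $G(x)$ with that for the tuple $(x_1, \ldots, x_r)$: one needs that $Y^{G(x)} = Y^{\langle x_1, \ldots, x_r\rangle} = \bigcap_i Y^{x_i}$. The first equality holds because each $Y^g$ is closed and hence $Y^{\langle x_1, \ldots, x_r\rangle} = \bigcap_{g \in \langle x_1, \ldots, x_r\rangle} Y^g$ is already closed and is stabilized set-wise by $G(x)$; more directly, if a point is fixed by all $x_i$ it is fixed by the abstract group they generate, and the stabilizer of a point in the action of the algebraic group $G$ is a closed subgroup, so it contains the closure $G(x)$ of that abstract group. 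This is routine but should be spelled out. Everything else — completeness of $G/P$, closedness of $W$, the closed-map property of projection from a complete variety — is standard, so I do not anticipate any real obstacle beyond organizing these observations cleanly.
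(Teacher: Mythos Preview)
Your proof is correct and follows essentially the same approach as the paper: both define the incidence variety $W \subseteq X \times (G/P)$ of pairs $(x,y)$ with $y$ fixed by $G(x)$, and use completeness of $G/P$ to conclude that the projection $\pi(W) = X_P$ is closed. You spell out in more detail why the condition $G(x)y = y$ is equivalent to the manifestly closed condition $x_iy = y$ for all $i$, which the paper leaves implicit.
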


\begin{proof} 
Let $Y = G/P$ and set 
\[
W=\{(x, y) \in X \times Y \,:\, G(x)y=y\}.
\]
Let $\pi$ be the projection map from $W$ into $X$. Since $Y$ is complete, it follows that 
$\pi(W)=X_P$ is closed as required.
\end{proof} 

\begin{rem}\label{r:spaces}
Let $G$ be a classical algebraic group of the form $\Sp(V)$ or $\SO(V)$ and assume $k$ is uncountable. Suppose $G(x)$ generically preserves a totally singular $m$-dimensional subspace of $V$. By the previous lemma, the set of elements $x \in X$ such that $G(x)$ preserves a totally singular $m$-space is closed. Since every generic subset of $X$ meets every nonempty open subset, it follows that $G(x)$ must preserve a totally singular $m$-space for all $x \in X$. This basic observation will be applied repeatedly in the proof of Theorem \ref{t:main2}. 
\end{rem}

Recall that a closed subgroup $H \leqs G$ is \emph{$G$-irreducible} if it is not contained in a proper parabolic subgroup of $G$. We can now prove most of Theorem \ref{t:main1}.

\begin{thm}\label{t:generic}   
Let $k$ be an algebraically closed field. Then exactly one of the following holds:
\begin{itemize}\addtolength{\itemsep}{0.2\baselineskip}
\item[{\rm (i)}] $\Delta(k')$ is nonempty for some algebraically closed field extension $k'/k$.
\item[{\rm (ii)}] For all $x \in X$, $G(x)$ is contained in a proper parabolic subgroup of $G$.
\item[{\rm (iii)}]  There exists a unique (up to conjugacy)  proper closed $G$-irreducible subgroup $H$ of $G$ such that

\vspace{1mm}
 
\begin{itemize}\addtolength{\itemsep}{0.2\baselineskip}
\item[{\rm (a)}] $X_H$ contains a nonempty open subset of $X$; and
\item[{\rm (b)}]  $\{ x \in X \,:\, \mbox{$G(x) = H^g$ for some $g \in G$}\}$ is a generic subset of $X$.  
 \end{itemize} 
\end{itemize}
\end{thm}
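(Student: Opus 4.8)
\textbf{Proof proposal for Theorem \ref{t:generic}.}

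The plan is to argue by a descent on the dimension of the subgroups $G(x)$, combined with the fixed-point machinery from Lemmas \ref{l:complete} and \ref{l:parabolic}. First I would dispose of the easy alternatives. If $\Delta$ is nonempty we are in case (i), so assume $\Delta = \emptyset$. If $G(x)$ lies in a proper parabolic for every $x \in X$ we are in case (ii), so assume there exists $x_0 \in X$ with $G(x_0)$ $G$-irreducible, and pick such an $x_0$ with $\dim G(x_0)$ as large as possible; set $H_0 = G(x_0)^0$ and note $H_0 < G$ since $\Delta = \emptyset$. The mutual exclusivity of (i), (ii), (iii) is immediate: (i) contradicts (ii) and (iii) since $G$ is not contained in a proper parabolic or a proper closed subgroup; (ii) and (iii) are incompatible because a $G$-irreducible subgroup lies in no proper parabolic, so if (iii)(a) holds there is $x$ with $G(x)$ $G$-irreducible, contradicting (ii).

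The heart of the argument is to show that the $G$-irreducible subgroup with maximal dimension occurring as some $G(x)$ is in fact achieved generically, and is unique up to conjugacy. For the uniqueness-and-genericity step I would use the following observations. Since $X$ is $G$-invariant, the set $X_{H}$ defined in \eqref{e:XH} is $G$-invariant for every closed $H$, and by Lemma \ref{l:parabolic} it is closed whenever $H$ is parabolic. For a general reductive (or arbitrary closed) $H$, one controls $X_H$ via an irreducible rational $kG$-module $V$ on which $H$ acts reducibly, exactly as in the proof of Theorem \ref{t:gen}: the condition ``$G(x)$ is reducible on $V$'' is closed in $x$, so $X_H$ is contained in a proper closed subvariety of $X$ unless $G(x)$ is reducible on $V$ for \emph{all} $x$. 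Running this over the finite collection $\mathcal{M}$ of maximal positive-dimensional closed subgroups (finite by \cite[Corollary 3]{LS04}), together with the $X_m = \{x : |G(x)| \leqs m\}$, shows that the complement $\Delta$ of $\bigcup_m X_m \cup \bigcup_{H \in \mathcal{M}} X_H$ being empty forces $X = X_{H}$ for some $H$ — either parabolic (case (ii), since then by the closedness statement and a connectedness/primary-decomposition argument all $G(x)$ lie in a fixed conjugate, cf. Remark \ref{r:parab}) or $H \in \mathcal{M}$ nonparabolic. In the latter case $G(x) \leqs H^{g(x)}$ for all $x$, so we may replace $G$ by $H$ and induct on $\dim G$: by induction applied to the $H$-invariant subvariety $\{(y_1^g,\dots,y_r^g) : y \in X, g\}$ inside $H^r$ (or directly to $X$ viewed in $H^r$ after a harmless $G$-conjugation normalization), we obtain either that $\Delta_H$ is nonempty — giving a $G$-irreducible (since $H$ is) subgroup $H$ realized as $G(y)$ generically, with $\mathrm{rk}\, G(y) = \mathrm{rk}\, H$ maximal — or that all $G(x)$ lie in a proper parabolic of $H$, or in a proper $H$-irreducible subgroup $K < H$; in the first of these the $K = H$ case of (iii) holds, and in the remaining cases we continue the descent. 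The descent terminates because $\dim$ strictly decreases, and the terminal step produces the required $H$ with (iii)(a) (namely $X_H \supseteq X_K$-complement is open, or $X_H = X$) and (iii)(b) ($G(x)$ conjugate to $H$ generically, using Theorem \ref{t:gen} / the genericity of $\Delta_H$ in $H^r$).

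I expect the main obstacle to be the bookkeeping in the inductive descent: one must verify that $G$-irreducibility is correctly tracked down the chain (a subgroup that is $G$-irreducible need not be $K$-irreducible inside an intermediate $K$, and conversely), and that ``$G(x)$ conjugate to $H$'' being generic in the $H$-variety transfers to being generic in $X$ as a $G$-variety — this uses that $G$-conjugacy of subgroups of $H$ refines into finitely many $N_G(H)$-classes, or more simply that the relevant condition is again the complement of countably many proper closed subvarieties by the same module-reducibility argument applied one level down. A secondary subtlety is the passage from ``all $G(x)$ lie in some conjugate of the parabolic $P$'' to ``in a fixed conjugate of $P$'': here one uses that the $G$-conjugacy classes of parabolics are indexed by subsets of the simple roots (finitely many), so $X = \bigcup_P X_P$ is a finite union of closed sets and irreducibility of $X$ forces $X = X_P$ for a single $P$, which is the content promised in Remark \ref{r:parab}. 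The remaining parts of Theorem \ref{t:main1} not covered here — namely the rank bound (ii)(a) for \emph{all} $x \in X$ and the existence of the open set $Z$ with $G(z) \leqs H^g$ — follow from Lemma \ref{l:complete}(i) applied to the action of $G$ on $G/H$ (fixed-point dimension is lower semicontinuous on closures) and from (iii)(a) respectively, once $H$ has been identified.
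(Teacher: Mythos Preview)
Your descent strategy has a genuine gap at the step ``replace $G$ by $H$ and induct on $\dim G$''. The theorem is stated for simple $G$, but the maximal subgroup $H \in \mathcal{M}$ you descend to is typically not simple (often not even connected), so the induction hypothesis is unavailable. To push the descent through you would need a version of the theorem for arbitrary reductive groups, together with a way to translate between $G$-irreducibility and $H$-irreducibility at each stage: a subgroup $K \leqs H$ that avoids all proper parabolics of $H$ need not avoid all proper parabolics of $G$, so the terminal subgroup produced by your chain might fail to be $G$-irreducible at all, and there is no reason it should be unique up to $G$-conjugacy rather than up to conjugacy in some intermediate group. You correctly flag this as ``the main obstacle'' but offer no mechanism to resolve it; as written, the argument does not terminate in a well-defined $H$ satisfying (iii).

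The paper bypasses the descent entirely. The missing ingredient is \cite[Lemma 4.1]{Martin}: there are only \emph{countably many} conjugacy classes of $G$-irreducible closed subgroups of $G$, with representatives $H_1, H_2, \ldots$ each defined over the algebraic closure of the prime field. Once (i) and (ii) are excluded, $\bigcup_i X_{H_i}$ contains a nonempty open subset of $X$; over an uncountable $k$, irreducibility of $X$ forces $\bar{X}_{H_j} = X$ for some $j$, and the dominant morphism $G \times (X \cap H_j^r) \to X$ (with image exactly $X_{H_j}$) then shows $X_{H_j}$ contains an open subset of $X$. The same argument applied to the sets $Y_i = \{x : G(x) = H_i^g \text{ for some } g\}$ shows some $Y_i$ is dense; since $Y_i$ then meets the open set $X_{H_j}$, $H_i$ is conjugate into $H_j$, giving a unique minimal such $H_j$. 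Genericity of $Y_j$ follows because its complement in the open set is covered by the countably many proper closed $\bar{X}_{H_i}$ for $H_i$ properly contained in $H_j$. For countable $k$ one passes to an uncountable extension, using that the $H_i$ are already defined over $k$. No induction on the ambient group is needed.
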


\begin{proof}   
If $\Delta$ is nonempty, then it is generic and so  at most one of the three conclusions can hold.  Therefore, we may assume that neither (i) nor (ii) holds.
So $\Delta(k')$ is empty for every algebraically closed field extension $k'/k$, and $G(x)$ is $G$-irreducible for some $x \in X$. 
 
Let $X_{{\rm par}}$ be the set of tuples $x \in X$ such that $G(x)$ 
is contained in a proper parabolic subgroup of $G$. Since $G$ has only finitely many conjugacy classes of parabolic subgroups, Lemma \ref{l:parabolic} implies that $X_{{\rm par}}$ is a proper closed subvariety of $X$. 

Let $H$ be any closed subgroup of $G$. Let $\Omega=G^r$ and note that $\Omega_H$ is the image of the morphism $G \times H^r \rightarrow \Omega$ given by $(g,h_1, \ldots, h_r) \mapsto (h_1^g, \ldots, h_r^g)$.  In particular, $\Omega_H$ is open in its closure and so $X_H$ is open in $X$ (but may be empty). 

By \cite[Lemma 4.1]{Martin}, there are only countably many conjugacy classes of $G$-irreducible subgroups of $G$ and each one is defined over some finite extension of the prime subfield $k_0$ of $k$ (in particular, these subgroups are defined over the algebraic closure of $k_0$). Let $\{H_i \, :\, i \in \NN\}$ be representatives of the conjugacy classes of the proper $G$-irreducible subgroups of $G$ and set $X_i = X_{H_i}$. Since (i) and (ii) do not hold, it follows that $\bigcup_i X_i$ contains the nonempty open subvariety $X \setminus X_{{\rm par}}$.  
 
First assume $k$ is uncountable. For a closed subgroup $H$ of $G$, let 
$\bar{X}_H$ denote the closure of $X_H$ in $X$. Then $\bar{X}_H = X$ for some proper $G$-irreducible subgroup $H$ (recall that $X$ is irreducible, so it is not a countable union of proper closed subvarieties), which implies that 
$\bar{X}_H=X$ over any algebraically closed field.  Therefore, $X_H$ is open and dense in $\bar{X}_H=X$ and by the minimum condition on subvarieties, we can choose $H$ minimal subject to this condition.     

For any proper $G$-irreducible subgroup $J$, let 
\begin{equation}\label{e:YJ}
Y_J = \{ x \in X \,:\, \mbox{$G(x) = J^g$ for some $g \in G$} \}.
\end{equation}
Note that $X$ is the union of $X_{{\rm par}}$, the complement of $X_H$ and the subsets $Y_J$, where $J$ runs over a set of conjugacy class representatives of the $G$-irreducible subgroups of $H$, so the irreducibility of $X$ implies that $X=\bar{Y}_J$ for some $J$. If $J < H$ then the inclusion $Y_J \subseteq X_J$ implies that $X_J$ is dense in $X$, but this contradicts the minimality of $H$. Therefore, $J = H$ and we deduce that $Y_H$ is a generic subset of $X$ (indeed, each $x \not\in Y_H$ is contained in one of $X_{\rm par}$, the complement of $X_H$, or in one of countably many subsets $\bar{X}_L$, where each $L$ is a proper $G$-irreducible subgroup of $H$).  

To complete the proof for $k$ uncountable, we show that $H$ is unique up to conjugacy in $G$. If $L$ is a $G$-irreducible subgroup satisfying (a) and (b) in (iii), then $Y_L$ is generic and therefore intersects the open subvariety $X_H$, which in turn implies that $L$ is conjugate to a subgroup of $H$. Then by the minimality of $H$, we conclude that $L$ is conjugate to $H$, as required.

Finally, let us assume $k$ is countable and let $k'/k$ be any uncountable algebraically closed field extension. Then as above, there is a unique (up to conjugacy) proper $G(k')$-irreducible subgroup $H$ of $G(k')$ such that $X'_{H}$ is open in $X'$, where $X' = X(k')$.  The complement of this open subset is thus a proper closed subset of $X'$. Since $H$ is defined over $k$,  it follows that the complement of $X_H$ in $X$ is a proper closed subset, so $X_H$ is open and dense in $X$. The result follows.   
\end{proof} 

\begin{rem}\label{r:parab}
Consider the conclusion in part (ii) of Theorem \ref{t:generic} and let $P_1, \ldots, P_m$ be representatives of the conjugacy classes of proper parabolic subgroups of $G$. If (ii) holds, then $X = \bigcup_{i}X_{P_i} = \bigcup_{i} \bar{X}_{P_i}$ and thus $X = \bar{X}_{P}$ for some proper parabolic subgroup $P$. But $X_P$ is closed by Lemma \ref{l:parabolic}, so $X = X_P$ and we conclude that each $G(x)$ is contained in a conjugate of a fixed proper parabolic subgroup.  
\end{rem}
  
As a corollary, we obtain the following result. In the statement, we refer to $G$-orbits and $G$-invariance, which are both defined in terms of simultaneous conjugation by $G$. So for example, if $X = C_1 \times \cdots \times C_r$ is a product of conjugacy classes, then $X$ is $G$-invariant.
  
\begin{cor} \label{c:finite}  
Suppose $G(x)$ is generically finite.
\begin{itemize}\addtolength{\itemsep}{0.2\baselineskip}
\item[{\rm (i)}] There exists a positive integer $d$ such that $|G(x)| \leqs d$ for all $x \in X$.
\item[{\rm (ii)}] If $G(y)$ is $G$-irreducible for some $y \in X$, then there is a nonempty open subvariety of $X$ contained in a $G$-orbit. In particular, if $X$ is $G$-invariant, then $X$ has an open dense $G$-orbit of dimension equal to $\dim G$. 
\end{itemize}
\end{cor}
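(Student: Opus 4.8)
For part (i) the plan is to exploit the fact, noted in the proof of Theorem~\ref{t:gen}, that each set $X_m = \{x \in X : |G(x)| \leqs m\}$ is a closed subvariety of $X$. For part (ii) the plan is to feed part (i) into the trichotomy of Theorem~\ref{t:generic}. Throughout I would first reduce to the case that $k$ is uncountable, passing to an uncountable algebraically closed extension of $k$ if necessary; this is harmless, since a uniform bound on $|G(x)|$, the existence of a dense (hence open) $G$-orbit in $X$, and the equality $\dim X = \dim G$ all descend from any such extension back to $k$, exactly as in the reductions used for Theorems~\ref{t:bgg} and~\ref{t:generic}.

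For part (i): the hypothesis that $G(x)$ is generically finite says precisely that $\bigcup_{m \geqs 1} X_m$ contains a nonempty generic subset of $X$, hence contains $X$ minus a countable union of proper closed subvarieties $Z_i \subsetneq X$. Thus $X = \bigcup_m X_m \cup \bigcup_i Z_i$. Since $X$ is irreducible and $k$ is uncountable, $X$ is not a countable union of proper closed subvarieties, so $X_m = X$ for some $m$, and then $d := m$ works. Note that this step uses neither Theorem~\ref{t:generic} nor the $G$-invariance of $X$ beyond what is already built into the definition of $X_m$.

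For part (ii): assume in addition that $G(y)$ is $G$-irreducible for some $y \in X$. I first claim we are in case (iii) of Theorem~\ref{t:generic}. Case (i) there is impossible, since by Theorem~\ref{t:gen} a nonempty $\Delta$ would be generic, forcing $G(x) = G$ (hence $|G(x)| = \infty$) on a generic set, contradicting generic finiteness as $G$ is infinite; and case (ii) is impossible, since $G(y)$ being $G$-irreducible means it lies in no proper parabolic. So there is a $G$-irreducible subgroup $H$ with $X_H$ containing a nonempty open subset $Z$ of $X$ and with $Y_H := \{x \in X : G(x) = H^g \text{ for some } g \in G\}$ generic; by part (i), $H$ is finite. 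Consequently $X \cap H^r$ is a finite nonempty set of points, and since the morphism $\phi$ of \eqref{e:phi} from $G \times (X \cap H^r)$ has image $X_H$, we get $X_H = \bigcup_{z \in X \cap H^r} z^G$, a finite union of $G$-orbits. As $X_H \supseteq Z$ is dense and $X$ is irreducible, $X$ is the closure of a single such orbit $y_0^G$; being a dense orbit, $y_0^G$ is open in $X$. Finally, $Y_H$ is generic so it meets the open set $y_0^G$, and $Y_H$ is conjugation-invariant, so $y_0 \in Y_H$; thus $G(y_0) = H^g$ for some $g$, and the $G$-stabiliser of $y_0$ under simultaneous conjugation is $C_G(G(y_0)) = C_G(H)^g$. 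Because $H$ is $G$-irreducible this centraliser is finite: if $C_G(H)^0$ contained a nontrivial torus then $H$ would lie in a proper Levi, hence proper parabolic, subgroup, while if $C_G(H)^0$ were a nontrivial connected unipotent group then $H$ would normalise it and lie in a proper parabolic by the Borel--Tits theorem. Hence $\dim X = \dim y_0^G = \dim G$, and $y_0^G$ is the asserted open $G$-orbit.

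The main obstacle is the geometric bookkeeping in part (ii): recognising $X_H$ as a \emph{finite} union of $G$-orbits (which rests entirely on the finiteness of $H$ coming from part (i)) and then locating the generic orbit $y_0^G$ inside $Y_H$ via the genericity and conjugation-invariance of $Y_H$. The one other point deserving care is the legitimacy of the reduction to uncountable $k$, and in particular the transfer of the hypothesis ``$G(x)$ generically finite'' along a field extension; I would handle this as in the countable case of the proof of Theorem~\ref{t:generic}, using that each $X_m$ is defined over a fixed subfield independent of the extension. I do not expect any genuinely hard point beyond this.
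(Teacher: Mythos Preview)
Your proof is correct and follows essentially the same approach as the paper: reduce to uncountable $k$, use the closed subvarieties $X_m$ to get the uniform bound in (i), and for (ii) invoke Theorem~\ref{t:generic} to obtain a finite $G$-irreducible $H$ with $X_H$ open and $Y_H$ generic, then use finiteness of $C_G(H)$ to compute dimensions. The only organisational difference is that you explicitly exhibit the open $G$-orbit by writing $X_H$ as a finite union of $G$-orbits (since $X\cap H^r$ is finite) and picking the dense one, whereas the paper phrases the same computation as showing the morphism $\phi\colon G\times(X\cap H^r)\to X$ is dominant and generically finite; your self-contained argument for the finiteness of $C_G(H)$ reproduces \cite[Lemma~2.1]{LT}, which the paper simply cites.
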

 
\begin{proof}   
We can always pass to an extension field, so without any loss of generality we may assume $k$ is uncountable.    

Suppose that $G(x)$ is infinite for some $x$. Then the set of $x \in X$ with $G(x)$ infinite is generic since each subvariety $\{x \in X \,:\, |G(x)| \leqs m \}$ with $m \in \NN$ is closed and proper. Since the intersection
of countably many generic sets over an uncountable algebraically closed field is generic, we have a contradiction. Hence  
\[
X = \bigcup_{m \in \NN} \{x \in X \,:\, |G(x)| \leqs m \}
\]
is a countable union of closed subvarieties and thus (i) follows from the irreducibility of $X$.

Now assume that $G(y)$ is $G$-irreducible for some $y \in X$. Then by (i) and Theorem \ref{t:generic},  there exists a finite $G$-irreducible subgroup $H$
of $G$ such that $X_H$ is open and $Y_H$ is generic, where $Y_H$ is defined as in \eqref{e:YJ}. In fact, since $H$ has only finitely many subgroups, the proof of Theorem \ref{t:generic} implies that $Y_H$ is open. 

First assume that $X$ is $G$-invariant. Here $X_H$ is contained in the closure of the image of the morphism 
\[
f: G \times H^r \to G^r,\;\; (g,h_1, \ldots, h_r) \mapsto (h_1^g, \ldots, h_r^g),
\] 
which implies that $X$ itself is contained in the closure of the image of $f$.  
Since $H$ is $G$-irreducible, its centralizer is finite (see 
\cite[Lemma 2.1]{LT}), and so the dimension of the image of $f$ is equal to $\dim G$.    Thus, the dimension of the $G$-orbit of any $x \in X$ with 
$G(x)=H$ is equal to $\dim G = \dim X$ and so this orbit contains a dense open subset of $X$.  This establishes (ii) in the case where $X$ is $G$-invariant.  

In the general case, we can work in the $G$-invariant variety that is the image of the morphism $G \times X \rightarrow G^r$ given
by $(g,x_1, \ldots, x_r) \mapsto (x_1^g, \ldots, x_r^g)$ and the result follows. 
 \end{proof} 
   
\begin{rem}\label{r:gen}
Suppose $X$ is $G$-invariant, $\dim X > \dim G$ and $G(x)$ is $G$-irreducible for some $x \in X$. Then Corollary \ref{c:finite} implies that $G(x)$ is generically positive dimensional. 
In the special case where $X = C_1 \times \cdots \times C_r$ is a product of noncentral conjugacy classes, it was shown in \cite{BGG}, using results from \cite{GMT}, that $G(x)$ is generically positive dimensional if $r \geqs 3$. In addition, \cite[Corollary 5.14]{GMT} shows that if $r=2$ and $G(x)$ is generically finite, then $G(x)$ is always contained in a Borel subgroup. Also see Lemma \ref{l:infinite}.
\end{rem}   
     
The final ingredient we need to complete the proof of Theorem \ref{t:main1} is provided by Theorem \ref{t:toralrank} below (recall that the \emph{rank} of a closed subgroup $H$ of $G$, denoted ${\rm rk}\, H$, is the dimension of a maximal torus of $H^0$). The bound in part (i) completes the proof of Theorem \ref{t:main1}. In order to explain the notation in part (ii), let $V$ be a nontrivial finite dimensional irreducible rational $kG$-module (the choice of $V$ is irrelevant) and write $f(g) \in k[\mathbf{x}]$ for the characteristic polynomial of $g \in G$ acting on $V$. This defines a morphism 
\begin{equation}\label{e:char}
f: G \to \mathcal{M}_d(\textbf{x}),
\end{equation}
where $\mathcal{M}_d(\textbf{x})$ is the variety of monic polynomials in $k[\mathbf{x}]$ of degree equal to $d = \dim V$. 
    
\begin{thm}\label{t:toralrank}  
Suppose there exists a closed subgroup $H \leqs G$ such that $X_H$ 
contains a nonempty open subset of $X$. Then for all $x \in X$,
     \begin{itemize}\addtolength{\itemsep}{0.2\baselineskip}
\item[{\rm (i)}] ${\rm rk}\, G(x) \leqs {\rm rk}\, H$; and
   \item[{\rm (ii)}] if $S$ is a maximal torus of $G(x)^0$, then $f(S)$ is contained in the closure of $f(H)$.  
   \end{itemize} 
    \end{thm}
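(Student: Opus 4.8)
The plan is to work with the characteristic polynomial morphism $f$ of \eqref{e:char} and to exploit two of its properties: it is invariant under $G$-conjugation, and its restriction to any subtorus of $G$ is a finite morphism onto its image. Fix a nonempty open subset $U$ of $X$ contained in $X_H$; since $X$ is irreducible, $U$ is dense. Let $F_r$ be the free group of rank $r$. For a word $w \in F_r$, evaluation gives a morphism $G^r \to G$, $(y_1,\ldots,y_r)\mapsto w(y_1,\ldots,y_r)$, and composing with $f$ yields a morphism $\psi_w \colon X \to \mathcal{M}_d(\mathbf{x})$. If $y=(y_1,\ldots,y_r)\in U$ then $G(y)\leqs H^g$ for some $g\in G$, so $w(y_1,\ldots,y_r)\in H^g$ and hence $\psi_w(y)=f(w(y_1,\ldots,y_r))\in f(H^g)=f(H)$ by conjugation-invariance of $f$. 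Thus $\psi_w$ sends the dense subset $U$ into the closed subvariety $\overline{f(H)}$, and so $\psi_w(X)\subseteq\overline{f(H)}$. As this holds for every $w\in F_r$, every element of $\langle x_1,\ldots,x_r\rangle$ has characteristic polynomial in $\overline{f(H)}$, and passing to Zariski closures gives $f(G(x))=f(\overline{\langle x_1,\ldots,x_r\rangle})\subseteq\overline{f(H)}$ for all $x\in X$. In particular, if $S$ is a maximal torus of $G(x)^0$ then $f(S)\subseteq f(G(x))\subseteq\overline{f(H)}$, which is part (ii).

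For part (i), first observe that $N:=\ker(G\to\GL(V))$ is a proper closed normal subgroup of the simple group $G$ (proper because $V$ is nontrivial), hence finite. Consequently, for any subtorus $S\leqs G$ the homomorphism $S\to\GL(V)$ has finite kernel $S\cap N$, so its image is a subtorus of $\GL(V)$ of the same dimension as $S$; composing with the finite map that sends a semisimple matrix to its characteristic polynomial shows that $f|_S\colon S\to\mathcal{M}_d(\mathbf{x})$ is a finite morphism onto its image, whence $\dim\overline{f(S)}=\dim S$. On the other hand, since $f$ is constant on conjugacy classes and $f(g)=f(g_s)$ for the semisimple part $g_s$ of $g$, the image $f(H)$ is the union of the images under $f$ of the sets of semisimple elements lying in the finitely many cosets of $H^0$ in $H$; by the theory of semisimple conjugacy classes in a (possibly disconnected) algebraic group, each of these images has dimension at most ${\rm rk}\,H$, so $\dim\overline{f(H)}\leqs{\rm rk}\,H$ (for $H$ connected reductive this is just the statement that the adjoint quotient has generic fibre of dimension $\dim H-{\rm rk}\,H$). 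Now take $S$ to be a maximal torus of $G(x)^0$, so $\dim S={\rm rk}\,G(x)$; combining the displayed facts with $f(S)\subseteq\overline{f(H)}$ from part (ii) gives
\[
{\rm rk}\,G(x)=\dim S=\dim\overline{f(S)}\leqs\dim\overline{f(H)}\leqs{\rm rk}\,H,
\]
which is part (i).

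The topological input here is soft (dense sets and preimages of closed sets), so the only points that need genuine care are the two dimension estimates for $f$. The equality $\dim\overline{f(S)}=\dim S$ for a subtorus $S$ relies on $N$ being finite together with a routine check, valid in every characteristic, that forming the characteristic polynomial is a finite morphism on a torus. The bound $\dim\overline{f(H)}\leqs{\rm rk}\,H$ for a general closed subgroup $H$ is where the structure theory enters: the connected case is standard, and the disconnected case — which is exactly what occurs when $H$ is $G$-irreducible — rests on Steinberg's description of semisimple (twisted) conjugacy classes in the cosets of $H^0$. I expect this last ingredient to be the main technical point; everything else follows formally. Finally, since part (i) provides the rank bound asserted in clause (ii)(a) of Theorem \ref{t:main1} (the other clauses having been established in Theorem \ref{t:generic} and Remark \ref{r:parab}), it completes the proof of Theorem \ref{t:main1}.
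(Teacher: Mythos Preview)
Your proof is correct and follows essentially the same approach as the paper: both arguments use word maps to show $f(G(x))\subseteq\overline{f(H)}$ via density of the open set $U\subseteq X_H$, then combine the two dimension facts $\dim f(S)=\dim S$ for a torus (finite fibers) and $\dim\overline{f(H)}\leqs{\rm rk}\,H$ (Steinberg's result that a semisimple element of $H$ normalizes a maximal torus of $H^0$ up to conjugacy). Your only organizational difference is that you record part (ii) first and then deduce (i), whereas the paper bundles both conclusions together; the paper also states the sharper equality $\dim f(H)={\rm rk}\,H$, but only the upper bound is needed.
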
 
    
\begin{proof}   
Let $V$ be a nontrivial finite dimensional irreducible rational $kG$-module corresponding to the map $f$ in \eqref{e:char}, where $d = \dim V$. For each $g \in G$, observe that $f(g)$ is determined by the conjugacy class of the semisimple part of $g$. Let us also note that every fiber of $f$ is finite. To see this, let $T$ be a maximal torus of $G$. Then $f(t_1)=f(t_2)$ with $t_1, t_2 \in T$ if and only if $t_1$ and $t_2$ are conjugate in $\GL(V)$ and the claim follows because every semisimple class in $\GL(V)$ intersects $T$ in a finite set. 
    
Set $e = {\rm rk}\,H$. We claim that $\dim f(H) = e$. To see this, first observe that $f(H^0) = f(S)$, where $S$ is a maximal torus of $H^0$. Since $f$ has finite fibers, this implies that $\dim f(S) = \dim S = e$. If $h \in H$ embeds in $\GL(V)$ as a semisimple element, then 
\cite[7.5]{St}  implies that some conjugate of $h$ normalizes $S$ and this gives $\dim f(hS) \leqs \dim S = e$ (in fact, this is a strict inequality unless $h$ centralizes $S$). This justifies the claim. 
    
Let $Z \subseteq X_H$ be a nonempty open subset of $X$ and let $Y$ be the closure of $f(H)$ in the variety $\mathcal{M}_d(\mathbf{x})$. Let $x \in X$ and let $w$ be an element of the free group of rank $r$, which we may view as a map from $X$ to $G$. Then $w(x)$ is contained in the closure of $w(Z)$ and thus $f(w(x))$ is in the closure of $f(w(Z)) \subseteq Y$. Therefore $f(G(x)) \subseteq Y$, proving both parts of the theorem.   
     \end{proof}  
          
\begin{rem} 
It would be interesting to know if it is possible to replace rank by dimension in part (i) of Theorem \ref{t:toralrank}. 
\end{rem}

We are now in a position to prove Theorem \ref{t:main1}. 

\begin{proof}[Proof of Theorem \ref{t:main1}] 
We combine Theorems \ref{t:generic} and \ref{t:toralrank}(i). More precisely, if (i) holds in Theorem \ref{t:generic}, then Theorem \ref{t:bgg} implies that $\Delta$ is dense and generic in $X$, so the conclusions in case (ii) of Theorem \ref{t:main1} are satisfied with $H=G$, $Y = \Delta$ and $Z=X$. Clearly, if  Theorem \ref{t:generic}(ii) holds then we are in case (i) of Theorem \ref{t:main1}. Finally, if part (iii) in Theorem \ref{t:generic} holds, then the existence of $H$ and the appropriate subsets $Y,Z \subseteq X$ in parts (b) and (c) of Theorem \ref{t:main1}(ii) follows immediately, and the rank condition in (ii)(a) follows from Theorem \ref{t:toralrank}(i). This completes the proof of Theorem \ref{t:main1}.
\end{proof}

We close this section by recording the following corollary.   

\begin{cor} \label{c:toralrank}   
If $k$ is uncountable, then
\[
\{x \in X \,:\, {\rm rk}\, G(x) = m\}
\]
is a generic subset of $X$, where $m = \max\{ {\rm rk}\,G(x) \,:\, x \in X\}$.
\end{cor} 
     
     \begin{proof}   
     If $G(x)$ is $G$-irreducible for some $x \in X$ then the result follows immediately
     from Theorems \ref{t:generic} and \ref{t:toralrank}, so we may assume that $G(x)$ is contained in a proper parabolic subgroup for every $x \in X$. Fix a parabolic subgroup $P$ of $G$ which is minimal with respect to the property that each group $G(x)$ is contained in a conjugate of $P$ (see Remark \ref{r:parab}). Let $Q$ be the unipotent radical of $P$ and note that $X = X_P$. 
     
     Let $Y$ be an irreducible component of $X \cap P^r$ such that the morphism $\psi:G \times Y \to X$ sending 
      $(g,y_1, \ldots, y_r)$ to $(y_1^g, \ldots, y_r^g)$ is dominant. By the minimality of $P$, the set 
      \[
      \{ y \in Y \,:\, \mbox{$G(y)Q$ is contained in a proper parabolic subgroup of $P$}\}
     \]
      is a proper closed subvariety of $Y$. Since $P/Q$ has only countably many $P/Q$-irreducible subgroups, the proof of Theorem \ref{t:generic} shows that there exists a closed subgroup $H$ of $P$ such that $HQ/Q$ is $P/Q$-irreducible and the sets 
     \[
     \{ y \in Y \,:\, \mbox{$G(y)Q$ is $G$-conjugate to a subgroup of $HQ$}\}
     \]
     and
     \[
     \{ y \in Y \,:\, \mbox{$G(y)Q$ is $G$-conjugate to $HQ$}\}
     \]
     are open and generic in $Y$, respectively. Then by arguing precisely as in the proof of Theorem \ref{t:toralrank}, we deduce that ${\rm rk}\, G(y) \leqs {\rm rk}\, HQ = {\rm rk}\, H$ for all $y \in Y$, with equality on a generic subset of $Y$. Since $\psi(\{1\} \times Y)$ contains a nonempty open subset of ${\rm im}(\psi)$, it follows that equality holds on a generic subset of $X$.  This completes the proof. 
\end{proof}  

\section{Preliminaries for Theorem \ref{t:main2}}\label{s:prel2}

In this section we record various results that will be needed in the proof of Theorem \ref{t:main2}, which is our main result on the topological generation of classical algebraic groups. 

Let $G$ be a simply connected simple algebraic group over an algebraically closed field $k$ of characteristic $p \geqs 0$. Let $r \geqs 2$ be an integer and let $X$ be a locally closed irreducible subvariety of $G^r$. We begin by presenting some general results, before specializing to the case where $X = C_1 \times \cdots \times C_r$ is a product of noncentral conjugacy classes. We define $G(x)$ and $\Delta$ as before. In view of Theorem \ref{t:gen}, in order to prove Theorem \ref{t:main2} we may (and do) assume $k$ is uncountable. 

\subsection{Modules}\label{ss:gen}

Recall that since $k$ is uncountable, every generic subset of $X$ is nonempty and dense. In particular, $\Delta$ is nonempty if it contains the intersection of countably many generic subsets. Therefore, we are interested in establishing the genericity of certain subsets of $X$ defined in terms of $G$ (recall that $G(x)$ is \emph{generically $\mathcal{P}$} for some property $\mathcal{P}$, if $G(x)$ has the relevant property for all $x$ in a generic subset of $X$, and similarly for the connected component $G(x)^0$). 

With this goal in mind, the collection of results presented in the next two lemmas will be useful in the proof of Theorem \ref{t:main2}. A version of the following result is proved in \cite[Section 3]{BGGT} (see Lemma \ref{l:submodules-conn} below for a version that is stated in terms of  the connected components).

\begin{lem} \label{l:submodules}
Let $V$ be a finite dimensional rational $kG$-module.
\begin{itemize}\addtolength{\itemsep}{0.2\baselineskip}
\item[{\rm (i)}] If there exists $y \in X$ such that  $G(y)$ has a $d$-dimensional composition factor on $V$, then the set of $x \in X$ such that $G(x)$ has a composition factor on $V$ of dimension at least $d$ is a nonempty open subset of $X$. In particular, if $G(y)$ acts irreducibly on $V$ for some $y \in X$, then $G(x)$ is irreducible on $V$ for $x$ in a nonempty open subset of $X$.  
\item[{\rm (ii)}]  If $d$ is the minimal dimension of a composition factor (respectively, nonzero submodule) of $G(y)$ on $V$ for some $y \in X$, then for all $x$ in a 
nonempty open subset of $X$, the minimal dimension of a composition factor (respectively, nonzero submodule) of $G(x)$ is at least $d$.
\item[{\rm (iii)}] If $\dim C_{\mathrm{End}(V)}(G(y)) = d$ for some $y \in X$, then $\dim C_{\mathrm{End}(V)}(G(x)) \leqs d$ for all $x$ in a nonempty open subset of $X$.  
\end{itemize} 
\end{lem}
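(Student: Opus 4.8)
The plan is to prove all three parts by exhibiting the relevant subset of $X$ as the locus where a suitable rank function is semicontinuous, which is the standard mechanism behind such ``genericity from a single witness'' statements. Throughout, fix a word $w$ in the free group $F$ of rank $r$ and view it as a morphism $w\colon X\to G$; since $V$ is a rational $kG$-module, $g\mapsto \rho(g)$ (where $\rho\colon G\to\GL(V)$ is the representation) is a morphism, and hence so is $x\mapsto \rho(w(x))$ for each fixed $w$. The key observation is that for any finite set of words $w_1,\ldots,w_t$ and any linear-algebraic quantity attached to the subalgebra of $\mathrm{End}(V)$ generated by $\rho(w_1(x)),\ldots,\rho(w_t(x))$, that quantity varies ``upper-semicontinuously upward'' in $x$: ranks of fixed matrix expressions drop on closed subsets.

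For part (iii), consider $C_{\mathrm{End}(V)}(G(x))$. Since $G(x)$ is the Zariski closure of $\la x_1,\ldots,x_r\ra$ and $V$ is rational, an endomorphism of $V$ commutes with $G(x)$ if and only if it commutes with $x_1,\ldots,x_r$; thus $C_{\mathrm{End}(V)}(G(x))$ is the kernel of the linear map $\Phi(x)\colon \mathrm{End}(V)\to \mathrm{End}(V)^r$ sending $A$ to $([A,\rho(x_1)],\ldots,[A,\rho(x_r)])$. The entries of the matrix of $\Phi(x)$ are polynomial functions of the entries of the $\rho(x_i)$, hence regular functions on $X$. Therefore $\{x\in X : \dim\ker\Phi(x)\geqslant d+1\}=\{x : \mathrm{rk}\,\Phi(x)\leqslant \dim\mathrm{End}(V)-d-1\}$ is closed, being defined by the vanishing of all $(\dim\mathrm{End}(V)-d)$-minors of $\Phi(x)$. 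Its complement is open and contains $y$, so it is nonempty; this is (iii). (If one prefers, one can instead work with the commutant and use that the dimension of the span of $\{\rho(w(x)):w\in F\}$ inside $\mathrm{End}(V)$ is attained by finitely many words, by a Burnside/noetherian argument, and is lower-semicontinuous upward — but the commutator formulation is cleaner.)

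For part (i), I use the criterion that $G(x)$ has a composition factor of dimension $\geqslant d$ on $V$ if and only if the subalgebra $A(x)\subseteq\mathrm{End}(V)$ generated by $\rho(x_1),\ldots,\rho(x_r)$ acts on some subquotient of $V$ with that subquotient of dimension $\geqslant d$; equivalently (by Wedderburn/Jordan–H\"older applied to $A(x)$, or to the image algebra $kG(x)\to\mathrm{End}(V)$), the largest composition-factor dimension equals $d$ exactly when a certain matrix built from products of the $\rho(x_i)$ of bounded length has the appropriate rank behaviour. Concretely, the span $A(x)=\mathrm{span}\{\rho(w(x)) : \ell(w)\leqslant \dim V\}$ is attained using words of length $\leqslant\dim V$ (by the standard argument that a spanning set of a subalgebra of $\mathrm{End}(V)$ can be built up length by length and must stabilize within $\dim V$ steps), so $x\mapsto A(x)$ has dimension lower-semicontinuous upward; and the dimension of the largest composition factor is controlled by the structure of $A(x)$. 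A clean route: $G(x)$ is irreducible on a $\geqslant d$-dimensional subquotient iff there is no invariant flag all of whose jumps have dimension $<d$; the set of $x$ admitting such a flag is the image under projection (from the appropriate partial-flag variety, which is complete) of the closed incidence variety $\{(x,\mathcal F): \mathcal F \text{ is }G(x)\text{-invariant}\}$, hence closed. Taking $d$ minimal with no such flag existing at $y$ shows the good locus is open and nonempty. The special case of irreducibility is $d=\dim V$.

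For part (ii), the composition-factor statement is dual/complementary to (i): ``minimal composition factor dimension $\geqslant d$'' means there is no $G(x)$-invariant flag with some jump of dimension $<d$ that is itself irreducible — more simply, it is the conjunction over all $e<d$ of the conditions ``$G(x)$ has no $e$-dimensional composition factor'', each of which, by the argument in (i) applied with the roles reversed, holds on a constructible set; one checks each such condition is satisfied on a nonempty open (or at worst a set containing a nonempty open, hence generic) subset, and intersects the finitely many of them. For the submodule version, $\{x : G(x) \text{ has a nonzero submodule of dimension} <d\}$ is the image of the closed incidence variety $\{(x,W)\in X\times \bigsqcup_{e<d}\mathrm{Gr}_e(V) : W \text{ is }G(x)\text{-invariant}\}$ under the projection to $X$ along the complete Grassmannian factors, hence closed; minimality of $d$ at $y$ makes its complement a nonempty open set. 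Taking the (finite, for (ii) as stated, since only finitely many dimensions are involved) intersection yields a generic — indeed here open and nonempty — subset, so the word ``generic'' in (ii) is all that is claimed and is certainly satisfied.

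The main obstacle is the bookkeeping in part (i)/(ii) needed to pass from ``invariant flags/submodules'' to genuinely closed conditions on $X$: one must be careful that ``$W$ is $G(x)$-invariant'' is a closed condition on $(x,W)$ (it is, since it is $\rho(x_i)W\subseteq W$ for $i=1,\ldots,r$, each a closed condition cut out by vanishing of matrix entries on the Grassmannian), and then invoke completeness of the Grassmannian / flag variety to conclude the projection to $X$ is closed. Once that is in place, the minimality choice of $d$ at the witness $y$ immediately gives nonemptiness of the complementary open set, and the ``generic'' (really: nonempty open, except where a countable intersection sneaks in) conclusions follow. I expect (iii) to be essentially immediate via the commutator-map rank argument, and the genuine care to be concentrated in formulating (i) and (ii) cleanly.
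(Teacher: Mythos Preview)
Your proposal is correct. Parts (ii) and (iii) essentially match the paper: for (iii) the paper just says $\dim C_{\mathrm{End}(V)}(G(x))$ is upper semicontinuous, which your commutator-map argument unpacks explicitly; for the submodule half of (ii) the paper invokes completeness of the Grassmannian exactly as you do.

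Part (i) is where the approaches genuinely diverge. The paper uses polynomial identity theory: take a polynomial $F(t_1,\ldots,t_r)$ that is an identity for $M_e(k)$ for all $e<d$ but not for $M_d(k)$; then $G(x)$ has a composition factor of dimension $\geqslant d$ if and only if $F^n(x)\ne 0$ on $V$ (where $n=\dim V$), which is manifestly open. Your route instead shows that ``every composition factor has dimension $<d$'' is closed by projecting the incidence variety from the (finitely many, complete) partial flag varieties with all jumps $<d$. Both arguments are short; the PI argument produces a single explicit open condition but imports a nontrivial external fact, while yours stays purely geometric and reuses the same completeness mechanism you already need for (ii).

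One point to tighten: your handling of the composition-factor half of (ii) is hand-wavy as written (``the argument in (i) with roles reversed'' is not literally what is needed). The clean statement is that $G(x)$ has a composition factor of dimension $<d$ if and only if $G(x)$ stabilizes a partial flag $W_1\subsetneq W_2$ with $0<\dim W_2-\dim W_1<d$; for each pair of dimensions this is a closed condition via projection from the corresponding two-step flag variety, and the finite union over admissible pairs is proper because $y$ lies outside it. The paper's proof, incidentally, only writes out the submodule case of (ii) explicitly.
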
 

\begin{proof} 
Set $n = \dim V$ and let $F(t_1, \ldots, t_r)$ be a polynomial identity for each matrix algebra $M_e(k)$ with $e < d$, but not for $M_d(k)$ (see \cite[Section 1.3]{Ro}). 
Then $F^n(x) \ne 0$ if and only if $G(x)$ has a composition factor on $V$ of dimension at least $d$ and this is clearly an open condition, proving (i).  

By Lemma \ref{l:parabolic}, the set of $x \in X$ such that $G(x)$ fixes a  subspace of dimension less than $d$ is a closed subvariety. Our assumption is that this is a proper subvariety, so the assertion regarding submodules in (ii) follows.   For composition factors,
we modify the argument slightly. Consider the set of flags of $V$ of the form $0 \subseteq U \subseteq W \subseteq V$ with $\dim W/U < d$.  This is a finite union of projective varieties
(one for each possible pair of dimensions of $U$ and $W$) and thus the proof of Lemma \ref{l:parabolic} implies that the set of $x \in X$ such that $G(x)$ has a fixed space on this variety is closed. Our assumption in (ii) implies that this is a proper closed subvariety and hence (ii) follows. 

Finally, part (iii) is clear since
$\dim C_{\mathrm{End}(V)}(G(x))$ is upper semicontinuous. 
\end{proof}  

\begin{rem}\label{r:words}
Let $F$ be the free group on $r$ generators. As noted in Section \ref{s:intro}, given a collection of words $w_1, \ldots, w_m$ in $F$, which we view as maps $G^r \to G$, the set 
\[
X_w:=\{(w_1(x), \ldots, w_m(x))\,:\, x \in X \}
\]
for $w=(w_1, \ldots, w_m)$ is also an irreducible
subvariety of $G^r$. In particular, we can apply all of our general results to $X_w$. 
\end{rem}

Let $d$ be a positive integer and let $F$ be the free group on $r$ generators, where $X \subseteq G^r$ as above. Let $F_d$ be the intersection of all subgroups of $F$ with index at most $d$ and note that $F_d$ is a characteristic subgroup of $F$ with finite index. In particular, $F_d$ is finitely generated and we may choose generators $w_1, \ldots, w_m$, where we view each $w_i$ as a word map from $G^r$ to $G$. Then for $x \in X$ we define
\[
G_d(x) = \overline{\langle w_1(x), \ldots, w_m(x) \rangle} \leqs G,
\]
which is independent of the choice of generators for $F_d$. 

The following result records some basic observations. Recall that a group acts \emph{primitively} on a vector space if it does not preserve a nontrivial direct sum decomposition of the space.

\begin{lem} \label{l;primitive}  
\mbox{ }
\begin{itemize}\addtolength{\itemsep}{0.2\baselineskip}
\item[{\rm (i)}]  $G(x)^0 \leqs G_d(x) \leqs G(x)$ for all $x \in X$, $d \in \mathbb{N}$.
\item[{\rm (ii)}] Let $V$ be a finite dimensional irreducible rational $kG$-module and let $d$ be an integer such that $d \geqs \dim V$. Assume that

\vspace{1mm}

\begin{itemize}\addtolength{\itemsep}{0.2\baselineskip}
\item[{\rm (a)}] $G(x)$ is generically irreducible on $V$; and
\item[{\rm (b)}] For some $y \in X$, $G_d(y)$ acts irreducibly on a submodule $W$ of $V$ such that $\dim W > d/s$ where $s$ is the smallest
prime divisor of $d$.   
\end{itemize}
\vspace{1mm} 
\noindent Then $G(x)$ is generically primitive on $V$.
\end{itemize} 
\end{lem}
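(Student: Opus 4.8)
The goal is to show that, under hypotheses (a) and (b), the generic group $G(x)$ cannot preserve a nontrivial direct sum decomposition of $V$. Since $G(x)$ is generically irreducible on $V$ by (a), the obstruction to primitivity is that $G(x)$ might generically stabilize a decomposition $V = V_1 \oplus \cdots \oplus V_t$ with $t \geqs 2$ and $\dim V_i = d/t$ for all $i$ (equal dimensions forced by irreducibility), with $t \mid d$. For such a decomposition, the kernel of the permutation action of $G(x)$ on the set of blocks $\{V_1, \ldots, V_t\}$ is a normal subgroup of index dividing $t!$, hence of index at most... actually the relevant bound is that this kernel has index at most $t$ when we pass instead to the image in $S_t$, but more to the point: the stabilizer of the block $V_1$ in $G(x)$ has index $t$ in $G(x)$ (as $G(x)$ acts transitively on the blocks by irreducibility), so it contains $G_t(x)$, and \emph{a fortiori} it contains $G_d(x) \leqs G_t(x)$ since $t \leqs d$ and $F_d \leqs F_t$. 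Thus $G_d(x)$ preserves each block $V_i$, so every $kG_d(x)$-submodule of $V$ has dimension divisible by... no — rather, $V$ restricted to $G_d(x)$ decomposes compatibly with the blocks, so $V = \bigoplus V_i$ as $kG_d(x)$-modules and every irreducible $kG_d(x)$-submodule of $V$ lies inside some $V_i$, hence has dimension at most $d/t \leqs d/s$ where $s$ is the smallest prime divisor of $d$ (using $t \geqs 2$ and $t \mid d$, so $t \geqs s$).

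\textbf{Key steps, in order.} First I would invoke Lemma \ref{l:submodules}(i) applied to the word maps $w_1, \ldots, w_m$ generating $F_d$: hypothesis (b) says $G_d(y)$ has an irreducible submodule $W$ of dimension $> d/s$ on $V$, and since having a composition factor of dimension at least $\dim W$ is an open condition, the set $\mathcal{O}_1$ of $x \in X$ such that $G_d(x)$ has a composition factor on $V$ of dimension $> d/s$ is a nonempty open subset of $X$. Second, by (a) there is a nonempty open subset $\mathcal{O}_2$ of $X$ on which $G(x)$ is irreducible on $V$. Third, suppose for contradiction that $G(x)$ is \emph{not} generically primitive; then by Lemma \ref{l:parabolic}-type reasoning (or rather, since being imprimitive of a fixed block-type is a closed condition — the stabilizer of a point on the appropriate Grassmannian-of-decompositions variety is closed, using that this variety is projective), there is a fixed $t \geqs 2$ with $t \mid d$ such that $G(x)$ preserves some $t$-block decomposition for all $x$ in a nonempty open subset $\mathcal{O}_3$ of $X$. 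Fourth, pick $x \in \mathcal{O}_1 \cap \mathcal{O}_2 \cap \mathcal{O}_3$, which is nonempty since all three are dense open; for this $x$, irreducibility of $G(x)$ forces the $t$ blocks to have equal dimension $d/t$ and $G(x)$ to permute them transitively, so the block-stabilizer has index $t$ in $G(x)$, hence contains $G_t(x) \supseteq G_d(x)$ (here I use $t \leqs d$, which holds as $t \mid d$ and $t \geqs 2$ forces $t \leqs d$, giving $F_d \subseteq F_t$ and so $G_d(x) \leqs G_t(x)$ by Lemma \ref{l;primitive}(i)-style containment); consequently $G_d(x)$ fixes each block $V_i$ and every $kG_d(x)$-composition factor on $V$ has dimension at most $d/t \leqs d/s$, contradicting $x \in \mathcal{O}_1$.

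\textbf{Main obstacle.} The delicate point is Step 3: making precise that ``imprimitivity'' is controlled by finitely many closed conditions, so that failure of generic primitivity produces a \emph{fixed} block-type $t$ on a nonempty \emph{open} set (not merely a generic one, and not one where $t$ varies with $x$). The clean way is to note that there are only finitely many divisors $t$ of $d$, and for each such $t$ the set of $x$ for which $G(x)$ stabilizes \emph{some} decomposition into $t$ equidimensional summands is the projection to $X$ of the closed incidence subvariety $\{(x, \mathcal{D}) : G(x)\text{ fixes }\mathcal{D}\}$ inside $X \times \mathcal{Y}_t$, where $\mathcal{Y}_t$ is the (projective) variety parametrizing such decompositions of $V$; projecting along a complete variety keeps this closed, exactly as in Lemma \ref{l:parabolic}. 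Since $X$ is irreducible and $G(x)$ is generically imprimitive would mean $X$ is the union over the finitely many $t$ of these closed sets, one of them is all of $X$, giving the desired $t$ and a nonempty (indeed full) open — in fact closed — set; intersecting with $\mathcal{O}_1 \cap \mathcal{O}_2$ still gives a nonempty open set, completing the argument. A secondary point worth stating carefully is the containment $F_d \subseteq F_t$ for $t \mid d$, $t \geqs 2$: since every subgroup of $F$ of index at most $t$ has index at most $d$, the intersection defining $F_d$ is taken over a larger family, hence $F_d \subseteq F_t$, so $G_d(x) \leqs G_t(x)$ for all $x$; this is what lets hypothesis (b), phrased via $G_d$, interact with the index-$t$ block stabilizer.
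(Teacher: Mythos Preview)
Your approach is essentially the paper's: assume not generically primitive, use (a) to place $G(x)$ generically in a wreath product $\GL_e(k) \wr S_t$, show $G_d(x)$ lands in the base group so its composition factors have dimension at most $d/t \leqs d/s$, and contradict (b) via Lemma~\ref{l:submodules}(i). Your explicit chain $G_d(x) \leqs G_t(x) \leqs (\text{block stabilizer})$ via $F_d \subseteq F_t$ nicely justifies a step the paper asserts without comment.

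There is, however, one genuine error in your Step~3 and ``Main obstacle'' paragraph: the variety $\mathcal{Y}_t$ of direct-sum decompositions of $V$ into $t$ equidimensional summands is \emph{not} projective. It is the open subset of $\prod_i \mathrm{Gr}(n/t,V)$ on which the summands span; for instance when $\dim V = 2$ and $t=2$, the unordered version is $\mathrm{Sym}^2(\mathbb{P}^1)$ minus the diagonal, i.e.\ $\mathbb{P}^2$ minus a conic, which is affine. So your completeness argument for closedness of the imprimitivity locus does not go through as stated.

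Fortunately this step is unnecessary, and removing it brings you exactly in line with the paper. You need not fix $t$ at all, since the bound $d/t \leqs d/s$ holds for \emph{every} divisor $t \geqs 2$ of $d$ (taking $d = \dim V$ as the paper does). Simply argue directly: for any $x \in \mathcal{O}_1 \cap \mathcal{O}_2$, if $G(x)$ were imprimitive with some number $t$ of blocks (possibly depending on $x$), your Step~4 reasoning would force every $G_d(x)$-composition factor to have dimension at most $d/s$, contradicting $x \in \mathcal{O}_1$. Hence $\mathcal{O}_1 \cap \mathcal{O}_2$ is already contained in the primitive locus, which is therefore nonempty open and thus generic. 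This is precisely how the paper's proof runs --- it never pins down a single block size, only the uniform upper bound $e \leqs d/s$.
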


\begin{proof}  
Part (i) is clear since $G(x)^0$ has no proper closed subgroups of finite index. Now consider (ii).   There is no harm in assuming that $d=\dim V$.  Seeking a contradiction, suppose $G(x)$ is not generically primitive on $V$. 
Then the condition in (a) implies that $G(x)$ is generically conjugate to a subgroup of 
$\GL_e(k) \wr S_{d/e}$ for some proper divisor $e$ of $d$.  In particular, $G_d(x)$ is generically contained in a direct product of $d/e$ copies of $\GL_e(k)$ and so
for all $x \in X$,  the largest composition factor of $G_d(x)$ on $V$ has dimension at most $e \leqs d/s$. But this is incompatible with the condition in (b) and we have reached a contradiction.
\end{proof}  

\begin{rem}\label{r:connected}
Let $V$ be a finite dimensional irreducible rational $kG$-module. If $G(y)^0$ acts irreducibly on $V$ for some $y \in X$, then $G_d(x)$ is generically irreducible for all $d \in \mathbb{N}$. In addition, if $d$ is large enough then $G_d(y)=G(y)^0$ and so $G(x)^0$ is generically irreducible as well. In particular, this implies that $G(x)^0$, and hence $G(x)$, is generically primitive on $V$.    
\end{rem}

Next we establish a version of Lemma \ref{l:submodules} with respect to the connected components. 

\begin{lem} \label{l:submodules-conn}
Let $V$ be a finite dimensional rational $kG$-module.
\begin{itemize}\addtolength{\itemsep}{0.2\baselineskip}
\item[{\rm (i)}] If there exists $y \in X$ such that  $G(y)^0$ has a composition factor  of dimension at least $e$ on $V$, then the set of $x \in X$ such that $G(x)^0$ has a composition factor on $V$ of dimension at least $e$ is a generic subset of $X$. In particular, if $G(y)^0$ acts irreducibly on $V$ for some $y \in X$, then $G(x)^0$ is generically irreducible on $V$.
\item[{\rm (ii)}]  If the minimal dimension of a composition factor (respectively, nonzero submodule) of $G(y)^0$ on $V$ is at least  $e$ for some $y \in X$, then for generic $x \in X$, the
minimal dimension of a composition factor (respectively, nonzero submodule) of $G(x)^0$ is at least $e$.
\item[{\rm (iii)}] If $\dim C_{\mathrm{End}(V)}(G(y)^0) \leqs e$ for some $y \in X$, then $\dim C_{\mathrm{End}(V)}(G(x)^0) \leqs e$ for all $x$ in a generic subset of $X$.  
\end{itemize} 
\end{lem}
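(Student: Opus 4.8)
The plan is to reduce Lemma \ref{l:submodules-conn} to the already-established Lemma \ref{l:submodules} by passing to the auxiliary varieties $X_w$ introduced in Remark \ref{r:words}. The key observation is that for a fixed $d \in \mathbb{N}$, the subgroup $G_d(x) = \overline{\langle w_1(x), \ldots, w_m(x)\rangle}$ is precisely the image under the word tuple $w = (w_1, \ldots, w_m)$ of the point $x$, so that $G_d(x)$ plays the role of ``$G(\cdot)$'' for the irreducible variety $X_w \subseteq G^m$. Concretely, if $x' = (w_1(x), \ldots, w_m(x)) \in X_w$, then $G(x') = G_d(x)$ in the notation of the earlier results applied to $X_w$. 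By Lemma \ref{l;primitive}(i) we have $G(x)^0 \leqs G_d(x) \leqs G(x)$ for all $x$, and when $d$ is chosen sufficiently large relative to the finite quotient involved, $G_d(x)$ has the same composition factors (resp.\ submodules, resp.\ centralizer in $\mathrm{End}(V)$) on $V$ as $G(x)^0$ — indeed for $d$ large enough $G_d(y) = G(y)^0$ at the witnessing point $y$, and more generally $G(x)^0 \leqs G_d(x)$ forces any $G_d(x)$-submodule to be a $G(x)^0$-submodule. I would make this ``$d$ large enough'' step precise by noting that $G_d(x)/G(x)^0$ embeds in a bounded-order quotient, so only finitely many $d$ are relevant.

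For part (i): given $y$ with $G(y)^0$ having a composition factor of dimension $\geqs e$ on $V$, first pick $d$ large enough that $G_d(y)$ and $G(y)^0$ have the same composition factors on $V$ (so $G_d(y)$ has one of dimension $\geqs e$). Apply Lemma \ref{l:submodules}(i) to the irreducible variety $X_w$: the set of $x' \in X_w$ with $G(x')$ having a composition factor of dimension $\geqs e$ is a nonempty open subset of $X_w$. Pulling back along the dominant morphism $X \to X_w$, $x \mapsto (w_1(x), \ldots, w_m(x))$, we get that the set of $x \in X$ with $G_d(x)$ having such a composition factor contains a nonempty open set of $X$, hence is generic. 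Since $G(x)^0 \leqs G_d(x)$, a composition factor of $G(x)^0$ has dimension at least that of the corresponding $G_d(x)$-composition factor... wait — the inclusion goes the wrong way for composition factors, so here I instead argue: on the generic (indeed open) subset where $G_d(x)$ has a composition factor of dimension $\geqs e$, refine to the open dense subset where additionally $G_d(x) = G(x)^0$ (this is where genericity, rather than openness, enters — the locus where $G(x)^0 = G_d(x)$ need only be generic). On the intersection, $G(x)^0$ itself has a composition factor of dimension $\geqs e$. This gives a generic subset, as claimed; the ``in particular'' (irreducibility) statement is the case $e = \dim V$.

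For parts (ii) and (iii) the same transfer works with Lemma \ref{l:submodules}(ii),(iii) in place of (i): for (ii), the minimal dimension of a composition factor or nonzero submodule of $G_d(x)$ is generically at least that of $G_d(y)$ (which equals that of $G(y)^0$ for $d$ large), and intersecting with the generic locus $\{G(x)^0 = G_d(x)\}$ transfers the bound to $G(x)^0$; for (iii), $\dim C_{\mathrm{End}(V)}(G_d(x)) \leqs \dim C_{\mathrm{End}(V)}(G_d(y)) = \dim C_{\mathrm{End}(V)}(G(y)^0)$ on an open set (using that $G(x)^0 \leqs G_d(x)$ gives $C_{\mathrm{End}(V)}(G_d(x)) \subseteq C_{\mathrm{End}(V)}(G(x)^0)$, so once more intersect with $\{G(x)^0 = G_d(x)\}$). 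I expect the main obstacle to be pinning down cleanly the statement ``for $d$ sufficiently large, $G_d(x) = G(x)^0$ generically, and the relevant $V$-module invariants of $G_d(x)$ and $G(x)^0$ agree at the witness point $y$'' — one must argue that only boundedly many congruence conditions matter (so that a single finite $d$ suffices for all three parts and both the generic locus and the witness point), using that $G(x)/G(x)^0$ is finite of bounded order for $x$ ranging over the constructible family, together with the fact that $G_d$ is designed to kill all index-$\leqs d$ subgroups of the free group.
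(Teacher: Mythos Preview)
Your overall strategy—reduce to Lemma \ref{l:submodules} via the word-map varieties $X_w$ and the subgroups $G_d(x)$—is exactly right, and matches the paper. The gap is in how you handle the passage from $G_d(x)$ back to $G(x)^0$. You try to fix a \emph{single} $d$ and then intersect with a ``generic locus $\{x : G_d(x) = G(x)^0\}$'', justified by the claim that $[G(x):G(x)^0]$ is bounded over $X$. That claim is false in general: for instance $G(x)$ can be finite of arbitrarily large order (take $X = G^2$), so no fixed $d$ forces $G_d(x) = G(x)^0$ on a generic set.

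The paper avoids this entirely by intersecting over \emph{all} $d \in \mathbb{N}$. The point is that each of the three properties is monotone under passing to an overgroup: since $G(y)^0 \leqs G_d(y)$, if $G(y)^0$ has the property then so does $G_d(y)$, for \emph{every} $d$. (Check: a large $G(y)^0$-composition factor sits inside some $G_d(y)$-composition factor; a $G_d(y)$-submodule is a $G(y)^0$-submodule; and $C_{\mathrm{End}(V)}(G_d(y)) \subseteq C_{\mathrm{End}(V)}(G(y)^0)$.) So Lemma \ref{l:submodules} applied to $X_w$ gives, for each $d$, an open (or generic, in (ii)) dense set of $x$ where $G_d(x)$ has the property. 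The countable intersection over all $d$ is generic. Now for each individual $x$ in that intersection, take $d = [G(x):G(x)^0]$: then $G_d(x) = G(x)^0$, and since $G_d(x)$ has the property, so does $G(x)^0$. No uniform bound on $d$ is needed.
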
 

\begin{proof}   
Suppose there exists $y \in X$ such that $G(y)^0$ has any of the properties described in the lemma. Then $G_d(y)$ has the same property for every positive integer $d$ and thus Lemma \ref{l:submodules} implies that the set of $x \in X$ such that $G_d(x)$ fails to have the given property is a proper closed subvariety of $X$. Since there are only countably many positive integers, we deduce that $G(x)^0$ satisfies the property on a generic subset of $X$. 
\end{proof} 

\begin{rem}
One can modify the proof of Lemma \ref{l:submodules-conn} in order to show that if $G(y)^0$ acts irreducibly on $V$ for some $y \in X$, then the set of $x \in X$ such that $G(x)^0$ acts irreducibly on $V$ is actually open, rather than just being generic.
This follows by noting that if $d > \dim V$, then $G_d(x)$ is irreducible on $V$ if and only if $G(x)^0$ is irreducible.  
\end{rem}

Next we introduce the notion of a strongly regular element.

\begin{defn}\label{d:sr}
Let $V$ be a finite dimensional rational $kG$-module and let $T$ be a maximal torus of $G$. We say that $x \in T$ is \emph{strongly regular on $V$} if every $x$-invariant subspace of $V$ is also $T$-invariant. 
\end{defn}

Equivalently, if $\chi_1, \ldots, \chi_m$ are the distinct characters of $T$ that occur in $V$, so $V = V_1 \oplus \cdots \oplus V_m$ with $0 \ne V_i = \{v \in V \,:\, tv = \chi_i(t)v \mbox{ for all } t \in T\}$, then $x \in T$ is strongly regular on $V$ if and only if $\chi_i(x) \ne \chi_j(x)$ for $i \ne j$. From the latter characterization it is clear that the elements in $T$ that are strongly regular on $V$ form an open subset since the complement is the intersection over all pairs $i,j$ of the closed subvarieties $\{t \in T \,:\, \chi_i(t) = \chi_j(t)\}$. Let us also note that if $G$ is a classical algebraic group, then $x \in T$ is strongly regular on the natural $kG$-module $V$ if and only if all the eigenvalues of $x$ on $V$ are distinct.

By \cite[Theorem 11.7]{GT}, there is a finite collection $\mathcal{M}$ of finite dimensional irreducible rational  $kG$-modules such that no proper closed subgroup of $G$ acts irreducibly on all of these modules. Then with respect to this collection of modules, we say that $x \in T$ is  \emph{strongly regular} if it is strongly regular on each module in $\mathcal{M}$.  

\begin{rem}\label{r:strong}
Note that the set of strongly regular elements in $T$ is open. Moreover, since each regular semisimple element in $G$ is conjugate to an element of $T$, and since the strongly regular property is invariant under conjugation
and the set of regular semisimple elements in $G$ is open, 
it follows that the set of strongly regular elements in $G$ is also open. 
\end{rem}

\begin{lem} \label{l:sr1} 
Let $V$ be a finite dimensional rational $kG$-module, let $T$ be a maximal torus of $G$ and suppose $x \in T$ is strongly regular on $V$. Let $H$ be a closed
subgroup of $G$ containing $x$ and assume $V$ has an $H$-invariant subspace that is not $G$-invariant. Then $\langle H, T \rangle$ is contained in a proper closed subgroup of $G$.
\end{lem}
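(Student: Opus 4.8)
The statement asserts that if $x \in T$ is strongly regular on $V$, and $x \in H \leqs G$ with $H$ stabilizing a subspace $W \subseteq V$ that is not $G$-invariant, then $\la H, T\ra$ lies in a proper closed subgroup of $G$. The key point is to produce a single subspace of $V$ that is simultaneously $H$-invariant and $T$-invariant but not $G$-invariant; the stabilizer of that subspace in $G$ is then a proper closed subgroup (it is proper precisely because the subspace is not $G$-invariant, and $V$ is nonzero and not $G$-trivial on the relevant part) containing both $H$ and $T$, hence containing $\la H, T\ra$.

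The plan is as follows. First, take $W \subseteq V$ to be an $H$-invariant subspace that is not $G$-invariant, which exists by hypothesis. Since $x \in H$, the subspace $W$ is in particular $x$-invariant. Now invoke the strong regularity of $x$ on $V$ (Definition \ref{d:sr}): every $x$-invariant subspace of $V$ is automatically $T$-invariant. Therefore $W$ is $T$-invariant. Thus $W$ is a subspace of $V$ that is invariant under both $H$ and $T$, hence invariant under the abstract subgroup they generate, and — since the stabilizer in $G$ of a fixed subspace of a rational module is a closed subgroup — $W$ is invariant under the closed subgroup $\la H, T\ra$ as well (equivalently, one simply observes that $\mathrm{Stab}_G(W)$ is a closed subgroup of $G$ containing $H$ and $T$). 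Finally, $\mathrm{Stab}_G(W)$ is a \emph{proper} closed subgroup of $G$ because $W$ is not $G$-invariant, so $\la H, T\ra \leqs \mathrm{Stab}_G(W) < G$, as required.

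There is essentially no obstacle here: the lemma is a direct unwinding of the definition of strong regularity, and the only thing to check is that $\mathrm{Stab}_G(W)$ is closed (standard, as the condition $gW \subseteq W$ is Zariski-closed on $G$ since the action of $G$ on $V$ is a morphism) and proper (immediate from $W$ not being $G$-invariant). The one point worth stating carefully is that strong regularity is being used on the natural (or relevant) module $V$ appearing in the hypothesis, matching Definition \ref{d:sr} verbatim, so no appeal to the collection $\mathcal{M}$ of Remark \ref{r:strong} is needed — the module $V$ is exactly the one named in the statement. Hence the proof is short: $W$ is $x$-invariant $\Rightarrow$ $W$ is $T$-invariant (strong regularity) $\Rightarrow$ $\mathrm{Stab}_G(W)$ is a proper closed subgroup containing $\la H, T\ra$.
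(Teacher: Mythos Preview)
Your proof is correct and follows essentially the same approach as the paper: take the given $H$-invariant subspace $W$, observe it is $x$-invariant and hence $T$-invariant by strong regularity, and conclude that $\la H, T\ra$ is contained in the proper closed stabilizer of $W$. The paper's version is just a terser rendering of the same argument.
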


\begin{proof}   
Let $W$ be an $H$-invariant subspace of $V$ that is not $G$-invariant. Since $W$ is $x$-invariant, it decomposes into a direct sum of eigenspaces for $x$ and so it is also $T$-invariant.  Therefore $\langle H, T \rangle$ preserves $W$ and the result follows. 
\end{proof}
 
In part (iii) of the next result, recall that $X_H$ is defined in \eqref{e:XH}. Also recall that a closed subgroup of $G$ has \emph{maximal rank} if it contains a maximal torus of $G$.

\begin{lem} \label{l:sr}    
Let $V$ be a finite dimensional irreducible rational $kG$-module and suppose there exists $u \in X$ such that $G(u)$ contains an element that is strongly regular on $V$. Then the following hold:
\begin{itemize}\addtolength{\itemsep}{0.2\baselineskip}
\item[{\rm (i)}]  There exists a nonempty open subset $Y$ of $X$ such that $G(y)$ contains a strongly regular element on $V$ for all $y \in Y$.
\item[{\rm (ii)}] If $G(u)^0$ contains a strongly regular element on $V$, then for generic $x \in X$, $G(x)^0$ contains a strongly regular element on $V$.  
\item[{\rm (iii)}]  Either $\Delta$ is nonempty, or there exists a maximal closed maximal rank subgroup $H$ of $G$ such that $X_H$ contains a nonempty open subset of $X$.
\end{itemize}
\end{lem}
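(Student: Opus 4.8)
The plan is to exploit throughout that the strongly regular elements form an open subset of $G$ (Remark~\ref{r:strong}) and that $\langle x_1,\dots,x_r\rangle$ is Zariski dense in $G(x)$, so that whenever some element of $G(x)$ satisfies a nonempty open condition, that condition is witnessed by a word value $w(x)$, where $w$ runs over the free group of rank $r$ regarded as a family of morphisms $X\to G$. For part~(i): since $G(u)$ contains a strongly regular element, so does the dense subgroup $\langle u_1,\dots,u_r\rangle$, i.e.\ $w(u)$ is strongly regular for some word $w$; then $Y:=\{x\in X: w(x)\text{ strongly regular}\}$ is the preimage under $w\colon X\to G$ of the open set of strongly regular elements, hence open, it contains $u$, and $w(y)\in\langle y_1,\dots,y_r\rangle\subseteq G(y)$ for all $y\in Y$.

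For part~(ii) one runs the same argument inside the connected component, using the subgroups $G_d(x)$ of Lemma~\ref{l;primitive} (note that $G(x)^0\leqs G_d(x)\leqs G(x)$ forces $G_d(x)^0=G(x)^0$). Choose $d$ large enough that $G_d(u)=G(u)^0$ (Remark~\ref{r:connected}). As $G_d(u)$ is the closure of the finitely generated group $\langle w_1(u),\dots,w_m(u)\rangle$ and contains a strongly regular element, some $w\in F_d$ has $w(u)$ strongly regular, and then $w(x)\in G_d(x)$ for all $x$ in the nonempty open set $\{x: w(x)\text{ strongly regular}\}$. Since a strongly regular element is regular semisimple, replacing $w$ by a suitable power makes $\overline{\langle w(x)\rangle}$ connected and hence $w(x)\in G_d(x)^0=G(x)^0$; thus $G(x)^0$ contains a strongly regular element on a nonempty open, in particular generic, subset of $X$. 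The point requiring care is exactly this passage to $G(x)^0$: one must see that the exponent can be fixed over a generic set while the element stays strongly regular, which is handled by the openness of the strongly regular locus and the $G_d$-machinery of Lemmas~\ref{l;primitive} and~\ref{l:submodules-conn}.

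For part~(iii), suppose $\Delta$ is empty. By~(i) there is a nonempty open $Y\subseteq X$ with a strongly regular element $g_y\in G(y)$ for every $y\in Y$; being regular semisimple, $g_y$ lies in the unique maximal torus $T_y=C_G(g_y)^0$ of $G$. If $G(x)$ lies in a proper parabolic for every $x\in X$, then by Lemma~\ref{l:parabolic} (or Theorem~\ref{t:generic} and Remark~\ref{r:parab}) they all lie in a fixed proper parabolic $P$, and any maximal parabolic $H\supseteq P$ has maximal rank and satisfies $X=X_P\subseteq X_H$. Otherwise the non-$G$-irreducible locus is a proper closed subset of $X$ (Lemma~\ref{l:parabolic}), so after shrinking $Y$ we may assume $G(y)$ is $G$-irreducible for all $y\in Y$; as $\Delta=\emptyset$ it is also proper, hence reducible on some module $W$ (on $V$ itself, or else on a module of the collection~$\mathcal{M}$, since no proper closed subgroup is irreducible on all of~$\mathcal{M}$). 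Since $g_y$ is strongly regular on $W$, Lemma~\ref{l:sr1} gives $\langle G(y),T_y\rangle\leqs M_y$ for a proper closed subgroup, which we enlarge to a maximal one; it has maximal rank because it contains $T_y$. As there are only finitely many conjugacy classes of maximal closed subgroups of maximal rank, for one such class $[M]$ the set $\{y\in Y: M_y\in[M]\}$ is dense in $X$, so $X_M$ — which contains this set and is the image under projection to $X$ of the closed set $\{(x,gM)\in X\times G/M: x_1,\dots,x_r\in M^g\}$, hence constructible — is a dense constructible subset of the irreducible variety $X$ and therefore contains a nonempty open subset of $X$. Taking $H=M$ completes the proof.

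The step that requires the most care is the one feeding into Lemma~\ref{l:sr1} in part~(iii): one must produce a module on which the generic subgroup $G(y)$ is reducible while $g_y$ remains strongly regular. This is precisely why strong regularity is measured against the full collection $\mathcal{M}$ of \cite[Theorem~11.7]{GT}, on which no proper closed subgroup acts irreducibly — a proper $G(y)$ is then automatically reducible on some $W\in\mathcal{M}$, and a strongly regular element is strongly regular on each such $W$. (If one has strong regularity only on the single module $V$, the subcase where $G(x)$ acts irreducibly on $V$ for generic $x$ must be handled separately, via Lemma~\ref{l:submodules}(i) and information about the $G$-irreducible subgroups that are irreducible on $V$.)
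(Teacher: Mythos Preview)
Your part (i) is correct and identical to the paper's argument.

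For (ii), the step ``replacing $w$ by a suitable power makes $\overline{\langle w(x)\rangle}$ connected and hence $w(x)\in G(x)^0$'' does not go through. Powers need not preserve strong regularity: if two eigenvalues of $w(x)$ on $V$ differ by an $n$-th root of unity then $w(x)^n$ has a repeated eigenvalue, so is not strongly regular. Moreover the exponent $n$ required to land in the identity component of $\overline{\langle w(x)\rangle}$ depends on $x$ and admits no uniform bound, so one cannot fix a single word $w^n$ that works on an open set. The paper's route avoids powers altogether: for \emph{each} $d\geqslant 1$ it reruns the proof of (i) with $F_d$ in place of $F$, obtaining a nonempty open set $Y_d=\{x\in X: G_d(x)\cap\mathcal S\ne\emptyset\}$ containing $u$; the intersection $\bigcap_d Y_d$ is then generic, and for any $x$ in it and any $d$ exceeding the finite index $[G(x):G(x)^0]$ one has $G(x)^0=G_d(x)$, which meets $\mathcal S$. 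Your closing reference to ``the $G_d$-machinery of Lemma~\ref{l:submodules-conn}'' points at exactly this device, but the argument you actually wrote out is the power argument, and that one fails.

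Your (iii) follows the paper's line; the parabolic versus $G$-irreducible case split and the explicit constructibility argument for $X_M$ are correct elaborations of steps the paper leaves terse. You are also right to flag, in your final paragraph, that the hypothesis supplies strong regularity only on the single module $V$ while the appeal to Lemma~\ref{l:sr1} needs it on whichever $W\in\mathcal M$ the proper subgroup $G(y)$ is reducible on; the paper's proof deals with this by passing at the start of (iii) to the absolute notion of strong regularity (with respect to the whole collection $\mathcal M$), and you would need to do the same rather than leave it as a separate case to be ``handled''.
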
 

\begin{proof}  
Let $\mathcal{S}$ be the set of elements in $G$ that are strongly regular on $V$ and recall that $\mathcal{S}$ contains a nonempty open subset of $G$ (see Remark \ref{r:strong}). Since $G(u) \cap \mathcal{S}$ is nonempty, it follows that there is a word $w$ in the free group $F$ of rank $r$, which we may view as a map $w:X \to G$, such that $w(u) \in \mathcal{S}$ (since the abstract group generated by the coordinates of $u$ is dense in $G(u)$ by definition). This implies that $Y = \{y \in X \,:\, w(y) \in \mathcal{S}\}$ is a nonempty open subset of $X$, which  proves (i). 

Now let us turn to (ii). By the proof of (i), we deduce that $\{x \in X \,:\, G_d(x) \cap \mathcal{S} \ne \emptyset\}$ is a nonempty open subset of $X$ for each positive integer $d$. The desired result now follows since $G_d(x) = G(x)^0$ for $d \gg 0$.

Finally, let us consider (iii). As above, we define strongly regular elements in $G$ with respect to a finite set $\mathcal{M}$ of irreducible $kG$-modules. We may assume $\Delta$ is empty, which implies that each $G(x)$ acts reducibly on at least one of the modules in $\mathcal{M}$. Define $w \in F$ as in the first paragraph of the proof and fix $x \in X$ such that $w(x)$ is strongly regular. Let $V$ be a module in $\mathcal{M}$ on which $G(x)$ acts reducibly and define $\mathcal{S}$ and $Y$ as above, with respect to this module. Then any $w(x)$-invariant subspace of $V$ is also $T$-invariant, where $T=C_G(w(x))$, so $\langle G(x), T \rangle$ acts reducibly on $V$ and thus $G(x)$ is contained in a maximal closed maximal rank subgroup of $G$. Since there are only finitely many conjugacy classes of such subgroups, it follows that there is a maximal closed maximal rank subgroup $H$ and a nonempty open subset $Z \subseteq Y$ such that $G(z)$ is conjugate to a subgroup of $H$ for all $z \in Z$.  
\end{proof} 

We close with three results that will be applied directly in the proof of Theorem \ref{t:main2}. The first is essentially a corollary of part (i) of Lemma \ref{l:submodules}. 

\begin{cor} \label{c:sr}  
Let $G$ be one of the classical groups $\SL_n(k)$ $(n \geqs 2)$, $\Sp_n(k)$ $(n \geqs 4)$, or $\SO_n(k)$ $(n \geqs 3, \, n \ne 4)$ and let $V$ be the natural $kG$-module. 
Assume there exists $x,y \in X$ such that $G(x)^0$ acts irreducibly on $V$ and $G(y)$ contains a strongly regular element on $V$. Then either
\begin{itemize}\addtolength{\itemsep}{0.2\baselineskip}
\item[{\rm (i)}]  $\Delta$ is nonempty; or
\item[{\rm (ii)}] $G=\Sp_n(k)$, $p=2$ and $G(x)^0$ is generically contained in a conjugate of 
$\SO_n(k)$.
\end{itemize}
\end{cor}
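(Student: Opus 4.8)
My plan is to feed the two hypotheses into Lemmas~\ref{l:submodules-conn} and~\ref{l:sr} to produce a maximal closed subgroup of maximal rank whose connected component acts irreducibly on $V$, and then read off the possibilities from the classification of maximal-rank subgroups of classical groups.

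First, since $G(x)^0$ acts irreducibly on $V$ for some $x \in X$, Lemma~\ref{l:submodules-conn}(i) shows that $G(x)^0$ is irreducible on $V$ for all $x$ in a generic subset $U$ of $X$; recall that $k$ is uncountable, so $U$ is dense. Since $V$ is an irreducible $kG$-module and $G(y)$ contains an element that is strongly regular on $V$, Lemma~\ref{l:sr}(iii) applies and gives the following dichotomy: either $\Delta$ is nonempty, in which case conclusion~(i) holds and we are done; or there exists a maximal closed subgroup $H$ of $G$ of maximal rank such that $X_H$ contains a nonempty open subset $Z$ of $X$. So assume we are in the latter case.

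For each $z \in Z$ we have $G(z) \leqs H^g$ for some $g \in G$, and hence $G(z)^0 \leqs (H^0)^g$. As $U$ is dense and $Z$ is open and nonempty, $U \cap Z \ne \emptyset$; choosing $z$ in this intersection shows that $(H^0)^g$, and therefore $H^0$, acts irreducibly on $V$. So $H^0$ is a proper connected reductive subgroup of $G$ of maximal rank acting irreducibly on the natural module. The key step is now to invoke the classification of closed connected subgroups of maximal rank of a classical algebraic group (Borel--de Siebenthal, together with the maximal-rank subgroups arising from the special isogenies in characteristic $2$): every proper such subgroup of $\SL_n(k)$, $\Sp_n(k)$ or $\SO_n(k)$ stabilises a nontrivial direct sum decomposition of $V$ --- into nondegenerate or totally singular subspaces in the symplectic and orthogonal cases, and into arbitrary subspaces in the linear case --- and hence acts reducibly on $V$; the sole exception is $G = \Sp_n(k)$ with $p = 2$, where $H^0$ may be $G$-conjugate to $\SO_n^{\pm}(k)$ acting irreducibly on the $n$-dimensional natural module. (The small cases $\SL_2(k)$, $\SL_4(k)$, $\SO_3(k)$ and $\Sp_4(k)$ should be checked directly: the connected component $N_G(T)^0 = T$ of a torus normaliser is never irreducible on $V$, and the tensor-product subgroup $\GL_2 \otimes \GL_2$ of $\GL_4$ no longer has maximal rank once intersected with $\SL_4(k)$.)

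We therefore conclude that $G = \Sp_n(k)$, $p = 2$, and $H^0$ is conjugate to $\SO_n(k)$. Since $G(z)^0 \leqs (H^0)^{g}$ for each $z \in Z$ and $Z$ is a nonempty open --- hence generic --- subset of $X$, it follows that $G(x)^0$ is generically contained in a conjugate of $\SO_n(k)$, which is conclusion~(ii). The main obstacle is precisely the classification input of the previous paragraph: one must be careful with the characteristic~$2$ behaviour (the extra subgroup $\SO_n(k) < \Sp_n(k)$, and verifying that the natural module of $\SO_n(k)$ remains irreducible when $p = 2$, so that Lemma~\ref{l:sr} is genuinely applicable to these $G$) and must rule out the handful of small exceptional configurations noted above by direct inspection.
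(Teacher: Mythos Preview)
Your proof is correct and follows essentially the same route as the paper: feed the two hypotheses into Lemma~\ref{l:submodules-conn}(i) and Lemma~\ref{l:sr}(iii), obtain a maximal closed maximal-rank subgroup $H$ with $H^0$ irreducible on $V$, and then invoke the classification of connected maximal-rank subgroups of classical groups. One small notational slip: over an algebraically closed field there is no $\pm$ type for $\SO_n(k)$, so just write $\SO_n(k)$.
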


\begin{proof} 
Suppose $\Delta$ is empty. By applying Lemmas \ref{l:submodules}(i) and \ref{l:sr}, we deduce that there is a maximal closed maximal rank subgroup $H$ of $G$ such that $G(x)^0$ is irreducible on $V$ and is contained in a conjugate of $H^0$ for generic $x \in X$. By considering the connected irreducible maximal rank subgroups of $G$, we deduce that $G=\Sp_n(k)$, $p=2$ and $H^0 = \SO_n(k)$ is the only possibility.
\end{proof}  

The following result is an easy consequence of the classification of low dimensional representations of simple algebraic groups. 

\begin{lem} \label{l:largeranksubs}
Let $G$ be one of the groups $\Sp_n(k)$ $(n \geqs 6)$ or $\SO_n(k)$ $(n \geqs 9)$, and let $H$ be a closed connected proper subgroup of $G$ that acts irreducibly on the natural $kG$-module $V$.
\begin{itemize}\addtolength{\itemsep}{0.2\baselineskip}
\item[{\rm (i)}]  If $G=\Sp_n(k)$, then either ${\rm rk}\, H \leqs \lfloor n/4 \rfloor +1$, or $p=2$ and $H = \SO_n(k)$.
\item[{\rm (ii)}]  If $G=\SO_n(k)$, then ${\rm rk}\, H \leqs \lfloor n/4 \rfloor +1$ if $n$ is even, otherwise ${\rm rk}\, H \leqs (n+1)/4$.
\end{itemize}
\end{lem}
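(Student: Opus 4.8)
\textbf{Proof proposal for Lemma \ref{l:largeranksubs}.}

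The plan is to reduce the statement to a known classification of closed connected proper irreducible subgroups $H$ of a classical group $G$ acting on a small-dimensional module, and then to read off the rank bound in each case. First I would appeal to the standard dichotomy (as in \cite{GT} or the work of Liebeck--Seitz on maximal subgroups of classical groups): since $H$ is connected and acts irreducibly on $V$, either $H$ is contained in a member of one of the geometric Aschbacher-type families $\mathcal{C}_i$ for classical algebraic groups (imprimitive, tensor-decomposable, tensor-induced, or subfield/classical-type stabilizers), or $H$ is simple and $V = V_H(\lambda)$ is (up to Frobenius twists) a nontrivial restricted irreducible module for $H$. In each of these situations I would bound ${\rm rk}\, H$ directly in terms of $n = \dim V$.

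For the geometric cases the bounds are elementary. If $H$ preserves a tensor decomposition $V = V_1 \otimes V_2$ with $\dim V_i = n_i \geqs 2$ and $n_1 n_2 = n$, then $H \leqs \mathrm{Cl}(V_1) \otimes \mathrm{Cl}(V_2)$ and ${\rm rk}\, H \leqs {\rm rk}\, \mathrm{Cl}_{n_1}(k) + {\rm rk}\, \mathrm{Cl}_{n_2}(k)$, which is maximized when $\{n_1,n_2\} = \{2, n/2\}$, giving roughly $n/4 + 1$; the imprimitive and tensor-induced cases give even smaller ranks since $n_i \geqs 2$ forces more factors. The subfield-type and "classical subgroup" cases ($\Sp$, $\SO$, $\mathrm{SU}$ inside a larger classical group on the same module, or $\SO_n \leqs \Sp_n$ in characteristic $2$) are where the genuine exception $H = \SO_n(k) \leqs \Sp_n(k)$ with $p=2$ arises; all other such containments either have ${\rm rk}\, H \leqs \lfloor n/4\rfloor + 1$ comfortably, or do not act irreducibly, or are ruled out by the hypothesis that $H$ is a \emph{proper} subgroup (so e.g.\ $\SO_n \leqs \mathrm{SO}_n$ is excluded). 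Throughout I would use $n \geqs 6$ (resp.\ $n \geqs 9$) to discard sporadically small configurations.

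The main obstacle, and the bulk of the work, is the case where $H$ is a simple algebraic group acting via a nontrivial irreducible restricted module $V_H(\lambda)$ with $\dim V_H(\lambda) = n$ (or $n$ a power-of-$p$ twist thereof, but Frobenius twists do not change the rank, so I may take $\lambda$ restricted). Here I would invoke the classification of irreducible modules of small dimension for simple algebraic groups --- effectively the statement that if $\dim V_H(\lambda) \leqs$ (a quadratic bound in ${\rm rk}\, H$), then $V_H(\lambda)$ appears on a short published list (natural modules, their duals, small exterior and symmetric powers, spin modules, adjoint modules, and a handful of small exceptions). Concretely, for $H$ of rank $\ell$ the smallest nontrivial irreducible has dimension on the order of $\ell$ (for type $A_\ell$, $B_\ell$, $C_\ell$, $D_\ell$), so $\ell \leqs n$ trivially, but to get the sharp bound $\ell \leqs \lfloor n/4\rfloor + 1$ I need to rule out $H$ of large rank acting on its \emph{natural} module sitting irreducibly inside $\Sp_n$ or $\SO_n$: such a containment would force $H$ to be of the same (or a specifically related) classical type as $G$ with $\dim V_H \approx n$, contradicting properness --- except precisely when $H = \SO_n(k) \leqs \Sp_n(k)$ and $p = 2$, the recorded exception. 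For all remaining $\lambda$ (wedge-squares, spin modules, etc.) the module dimension grows at least quadratically in $\ell$, so $\ell = {\rm rk}\, H$ is at most of order $\sqrt{n}$, which is far below $\lfloor n/4 \rfloor + 1$ once $n \geqs 6$ (resp.\ $n\geqs 9$); I would verify the few borderline small cases (e.g.\ $\mathrm{SL}_4$ on $\wedge^2$, spin modules of $B_3, D_4, D_5$) by hand against the stated thresholds, noting that for $\SO_n$ with $n$ odd one gets the slightly better bound $(n+1)/4$ because the even-dimensional wedge and spin modules land in symplectic or even-orthogonal groups rather than $\SO_{\mathrm{odd}}$. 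Assembling these cases yields the two displayed bounds.
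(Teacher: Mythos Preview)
Your approach is essentially the same as the paper's: split into the tensor-decomposable case versus the case where $H$ is simple acting via a restricted irreducible module, bound the rank directly in the former, and in the latter appeal to the classification of low-dimensional irreducibles (the paper cites L\"ubeck \cite{Lu} for this). One small point: your invocation of the full Aschbacher-type family list is unnecessary overhead here, since $H$ is assumed \emph{connected}. A connected group acting irreducibly cannot be imprimitive or tensor-induced (there is no component group to permute summands or tensor factors), and over an algebraically closed field there is no subfield family. So the dichotomy collapses immediately to ``$H$ not simple, hence $V$ tensor-decomposable for $H$'' versus ``$H$ simple'', which is exactly how the paper organizes it. This does not affect correctness---those extra families are simply vacuous---but it would streamline your write-up to drop them.
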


\begin{proof}
First assume $H$ is not simple, in which case $V$ is tensor decomposable as a $kH$-module.  If $n=2m$ with $m$ even, then the largest rank self-dual non-simple closed connected subgroup of $G$ is of the form $\Sp_2(k) \otimes L$, where $L = \Sp_m(k)$ or $\SO_m(k)$, which has rank $m/2+1 = n/4+1$. Similarly, if $G$ is symplectic and $n=2m$ with $m$ odd, then the same argument shows that the maximum rank is $(m + 1)/2 = \lfloor n/4\rfloor+1$ when $p \ne 2$. Here the bound is even better when $p=2$ since no group has a nontrivial odd-dimensional irreducible self-dual module in even characteristic. 

The remaining cases where $G$ is orthogonal and $H$ is non-simple can be handled in a similar fashion. If $n = 2m$ with $m$ odd, then we may assume $p \ne 2$ (since there are no closed connected tensor decomposable subgroups of $G$ when $p=2$). Here the largest tensor decomposable subgroups of $G$ are of the form $L_a \otimes L_b$, where $L_a$ is a symplectic or orthogonal group with an $a$-dimensional natural module (and similarly for $L_b$), $2m=ab$ and $3 \leqs a < b$. If $n$ is odd, then $p \ne 2$ and the same argument applies.  

Finally, suppose $H$ is simple. By inspecting \cite{Lu}, one checks that aside from a handful of very low rank cases (excluded by the conditions on $n$ in the statement of the lemma), the self-dual irreducible $kH$-modules have relatively large dimension (excluding Frobenius twists of the natural module when $H$ is a classical group). The result quickly follows.
\end{proof} 

Finally, we present the following well known and elementary observation.  See \cite[Lemma 3.14]{BGS}, for example.

\begin{lem} \label{l:quadratic}   
Let $a, b \in \GL(V) = \GL_n(k)$ be quadratic elements. Then each composition factor of $\langle a, b \rangle$ on $V$ is at most $2$-dimensional.  In particular, if $n \geqs 3$ then  $\langle a, b \rangle$ acts reducibly on $V$. 
\end{lem}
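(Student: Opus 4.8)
The plan is to reduce immediately to the case of an irreducible module and then exploit the structure that two quadratic elements impose. So suppose $W \subseteq V$ is a nonzero $\langle a,b\rangle$-submodule and pass to the composition factor; replacing $V$ by this factor, we may assume $\langle a,b\rangle$ acts irreducibly on $V$, and it suffices to show $\dim V \leqs 2$. Write $A = a - \lambda_1 I$ is singular for some scalar $\lambda_1$ (since $a$ satisfies a quadratic $(t-\lambda_1)(t-\lambda_2)$, and in the worst case $\lambda_1=\lambda_2$), and similarly set up the analogous decomposition for $b$. The key point is that $a$ stabilizes a direct sum decomposition $V = V_1 \oplus V_2$ into (generalized, if needed) eigenspaces — more precisely, if $a$ has distinct eigenvalues then $V = V_1 \oplus V_2$ with $a$ scalar on each $V_i$, and if $a$ is not semisimple then $(a-\lambda I)^2 = 0$ so $\mathrm{im}(a - \lambda I) \subseteq \ker(a-\lambda I)$; in every case there is a subspace $U_a := \mathrm{im}(a - \lambda_1 I)$ (or the relevant eigenspace) with small codimension on which $a$ is constrained.

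The cleanest route is via the following standard trick: consider the subspace $U = \ker(a - \lambda I) \cap \ker(b - \mu I)$ for suitable scalars $\lambda,\mu$, or alternatively work with the sum of two eigenspaces. Concretely, pick an eigenvalue $\lambda$ of $a$ whose eigenspace $E_\lambda(a)$ has dimension $\geqs \frac12 \dim V$ (such $\lambda$ exists since $a$ has at most two eigenvalues and, when it has exactly two with a Jordan block, one checks the $1$-eigenspace of the unipotent part still has dimension $\geqs \frac12\dim V$ because $(a-\lambda I)^2=0$ forces $\mathrm{rk}(a-\lambda I) \leqs \frac12\dim V$). Do the same for $b$, getting $E_\mu(b)$ with $\dim E_\mu(b) \geqs \frac12\dim V$. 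If $\dim V \geqs 3$ these two subspaces must intersect nontrivially, so there is $0 \neq v$ fixed (up to scalar) by both $a$ and $b$; but then $\langle v\rangle$ is generically close to being invariant — this alone is not quite enough, so instead one argues with the chain of subspaces $V \supseteq U_a + U_b \supseteq \cdots$ built from the images $\mathrm{im}(a-\lambda I)$ and $\mathrm{im}(b-\mu I)$, each of dimension $\leqs \frac12\dim V$, and shows the $\langle a,b\rangle$-submodule they generate is proper when $\dim V \geqs 3$.

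The honest and shortest argument I would actually write: after scaling $a$ and $b$ and reducing to the irreducible case, the element $a$ satisfies $(a-I)(a-\nu I)=0$, so $N_a := a - I$ has $N_a(N_a - (\nu-1)I) = 0$; set $r_a = \mathrm{rk}\, N_a \leqs \lceil \dim V/2\rceil$ with equality forcing $N_a^2 = 0$ only in the unipotent case, and note $r_a \leqs \lfloor \dim V /2\rfloor$ in fact once one handles the semisimple case by a direct eigenspace count. Then $W := \mathrm{im}\, N_a + \mathrm{im}\, N_b$ satisfies $\dim W \leqs \dim V$ with equality iff $\mathrm{im}\,N_a$ and $\mathrm{im}\,N_b$ are complementary of dimension exactly $\dim V/2$; and one shows $\langle a,b\rangle W \subseteq W$ fails to be all of $V$ unless $\dim V \leqs 2$ by checking that $a$ and $b$ act on $V/W$ as scalars (since $N_a, N_b$ kill $V/W$), so $V/W$ is a trivial-ish $\langle a,b\rangle$-module, hence $V/W = 0$ by irreducibility, i.e. $W = V$, which then forces a contradiction with irreducibility via the decomposition $V = \mathrm{im}\,N_a \oplus \mathrm{im}\,N_b$ unless $\dim\mathrm{im}\,N_a = \dim\mathrm{im}\,N_b \leqs 1$. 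The main obstacle, and the step needing the most care, is the bookkeeping in characteristic $p$ when $a$ or $b$ is unipotent (so $\nu = 1$ and $N_a^2 = 0$): here one must be careful that "quadratic" genuinely gives $\mathrm{rk}\,N_a \leqs \lfloor \dim V/2\rfloor$ and that the quotient argument still applies. Since this is a well-known fact, I would in practice simply cite \cite[Lemma 3.14]{BGS} as the excerpt already does, and include only the one-paragraph reduction above for completeness.
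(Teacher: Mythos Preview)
The paper does not give a proof; it simply cites \cite[Lemma 3.14]{BGS}. Your final sentence (``in practice simply cite \cite[Lemma 3.14]{BGS}'') therefore matches the paper exactly, and as a submission that would be fine.

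However, the self-contained argument you sketch has a genuine gap at the final step. You correctly observe that $W = \mathrm{im}\,N_a + \mathrm{im}\,N_b$ is $\langle a,b\rangle$-invariant and that $a,b$ act as scalars on $V/W$, so irreducibility forces $W = V$. But the claimed conclusion ``$V = \mathrm{im}\,N_a \oplus \mathrm{im}\,N_b$ forces a contradiction with irreducibility unless $\dim\mathrm{im}\,N_a \leqs 1$'' is not justified and is in fact false as stated: neither $\mathrm{im}\,N_a$ nor $\mathrm{im}\,N_b$ is individually a submodule, so a direct sum decomposition of this kind says nothing about irreducibility. For a concrete illustration, take $V=k^4$, $a=\mathrm{diag}(1,1,-1,-1)$, and $b$ the involution with $(+1)$-eigenspace $\langle e_1+e_3,\,e_2+e_4\rangle$. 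Then $\mathrm{im}(a-I)+\mathrm{im}(b-I)=V$, both images are $2$-dimensional, and yet $V$ is reducible (the subspace $\langle e_1+e_3,\,e_1-e_3\rangle$ is invariant). So reaching $W=V$ is not a contradiction; it is simply where your argument stops.

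The missing idea is to bring the product $ab$ into play rather than working with the eigenspaces of $a$ and $b$ separately. Here is the clean argument: assume $V$ is irreducible, let $v$ be an eigenvector of $c=ab$ with eigenvalue $\zeta$, and set $W=\langle v,\,bv\rangle$. Then $b^2v\in\langle v,bv\rangle$ since $b$ is quadratic, and $a(bv)=cv=\zeta v\in W$; finally, since $a$ is quadratic we have $a^{-1}\in\langle I,a\rangle$, and $bv=\zeta a^{-1}v$ rearranges to give $av\in\langle v,bv\rangle=W$. Thus $W$ is a nonzero $\langle a,b\rangle$-submodule of dimension at most $2$, so $V=W$.
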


As a consequence, if $G \ne \SL_2(k)$ is a classical group and $a,b \in G$ act quadratically on the natural $kG$-module, then $\langle a, b \rangle$ is not Zariski dense. 

\subsection{Homogeneous spaces}\label{ss:hom}

For the remainder of Section \ref{s:prel2}, we will assume 
\begin{equation}\label{e:X}
X = C_1 \times \cdots \times C_r = x_1^G \times \cdots \times x_r^G
\end{equation}
where each $C_i = x_i^G$ is a noncentral conjugacy class.

Here we establish some general results concerning the action of $G$ on coset varieties $G/H$, where $H$ is a closed subgroup. Our first lemma provides a useful criterion to ensure that $G(x)$ is not generically contained in a conjugate of $H$ (the relevant condition is sufficient, but not always necessary). Recall that $X_H$ is defined in \eqref{e:XH}. 
  
\begin{lem}\label{l:basic} 
Let $H$ be a closed subgroup of $G$ and set $Y=G/H$. If
\[
\sum_{i=1}^{r} \dim Y^{x_i}  < (r-1) \dim Y
\]
then $X_H$ is contained in a proper closed subvariety of $X$.
\end{lem}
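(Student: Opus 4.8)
The plan is to set up the standard incidence variety argument, exactly analogous to the proof of Lemma~\ref{l:parabolic} but now tracking dimensions of fibers rather than merely completeness. Fix $H$ and set $Y = G/H$, which need not be complete, so the naive projection argument fails; instead I would work with the closed subset $\mathcal{Z} = \{(x,gH) \in X \times Y : G(x) \text{ fixes } gH\} \subseteq X \times Y$, and observe that $X_H$ is exactly the image of $\mathcal{Z}$ under projection to $X$. The key point is that $X_H$ is a constructible set (image of a variety under a morphism), so to show it lies in a proper closed subvariety it suffices to show $\dim X_H < \dim X$, equivalently that $\mathcal{Z}$ has dimension strictly less than $\dim X$ (since the fibers of $\mathcal{Z} \to X_H$ over a generic point of $X_H$ have dimension $\geqslant 0$).

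The heart of the matter is to compute, or rather bound, $\dim \mathcal{Z}$ by projecting instead onto $Y$. Because $X = x_1^G \times \cdots \times x_r^G$ is $G$-invariant and $G$ acts transitively on $Y = G/H$, the variety $\mathcal{Z}$ is $G$-invariant under the diagonal action, so all fibers of the projection $\pi_2 : \mathcal{Z} \to Y$ are isomorphic; it suffices to compute the fiber over the base point $eH$. That fiber is $\{x \in X : G(x) \leqslant H\} = X \cap H^r$, but more precisely, since $G(x) \leqslant H$ iff every coordinate $x_i$ lies in $H$, the fiber is $(x_1^G \cap H) \times \cdots \times (x_r^G \cap H)$. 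Now I would use the standard fact that $\dim(x_i^G \cap H) \leqslant \dim(x_i^H \cdot (\text{something}))$ — more carefully, each irreducible component of $x_i^G \cap H$ has dimension at most $\dim x_i^G - \operatorname{codim}_Y Y^{x_i}$... Let me instead recall the cleaner route: for the diagonal $G$-action on $x_i^G \times Y$, the fiber of the projection to $Y$ over $eH$ is $x_i^G \cap H$ while the fiber of the projection to $x_i^G$ over a point $z$ is $Y^z$; counting dimensions two ways gives $\dim(x_i^G \cap H) = \dim x_i^G + \dim Y^{x_i} - \dim Y$ when the relevant maps are suitably dominant, and in general $\leqslant$ this. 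Summing over $i$ and adding $\dim Y$ (the dimension of the base $Y$ for $\pi_2$) yields
\[
\dim \mathcal{Z} \leqslant \dim Y + \sum_{i=1}^r \left( \dim x_i^G + \dim Y^{x_i} - \dim Y \right) = \dim X + \sum_{i=1}^r \dim Y^{x_i} - (r-1)\dim Y,
\]
using $\dim X = \sum_i \dim x_i^G$. The hypothesis $\sum_i \dim Y^{x_i} < (r-1)\dim Y$ then forces $\dim \mathcal{Z} < \dim X$, hence $\dim X_H < \dim X$, so $X_H$ is contained in a proper closed subvariety $\overline{X_H}$ of the irreducible variety $X$, as claimed.

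The main obstacle is making the dimension-counting inequality $\dim(x_i^G \cap H) \leqslant \dim x_i^G + \dim Y^{x_i} - \dim Y$ rigorous component-by-component, since $x_i^G \cap H$ may be reducible with components of varying dimension, and the fibers $Y^z$ over different conjugates $z$ of $x_i$ in $H$ can jump in dimension. The clean way to handle this is to reverse the bookkeeping: work directly with $\mathcal{Z}_i := \{(x_i, gH) \in x_i^G \times Y : g^{-1}x_i g \in H\}$, note $\mathcal{Z}_i$ is $G$-invariant so its projection to $x_i^G$ is surjective with all fibers isomorphic to $Y^{x_i}$, giving $\dim \mathcal{Z}_i = \dim x_i^G + \dim Y^{x_i}$ exactly (on each irreducible component through which the projection is dominant — and $G$-invariance plus transitivity of $G$ on $x_i^G$ forces this); then the projection $\mathcal{Z}_i \to Y$ has image inside the closure of the $H$-conjugates, and comparing with the base point $eH$ gives $\dim(x_i^G \cap H) \leqslant \dim \mathcal{Z}_i - \dim(\overline{\pi_Y(\mathcal{Z}_i)}) \leqslant \dim x_i^G + \dim Y^{x_i} - \dim Y$ whenever $\pi_Y(\mathcal{Z}_i)$ is dense in $Y$, and otherwise $x_i^G \cap H$ is empty or even smaller, so the bound still holds. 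Finally one assembles the $\mathcal{Z}_i$ into $\mathcal{Z}$ via the product structure and fiber-dimension theorem, and the stated inequality on $\sum_i \dim Y^{x_i}$ closes the argument; if $X_H$ were dense, then $\overline{X_H} = X$ contradicts $\dim \overline{X_H} \leqslant \dim \mathcal{Z} < \dim X$.
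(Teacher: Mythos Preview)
Your argument is correct and is essentially the approach the paper has in mind: the paper does not write out a proof but says the lemma ``follows easily from the elementary observation'' $\dim Y - \dim Y^g = \dim g^G - \dim(g^G \cap H)$ (their equation~\eqref{e:fpr}, cited from \cite{LLS}), and refers to the proof of \cite[Theorem~5]{BGG}. Your incidence-variety computation is exactly the standard double-counting that establishes \eqref{e:fpr} and then feeds it into the bound $\dim \mathcal{Z} = \dim Y + \sum_i \dim(x_i^G \cap H) < \dim X$; the only cosmetic difference is that you re-derive \eqref{e:fpr} rather than quoting it. One small expository point: your worry about reducibility of $\mathcal{Z}_i$ is handled more cleanly than you indicate, since $G$ is connected and acts transitively on both $x_i^G$ and $Y$, so each irreducible component of $\mathcal{Z}_i$ is $G$-invariant and surjects onto both factors with equidimensional fibers, giving the exact equality $\dim \mathcal{Z}_i = \dim x_i^G + \dim Y^{x_i} = \dim Y + \dim(x_i^G \cap H)$ rather than merely an inequality.
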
   

\begin{proof}
First we recall that 
\begin{equation}\label{e:fpr}
\dim Y - \dim Y^g = \dim g^G - \dim (g^G \cap H)
\end{equation}
for all $g \in H$ (see \cite[Proposition 1.14]{LLS}). Clearly, $X_H$ is nonempty if and only if $C_i \cap H$ is nonempty for all $i$, so we may assume each $x_i$ is contained in $H$. We will work with the variety
\[
Z = \left\{ (g_1, \ldots, g_r,y) \,:\, g_i \in C_i, \, y \in \bigcap_i Y^{g_i} \right\} \subseteq X \times Y
\]
and the projection maps $\pi_1 : Z \to X$ and $\pi_2 : Z \to Y$, noting that $X_H$ coincides with the image of $\pi_1$. 

All fibers of $\pi_2$ have the same dimension, so  
\[
\dim Z = \dim Y + \sum_{i=1}^r \dim (C_i \cap H)
\]
and by applying \eqref{e:fpr} we deduce that
\[
\dim Z  = \dim Y + \sum_{i=1}^r\left(\dim C_i - \dim Y + \dim Y^{x_i}\right) < \dim X
\]
since $\sum_{i} \dim Y^{x_i}  < (r-1) \dim Y$. Therefore, $\pi_1$ is not dominant and thus $X_H$ is contained in a proper closed subvariety of $X$.
\end{proof}
  
We also record a version of \eqref{e:fpr} for subgroups. Recall that if $H$ and $L$ are closed subgroups of $G$, then $T:=T(L,H) 
  =\{g \in G \,:\, L^g \leqs H \}$ is the \emph{transporter of $L$ into $H$}. Note that $T$ is a union of cosets of $N = N_G(L)$ and $T/N$ is a variety.   
  
  \begin{lem} \label{l:fpr}   
  Let $H$ and $L$ be closed subgroups of $G$ and set $Y = G/H$. If $T=T(L,H)$ is nonempty, then
  \[
  \dim Y - \dim Y^L = \dim Y - \dim T/N  = \dim G - \dim T.
  \]
  \end{lem}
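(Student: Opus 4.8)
The plan is to compute both displayed equalities by relating the fixed-point subvariety $Y^L$ and the transporter $T = T(L,H)$ via natural maps, using the orbit-stabilizer correspondence on the homogeneous space $Y = G/H$. First I would unwind the definitions: a coset $gH \in Y$ is fixed by $L$ precisely when $L \, gH = gH$, i.e.\ $g^{-1}Lg \leqslant H$, i.e.\ $g \in T(L^{},H)$ after adjusting for the direction of conjugation (one has to be mildly careful here: $L^g \leqslant H$ for $g \in T$ means $g^{-1}Lg \leqslant H$, so the fixed coset is $g^{-1}H$, and the map $g \mapsto g^{-1}H$ identifies $T$ with the preimage in $G$ of $Y^L$ under the quotient $G \to G/H$). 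Since $Y^L$ is nonempty (as $T \ne \emptyset$), the fibres of this surjection $T \to Y^L$ are the cosets of $H$ contained in $T$, each of dimension $\dim H$, so $\dim T = \dim Y^L + \dim H$, and hence $\dim Y - \dim Y^L = \dim G - \dim H - \dim Y^L = \dim G - \dim T$, which is the second equality.

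For the first equality, $\dim Y - \dim T/N = \dim Y - \dim T + \dim N$ since $T$ is a union of $N = N_G(L)$-cosets and the quotient $T \to T/N$ has all fibres of dimension $\dim N$; combining with the second equality gives $\dim Y - \dim T/N = \dim Y^L + \dim H - \dim T + (\dim T - \dim H) \cdot$—wait, let me instead just observe $\dim T/N = \dim T - \dim N$, so $\dim Y - \dim T/N = (\dim G - \dim T) + \dim N$. To finish I would show $\dim Y - \dim Y^L = (\dim G - \dim T) + \dim N$ is consistent, i.e.\ that the extra $\dim N$ term is absorbed correctly; in fact the cleanest route is to note that $Y^L$ decomposes into $N$-orbits and the natural morphism $T/N \to Y^L$ (sending $Ng$ to the fixed coset) is surjective with finite fibres when $L$ has finitely many $G$-conjugates inside $H$ up to the action of... this is getting delicate, so I would instead argue directly: $Y^L \cong T/H'$ where $H'$ acts suitably, but the safest formulation is that both $T/N$ and $Y^L$ have the same dimension because the morphism $T \to Y^L$ factors as $T \to T/N \to Y^L$ with the second map having zero-dimensional generic fibre (two cosets $g_1^{-1}H = g_2^{-1}H$ with $L^{g_1}, L^{g_2} \leqslant H$ satisfy $g_2 g_1^{-1} \in H \cap N_G(\text{something})$...).

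The main obstacle, as the above indicates, is correctly bookkeeping the $N = N_G(L)$ action: one must check that the fibres of $T/N \to Y^L$ are zero-dimensional (indeed, that the map is injective once we observe $g_1 H = g_2 H$ forces $g_1$ and $g_2$ to differ by an element of $H$ normalizing $L$, hence lying in both $H$ and $N$, and $Ng_1 = Ng_2$). So the real content is the chain of dimension counts $\dim T = \dim Y^L + \dim H$ and $\dim T/N = \dim Y^L$, from which $\dim Y - \dim Y^L = \dim Y - \dim T/N = \dim G - \dim T$ follows immediately since $\dim Y = \dim G - \dim H$. I expect everything to be a routine application of the fibre-dimension theorem once the identification of $Y^L$ with a quotient of $T$ is set up carefully; the only thing requiring genuine attention is verifying the relevant fibres have the claimed dimension, which is where the normalizer $N$ enters.
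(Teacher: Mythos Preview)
Your derivation of the outer equality $\dim Y - \dim Y^L = \dim G - \dim T$ is correct: the quotient map $G \to G/H$ restricts to a surjection $T \to Y^L$ with fibres equal to $H$-cosets, giving $\dim Y^L = \dim T - \dim H$, and the rest is arithmetic. The paper proves the same equality by a slightly different (and equally standard) device: it introduces the incidence variety $Z = \{(g,y) \in G \times Y : y \in Y^{L^g}\}$ and computes $\dim Z$ by projecting onto each factor, obtaining $\dim Z = \dim Y + \dim T = \dim G + \dim Y^L$. Your argument is more direct; the paper's has the minor advantage of being symmetric in the two projections and not requiring one to track which side $H$ and $N$ act on.

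Your struggle with the middle term is not a gap in your reasoning but a symptom of a typo in the displayed statement. As written, the middle equality would force $\dim T/N = \dim Y^L$, i.e.\ $\dim N_G(L) = \dim H$, which is false in general; your attempted argument that the map $T/N \to Y^L$ has zero-dimensional fibres cannot succeed for this reason. The intended middle expression is $\dim G/N - \dim T/N$ rather than $\dim Y - \dim T/N$: with that correction the equality $\dim G/N - \dim T/N = \dim G - \dim T$ is immediate (both sides differ from the next by subtracting $\dim N$), and the specialisation $L = \{g\}$ recovers the paper's equation $\dim Y - \dim Y^g = \dim g^G - \dim(g^G \cap H)$ exactly as the remark following the lemma indicates (there $G/N = g^G$ and $T/N \cong g^G \cap H$). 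Note too that the paper's own proof only establishes the outer equality; the middle one, once corrected, is trivial.
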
  
  
  \begin{proof}   
  Let $Z = \{ (g, y) \in G \times Y \,:\, \mbox{$y$ is fixed by $L^g$}\}$. By projecting onto each factor, we see that $\dim Z = \dim Y + \dim T = \dim G +  \dim Y^L$
  and the result follows. 
  \end{proof}
  
Note that Lemma \ref{l:fpr} holds for any closed subset $L$ of $G$ (or one can replace
$L$ by the closure of the subgroup it generates). In the special case $L = \{g\}$ we have $N=C_G(g)$
and $T/N$ can be identified with $g^G \cap H$, so we recover the equation in \eqref{e:fpr}. 

We can also establish the following generalization of Lemma \ref{l:basic}, working with subgroups rather than elements. The proof is identical and we omit the details.
 
\begin{lem} \label{l:basic2}  
Let $H$ be a closed subgroup of $G$ and set $Y=G/H$. Let $L_1, \ldots, L_r$ be closed subgroups of $G$ such that  
\[
\sum_{i=1}^r \dim Y^{L_i} < (r-1) \dim Y.
\]
Then there exist $g_i \in G$ such that $\langle L_1^{g_1}, \ldots, L_r^{g_r} \rangle$ is not contained in a conjugate of $H$.
\end{lem}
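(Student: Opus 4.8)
The plan is to mirror the proof of Lemma \ref{l:basic}: I would show that the set
\[
B = \{(g_1, \ldots, g_r) \in G^r \,:\, \la L_1^{g_1}, \ldots, L_r^{g_r}\ra \leqs H^g \mbox{ for some } g \in G\}
\]
is contained in a proper closed subvariety of $G^r$; any tuple outside it then has the required property. The key observation is that $\la L_1^{g_1}, \ldots, L_r^{g_r}\ra \leqs H^g$ if and only if each $L_i^{g_i}$ fixes the point $gH$ of $Y = G/H$, so $B$ is exactly the set of tuples $(g_1, \ldots, g_r)$ for which $L_1^{g_1}, \ldots, L_r^{g_r}$ have a common fixed point on $Y$.

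To estimate $\dim B$, I would introduce the incidence variety
\[
W = \{(g_1, \ldots, g_r, y) \in G^r \times Y \,:\, L_i^{g_i} \mbox{ fixes } y \mbox{ for all } i\},
\]
which is closed in $G^r \times Y$, being an intersection of preimages of the diagonal under morphisms. First I would dispose of the trivial case: if the transporter $T(L_i,H)$ is empty for some $i$, then no conjugate of $L_i$ lies in any conjugate of $H$, so $B = \emptyset$ and there is nothing to prove; thus we may assume every $T(L_i,H)$ is nonempty. Now $G$ acts on $W$ via $h\cdot(g_1, \ldots, g_r, y) = (hg_1, \ldots, hg_r, hy)$, and the projection $\pi\colon W \to Y$ is $G$-equivariant; since $G$ is transitive on $Y$, every fiber of $\pi$ is isomorphic to the fiber over $H \in Y$, which is precisely $T(L_1,H) \times \cdots \times T(L_r,H)$. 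Hence all fibers of $\pi$ have dimension $\sum_{i=1}^r \dim T(L_i,H)$, so $\dim W \leqs \dim Y + \sum_{i=1}^r \dim T(L_i,H)$.

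By Lemma \ref{l:fpr} (the subgroup version of \eqref{e:fpr}), $\dim T(L_i,H) = \dim G - \dim Y + \dim Y^{L_i}$, and therefore
\[
\dim W \leqs \dim Y + \sum_{i=1}^r \big(\dim G - \dim Y + \dim Y^{L_i}\big) = r\dim G - (r-1)\dim Y + \sum_{i=1}^r \dim Y^{L_i}.
\]
The hypothesis $\sum_i \dim Y^{L_i} < (r-1)\dim Y$ then gives $\dim W < r\dim G = \dim G^r$. Since $B$ is the image of $W$ under the projection $W \to G^r$, we obtain $\dim \overline{B} \leqs \dim W < \dim G^r$, so $\overline{B}$ is a proper closed subvariety of $G^r$, and any $(g_1, \ldots, g_r) \in G^r \setminus \overline{B}$ satisfies the conclusion.

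The argument is essentially routine; the only points needing care are the reduction identifying "$\la L_1^{g_1}, \ldots, L_r^{g_r}\ra$ lies in a conjugate of $H$" with the existence of a common fixed point on $Y$, and the observation — forced by the homogeneity of $Y$ — that the fibers of $W \to Y$ all have the same dimension, which is what converts the pointwise transporter computation of Lemma \ref{l:fpr} into the bound on $\dim W$. The degenerate case of an empty transporter must also be handled separately, since Lemma \ref{l:fpr} presupposes $T(L_i,H) \ne \emptyset$.
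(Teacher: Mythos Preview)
Your argument is correct and is precisely the approach the paper intends: the paper states that the proof is identical to that of Lemma \ref{l:basic} and omits the details, and your incidence-variety plus dimension-count argument (using Lemma \ref{l:fpr} in place of \eqref{e:fpr}) is exactly that identical proof spelled out. Your handling of the degenerate empty-transporter case and the use of homogeneity to equate fiber dimensions are appropriate refinements that the paper's terse statement leaves implicit.
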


We will also need the following elementary observation. 
 
\begin{lem} \label{l:products}  
Suppose $D$ is a $G$-class such that an element of $D$ is contained in the closure of 
$\langle g_1, g_2 \rangle$ for some $g_i \in x_i^G$  and  that $G(y) = G$ for some $y \in Y$, where $Y = D \times C_3 \times \cdots \times C_r$. Then $\Delta$ is nonempty.   
\end{lem}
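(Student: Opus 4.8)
The plan is to reduce the statement for $Y = D \times C_3 \times \cdots \times C_r$ back to the statement for $X = C_1 \times \cdots \times C_r$ by building a morphism that realizes the hypothesis "an element of $D$ lies in the closure of $\langle g_1, g_2 \rangle$ for some $g_i \in x_i^G$" in a generic fashion across $X$. First I would pick a word $w$ in the free group on $2$ generators such that, for the particular pair $(g_1,g_2) \in C_1 \times C_2$ given by the hypothesis, the element $w(g_1,g_2)$ lies in $D$; such a $w$ exists because the abstract subgroup $\langle g_1, g_2\rangle$ is dense in its closure, and $D$ is a conjugacy class, hence locally closed, so it meets the dense set $\langle g_1,g_2\rangle$. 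Replacing $w(g_1,g_2)$ by a suitable power or conjugate if necessary, we may assume $w(g_1,g_2) \in D$ exactly (not merely $\overline{D}$): indeed $D$ being a single $G$-class, the condition $w(a,b) \in D$ on $(a,b) \in C_1 \times C_2$ is the condition that $w(a,b)$ is $G$-conjugate to an element of $D$, which is detected by finitely many characteristic-polynomial/Jordan-type invariants and hence defines a locally closed (in fact, by semicontinuity of these invariants, a subset whose closure we can control) subset of $C_1 \times C_2$ containing the given point.

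Next I would consider the morphism
\[
\psi : C_1 \times C_2 \times C_3 \times \cdots \times C_r \longrightarrow G^r, \qquad
(a,b,c_3,\ldots,c_r) \mapsto (w(a,b),\, c_3,\ldots,c_r).
\]
This is a morphism of varieties, so its image lies in an irreducible subvariety of $G^r$; more precisely, restricting the first two coordinates to the (nonempty, hence dense, since $X$ is irreducible) open-in-its-closure locus $U \subseteq C_1 \times C_2$ where $w(a,b) \in D$, the image of $U \times C_3 \times \cdots \times C_r$ is an irreducible constructible subset of $D \times C_3 \times \cdots \times C_r = Y$, and I claim it is dense in $Y$. This is the one point that needs a small argument: the map $C_1 \times C_2 \dashrightarrow D$, $(a,b) \mapsto w(a,b)$, restricted to $U$, has image meeting $D$, and since $D$ is a single orbit it is homogeneous under $G$ acting by conjugation; by $G$-equivariance of $w$ (replacing $(a,b)$ by $(a^g,b^g)$ sends $w(a,b)$ to $w(a,b)^g$) the image is a $G$-invariant constructible subset of the orbit $D$, hence either empty or all of $D$ — and it is nonempty, so it equals $D$. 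Therefore $\psi$ restricted to $U \times C_3 \times \cdots \times C_r$ is dominant onto $Y$.

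With dominance in hand, the conclusion is immediate. We are given $G(y) = G$ for some $y \in Y$; by Lemma~\ref{l:closures} (applied with the ambient variety taken to be $Y$ and the locally closed subvariety the image of $\psi$, whose closure is $Y$), $\Delta_Y$ is a dense and generic subset of $Y$. Now pull back along the dominant morphism $\psi$: the preimage under $\psi$ of a dense open (indeed generic) subset of $Y$ is a generic subset of $X = C_1 \times \cdots \times C_r$, and for any $x = (a,b,c_3,\ldots,c_r)$ in that preimage we have $w(a,b) \in D$ and $(w(a,b),c_3,\ldots,c_r) \in \Delta_Y$, so
\[
G = \overline{\langle w(a,b), c_3,\ldots,c_r\rangle} \leqslant \overline{\langle a, b, c_3, \ldots, c_r\rangle} = G(x),
\]
whence $x \in \Delta$. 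Thus $\Delta$ is nonempty (and, by Theorem~\ref{t:gen}, dense and generic).

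I expect the main obstacle to be the density/equivariance argument in the second paragraph: one must be careful that the word $w$ can be chosen independently of the conjugate $(g_1, g_2)$ — which is exactly what $G$-equivariance of word maps gives — and that the locus $U$ on which $w$ lands inside the single class $D$ (rather than merely in $\overline D$) is genuinely dense, so that the composite morphism is dominant onto $Y$ and not merely onto a proper closed subvariety. Everything after that is a routine application of Lemma~\ref{l:closures} and the observation that preimages of generic sets under dominant morphisms of irreducible varieties are generic.
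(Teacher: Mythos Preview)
Your first step is where the argument breaks. You claim that since $\langle g_1,g_2\rangle$ is dense in its closure $H$ and $D$ is locally closed, the set $D$ must meet $\langle g_1,g_2\rangle$, giving a word $w$ with $w(g_1,g_2)\in D$. But density only guarantees that $\langle g_1,g_2\rangle$ meets every nonempty \emph{open} subset of $H$, not every nonempty locally closed subset. In general $D\cap H$ is a proper subvariety of $H$, and a countable Zariski-dense subgroup need not meet it at all. For instance, take $G=\SL_2(\mathbb{C})$, $g_1=\begin{pmatrix}1&1\\0&1\end{pmatrix}$, $g_2=\begin{pmatrix}1&0\\1&1\end{pmatrix}$, so $H=G$ and $\langle g_1,g_2\rangle\subseteq\SL_2(\mathbb{Z})$; if $D$ is the class of $\mathrm{diag}(\pi,\pi^{-1})$ then $D\cap\langle g_1,g_2\rangle=\emptyset$ since no integer matrix has trace $\pi+\pi^{-1}$, yet certainly $D\subseteq G=\overline{\langle g_1,g_2\rangle}$. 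So the word $w$ simply need not exist, and the subsequent hedging (``replacing by a power or conjugate'') does not help: even $\overline D$ can miss the abstract group entirely.

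The paper's proof bypasses this completely and is a two-line argument that you in fact already have at the very end of your write-up. Given $y=(d,g_3,\ldots,g_r)\in Y$ with $G(y)=G$, conjugate so that $d$ is the specific element of $D$ lying in $\overline{\langle g_1,g_2\rangle}$ (possible since $D$ is a single $G$-class). Then for $x=(g_1,g_2,g_3,\ldots,g_r)\in X$ one has $d\in\overline{\langle g_1,g_2\rangle}\subseteq G(x)$ and $g_3,\ldots,g_r\in G(x)$, hence $G(y)\leqslant G(x)$, so $G(x)=G$. No word map, no dominance, no pullback of generic sets is needed: the containment
\[
G=\overline{\langle d,g_3,\ldots,g_r\rangle}\leqslant \overline{\langle g_1,g_2,g_3,\ldots,g_r\rangle}=G(x)
\]
that you wrote as your final displayed inequality \emph{is} the whole proof, and it only uses $d\in\overline{\langle g_1,g_2\rangle}$, not $d\in\langle g_1,g_2\rangle$.
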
 

\begin{proof}  
Suppose $G(y) = G$ for some $y = (d, g_3, \ldots, g_r) \in Y$ with $d \in D$ and $g_i \in C_i$ for $i > 2$. If $x =(g_1, g_2, \ldots, g_r) \in X$ then $G(y) \leqs G(x)$ and the result follows.
\end{proof} 

In particular, we can take  $D$ to be a conjugacy class contained in $C_1C_2$. 
    
\subsection{Scott's Theorem and the adjoint module}\label{ss:adj}

We begin by recalling Scott's Theorem \cite[Theorem 1]{scott}, which we will then apply in the special case where $G$ is acting on its adjoint module ${\rm Lie}(G)$. Recall that if $W$ is a $kG$-module and $J \subseteq G$ is a subset, then $[J,W]$ is the subspace $\la gw - w \,:\, g \in J, w \in W \ra$ of $W$.
Note that $\dim \, [J,W] = \dim W - \dim (W^*)^J$.  

\begin{thm}[Scott]\label{t:scott}  
If $G= \langle y_1, \ldots, y_r \rangle \leqs \GL(W)$ and $y_0 = (y_1 \cdots y_r)^{-1}$, then 
\[
\sum_{i=0}^r \dim \, [y_i,W]  \geqs  \dim W - \dim W^G + \dim \, [G,W].
\]
\end{thm}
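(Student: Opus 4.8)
The plan is to encode both $W^G$ and $(W^*)^G$ inside a single vector space of dimension $\sum_{i=0}^r\dim[y_i,W]$, using the relation $y_0y_1\cdots y_r=1$ to force the two contributions apart. Write $A_i=y_i-1\in\operatorname{End}(W)$, set $h_0=1$ and $h_i=y_0y_1\cdots y_{i-1}$ for $1\leqs i\leqs r+1$, so that $h_{r+1}=y_0y_1\cdots y_r=1$. Since $h_iA_i=h_iy_i-h_i=h_{i+1}-h_i$, the sum telescopes to the operator identity $\sum_{i=0}^r h_iA_i=h_{r+1}-h_0=0$ on $W$. Now set $\mathcal{W}=\bigoplus_{i=0}^r[y_i,W]$, so $\dim\mathcal{W}=\sum_{i=0}^r\dim[y_i,W]$, and introduce the two linear maps $\Psi\colon W\to\mathcal{W}$, $w\mapsto(A_0w,\dots,A_rw)$, and $\Lambda\colon\mathcal{W}\to W$, $(w_0,\dots,w_r)\mapsto\sum_{i=0}^r h_iw_i$.

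The operator identity above says precisely that $\Lambda\circ\Psi=0$, so $\operatorname{im}\Psi\subseteq\ker\Lambda$ and hence $\dim\Lambda(\mathcal{W})\leqs\dim\mathcal{W}-\dim\Psi(W)$. Because $\langle y_1,\dots,y_r\rangle=G$, we have $\ker\Psi=\bigcap_iC_W(y_i)=W^G$, so $\dim\Psi(W)=\dim W-\dim W^G$. Thus it suffices to prove that $\dim\Lambda(\mathcal{W})=\dim[G,W]$: granting this and substituting gives $\dim[G,W]\leqs\sum_{i=0}^r\dim[y_i,W]-(\dim W-\dim W^G)$, which is the asserted inequality after rearrangement.

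To compute $\dim\Lambda(\mathcal{W})$ I would pass to the transpose map $\Lambda^{*}\colon W^{*}\to\mathcal{W}^{*}$, which has the same rank. A functional $f\in W^{*}$ lies in $\ker\Lambda^{*}$ if and only if it annihilates $h_i\cdot[y_i,W]$ for each $i$, i.e.\ $h_i^{-1}\cdot f\in[y_i,W]^{\perp}$. The annihilator identity $[y_i,W]^{\perp}=(W^{*})^{y_i}$ — which is exactly the identity $\dim[J,W]=\dim W-\dim(W^{*})^{J}$ recorded above, read off the natural pairing with $J=\{y_i\}$ — rewrites this as $y_i\cdot(h_i^{-1}\cdot f)=h_i^{-1}\cdot f$, equivalently $c_i\cdot f=f$ with $c_i=h_iy_ih_i^{-1}$. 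Since $c_0=y_0$ and each $y_i$ is recovered as a word in $c_0,\dots,c_i$, the elements $c_0,\dots,c_r$ generate $G$; therefore $\ker\Lambda^{*}=\bigcap_i(W^{*})^{c_i}=(W^{*})^{G}$, and $\dim\Lambda(\mathcal{W})=\dim W^{*}-\dim(W^{*})^{G}=\dim[G,W]$ by the same identity with $J=G$.

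The step I expect to be the real content, rather than routine bookkeeping, is this last computation of $\dim\Lambda(\mathcal{W})$: one must recognise that $\ker\Lambda^{*}$ is the fixed space of the \emph{dual} module $W^{*}$ and not of $W$, which is precisely what produces the extra term $\dim[G,W]$ in Scott's bound beyond the ``expected'' $\dim W-\dim W^G$. Everything else is elementary: the telescoping identity, rank--nullity applied to $\Psi$ and $\Lambda$, the equality of rank for a map and its transpose, and the fact that replacing a generating set by a conjugated generating set does not change the group generated.
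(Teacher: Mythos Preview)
The paper does not actually prove Scott's Theorem: it is stated with a citation to \cite[Theorem~1]{scott} and then applied. Your argument is correct and is essentially Scott's original proof --- the telescoping identity $\sum h_iA_i=0$, the pair of maps $\Psi$ and $\Lambda$ with $\Lambda\circ\Psi=0$, and the identification of $\ker\Lambda^{*}$ with $(W^{*})^{G}$ are exactly the ingredients of that proof --- so there is nothing to compare beyond noting that you have reproduced the cited result rather than something the paper itself supplies.
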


Recall that $X = C_1 \times \cdots \times C_r$, where each $C_i = x_i^G$ is a noncentral conjugacy class in $G$. Fix an additional noncentral conjugacy class $C_0 = x_0^G$ and set 
\begin{equation}\label{e:Z}
Z=\{(z_0, \ldots, z_r) \in C_0 \times \cdots \times C_r \,:\,  z_0z_1 \cdots z_r = 1\} \subseteq G^{r+1}.
\end{equation}  
For $z = (z_0, \ldots, z_r) \in Z$, let $G(z)$ be the Zariski closure of $\la z_0, \ldots, z_r \ra$. 

\begin{lem}\label{l:adjointprod}  If $L = {\rm Lie}(G)$ and $G=G(z)$ for some $z \in Z$, then 
\[
\sum_{i=0}^{r}\dim C_i \geqs  \sum_{i=0}^{r} \dim \, [x_i,L] \geqs    2\dim G - \dim Z(L). 
 \]
 \end{lem}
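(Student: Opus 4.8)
The plan is to derive the two inequalities separately and then chain them. For the first inequality $\sum_{i=0}^r \dim C_i \geqs \sum_{i=0}^r \dim[x_i,L]$, I would use the standard fact that for a conjugacy class $C_i = x_i^G$ in an algebraic group $G$ acting on $L = \mathrm{Lie}(G)$ by the adjoint representation, one has $\dim C_i = \dim G - \dim C_G(x_i) \geqs \dim[x_i,L]$, since $[x_i,L] = (\mathrm{Ad}(x_i) - 1)L$ has dimension $\dim L - \dim L^{x_i} = \dim G - \dim L^{x_i}$, and the Lie algebra of the centralizer $C_G(x_i)$ is contained in $\mathfrak{c}_L(x_i) = L^{x_i}$ (with equality in characteristic $0$, but only containment in general), so $\dim C_G(x_i) \leqs \dim L^{x_i}$, giving $\dim C_i \geqs \dim[x_i, L]$. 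Summing over $i = 0, \ldots, r$ yields the first inequality.

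For the second inequality, I would invoke Scott's Theorem (Theorem \ref{t:scott}) with $W = L$ and the generating tuple. Since $G = G(z) = \overline{\langle z_0, \ldots, z_r \rangle}$ with $z_0 z_1 \cdots z_r = 1$, I need to rephrase this to match Scott's setup: Scott's statement is in terms of generators $y_1, \ldots, y_r$ with $y_0 = (y_1 \cdots y_r)^{-1}$. Taking $y_i = z_i$ for $i = 1, \ldots, r$ gives $y_0 = (z_1 \cdots z_r)^{-1} = z_0$ (using the relation), so the hypotheses match with $r$ replaced appropriately. Scott's inequality then reads $\sum_{i=0}^r \dim[z_i, L] \geqs \dim L - \dim L^G + \dim[G,L]$. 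Now each $z_i$ is $G$-conjugate to $x_i$, so $\dim[z_i,L] = \dim[x_i,L]$. For the right-hand side: $L^G = \mathfrak{c}_L(G)$ is the center of $L$, i.e. $Z(L)$, so $\dim L^G = \dim Z(L)$; and $[G,L]$ is the $G$-submodule generated by all $gv - v$, which for the adjoint module has codimension $\dim L^G = \dim Z(L)$ (as $L = [G,L] \oplus$ something complementary generically, or more carefully $\dim[G,L] = \dim L - \dim L^G$ — this needs the fact that $L/[G,L] \cong$ trivial module part; actually $\dim[G,L] \geqs \dim L - \dim(L^*)^G = \dim L - \dim Z(L)$ using self-duality of the Killing-type form or just $L^* \cong L$ up to the relevant part). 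Substituting, the right side becomes $\dim L - \dim Z(L) + (\dim L - \dim Z(L)) = 2\dim G - 2\dim Z(L)$...

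Here I need to be careful — the claimed bound is $2\dim G - \dim Z(L)$, not $2\dim G - 2\dim Z(L)$, so I would recheck the computation of $\dim[G,L]$. The point is that $[G,L]$ contains $L^G = Z(L)$ in many cases (e.g. for $\mathfrak{sl}_n$ with $p \mid n$, the center sits inside the derived algebra), so the correct statement is $\dim[G,L] = \dim L - \dim(L/[G,L])$ and $L/[G,L]$ is the largest trivial quotient, whose dimension may be strictly less than $\dim L^G = \dim Z(L)$. In fact for the adjoint module one has $\dim(L^*)^G = \dim L^G = \dim Z(L)$ by self-duality, but $\dim[G,L] = \dim L - \dim(L^*)^G$ is the identity recorded just before Theorem \ref{t:scott} (namely $\dim[J,W] = \dim W - \dim(W^*)^J$ applied with $J = G$, $W = L$), giving $\dim[G,L] = \dim L - \dim Z(L)$. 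Then Scott gives $\sum \dim[x_i,L] \geqs \dim L - \dim Z(L) + \dim L - \dim Z(L) = 2\dim G - 2\dim Z(L)$, which is weaker than claimed; so the resolution must be that $\dim[G,L] \geqs \dim G$ always (the adjoint module has at most a $\dim Z(L)$-dimensional trivial sub but $[G,L]$ is larger), OR more likely the intended reading uses $\dim W - \dim W^G + \dim[G,L] = \dim L - \dim Z(L) + \dim[G,L]$ and then one only claims $\dim[G,L] \geqs \dim G - \dim Z(L)$... no. The cleanest fix: apply the pre-Theorem identity to get $\dim[G,L] = \dim L - \dim (L^*)^G$, and since $L$ is \emph{not} in general self-dual when $p > 0$ one has $\dim(L^*)^G$ possibly $= 0$ while $\dim L^G = \dim Z(L)$, so Scott gives $\sum_{i=0}^r \dim[x_i,L] \geqs \dim L - \dim Z(L) + \dim L - 0 = 2\dim G - \dim Z(L)$ exactly as claimed. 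So the main obstacle is getting the duality bookkeeping right: specifically establishing $\dim(L^*)^G = 0$ (equivalently $[G,L] = L$, i.e. $L$ has no nontrivial trivial \emph{quotient}) even when $\dim L^G = \dim Z(L) > 0$, which I would justify by noting that the only trivial composition factors of $L$ as a $kG$-module lie inside the center $Z(L)$ and are submodules, not direct summands or quotients, precisely in the bad-characteristic situations — hence $L^*$ has such factors as submodules, forcing $(L^*)^G$ to equal the socle contribution, and a short argument (or citation to the structure of $\mathrm{Lie}(G)$ for $G$ semisimple simply connected) pins down $\dim (L^*)^G = 0$ when $Z(L) \neq 0$. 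Combining the two chained inequalities then yields the lemma.
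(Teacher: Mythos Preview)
Your approach is the same as the paper's: the first inequality via $\dim C_G(g) \leqs \dim L^g$, the second via Scott's Theorem applied to the adjoint module. The paper's proof is two lines, and the only place you diverge is in justifying the right-hand side of Scott's inequality.

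The paper simply invokes the standing hypothesis (stated at the start of Section~\ref{s:prel2}) that $G$ is \emph{simply connected}: this gives $[G,L]=L$ and $L^G=Z(L)$ directly, so Scott's bound becomes $\dim L - \dim Z(L) + \dim L = 2\dim G - \dim Z(L)$ immediately. Your duality detour is unnecessary, and it also contains a slip: if the trivial composition factors of $L$ occur only as \emph{submodules} (which is exactly the statement $[G,L]=L$ you are trying to prove), then in $L^*$ they occur only as \emph{quotients}, not as submodules as you wrote; this is what forces $(L^*)^G=0$. So the argument you sketch is circular until you invoke the structural fact for simply connected $G$ at the end --- at which point you may as well have used it at the outset, as the paper does.
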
  
 
\begin{proof}  
Since $G$ is simply connected, we have $[G,L]=L$ and $L^G = Z(L)$, the center of the Lie algebra of $G$. The second inequality now follows from Scott's Theorem, while the first holds since 
 \[
 \dim g^G =  \dim L - \dim C_G(g)  \geqs \dim L - \dim L^g = \dim \, [g, L]
 \]
for all $g \in G$.   
\end{proof}  
 
For each $y = (y_1, \ldots, y_r) \in X$, set $y_0 = (y_1 \cdots y_r)^{-1}$ so $z = (y_0, \ldots, y_r) \in Z$ with $C_0 = y_0^G$ (see \eqref{e:Z}). 
Since $\dim y_0^G \leqs \dim G - {\rm rk}\, G$, we obtain the following corollary which provides a useful, necessary condition for topological 
generation by a tuple in $X$.

\begin{cor} \label{c:adjoint}   
If $L = {\rm Lie}(G)$ and $G = G(x)$ for some $x \in X$, then 
 \[
  \dim X \geqs \sum_{i=1}^{r} \dim \, [x_i,L] \geqs  \dim G + {\rm rk}\, G - \dim Z(L).
 \] 
\end{cor}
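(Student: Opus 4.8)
The plan is to deduce both inequalities of Corollary~\ref{c:adjoint} directly from Lemma~\ref{l:adjointprod}, together with two elementary facts about centralizer dimensions; no new ideas should be required.

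\textbf{First inequality.} I would begin by noting that $\dim X \geqs \sum_{i=1}^{r} \dim\,[x_i,L]$ holds for \emph{every} $x \in X$, with no use of topological generation. Indeed, for each $i$ we have $\dim C_i = \dim x_i^G = \dim G - \dim C_G(x_i)$, while ${\rm Lie}(C_G(x_i)) \subseteq L^{x_i}$ (the fixed space of $x_i$ acting on $L$ via the adjoint action), so $\dim C_G(x_i) \leqs \dim L^{x_i}$ and hence
\[
\dim C_i \;\geqs\; \dim L - \dim L^{x_i} \;=\; \dim\,[x_i,L].
\]
Summing over $i = 1, \dots, r$ and using $\dim X = \sum_{i=1}^{r} \dim C_i$ gives the left-hand inequality.

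\textbf{Second inequality.} Now assume $G = G(x)$ with $x = (x_1, \dots, x_r)$, and follow the paragraph immediately preceding the corollary: put $x_0 = (x_1 \cdots x_r)^{-1}$ and $C_0 = x_0^G$, so that $z = (x_0, x_1, \dots, x_r) \in Z$ as in \eqref{e:Z}. Since $x_0 \in \langle x_1, \dots, x_r \rangle$, the abstract group $\langle x_0, x_1, \dots, x_r \rangle$ equals $\langle x_1, \dots, x_r \rangle$, whence $G(z) = G(x) = G$ and Lemma~\ref{l:adjointprod} applies, giving
\[
\dim\,[x_0,L] + \sum_{i=1}^{r} \dim\,[x_i,L] \;=\; \sum_{i=0}^{r} \dim\,[x_i,L] \;\geqs\; 2\dim G - \dim Z(L).
\]
It then remains only to bound the $i=0$ term from above. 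By the estimate from the first step, $\dim\,[x_0,L] \leqs \dim x_0^G$, and $\dim x_0^G \leqs \dim G - {\rm rk}\,G$ since every centralizer in the connected reductive group $G$ has dimension at least ${\rm rk}\,G$. Substituting and rearranging yields
\[
\sum_{i=1}^{r} \dim\,[x_i,L] \;\geqs\; 2\dim G - \dim Z(L) - (\dim G - {\rm rk}\,G) \;=\; \dim G + {\rm rk}\,G - \dim Z(L),
\]
which is the right-hand inequality.

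\textbf{Expected difficulty.} I do not anticipate any genuine obstacle here: the substance is already encapsulated in Lemma~\ref{l:adjointprod}, hence ultimately in Scott's Theorem~\ref{t:scott}. The only two points needing care are the containment ${\rm Lie}(C_G(g)) \subseteq L^{g}$ (which yields $\dim\,[g,L] \leqs \dim g^G$ in every characteristic) and the standard bound $\dim g^G \leqs \dim G - {\rm rk}\,G$ applied to $g = x_0$. The term $\dim Z(L)$ requires no attention: it is a harmless error term, vanishing when $p = 0$ or $p$ is large, that simply propagates from Lemma~\ref{l:adjointprod}.
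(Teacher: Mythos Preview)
Your proposal is correct and follows precisely the route the paper takes: the first inequality is the termwise bound $\dim C_i \geqs \dim\,[x_i,L]$ already established in the proof of Lemma~\ref{l:adjointprod}, and the second inequality is obtained by applying Lemma~\ref{l:adjointprod} to $z=(x_0,x_1,\dots,x_r)$ and subtracting the bound $\dim\,[x_0,L] \leqs \dim x_0^G \leqs \dim G - {\rm rk}\,G$, exactly as the paper indicates in the sentence preceding the corollary.
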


\begin{rem}\label{r:scott}
Suppose $\dim X < \dim G + {\rm rk}\, G - \dim Z(L)$ and let $x \in X$.
\begin{itemize}\addtolength{\itemsep}{0.2\baselineskip}
\item[{\rm (a)}] By Scott's Theorem, either $L^{G(x)}$ strictly contains $Z(L)$, or $[G(x),L] \ne L$. In particular, $\Delta$ is empty.
\item[{\rm (b)}] Recall that $p$ is \emph{special} for $G$ if $p=3$ and $G=G_2$, or if $p=2$ and $G$ is of type $B_n$, $C_n$ or $F_4$. If $p$ is not special for $G$, then $L/Z(L)$ is a self-dual irreducible $kG$-module and $G(x)$ has nonzero fixed points on this module. In particular, if $L$ is a simple Lie algebra, then  $G(x)$ has nonzero fixed points on $L$.  
\end{itemize}
\end{rem}
 
The following results can also be stated in terms of the variety $Z$ in \eqref{e:Z}. But since $X$ is our main focus, we leave this to the reader. 

Recall that the prime $p=2$ is  \emph{bad} for all simple algebraic groups except type $A_n$; $p=3$ is also bad for all exceptional groups and $p=5$ is bad for $E_8$. All other primes (and also $p=0$) are \emph{good} for $G$.
 
\begin{cor} \label{c:adjreducible}   
Suppose that either $L=\mathrm{Lie}(G)$ is simple or the characteristic $p$ of $k$ is good for $G$. If $\dim X < \dim G + {\rm rk}\, G$, then $G(x)$ acts reducibly on every finite dimensional rational $kG$-module for all $x \in X$.
\end{cor}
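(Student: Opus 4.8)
The plan is to argue by contradiction. Suppose $\dim X < \dim G + {\rm rk}\,G$ but, for some tuple $x=(x_1,\ldots,x_r)\in X$, the group $G(x)$ acts irreducibly on a nontrivial finite-dimensional rational $kG$-module $W$; since every $G$-submodule of $W$ is $G(x)$-invariant, $W$ is then an irreducible $kG$-module. The first step is to reduce to the case where the differential $d\rho\colon\mathrm{Lie}(G)\to\mathfrak{gl}(W)$ of $\rho\colon G\to\GL(W)$ is nonzero: if $d\rho=0$ then $\rho=\rho_m\circ F^m$ factors through a power of the Frobenius morphism with $d\rho_m\ne 0$, and $F^m(G(x))=G(F^m(x))$ is irreducible on $W$ via $\rho_m$; since $F^m$ carries $C_G(g)$ onto $C_G(F^m(g))$, every conjugacy class has the same dimension as its $F^m$-image, so replacing $X$ by $F^m(x_1)^G\times\cdots\times F^m(x_r)^G$ (a product of noncentral classes of the same dimensions) we may assume $d\rho\ne 0$.

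Write $L=\mathrm{Lie}(G)$. Then $d\rho(L)$ is a nonzero $G$-submodule of $\mathfrak{sl}(W)\subseteq\mathrm{End}(W)$ (here $\rho(G)\leqs\SL(W)$ as $G$ is perfect), and $\ker(d\rho)$ is a $G$-stable ideal of $L$. Under either hypothesis, $p$ is not special for $G$ (special primes are bad, and $L$ fails to be simple in each special case), so by Remark \ref{r:scott}(b) the module $L/Z(L)$ is irreducible and self-dual; moreover the submodule lattice of $L$ forces $\ker(d\rho)\in\{0,Z(L)\}$. Now put $M=L$ if $Z(L)=0$ and $M=L/Z(L)$ otherwise — an irreducible, self-dual, nontrivial $kG$-module with $\dim M=\dim G-\dim Z(L)$. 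Provided $\ker(d\rho)=Z(L)$ (automatic when $Z(L)=0$), $M\cong d\rho(L)$ is a $G$-submodule of $\mathrm{End}(W)$; since $k$ is algebraically closed and $G(x)$ is absolutely irreducible on $W$, Schur's Lemma gives $\mathrm{End}(W)^{G(x)}=k\cdot\mathrm{id}_W$, and $\mathrm{id}_W$ cannot lie in the irreducible nontrivial submodule $M$ (its $d\rho$-preimage would be $G$-fixed modulo $\ker(d\rho)\subseteq Z(L)$, hence $0$ in $M$, forcing $\mathrm{id}_W=0$). Therefore $M^{G(x)}=0$.

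It remains to feed this into Scott's Theorem (Theorem \ref{t:scott}). Applied to $G(x)=\overline{\langle x_1,\ldots,x_r\rangle}$ acting on $M$, with $x_0=(x_1\cdots x_r)^{-1}$, and using $M^{G(x)}=0$ together with $M\cong M^*$ (so $[G(x),M]=M$), it gives $\sum_{i=0}^{r}\dim\,[x_i,M]\geqs 2\dim M$. For $1\leqs i\leqs r$ one has $\dim\,[x_i,M]\leqs\dim\,[x_i,L]=\dim L-\dim C_L(x_i)\leqs\dim G-\dim C_G(x_i)=\dim C_i$ (using $\mathrm{Lie}(C_G(x_i))\subseteq C_L(x_i)$), and $\dim\,[x_0,M]\leqs\dim x_0^G\leqs\dim G-{\rm rk}\,G$. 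Hence $\dim X=\sum_{i=1}^{r}\dim C_i\geqs 2\dim M-(\dim G-{\rm rk}\,G)=\dim G+{\rm rk}\,G-2\dim Z(L)$, which contradicts the hypothesis whenever $Z(L)=0$. Since $Z(\mathrm{Lie}(G))=0$ whenever $L$ is simple, and also in the symplectic and orthogonal cases with $p\ne 2$ relevant to Theorem \ref{t:main2}, this settles the main content. I expect the one genuinely delicate point to be the residual case in which $p$ is good but $Z(\mathrm{Lie}(G))\ne 0$ — forcing $G$ to have type $A_n$ with $p\mid n+1$ — where the count above loses $2\dim Z(L)$ and, since $Z(L)=L^G$ is already a nonzero $G(x)$-fixed subspace of $L$, one cannot simply replace $M$ by $L$; closing this requires a finer analysis of $L^{G(x)}$ and $(L^*)^{G(x)}$ together with the precise value of $\dim\,[x_0,L]$ (which is $<\dim G-{\rm rk}\,G$ unless $x_0$ is regular), or else an appeal to Gerhardt's classification for $\SL_n(k)$.
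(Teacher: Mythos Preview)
Your argument when $L=\mathrm{Lie}(G)$ is simple is correct and is essentially the contrapositive of the paper's approach. The paper uses Scott's Theorem (via Corollary~\ref{c:adjoint} and Remark~\ref{r:scott}) to produce a nonzero $\ell\in L^{G(x)}$, untwists by Frobenius so that $\ell$ acts nontrivially on $W$, and then observes that $G(x)$ preserves the eigenspaces of $\ell$. You instead assume irreducibility, embed $L$ in $\mathrm{End}(W)$, invoke Schur to force $L^{G(x)}=0$, and feed this back into Scott to obtain $\dim X\geqs\dim G+{\rm rk}\,G$. Both routes hinge on the same fact: a nonzero $G(x)$-fixed element of $L$ acting on $W$ is precisely a non-scalar endomorphism commuting with $G(x)$.

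The genuine gap is the case $G=\SL_n(k)$ with $p\mid n$ (the only way to have $p$ good but $L$ non-simple), which you flag but do not resolve. Your suggested fixes do not work: Theorem~\ref{t:sl} only tells you when $\Delta$ is empty, not that every $G(x)$ is reducible on every module, and sharpening the bound on $\dim[x_0,L]$ recovers at most one of the two lost dimensions (and only when $x_0$ happens to be non-regular). The paper's fix is clean: apply Scott's Theorem to the action of $G(x)$ on $L_1=\mathfrak{gl}_n(k)$ rather than on $L$. Since $[x_i,L_1]=[x_i,L]$ the left-hand side is unchanged, while $L_1$ is self-dual with $L_1^G=k\cdot I_n$, so the hypothesis $\dim X<\dim G+{\rm rk}\,G$ forces $\dim L_1^{G(x)}\geqs 2$. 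Thus $G(x)$ fixes a noncentral element of $L_1$, hence (after adjusting the trace) a noncentral element of $L$, and the eigenspace argument from the simple case applies verbatim.
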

 
\begin{proof}    
First assume that $L$ is simple and let $x \in X$. As noted in Remark \ref{r:scott}, the hypotheses and Scott's Theorem imply that there exists $0 \ne \ell \in L$ fixed by $G(x)$. Let $W$ be an irreducible $kG$-module. Untwisting by a Frobenius morphism of $G$, if necessary, we may assume that $W = W_0 \otimes W_1^{(p)}$, where $W_0$ is a nontrivial restricted $kG$-module and $\ell$ acts nontrivially on $W$. Since $G(x)$ fixes $\ell$, it preserves the eigenspaces of $\ell$ on $W$ and thus $G(x)$ acts reducibly on $W$.   
 
Finally, suppose $L$ is not simple and $p$ is good for $G$, in which case $G=\SL_n(k)$ and $p$ divides $n$. Here we can apply Scott's Theorem directly with respect to the action of $G$ on the Lie algebra $L_1$ of $\GL_n(k)$.  For $x \in X$, this gives the inequality 
\[
\sum_{i=1}^r \dim C_i  + (n^2-n) \geqs 2n^2 - 2  - (\dim L_1^{G(x)}  -1)  - (\dim (L_1^*)^{G(x)}-1)
\]
and thus  
\[
\dim X  \geqs  {\rm rk}\, G +  \dim G   -  (\dim L_1^{G(x)} -1)  - (\dim (L_1^*)^{G(x)} -1 ).     
\]                  

Since $\dim X < \dim G + {\rm rk}\, G$ and $L_1$ is self dual, it follows that $\dim L_1^{G(x)} \geqs 2$. Therefore, $G(x)$ fixes a noncentral element of $L_1$ and so it also fixes a noncentral element of $L$ (just choose an element of trace zero with the same eigenspaces). We can now conclude by repeating the argument given in the first paragraph. 
\end{proof} 
 
We present another consequence of the above observations. To do this, we need the following lemma. 
 
\begin{lem} \label{l:ssgood}  
Suppose the characteristic $p$ of $k$ is good for $G$ and let $s$ be a noncentral semisimple element of ${\rm Lie}(G)$. Then the following hold:
 \begin{itemize}\addtolength{\itemsep}{0.2\baselineskip}
\item[{\rm (i)}] $C_G(s)$  is connected.
 \item[{\rm (ii)}] $C_G(s) = C_G(S)$ for some nontrivial torus $S$ in $G$. 
 \end{itemize}
 \end{lem}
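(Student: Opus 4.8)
The statement concerns a noncentral semisimple element $s$ of $L = \mathrm{Lie}(G)$ when $p$ is good for $G$, and asserts that $C_G(s)$ is connected and equals the centralizer of a nontrivial torus. The natural approach is to reduce to the case of a semisimple group element. First I would recall that, since $p$ is good, the adjoint representation is separable, or more precisely one can invoke the standard structure theory (as in Springer--Steinberg or Bate--Martin--R\"ohrle) showing that for $p$ good the centralizer $C_G(s)$ is reductive and its Lie algebra is $\mathfrak{c}_L(s) = \{x \in L : [x,s] = 0\}$, so there is no unexpected scheme-theoretic behaviour. The key point is that the $\mathrm{ad}$-nilpotent/semisimple decomposition behaves well: since $s$ is semisimple, $\mathrm{ad}(s)$ is a semisimple endomorphism of $L$, and $C_G(s)^0$ is the connected reductive subgroup with root system given by the roots $\alpha$ with $d\alpha(s) = 0$ relative to a maximal torus $T$ containing $s$ in a Cartan subalgebra.

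\textbf{Main steps.} (1) Fix a maximal torus $T$ of $G$ with $s \in \mathrm{Lie}(T) = \mathfrak{t}$; this is possible because every semisimple element of $L$ lies in a Cartan subalgebra, and for $p$ good every Cartan subalgebra is the Lie algebra of a maximal torus. Decompose $L = \mathfrak{t} \oplus \bigoplus_\alpha L_\alpha$ into root spaces; then $\mathrm{ad}(s)$ acts on $L_\alpha$ by the scalar $d\alpha(s)$. Set $\Psi = \{\alpha : d\alpha(s) = 0\}$. (2) Show $C_G(s)$ is the subgroup generated by $T$ and the root subgroups $U_\alpha$ for $\alpha \in \Psi$: containment $\supseteq$ is immediate since those generators centralize $s$; for $\subseteq$, decompose a general element using the Bruhat/big-cell coordinates and use the $T$-equivariance to pin down which root subgroup coordinates can occur, exactly as in the classical proof that $C_G(t)$ for $t$ a semisimple \emph{element} is generated by $T$ and the $U_\alpha$ with $\alpha(t)=1$. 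Here is where goodness of $p$ is used: it guarantees that $\Psi$ is a closed subsystem corresponding to a connected reductive group and that no disconnectedness is introduced (this can fail for bad primes). This yields (i). (3) For (ii), observe that $d\alpha(s) = 0$ for $\alpha \in \Psi$ defines the condition $\alpha \in \Psi \iff \alpha \in X(T)^{\perp s}$, i.e. $\Psi = \{\alpha : \alpha \text{ vanishes on } s\}$; let $S = (\bigcap_{\alpha \in \Psi} \ker \alpha)^0 \leq T$, a subtorus. Then on one hand $C_G(S)^0$ is generated by $T$ and exactly those $U_\beta$ with $\beta|_S$ trivial, i.e. with $\beta$ vanishing on all of $S$; on the other hand, since $s$ is noncentral, $\Psi$ is a proper subsystem, so $S$ is nontrivial, and by construction $\beta|_S$ trivial $\iff \beta \in \Psi$ (the reverse-included directions need the characteristic-free fact that a subtorus is determined by the characters vanishing on it, together with goodness ensuring $d$ is injective enough on the relevant character lattice). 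Hence $C_G(S) = \langle T, U_\alpha : \alpha \in \Psi\rangle = C_G(s)$, as desired.

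\textbf{Alternative, cleaner route.} Rather than redo root-group bookkeeping, I would prefer to cite the known structure results: by a theorem going back to Springer--Steinberg (see also \cite{GL1} or the standard references on centralizers in good characteristic), for $p$ good the centralizer of any semisimple element of $L$ is connected and reductive, which gives (i) directly. For (ii), use that $C_G(s)^0$ is a connected reductive subgroup of maximal rank, hence (being the identity component of a centralizer) it is of the form $C_G(S)$ where $S = Z(C_G(s))^0$; one then only needs to check $S \neq 1$, which follows because if $Z(C_G(s))^0$ were trivial then $C_G(s)$ would be semisimple of the same rank as $G$, forcing $C_G(s) = G$ and $s$ central, contrary to hypothesis.

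\textbf{Expected main obstacle.} The delicate point is the precise use of the goodness hypothesis: in bad characteristic $C_G(s)$ for semisimple $s \in L$ can be non-reductive or disconnected, and $\mathfrak{c}_L(s)$ can fail to be $\mathrm{Lie}(C_G(s))$, so the argument genuinely breaks. Making sure the cited structural statements are applicable in exactly the stated generality (good $p$, $G$ simple simply connected, $s$ semisimple but possibly not regular) is the crux; everything else is routine once $C_G(s)$ is known to be connected reductive of maximal rank.
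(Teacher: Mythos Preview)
Your alternative route contains a clear error: the claim that ``if $Z(C_G(s))^0$ were trivial then $C_G(s)$ would be semisimple of the same rank as $G$, forcing $C_G(s)=G$'' is false. There are many proper semisimple subgroups of maximal rank (e.g.\ $A_1A_1$ in $G_2$, or $\Sp_m\times\Sp_{n-m}$ in $\Sp_n$), so $C_G(s)$ semisimple of maximal rank does not force $C_G(s)=G$. Indeed the remark immediately after this lemma in the paper exhibits exactly such a centralizer in bad characteristic, which is why the goodness hypothesis is essential at this step. Your main root-system route has the same gap at the corresponding point: the inference ``$\Psi$ is a proper subsystem, so $S=(\bigcap_{\alpha\in\Psi}\ker\alpha)^0$ is nontrivial'' is unjustified, because a proper closed root subsystem can span a sublattice of full rank in $X(T)$ (and then $S$ is finite). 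Likewise ``$\beta|_S$ trivial $\iff \beta\in\Psi$'' only says $\beta$ lies in the saturation of $\langle\Psi\rangle$, not in $\Psi$ itself. Your parenthetical ``goodness ensuring $d$ is injective enough'' gestures at the right place but does not supply an argument.

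The paper's proof confronts precisely this difficulty. After citing Steinberg for (i) and disposing of type $A$ by an explicit eigenspace argument, it sets $C=C_G(s)$, notes $C$ is connected reductive of maximal rank, and takes $S=Z(C)^0$; if $S\ne 1$ we are done. The substantive work is to rule out $C$ semisimple, and this is done by \emph{induction}: choose a maximal connected subgroup $D$ of $G$ containing $C$; then $D$ is semisimple (since $Z(D)\leqs Z(C)$), and one checks case by case that $p$ remains good for every simple factor of $D$. Applying the lemma inside $D$ gives that $C=C_D(s)$ has positive-dimensional center, contradicting semisimplicity. This inductive descent is what replaces the missing step in your argument.
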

 
\begin{proof}  
Part (i) is due to Steinberg (see \cite[Theorem 3.14]{St2}). 
 
Now let us turn to part (ii). If $G=\SL_n(k)$ (or a quotient) then the result holds because we can choose a semisimple element $g \in G$ that has the same eigenspaces as $s$ on the natural module. In the remaining cases, ${\rm Lie}(G)$ is simple and we note that  $s \in \mathrm{Lie}(T)$ for some maximal torus $T$ of $G$ (see \cite[Theorem 13.3, Remark 13.4]{Hu}). Therefore, $T \leqs C_G(s)= C$ is a maximal rank connected reductive subgroup of $G$.  If $C$ is not semisimple, then $S:=Z(C)$ is a nontrivial torus and $C_G(S) = C$ as required. Now assume $C$ is semisimple. Let $D$ be a maximal connected subgroup of $G$ containing $C$ and note that $D$ is semisimple 
(since $Z(D) \leqs Z(C)$). Then $p$ is good for any simple factor of $D$ (if $G$ has type A, there are no such subgroups; if $G$ is symplectic or orthogonal, then any simple factor is classical; if $G$ is exceptional, the observation follows by inspection of the possibilities for $D$). By induction, $C = C_D(s)$ has a positive dimensional center, which is incompatible with the assumption that $C$ is semisimple. 
 \end{proof}  
 
\begin{cor} \label{c:adjreducible2}  
Suppose the characteristic of $k$ is good for $G$. If $\dim X < \dim G + {\rm rk}\, G$, 
then $\dim C_{G}(G(x))>0$ for all $x \in X$.   
\end{cor}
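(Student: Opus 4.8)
The plan is to use Scott's Theorem to exhibit a nonzero ${\rm Ad}(G(x))$-fixed vector in $L = {\rm Lie}(G)$, and then to extract from it a positive-dimensional subgroup centralising $G(x)$ by passing to the Jordan decomposition. Fix $x \in X$.

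First, exactly as in the proof of Corollary~\ref{c:adjreducible}, the hypotheses $\dim X < \dim G + {\rm rk}\, G$ and $p$ good together with Scott's Theorem force $G(x)$ to fix some noncentral $\ell \in L$: when $L$ is simple this is immediate from Remark~\ref{r:scott}, and when $G = \SL_n(k)$ with $p \mid n$ one runs the same argument with ${\rm Lie}(\GL_n(k))$ in place of $L$ to obtain a noncentral fixed element there and then passes back to $L$ by subtracting a scalar. Write $\ell = \ell_s + \ell_n$ for the Jordan decomposition of $\ell$ in the restricted Lie algebra $L$, with $\ell_s$ semisimple, $\ell_n$ nilpotent and $[\ell_s,\ell_n] = 0$. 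Since ${\rm Ad}(g)$ is an automorphism of the restricted Lie algebra for every $g \in G$, uniqueness of this decomposition forces $\ell_s$ and $\ell_n$ to be fixed by $G(x)$ as well; hence $G(x) \leqslant C_G(\ell_s) \cap C_G(\ell_n)$.

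If $\ell_s$ is noncentral, we are essentially done: since $p$ is good, Lemma~\ref{l:ssgood}(ii) gives $C_G(\ell_s) = C_G(S)$ for some nontrivial torus $S \leqslant G$, and then $G(x) \leqslant C_G(S)$ shows that $S$ centralises $G(x)$, so $\dim C_G(G(x)) \geqslant \dim S > 0$. (This covers the case $p = 0$, where $\ell$ may as well be taken semisimple, and more generally every case in which $L^{G(x)}$ contains a noncentral semisimple element.)

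The remaining case is that $\ell_s$ is central, so that $\ell_n = \ell - \ell_s$ is a \emph{nonzero} nilpotent element of $L$ and $G(x) \leqslant C_G(\ell_n)$. Here the conclusion will follow once we know that $Z(C_G(\ell_n))$ is positive-dimensional, for then $Z(C_G(\ell_n))$ is a positive-dimensional subgroup centralising $C_G(\ell_n) \supseteq G(x)$. This last assertion — that the centraliser of any nonzero nilpotent element of $L$ has positive-dimensional centre — is where the real work lies, and it is the step I expect to be the main obstacle: it must be verified across all nonzero nilpotent orbits, the delicate cases being the distinguished ones (where $C_G(\ell_n)^0$ is unipotent) and, more generally, those where the reductive part of $C_G(\ell_n)^0$ is semisimple. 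In each case $C_G(\ell_n)$ is non-reductive and a positive-dimensional central subgroup can be produced from the standard description of $C_G(\ell_n)$; for instance, in good characteristic $Z(C_G(\ell_n))$ contains a nontrivial connected unipotent subgroup (arising from an associated cocharacter of $\ell_n$), and for $G$ of type $A$ with $\ell_n$ of Jordan type $(m^k)$ on the natural module the relevant central subgroup is the group of scalar matrices over $k[t]/(t^m)$ that lie in $C_G(\ell_n)$. Combining the two cases yields $\dim C_G(G(x)) > 0$ for all $x \in X$, as required.
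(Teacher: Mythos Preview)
Your strategy matches the paper's exactly up to the nilpotent case: obtain a noncentral $\ell \in L^{G(x)}$ via Scott (handling $\SL_n$ through $\mathfrak{gl}_n$, where the paper simply observes by elementary linear algebra that $C_G(\ell)$ already has positive-dimensional centre), take Jordan components, and settle the semisimple case by Lemma~\ref{l:ssgood}.

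The gap is the nilpotent case. You correctly reduce to showing $\dim Z(C_G(\ell_n)) > 0$ but then flag this as the ``main obstacle'' and offer only a sketch of a case-by-case verification. The paper closes it in one stroke with the Springer correspondence (citing Seitz~\cite{Seitz2}): in good characteristic there is a $G$-equivariant isomorphism of varieties between the nilpotent cone of $L$ and the unipotent variety of $G$, so $C_G(\ell_n) = C_G(u)$ for the corresponding nontrivial unipotent $u \in G$. Since $G(x)$ fixes the whole line $k\ell_n$, equivariance carries it to a one-dimensional family of unipotent elements of $G$ each centralised by $G(x)$, giving $\dim C_G(G(x)) > 0$ immediately with no case analysis. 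Your aside about associated cocharacters is not quite the right mechanism: such a cocharacter acts on $\ell_n$ with weight~$2$ and so normalises rather than centralises it; the one-parameter family you actually want is the Springer image of $k\ell_n$ (morally $\exp(t\ell_n)$), not the cocharacter itself.
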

 
\begin{proof}  
First suppose that $G=\SL_n(k)$ and let $x \in X$. As in the proof of Corollary \ref{c:adjreducible}, we see that $G(x)$ centralizes a noncentral element $\ell$ of
the Lie algebra of $\GL_n(k)$ and so also for the Lie algebra of $G$.    Using elementary linear algebra, we see that $C_G(\ell)$ has a positive
dimensional center, whence $\dim C_{G}(G(x))>0$ for all $x \in X$. 

In the remaining cases, the Lie algebra $L = {\rm Lie}(G)$ is simple and irreducible as a $kG$-module. Let $x \in X$ and let $0 \ne \ell \in L$ be fixed by $G(x)$ (see Remark \ref{r:scott}), which we may assume is either nilpotent or semisimple. If $\ell$ is semisimple, then Lemma \ref{l:ssgood} applies, so we can assume $\ell$ is nilpotent. Here the Springer correspondence implies that $C_G(\ell) \cong C_G(u)$ for some nontrivial unipotent element $u \in G$ and we know that $\dim C_G(u)>0$ (see Seitz \cite{Seitz2}, for example). 
\end{proof}
 
\begin{rem}
We close by recording a couple of comments on the above results:
\begin{itemize}\addtolength{\itemsep}{0.2\baselineskip}
\item[{\rm (a)}] First observe that the conclusion to Lemma \ref{l:ssgood}(ii) is false (in general) if $p$ is a bad prime for $G$. For example, if $G = \Sp_n(k)$ with $p=2$ and $n \equiv 0 \imod{4}$, then there are semisimple elements $s$ in the Lie algebra of $G$ such that $C_G(s) = \Sp_m(k) \times \Sp_{n-m}(k)$ is semisimple. In particular, $C_G(s)$ is not the centralizer of a torus in this situation. Similarly, if $p=3$ then $G=G_2(k)$ has a subgroup $\SL_3(k)$ with a $1$-dimensional fixed space on the Lie algebra of $G$. 

\item[{\rm (b)}] Let us also note that Corollary \ref{c:adjreducible2} is equivalent to the statement that every $G$-orbit on $X$ has dimension strictly less than $\dim G$ (this property is stronger than stating that $\Delta$ is empty). As remarked above, the conclusion extends to tuples in $Z$ (see \eqref{e:Z}) if we assume the condition $\dim Z < 2 \dim G$. 
\end{itemize}
\end{rem}

\subsection{Unipotent classes}\label{ss:classes}

For the remainder of Section \ref{s:prel2}, unless stated otherwise, we will assume $G$ is a simple classical algebraic group over $k$ of the form ${\rm SL}(V)$, ${\rm Sp}(V)$ or ${\rm SO}(V)$, where $\dim V = n$. In the statement of Theorem \ref{t:main2}, we assume that each $x_i$ in \eqref{e:conj} has prime order modulo $Z(G)$, which implies that the corresponding elements in $G/Z(G)$ are either semisimple or unipotent (as noted in Remark \ref{r:2}, if $p=0$ then $x_i$ can be an arbitrary nontrivial unipotent element). In view of Lemma \ref{l:closures}, in order to establish the existence of a tuple in $\Delta$ we may replace $X$ by its closure in $G^r$, so we are naturally interested in the closure properties of semisimple and unipotent classes. 

The situation for semisimple classes is transparent: every such class is closed and conjugacy of semisimple elements is essentially determined by the multiset of eigenvalues on the natural module $V$ (one has to be slightly careful if $G = \SO(V)$ and $V^{x^2} =0$, in which case $n$ is even and there are two $G$-classes of semisimple elements with the same eigenvalues as $x$ on $V$, which are fused in $\OO(V) = G.2$). Our main aim in this section is to briefly recall the parametrization of unipotent classes in the classical algebraic groups, together with some of their closure properties that will be needed later. We will generally follow the notation in \cite{LS}. The results discussed below are essentially all consequences of Spaltenstein \cite{Sp}. 

\subsubsection{Linear groups} 
First recall that the conjugacy classes of unipotent elements in $G=\SL(V)$ are in bijection with partitions of $n$. Write $C(\pi)$ for the conjugacy class in $\SL(V)$ corresponding to the partition $\pi$ and note that if $p>0$ then the elements in $C(\pi)$ have order $p$ if and only if each part of $\pi$ is at most $p$. If $\pi_1$ and $\pi_2$ are partitions of $n$, then $C(\pi_2)$ is in the closure of $C(\pi_1)$ if and only if $\pi_1$ dominates $\pi_2$ in the usual partial ordering on the set of partitions of $n$ (see \cite{He1, Sp}, for example). Let $d(\pi)$ denote the number of parts in the partition $\pi$ and let $U(m)$ be the subvariety of $G$ consisting of all unipotent elements with an $m$-dimensional fixed space on $V$ (in other words, $U(m)$ is the union of the unipotent classes $C(\pi)$ with $d(\pi) = m$). It follows from the above discussion that $U(m)$ is irreducible and $C(\pi)$ is open in $U(m)$,  where $\pi$ is the partition $(n-m+1, 1^{m-1})$. Moreover, there is a unique partition $\pi'$ of $n$ such that $d(\pi')=m$ and $C(\pi')$ is contained in the closure of every conjugacy class contained in $U(m)$. This partition has at most two distinct part sizes (and if there are two, say $a$ and $b$ with $a>b$, then $a-b=1$). 

\subsubsection{Symplectic groups with $p \ne 2$}
Next assume $G = \Sp(V)$ and $p \ne 2$. Let $\pi$ be a partition of $n$ and write $C(\pi)$ for the corresponding class in $\SL(V)$ as above. Then $C(\pi) \cap G$ is nonempty if and only if the multiplicity of every odd part of $\pi$ has even multiplicity; if this condition holds, then $C_G(\pi) := C(\pi) \cap G$ is a conjugacy class of $G$. Moreover, the closure relation is the same as for $\SL(V)$ (for the admissible partitions). Set $U_G(m) = U(m) \cap G$ and note that $U_G(m)$ is irreducible and contained in the closure of the class $C_G(\pi)$, where 
\[
\pi = \left\{\begin{array}{ll}
(n-m+1, 1^{m-1}) & \mbox{if $m$ is odd} \\
(n-m, 2, 1^{m-2}) & \mbox{if $m$ is even.} 
\end{array}\right.
\]
As noted for $\SL(V)$, there is a unique unipotent class contained in the closure of any class in $U_G(m)$ (this is the same class as described for $\SL(V)$
and has the smallest dimension of any class in $U_G(m)$). Also note that if $m$ is even, then this class contains elements in a Levi subgroup of $G$, namely the stabilizer in $G$ of a pair of complementary totally isotropic subspaces of dimension $n/2$. Of course, if $m$ is odd, then no elements in $U_G(m)$ are contained in such a Levi subgroup.   

\subsubsection{Orthogonal groups with $p \ne 2$}\label{sss:orthpn2}
Next assume $G = \SO(V)$ and $p \ne 2$, with $n \geqs 5$. Set $C_G(\pi)= C(\pi) \cap G$, which is nonempty if and only if all even parts in $\pi$ occur with even multiplicity. In addition, we note that $C_G(\pi)$ is a single conjugacy class in the full orthogonal group $G.2=\OO(V)$, while  $C_G(\pi)$ splits into two $G$-classes if and only if all parts are even. As for $\Sp(V)$, the closure relation is the same as above, restricted to the admissible classes (for classes that split, the smallest classes in the respective closures are precisely the same). If $n$ is even then $d(\pi)$ is even for every partition $\pi$ corresponding to a class in $G$. Once again, there is a unique unipotent class contained in the closure of any class in $U_G(m) = U(m) \cap G$ and it has the smallest dimension of any class in $U_G(m)$. This class also contains elements in a Levi subgroup of $G$, which is the stabilizer of a pair of complementary totally singular subspaces of dimension $n/2$. Note that if $n/2$ is odd, then this Levi is unique up to conjugacy in $G$, whereas there are two $G$-classes of such Levi subgroups when $n/2$ is even, which are fused under the action of an involutory graph automorphism of $G$ (i.e. the two $G$-classes are fused in $G.2 = \OO(V)$).

\subsubsection{Symplectic groups with $p=2$}\label{sss:sppn2}
Now suppose $G = \Sp(V)$ and $p =2$. If $g \in G$ is unipotent, then we can write 
$V$ as an orthogonal direct sum of indecomposable $k\la g \ra$-modules (in the sense that a module is indecomposable if it cannot be decomposed as an orthogonal sum of two proper submodules).  The indecomposable summands that arise are labeled as follows in \cite[Lemma 6.2]{LS}:  
\begin{itemize}\addtolength{\itemsep}{0.2\baselineskip}
\item[{\rm (a)}] $V(2m)$, where $g$ acts as a single Jordan block of size $2m$; and
\item[{\rm (b)}] $W(\ell)$, where $g$ has two Jordan blocks of size
$\ell$, each corresponding to a submodule that is a totally isotropic space.
\end{itemize}
Then every unipotent element $g \in G$ yields an orthogonal direct sum decomposition
 of the form  
 \begin{equation}\label{e:decc}
 V = \sum_i  V(2m_i)^{a_i}  \perp  \sum_j W(\ell_j)^{b_j}
 \end{equation}
 with $0 \leqs a_i \leqs 2$ for each $i$, which is unique up to isomorphism. Spaltenstein \cite{Sp}  completely describes the closure relations, but here we only record what we need:
\begin{itemize}\addtolength{\itemsep}{0.2\baselineskip}
\item[{\rm (i)}] If $m_1 > m_2$, the closure of $V(2m_1) \perp V(2m_2)$ contains $V(2m_1 - 2) \perp V(2m_2 + 2)$. 
\item[{\rm (ii)}]  If $m_1 \geqs m_2$, the closure of  $V(2m_1) \perp V(2m_2)$ contains $W(m_1 + m_2)$.
\item[{\rm (iii)}]  We have $a_i=0$ for all $i$ if and only if $g$ is conjugate to an element in a 
Levi subgroup of $G$ arising as the stabilizer of a pair of complementary totally isotropic subspaces of $V$. For such an element $g$, the multiplicity of every part in the corresponding partition of $n$ is even. 
\item[{\rm (iv)}] The closure relation for unipotent elements with $a_i=0$ for all $i$ coincides with the usual ordering on partitions.   
\item[{\rm (v)}]  If $m$ is even, then there is a unique smallest class in $U_G(m) = U(m) \cap G$ and this class corresponds to a partition with at most two distinct sizes (if there are two, say $a>b$, then $a-b=1$).  
\end{itemize}

\subsubsection{Orthogonal groups with $p=2$}
Finally, let us assume $G= \SO(V)$ with $p=2$, where $n \geqs 6$ is even. Here it is  convenient to view $G$ as a subgroup of $J =  \Sp(V)$ and we observe that the description of the unipotent conjugacy classes in $G$ is (essentially) the same as for $J$. If $g \in J$ is unipotent, then $g$ is conjugate to an element of $G.2 = \OO(V)$ and two unipotent elements in $G.2$ are conjugate in $G.2$ if and only if they are conjugate in $J$. So we can use the same notation for the unipotent elements in $G.2$ corresponding to the decomposition in \eqref{e:decc}. Note that such an element $g \in G.2$ is contained in $G$ if and only if $\sum_ia_i$ is even (which is equivalent to the condition that $g$ has an even number of Jordan blocks on $V$). In addition, if $g \in G$ then the class $g^{G.2}$ splits into two $G$-classes if and only if $a_i=0$ and $\ell_j$ is even for all $i,j$ in \eqref{e:decc} (see \cite[Proposition 6.22]{LS}). The closure properties in $G$ are also inherited from $J$. In particular, if $m$ is even then the smallest unipotent class $g^G$ with $m$ Jordan blocks corresponds to the smallest class in a Levi subgroup $\GL(W)$ with $m/2$ Jordan blocks, where $W$ is a maximal totally singular subspace of $V$ (hence each Jordan block of $g$ on $V$ has even multiplicity).    

\begin{rem}\label{r:as}
As previously noted, if $G = \Sp(V)$ or $\SO(V)$ with $p=2$, then we will mainly be interested in the unipotent involutions in $G$. The conjugacy classes of unipotent involutions in simple algebraic groups (and the corresponding finite groups of Lie type) were studied in detail by Aschbacher and Seitz \cite{AS} and here we recall their notation. 

Let $g \in G$ be a unipotent involution with Jordan form $(J_2^s,J_1^{n-2s})$ on $V$, where $J_i$ denotes a standard unipotent Jordan block of size $i$. If $s$ is even, then $\Sp(V)$ and $\OO(V)$ both have two classes of such elements, with representatives denoted by $a_s$ and $c_s$ (here $g$ is of type $a_s$ if and only if $(v,gv) = 0$ for all $v \in V$, where $( \, , \,)$ is the corresponding alternating or symmetric form on $V$). On the other hand, if $s$ is odd then there is a unique class of such elements in $\Sp(V)$ and $\OO(V)$, represented by $b_s$ (for orthogonal groups, these elements are contained in $\OO(V) \setminus \SO(V)$). We also note that if $g \in \SO(V)$ is a unipotent involution, then $g^{\OO(V)} = g^{\SO(V)}$ unless $n \equiv 0 \imod{4}$ and $g$ is $\OO(V)$-conjugate to $a_{n/2}$, in which case the $\OO(V)$-class splits into two $\SO(V)$-classes. In view of the notation in \cite{AS}, we will refer to \emph{$x$-type} involutions in $G$, where $x$ is either $a$, $b$ or $c$. 

The correspondence between this notation and the decomposition in \eqref{e:decc} is as follows:
\begin{align*}
\mbox{$a_s$, $s$ even, $2 \leqs s \leqs n/2$:} & \;\; W(2)^{s/2} \perp W(1)^{n/2-s} \\
\mbox{$b_s$, $s$ odd, $1 \leqs s \leqs n/2$:} & \;\; V(2) \perp W(2)^{(s-1)/2} \perp W(1)^{n/2-s} \\
\mbox{$c_s$, $s$ even, $2 \leqs s \leqs n/2$:} & \;\;  V(2)^2 \perp W(2)^{s/2-1} \perp W(1)^{n/2-s}
\end{align*}
\end{rem}

\subsection{Tensor products}\label{ss:tensorproduct} 

In the proof of Theorem \ref{t:main2} we will need to consider the action of unipotent elements on tensor products and related spaces. As above, we write $J_i$ for a standard unipotent Jordan block of size $i$.

Let $J_a \in {\rm GL}_a(k) = \GL(W)$ be a regular unipotent element and let $J_a \otimes J_a$, $\wedge^2(J_a)$ and $\Sym^2(J_a)$ denote the action of $J_a$ on the tensor product $W \otimes W$, the exterior square $\wedge^2(W)$ and the symmetric square $\Sym^2(W)$, respectively. Similarly, we define $J_a \otimes J_b$. There are results in the literature giving the precise Jordan decomposition of these operators (see \cite{Barry}, for example), but we are only interested here in the number of Jordan blocks on the respective spaces. As explained below, this number is independent of the characteristic $p$, with the exception of the module $\Sym^2(W)$ for $p=2$.  

\begin{lem}\label{l:tensor} 
Let $a,b \geqs 2$ be integers.
\begin{itemize}\addtolength{\itemsep}{0.2\baselineskip}
\item[{\rm (i)}] $J_a \otimes J_b$ has $\min\{a,b\}$ Jordan blocks.
\item[{\rm (ii)}] $\wedge^2(J_a)$ has $\lfloor a/2 \rfloor$ Jordan blocks.
\item[{\rm (iii)}] $\Sym^2(J_a)$ has $\lceil a/2\rceil+\e$ Jordan blocks, where $\e=1$ if $a$ is even and $p=2$, otherwise $\e=0$.    
\end{itemize}
\end{lem}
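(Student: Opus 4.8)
\textbf{Proof proposal for Lemma \ref{l:tensor}.}

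The plan is to reduce everything to counting Jordan blocks, which is equivalent to computing the dimension of the fixed space $\dim(M^{J})$ for the relevant operator $J$, since a unipotent operator on a $d$-dimensional space with $k$ Jordan blocks satisfies $\dim M^J = k$. So for each of the three modules I would identify the unipotent operator and compute the dimension of its fixed (equivalently, $1$-eigen-) space.

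For part (i), I would use the standard fact that $J_a\otimes J_b$ on $W\otimes W'$ (with $\dim W = a \leqs b = \dim W'$) has exactly $\min\{a,b\}=a$ Jordan blocks. The cleanest route is to pass to the semisimplification: over any field, the Jordan type of $J_a\otimes J_b$ is computed from the tensor product of the corresponding modules for the cyclic group (or $\mathrm{SL}_2$) — but since I only want the \emph{number} of blocks, it suffices to observe that the number of blocks equals $\dim (W\otimes W')^{J} = \dim\bigl(\mathrm{Hom}(W^*, W')^{J}\bigr)$, and a map $W^*\to W'$ commuting with the (single) unipotent generator corresponds to a module homomorphism between indecomposable $k[t]/(t-1)^a$- and $k[t]/(t-1)^b$-modules; the space of such homomorphisms has dimension $\min\{a,b\}$. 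This argument is characteristic-free.

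For parts (ii) and (iii), I would again compute fixed-space dimensions. Here $W\otimes W = \Sym^2(W)\oplus\wedge^2(W)$ when $p\ne 2$, and $\dim(\wedge^2(W))^{J_a}+\dim(\Sym^2(W))^{J_a} = \dim(W\otimes W)^{J_a} = a$ by part (i). The symmetric square fixed space consists of $J_a$-invariant symmetric bilinear forms on $W^*$, and the exterior square fixed space of $J_a$-invariant alternating forms; a direct computation (or citing \cite{Barry}) shows $\dim(\wedge^2 W)^{J_a}=\lfloor a/2\rfloor$ and $\dim(\Sym^2 W)^{J_a}=\lceil a/2\rceil$ when $p\ne 2$, consistent with the sum being $a$. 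For $p=2$ the decomposition $W\otimes W = \Sym^2\oplus\wedge^2$ fails, and one must work with $\Sym^2(W)$ directly as the quotient $(W\otimes W)/\wedge^2(W)$, or observe that in characteristic $2$ there is an extra $J_a$-invariant quadratic form (the one coming from $v\mapsto v\otimes v$, i.e. the Frobenius-twisted $W^{(2)}$ sits inside $\Sym^2 W$ as a submodule), which contributes one additional Jordan block; this gives the correction term $\e=1$ exactly when $a$ is even and $p=2$. The case $a$ odd, $p=2$ needs a separate check to confirm no extra block appears, which I expect follows from a parity/dimension count: $\dim\Sym^2(W) = \binom{a+1}{2}$ has the right parity only in the claimed pattern.

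The main obstacle is the characteristic-$2$ behaviour of $\Sym^2$: the clean splitting of the tensor square is unavailable, and one has to argue carefully that the submodule $W^{(2)}\hookrightarrow \Sym^2(W)$ (spanned by the images of $v\otimes v$) forces exactly one extra Jordan block when $a$ is even, and none when $a$ is odd. I would either spell this out via an explicit basis for $\Sym^2(W)$ in terms of $w_iw_j$ with $w_1,\dots,w_a$ a Jordan basis for $W$, compute the action of $J_a-1$, and read off $\dim\ker$; or, more economically, simply cite the detailed Jordan-form results of Barry \cite{Barry} (or Gow--Laffey) for $\Sym^2(J_a)$ and extract the block count. Since the statement only claims the block count and not the full partition, the citation route is the most efficient, with the tensor-product and exterior-square counts being elementary enough to include directly.
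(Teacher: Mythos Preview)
Your argument for (i) via $\dim(W\otimes W')^{J}=\dim\mathrm{Hom}_{k[J]}(W^*,W')=\min\{a,b\}$ is correct and more direct than the paper's route, which first reduces to characteristic $0$ (via $\SL_2$-theory) for a lower bound and then gets the upper bound by comparison with $J_a\otimes I_b$. For (ii) and (iii) with $p\ne 2$ you and the paper do essentially the same thing: use the splitting $W\otimes W=\Sym^2\oplus\wedge^2$ together with (i).

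The genuine divergence is at $p=2$. Your Frobenius-twist observation gives the short exact sequence $0\to W^{(2)}\to\Sym^2(W)\to\wedge^2(W)\to 0$, which yields the upper bound $\dim(\Sym^2 W)^{J_a}\leqs 1+\lfloor a/2\rfloor$; combined with the lower bound $\lceil a/2\rceil$ this settles $a$ odd but leaves a gap of one for $a$ even, and it does not by itself give (ii) in characteristic $2$ at all (your parity remark does not close this). You fall back on citing Barry, which is fine. The paper instead takes a quite different and self-contained route: it embeds $g=J_a\in\GL_a(k)$ diagonally into $H=\SO_{2a}(k)$ and $K=\Sp_{2a}(k)$ via the stabilizer of a pair of complementary totally singular $a$-spaces, and observes that $\dim C_H(g)=a+2c$ and $\dim C_K(g)=a+2c'$ where $c,c'$ are the block counts for $\wedge^2(J_a)$ and $\Sym^2(J_a)$ respectively. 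Since centralizer dimensions of unipotent elements in classical groups are characteristic-independent (Hesselink, or \cite{LS}), (ii) follows immediately, and then Lemma~\ref{l:oinsp} gives $\dim C_K(g)=\dim C_H(g)+2$, hence $c'=c+1$, which is exactly (iii). This avoids any explicit basis computation or external citation for the block counts.
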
 

\begin{proof}  
All these results hold in characteristic $0$ by considering appropriate modules for $\SL_2(k)$ (see \cite[Section 6]{GG2}). Since the relevant operators are defined over $\mathbb{Z}$, it follows that the results in characteristic $0$ give lower bounds in the positive characteristic setting.

For the remainder, let us assume $p>0$. First consider (i) and assume $a \geqs b$. Then 
$J_a \otimes J_b$ has no more Jordan blocks than $J_a \otimes I_b$, which visibly has Jordan form $(J_a^b)$. Therefore (i) holds. Similarly, if $p \ne 2$ then 
\[
J_a \otimes J_a  = \wedge^2(J_a) \oplus \Sym^2(J_a)
\]
and thus (ii) and (iii) follow by combining part (i) with the result in characteristic $0$. For the remainder, we may assume $p=2$.

Consider (ii) and view $g= J_a \in \GL_a(k) < \SO_{2a}(k)=H$, where $\GL_a(k)$ is the stabilizer of a pair of complementary totally singular $a$-dimensional subspaces of the natural module for $H$ (so in particular, $g$ has Jordan form $(J_a^2)$ on this space). Then $\dim C_H(g) =   a  + 2c$, where $c$ is the number of Jordan blocks of 
$\wedge^2(J_a)$. By \cite[Chapter 4]{LS} or \cite{He},  the dimension of $C_H(g)$ is independent of the characteristic and the result follows. 

Finally, consider (iii) with $p=2$. Here $H < \Sp_{2a}(k) = K$ and $\dim C_K(g) = a + 2c'$, where $c'$ is the number of Jordan blocks of $\Sym^2(J_a)$. By
Lemma \ref{l:oinsp} below, we have $\dim C_K(g) = \dim C_H(g) + 2$, so $c'=c+1$ and thus (iii) follows from (ii).   
\end{proof} 

\subsection{Exterior squares}\label{ss:linalg}

Here we study the action of $G = \GL_n(k) = \GL(V)$ on $W = \wedge^2(V)$, where $n \geqs 2$ and $k$ is an algebraically closed field of characteristic $p \geqs 0$. Let $g \in G$ and recall that $\dim W = \binom{n}{2}$ and $W^g$ denotes the fixed space of $g$ on $W$. Let $\mathcal{E}(g)$ be the set of eigenvalues of $g$ on $V$ and let $\a(g)$ be the dimension of the largest eigenspace of $g$ on $V$. We will establish some useful bounds on $\dim W^g$ in terms of $\a(g)$. 

\begin{lem}\label{l:extbd}
Let $g \in G$ be a noncentral semisimple or unipotent element, and assume $g$ is an involution if $p=2$ and $g$ is unipotent. If $d=\a(g)$ then the following hold:
\begin{itemize}\addtolength{\itemsep}{0.2\baselineskip}
\item[{\rm (i)}] $\dim W^g \leqs d\lfloor n/2 \rfloor$.
\item[{\rm (ii)}] If $p \ne 2$, $g$ is semisimple and $\{\pm 1\} \subseteq \mathcal{E}(g)$, then $\dim W^g <  d(n-1)/2$.
\end{itemize}
\end{lem}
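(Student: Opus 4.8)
The plan is to reduce the computation of $\dim W^g$ to a sum over pairs of eigenspaces of $g$ on $V$, and then bound this sum using the hypothesis $d = \a(g)$. Write $V = \bigoplus_{\l \in \mathcal{E}(g)} V_\l$ as a direct sum of eigenspaces (if $g$ is unipotent with $p \ne 2$ and has order $p$, replace ``eigenspace'' by ``generalized eigenspace'' — but in fact the cleanest route is to handle the semisimple and involution cases first, where $g$ is diagonalizable, and treat the remaining unipotent case separately). Set $n_\l = \dim V_\l$. Then $\wedge^2(V)$ decomposes as $\bigoplus_{\l} \wedge^2(V_\l) \oplus \bigoplus_{\l < \mu} V_\l \otimes V_\mu$, and $g$ acts on $\wedge^2(V_\l)$ as the scalar $\l^2$ and on $V_\l \otimes V_\mu$ as the scalar $\l\mu$. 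Hence $\dim W^g = \sum_{\l^2 = 1} \binom{n_\l}{2} + \sum_{\l\mu = 1,\, \l \ne \mu} n_\l n_\mu$, where the sums run over $\mathcal{E}(g)$.

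For part (i): I would argue that the right-hand side is maximized, subject to $\sum_\l n_\l = n$ and each $n_\l \le d$, by the configuration that pairs up eigenvalues into inverse pairs $\{\l, \l^{-1}\}$ each of size $d$ (or, when $\l^2 = 1$, a single eigenspace of size $d$ contributing $\binom{d}{2}$, which is smaller per dimension-squared). Concretely, group $\mathcal{E}(g)$ into blocks of the form $\{\l,\l^{-1}\}$ (with $n_\l n_{\l^{-1}} \le d^2$ but also $n_\l + n_{\l^{-1}} \le$ the total dimension used) or singletons $\{\l\}$ with $\l = \pm 1$. Each inverse pair of combined dimension $m \le 2d$ contributes at most $n_\l n_{\l^{-1}} \le \lfloor m/2 \rfloor \lceil m/2 \rceil \le d \lfloor m/2 \rfloor$; each self-inverse eigenspace of dimension $m \le d$ contributes $\binom{m}{2} < \tfrac12 m d \le d \lfloor m/2\rfloor$ when $m$ is even, with a small check when $m$ is odd. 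Summing over all blocks and using $\sum m = n$ gives $\dim W^g \le d \lfloor n/2 \rfloor$; the worst case is genuinely $n$ even, split as two eigenspaces of size $d$ with inverse eigenvalues and $n = 2d$, i.e. $d = n/2$, giving exactly $d \cdot (n/2) = d\lfloor n/2\rfloor$. For the unipotent case with $p = 2$ and $g$ an involution: here $g$ has Jordan form $(J_2^s, J_1^{n-2s})$, $\a(g) = d = n - s$, and one computes $\dim W^g$ directly from Lemma \ref{l:tensor}(ii) applied blockwise — $\wedge^2$ of $J_2$ gives one fixed vector, $J_2 \otimes J_2$ gives two, $J_2 \otimes J_1$ gives one, $J_1 \otimes J_1$ gives one — leading to $\dim W^g = s + \binom{n-2s}{2} + s(n-2s) + \binom{s}{2}\cdot(\text{correction})$; simplifying and comparing to $d\lfloor n/2\rfloor = (n-s)\lfloor n/2\rfloor$ should give the bound with room to spare since $s \ge 1$.

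For part (ii): now $p \ne 2$, $g$ is semisimple, and both $+1$ and $-1$ lie in $\mathcal{E}(g)$. The point is that the eigenvalue $1$ and the eigenvalue $-1$ do \emph{not} pair with each other ($1 \cdot (-1) = -1 \ne 1$), and $-1$ is its own inverse, so the $V_{-1}$ eigenspace (of some dimension $n_{-1} \ge 1$) contributes only $\binom{n_{-1}}{2}$, whereas a generic eigenspace of the same dimension paired with its inverse would contribute up to $n_{-1} d$. This forces a strict loss compared to the extremal configuration for part (i). Quantitatively: removing $V_1$ and $V_{-1}$ from $V$ leaves a space of dimension $n - n_1 - n_{-1} \le n - 2$ on which the non-identity-eigenvalue part of the bound applies, giving at most $d\lfloor (n - n_1 - n_{-1})/2\rfloor$; adding back the $\binom{n_1}{2} + \binom{n_{-1}}{2}$ terms and cross-terms $V_1 \otimes V_\mu$, $V_{-1} \otimes V_\mu$ (the latter with $\mu = -1$ forbidden) and checking that the total is strictly less than $d(n-1)/2$ is the required computation. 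The main obstacle I anticipate is organizing the case analysis in part (i) cleanly — the inequality $n_\l n_{\l^{-1}} \le d\lfloor m/2\rfloor$ for $m = n_\l + n_{\l^{-1}}$ is elementary, but one has to be careful that the global constraint $\sum m = n$ interacts correctly with the per-block bounds, and the odd-$n$ and odd-$m$ parity bookkeeping (where $\lfloor n/2\rfloor$ versus $n/2$ matters) needs a little care; everything else is routine once the eigenspace decomposition of $\wedge^2(V)$ is written down.
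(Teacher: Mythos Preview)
Your semisimple argument for (i) is correct and is essentially the paper's approach, just organized differently: the paper writes $n = 2de + f$ and argues that the maximizer has $e$ inverse pairs each of multiplicity $d$ plus a leftover piece, whereas you group $\mathcal{E}(g)$ into blocks $\{\l,\l^{-1}\}$ and use $n_\l n_{\l^{-1}} \le d\lfloor m/2\rfloor$, $\binom{m}{2} \le d\lfloor m/2\rfloor$ and $\sum_j \lfloor m_j/2\rfloor \le \lfloor n/2\rfloor$. Both work. Your outline for (ii) is also fine; one simplification you should notice is that the cross-terms $V_{\pm 1}\otimes V_\mu$ you mention contribute \emph{nothing} to $W^g$ (since $\pm\mu \ne 1$ for $\mu \ne \pm 1$), so the whole computation is just $\binom{n_1}{2}+\binom{n_{-1}}{2}+d\lfloor(n-n_1-n_{-1})/2\rfloor < d(n-1)/2$, which follows from $a(a-1)+b(b-1)\le d(a+b-2) < d(a+b-1)$ with $a=n_1,\ b=n_{-1}$.

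The genuine gap is the unipotent case for $p\ne 2$. You announce you will ``treat the remaining unipotent case separately'' but then only handle the $p=2$ involution. For arbitrary unipotent $g$ in odd characteristic there is only one generalized eigenspace, so your eigenspace formula is useless, and the direct Jordan-block count $\dim W^g = \sum_i \lfloor a_i/2\rfloor + \sum_{i<j}\min(a_i,a_j)$ (from Lemma~\ref{l:tensor}) does not obviously satisfy the bound without further argument. The paper deals with this via a semicontinuity trick you do not mention: by Remark~\ref{r:linear}, $\dim W^g$ can only go up when $g$ is replaced by an element in the closure of $g^G$, so one may replace $g$ by the \emph{smallest} unipotent class with $d$ Jordan blocks, namely $(J_a^e,J_{a-1}^{d-e})$ with block sizes differing by at most one. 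For that specific shape the block count from Lemma~\ref{l:tensor} is a clean closed form and the inequality is immediate. Your $p=2$ sketch (with the undetermined ``correction'') would also be cleaner via this reduction, though for involutions the direct computation you indicate does go through once you actually write it out.
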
 

\begin{proof}   
 First assume $g$ is semisimple and write $n = 2de + f$ with $0 \leqs f < 2d$.   It is straightforward to see that if $f=0$, then $\dim W^g$ is maximal when $g$ has $e$ pairs of distinct eigenvalues $\{\l,\l^{-1}\}$, each with multiplicity $d$.  In this case, $\dim W^g  = ed^2 = dn/2$. 

Now assume $f >0$. Here  the maximum still occurs when $g$ has $e$ pairs of distinct eigenvalues $\{\l,\l^{-1}\}$ with multiplicity $d$ and so we may write $g = g_1 \oplus g_2$ with respect to the decomposition $V=V_1 \oplus V_2$, where $\dim V_1 = 2de$, $\dim V_2=f$ and $g_1$ has the eigenvalues on $V_1$ as described above. Then 
\[
\dim W^g   = ed^2  + \dim \wedge^2(V_2)^{g_2}.
\]
If $0 < f   \leqs  d$, then we can assume $g_2$ is trivial and thus  
\[
\dim W^g   = ed^2  + \frac{1}{2}f(f-1)  \leqs  \frac{1}{2}d(2ed + f-1)  =  \frac{1}{2}d(n-1), 
\]
with equality only if $f=d$. On the other hand, if $f > d$ then we may assume $g_2$ has exactly two eigenvalues on $V_2$ and it is straightforward to show that $\dim W^g  < d(n-1)/2$.

To complete the proof for semisimple elements, let us assume $\{\pm 1\} \subseteq \mathcal{E}(g)$ and write $g=g_1 \oplus g_2$ with respect to the decomposition $V=U_1 \oplus U_2$, where $U_2$ is the kernel of $g^2-1$. Then $W^g = \wedge^2(U_1)^{g_1} \oplus \wedge^2(U_2)^{g_2}$ and the argument above gives 
\[
\dim \wedge^2(U_1)^{g_1} \leqs \frac{1}{2}d(n-l-1)
\]
with $l = \dim U_2$. In addition, if the two eigenspaces of $g_2$ on $U_2$ have dimensions $m$ and $l-m$, then 
\[
\dim \wedge^2(U_2)^{g_2} = \binom{m}{2}+\binom{l-m}{2} \leqs \frac{1}{2}d(l-2)
\]
and the result follows.

Finally, let us assume $g$ is unipotent. In view of Lemma \ref{l:complete} (also see Remark \ref{r:linear}), we may replace $g$ by any unipotent element in the closure of $g^G$ with the same number of Jordan blocks on $V$.

First assume $p \ne 2$. By the discussion in Section \ref{ss:classes}, we may assume $g$ has Jordan form $(J_a^{e},J_{a-1}^{d-e})$ for some $a \geqs 2$. 
By Lemma \ref{l:tensor} we see that $\wedge^2(J_m)$ and $J_b \otimes J_c$ (with $c \leqs b$) have $\lfloor m/2 \rfloor$ and $c$ Jordan blocks, respectively. This makes it easy to compute the number of Jordan blocks of $g$ on $W$ and the result follows. The case $p=2$ (with $g$ an involution) is entirely similar. 
\end{proof} 

We need to consider the case where $n$ is odd in a bit more detail (for example, see the proof of Theorem \ref{t:so2nodd}, which establishes Theorem \ref{t:main2} for orthogonal groups of the form $\SO_{2m}(k)$ with $m \geqs 5$ odd). There is a similar result for $n$ even, but the analysis is more complicated and we do not need it in this paper. For $n$ odd, we first observe that the proof of Lemma \ref{l:extbd} gives the following corollary.

\begin{cor} \label{c:nodd}  
Suppose $n = 2m+1$, $m \geqs 1$ and $g \in G$ has prime order modulo $\langle -I_n \rangle$. Set $d = \a(g)$. Then $\dim W^g \leqs dm$, with equality only if $d \leqs m+1$. In addition, if both bounds are attained then $g$ is unipotent. 
\end{cor}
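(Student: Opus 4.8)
The plan is to extract the $n=2m+1$ case from the internal workings of the proof of Lemma \ref{l:extbd}, being careful to track when equality can occur and to analyse the borderline configurations. First I would dispose of the semisimple case. Write $n = 2m+1 = 2de+f$ with $0 \le f < 2d$, where $d = \a(g)$; since $n$ is odd, $f$ is odd and in particular $f \ge 1$. Following the argument in Lemma \ref{l:extbd}(i), the quantity $\dim W^g$ is maximised when $g$ has $e$ pairs of eigenvalues $\{\l,\l^{-1}\}$ each of multiplicity $d$, together with an action $g_2$ on an $f$-dimensional complement $V_2$, and then $\dim W^g = ed^2 + \dim\wedge^2(V_2)^{g_2}$. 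When $0 < f \le d$ one may take $g_2$ trivial, giving $\dim W^g = ed^2 + \binom{f}{2} \le \tfrac12 d(2de+f-1) = \tfrac12 d(n-1) = dm$, with equality forcing $f = d$; here $d = f$ is odd, $e = m/d \le m$, so $d \le m$ (the case $d = m+1$ cannot arise). When $f > d$ the proof of Lemma \ref{l:extbd} already shows $\dim W^g < \tfrac12 d(n-1) = dm$, which is strictly better than required. So in the semisimple case the bound $dm$ holds, equality forces $d \le m$, and the extra claim (equality $\Rightarrow$ $g$ unipotent) is vacuous among semisimple elements unless we also rule out semisimple equality entirely — which we do NOT, so I should double-check the statement: equality may be attained by semisimple elements with $d\le m$, and the phrase "if both bounds are attained" refers to attaining $\dim W^g = dm$ \emph{and} $d = m+1$ simultaneously, which by the above is impossible for semisimple $g$. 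Hence any $g$ attaining both must be unipotent.

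Next the unipotent case. By Lemma \ref{l:complete} and Remark \ref{r:linear} I may replace $g$ by any unipotent element in the closure of $g^G$ with the same number of Jordan blocks, so (using Section \ref{ss:classes}) I may take $g$ with Jordan form $(J_a^e, J_{a-1}^{d-e})$ for suitable $a \ge 2$, $1 \le e \le d$, subject to $ea + (d-e)(a-1) = n = 2m+1$. Using Lemma \ref{l:tensor}, the number of Jordan blocks of $g$ on $W = \wedge^2(V)$ is computed from $\wedge^2(J_a)$ ($\lfloor a/2\rfloor$ blocks), $\wedge^2(J_{a-1})$ ($\lfloor (a-1)/2\rfloor$ blocks), and the $J_a \otimes J_{a-1}$ cross terms ($a-1$ blocks each), and this number equals $\dim W^g$. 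I would then carry out the elementary bookkeeping: summing the contributions gives $\dim W^g$ as an explicit quadratic-type expression in $a$, $d$, $e$, and comparing with $dm = \tfrac12 d(n-1)$ I expect the inequality $\dim W^g \le dm$ to drop out, with the equality cases pinned down to a short list of $(a,d,e)$. The point is to verify that $d = m+1$ is compatible with equality in the unipotent case — this is where the statement genuinely needs the unipotent hypothesis, and presumably the witnessing configuration is something like $a = 2$, $d = m+1$, $e = 1$ (Jordan form $(J_2^{m+1}, J_1^{?})$, i.e.\ close to $(J_2^{m}, J_1)$), which one checks gives $\dim W^g = dm$. The $p=2$ sub-case is handled identically since Lemma \ref{l:tensor}(i),(ii) are characteristic-free and $g$ is an involution by hypothesis (the $\Sym^2$ anomaly at $p=2$ is irrelevant here as we only use $\wedge^2$ and tensor products).

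The main obstacle I anticipate is purely combinatorial rather than conceptual: pushing the Jordan-block count for $g$ on $\wedge^2(V)$ through cleanly enough to (a) confirm the bound $dm$ with no off-by-one slips in the floor/ceiling terms coming from $\wedge^2(J_a)$ versus $\wedge^2(J_{a-1})$, and (b) correctly enumerate the equality cases so as to exhibit a unipotent $g$ with $d = m+1$ attaining the bound while confirming no semisimple one does. A secondary subtlety is the bookkeeping when $e = d$ (so $g = (J_a^d)$, a single Jordan block size, requiring $ad = n$ odd, hence $a$ odd) versus $e < d$; I would treat these uniformly by writing everything in terms of the partition and invoking the closure/irreducibility facts from Section \ref{ss:classes} to reduce to the two-part shape $(J_a^e, J_{a-1}^{d-e})$ already used in Lemma \ref{l:extbd}. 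No new ideas beyond those in Lemma \ref{l:extbd} should be needed — this really is a corollary of that proof, specialised and with equality analysis added.
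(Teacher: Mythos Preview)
Your approach is correct and matches the paper's, which simply observes that the corollary follows by specialising the proof of Lemma~\ref{l:extbd} to odd $n$ and tracking when equality occurs. Two minor arithmetic slips to fix when you write it up: in the semisimple equality case $f = d$ one has $n = d(2e+1)$, so $e \ne m/d$ in general (though your conclusion $d \leqs m$ is still correct, since $e \geqs 1$ forces $d \leqs m$), and in the unipotent witness $(J_2^m, J_1)$ with $a = 2$ and $d = m+1$ the parameter is $e = m$ (the number of $J_2$ blocks), not $e = 1$.
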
 

\begin{cor} \label{c:wedge2odd}   
Suppose $n = 2m+1$ and $m,r \geqs 2$. Let $g_1, \ldots, g_r$ be elements in $G$ of prime order modulo $\langle -I_n \rangle$ and set $d_i = \a(g_i)$ and $e_i = \dim W^{g_i}$. If $\sum_i d_i \leqs n(r-1)$, then one of the following holds (up to ordering and conjugacy):
\begin{itemize}\addtolength{\itemsep}{0.2\baselineskip}
\item[{\rm (i)}] $\sum_i e_i < (r-1)\dim W$. 
\item[{\rm (ii)}] $r=2$, $g_1 = (J_2^m,J_1)$ and either $g_2 = (\l I_{m}, \l^{-1}I_m,\mu I_1)$ with $\l \in k^{\times} \setminus \{\pm 1\}$ and $\mu \in k^{\times}\setminus \{\l^{\pm}\}$, or $p \ne 2$ and $g_2 = (J_3,J_2^{m-1})$.
\end{itemize} 
\end{cor}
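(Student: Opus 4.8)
The plan is to run the inequality $\dim W^{g_i}\leq m\,d_i$ from Corollary \ref{c:nodd} and analyse when equality propagates. Write $n=2m+1$, so $\dim W=\binom n2=mn$. Summing the bounds gives
\[
\sum_i e_i \;\leq\; m\sum_i d_i \;\leq\; m\,n(r-1)\;=\;(r-1)\dim W,
\]
using the hypothesis $\sum_i d_i\leq n(r-1)$. If either inequality is strict --- that is, if $\sum_i d_i<n(r-1)$, or if $e_j<m\,d_j$ for some $j$ --- then conclusion (i) holds. So I may assume $\sum_i d_i=n(r-1)$ and $e_i=m\,d_i$ for every $i$, and the task is to show this forces the situation described in (ii). Throughout I would note that replacing $g_i$ by $-g_i$ changes neither $d_i=\a(g_i)$ nor $\dim W^{g_i}$ (indeed $\wedge^2(-g_i)=\wedge^2(g_i)$), so there is no loss in assuming each $g_i$ is semisimple or unipotent (of order $p$, an involution when $p=2$), with all conclusions understood modulo $\langle -I_n\rangle$ as in the statement.

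Next I would extract the numerical constraints. By Corollary \ref{c:nodd}, $e_i=m\,d_i$ forces $d_i\leq m+1$ for all $i$, so $(2m+1)(r-1)=\sum_i d_i\leq r(m+1)$, which rearranges to $m(r-2)\leq1$; since $m\geq2$ and $r\geq2$ this forces $r=2$. Then $d_1+d_2=2m+1$ with $d_1,d_2\leq m+1$, so $\{d_1,d_2\}=\{m,m+1\}$, and after reordering I take $d_1=m+1$, $d_2=m$. For $g_1$ both bounds of Corollary \ref{c:nodd} are attained ($e_1=m\,d_1$ and $d_1=m+1$), so $g_1$ is unipotent; being unipotent with largest eigenspace (equivalently fixed space) of dimension $m+1$, it has exactly $m+1$ Jordan blocks on $V$.

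The heart of the proof is then a combinatorial identification of $g_1$ and of $g_2$ using the action on $W=\wedge^2(V)$. For a unipotent element with Jordan partition $\pi$, the decomposition $W=\bigoplus_{i<j}V_i\otimes V_j\oplus\bigoplus_i\wedge^2 V_i$ over the Jordan blocks together with Lemma \ref{l:tensor} gives
\[
\dim W^g \;=\; \sum_{i<j}\min(\pi_i,\pi_j)+\sum_i\Big\lfloor\tfrac{\pi_i}{2}\Big\rfloor,
\]
and by Lemma \ref{l:complete} and Remark \ref{r:linear} this quantity can only increase when $g$ is replaced by an element in the closure of $g^G$, so it is largest on the unique smallest class with a prescribed number of Jordan blocks (see Section \ref{ss:classes}). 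Using the identity $\sum_{i<j}\min(\pi_i,\pi_j)=\tfrac12\big(\sum_k(\pi'_k)^2-n\big)$ for the conjugate partition $\pi'$ (equivalently, $\dim C_{\GL(V)}$ of the unipotent element), one checks quickly that among partitions of $2m+1$ with $m+1$ parts the maximum of $\dim W^g$ is $m(m+1)$, attained \emph{only} at $(2^m,1)$ --- any other such $\pi$ gives $\dim W^g\leq m^2+1<m(m+1)$ for $m\geq2$ --- whence $g_1=(J_2^m,J_1)$. Likewise for $g_2$: if $g_2$ is unipotent it cannot be an involution with $m$ Jordan blocks on a space of odd dimension $2m+1$ (that would require $m+1$ blocks of size $2$), so $p\neq2$, and the same computation shows that among partitions of $2m+1$ with $m$ parts the value $m^2$ is attained only at $(3,2^{m-1})$ (all other such $\pi$ give $\dim W^g<m^2$), so $g_2=(J_3,J_2^{m-1})$; if instead $g_2$ is semisimple, then refining the argument in the proof of Lemma \ref{l:extbd} (with $n=2m+1$, $d=m$, forcing a single reciprocal pair $\{\l,\l^{-1}\}$ with $\l^2\neq1$, each of multiplicity $m$, together with one further eigenvalue $\mu\neq\l^{\pm1}$) gives $g_2=(\l I_m,\l^{-1}I_m,\mu I_1)$. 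In either case we are in case (ii).

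The main obstacle is this last step: upper semicontinuity only tells us that the minimal class is \emph{a} maximum of $\dim W^g$ among classes with a fixed number of Jordan blocks, not that it is the \emph{unique} one, so one genuinely needs the combinatorial input above --- either the conjugate-partition/sum-of-squares estimate, or a direct comparison against the covers of the minimal partition in the dominance order. The secondary points needing care are the bookkeeping around the prime-order hypothesis (in particular the $p=2$ exclusion of the unipotent alternative for $g_2$) and the harmless sign ambiguity from $\langle -I_n\rangle$.
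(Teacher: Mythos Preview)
Your proof is correct and follows essentially the same approach as the paper: both apply Corollary \ref{c:nodd} to bound $\sum_i e_i$, reduce the equality case to $r=2$ with $\{d_1,d_2\}=\{m,m+1\}$ and $g_1$ unipotent, and then identify the Jordan types. The only real difference is in the final identification step. The paper exploits semicontinuity more fully: since every partition of $2m+1$ with $m+1$ parts other than $(2^m,1)$ dominates the unique covering partition $(3,2^{m-2},1^2)$, it suffices to check that $\dim W^g<m(m+1)$ for that one class (and similarly $(3^2,2^{m-3},1)$ for the $m$-block case). You instead use the explicit formula $\dim W^g=\sum_{i<j}\min(\pi_i,\pi_j)+\sum_i\lfloor\pi_i/2\rfloor$ together with the conjugate-partition identity to optimize directly. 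Both routes are sound; the paper's is slightly slicker (one computation per case), while yours is more self-contained and makes the uniqueness visible without needing to know the lattice structure of partitions with a fixed number of parts.
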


\begin{proof}  
By Corollary \ref{c:nodd}, we have $\sum_i e_i  \leqs (r-1)\dim W$, with equality only if $\sum_i d_i = n(r-1)$ and $d_i \leqs m+1$ for all $i$.  Therefore, we may assume these conditions are satisfied, which immediately implies that $r=2$ (since $n \geqs 5$). Up to reordering, it follows that 
\[
d_1 = m+1, \; d_2 = m, \; e_1 = d_1m, \; e_2 = d_2m
\] 
and thus $g_1$ is unipotent by Corollary \ref{c:nodd}. 

If $p=2$ then $g_1$ is an involution and the condition $d_1=m+1$ forces it to have Jordan form $(J_2^m,J_1)$ as required. If $p \ne 2$, then the closure of any unipotent class in $G$ with $m+1$ Jordan blocks on $V$ contains the class of elements with Jordan form $(J_2^m,J_1)$. The next smallest class of unipotent elements with $m+1$ Jordan blocks contains elements with Jordan form $(J_3,J_2^{m-2},J_1^2)$. But it is straightforward to check that $e_1<d_1m$ if $g_1$ has this form, so this is not possible.    

Similarly, we find that if $d_2=m$ and $e_2 = d_2m$, then $g_2$ has the form described in (ii).  
\end{proof} 

\subsection{Subspace stabilizers}\label{ss:subspaces} 

In this final preliminary section we assume $G$ is one of the classical groups $\SL_n(k)$ ($n \geqs 2$), $\Sp_n(k)$ ($n \geqs 4$) or $\SO_n(k)$ (with $n \geqs 3$, $n \ne 4$). Recall that we may assume $p \ne 2$ if $G = \SO_n(k)$ and $n$ is odd. As before, let $V$ be the natural $kG$-module and set 
\[
X = C_1 \times \cdots \times C_r = x_1^G \times \cdots \times x_r^G
\]
as usual, where $r \geqs 2$ and each $x_i$ has prime order modulo $Z(G)$ (see Remark \ref{r:2}). For $g \in G$, let $\a(g)$ be the dimension of the largest eigenspace of $g$ on $V$ and set $d_i = \a(x_i)$ for $i=1, \ldots, r$. We define $\Delta$ and $X_H$ as in \eqref{e:delta} and \eqref{e:XH}, respectively, where $H$ is a closed subgroup of $G$.

As noted in Section \ref{s:intro}, if $\sum_id_i>n(r-1)$ then $G(x)$ acts reducibly on $V$ for all $x \in X$ and thus $\Delta$ is empty. The following result shows that if $\sum_id_i \leqs n(r-1)$ (or if $r \geqs 3$), then $G(x)$ is generically positive dimensional. In particular, in order to prove Theorem \ref{t:main2} we can ignore any tuples $x \in X$ such that $G(x)$ is finite.

\begin{lem}\label{l:infinite}  
Suppose $r \geqs 3$ or $d_1+d_2 \leqs n$. Then $G(x)$ is generically infinite.
\end{lem}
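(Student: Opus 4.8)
\emph{Proof proposal.} The plan is to reduce the statement to the production of a single tuple, and then to handle the cases $r\geqs 3$ and $r=2$ separately. First recall that, since $k$ is uncountable, the irreducible variety $X$ is not a countable union of proper closed subvarieties; as $X_m:=\{x\in X : |G(x)|\leqs m\}$ is closed for every $m$, it follows that if $G(x_0)$ is infinite for some $x_0\in X$ then each $X_m$ is a \emph{proper} closed subvariety of $X$, and hence the set $\{x\in X : G(x)\text{ is infinite}\}=X\setminus\bigcup_m X_m$ is a nonempty generic subset of $X$. Therefore it suffices to exhibit one tuple $x\in X$ with $G(x)$ infinite.

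If $r\geqs 3$, then since $X=C_1\times\cdots\times C_r$ is a product of noncentral conjugacy classes, the results established in \cite{BGG} using \cite{GMT} and recorded in Remark \ref{r:gen} show that $G(x)$ is generically positive dimensional; as a positive dimensional closed subgroup of $G$ is infinite, this settles the case $r\geqs 3$.

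Now suppose $r=2$ and $d_1+d_2\leqs n$, and assume for a contradiction that $G(x)$ is finite for every $x\in X$; then $G(x)$ is generically finite, so by \cite[Corollary 5.14]{GMT} (see Remark \ref{r:gen}) the group $G(x)$ lies in a Borel subgroup of $G$ for generic $x\in X$. Since Borel subgroups form a single conjugacy class and $X_B$ is closed by Lemma \ref{l:parabolic}, it follows that $G(x)$ lies in a Borel subgroup for \emph{every} $x\in X$; in particular $\la x_1,x_2\ra$ fixes the $1$-dimensional subspace at the bottom of a flag stabilized by such a Borel, so the elements $x_1$ and $x_2$ have a common eigenvector on the natural module $V$ for every $(x_1,x_2)\in X$. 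I would then derive a contradiction by a dimension count. Every eigenspace of $x_i$ on $V$ has dimension at most $d_i$, and for fixed eigenspaces $E$ of $x_1$ and $F$ of $x_2$ with $\dim E+\dim F\leqs n$ the set $\{g\in G : E\cap gF\neq 0\}$ is a proper closed subvariety of $G$ (it is the image in $G$ of a closed subvariety of $G\times\PP(V)$, and it is proper because two subspaces of $V$ with dimensions summing to at most $n$ lie in direct sum in generic position). Since $d_1+d_2\leqs n$ bounds all of the finitely many sums $\dim E+\dim F$, the union of these subvarieties over all pairs of eigenspaces is again proper and closed in $G$; translating back to $X$, the set $\{(y_1,y_2)\in C_1\times C_2 : y_1\text{ and }y_2\text{ have a common eigenvector on }V\}$ — which is closed, being the image in $C_1\times C_2$ of the closed subvariety $\{(y_1,y_2,\la v\ra): v\text{ a common eigenvector of }y_1\text{ and }y_2\}$ of $C_1\times C_2\times\PP(V)$ — is a proper subvariety of $X$. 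This contradicts the conclusion of the previous paragraph, and the lemma follows.

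The eigenspace intersection estimate is routine; the delicate point is the case $r=2$, where the argument really depends on the input from \cite{GMT} that a generically finite $G(x)$ must be contained in a Borel subgroup, so as to force $x_1$ and $x_2$ to share an eigenvector on $V$. Without such an input, finiteness of $\la x_1,x_2\ra$ alone does not produce a common eigenvector, since a finite irreducible subgroup of $G$ need not fix a $1$-space.
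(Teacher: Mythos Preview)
Your reduction in the first paragraph and the treatment of $r\geqs 3$ are fine and essentially match the paper. For $r=2$ the paper takes a more direct route: it cites \cite[Theorem 1.1]{GMT}, which lists the pairs $(C_1,C_2)$ for which $C_1C_2$ has bounded element orders, and simply checks that every pair on that list satisfies $d_1+d_2>n$. Your approach via \cite[Corollary 5.14]{GMT} and a common-eigenvector contradiction is more indirect, and it has a genuine gap.

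The problem is your claim that $\{g\in G : E\cap gF\neq 0\}$ is a \emph{proper} closed subvariety of $G$ whenever $\dim E+\dim F\leqs n$. Your justification (``two subspaces \ldots\ lie in direct sum in generic position'') is a statement about $\GL(V)$-generic position, but conjugation is only by $G$, and $G$ may be unable to realise that position. Concretely, take $G=\SO_{2m}(k)$ with $m$ odd (e.g.\ $m=5$, which is within the range of Section~\ref{ss:subspaces}) and let
\[
x_1=(\lambda I_m,\lambda^{-1}I_m),\qquad x_2=(\mu I_m,\mu^{-1}I_m)
\]
with $\lambda^2,\mu^2\ne 1$, so that $d_1=d_2=m$ and $d_1+d_2=n$. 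Each $x_i$ has two eigenspaces, both maximal totally singular and of opposite types. Now recall that two maximal totally singular subspaces $U,W$ of $V$ lie in the same $G$-orbit if and only if $m-\dim(U\cap W)$ is even; since $m$ is odd, any two such spaces of the same type meet nontrivially. Hence for every $g\in G$, the $\lambda$-eigenspace of $x_1$ meets either the $\mu$-eigenspace or the $\mu^{-1}$-eigenspace of $gx_2g^{-1}$, so \emph{every} pair $(y_1,y_2)\in C_1\times C_2$ has a common eigenvector. Your step~4 therefore fails and no contradiction is obtained. The paper's route through \cite[Theorem 1.1]{GMT} avoids this, because these quadratic semisimple classes are not among the exceptional pairs listed there.
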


\begin{proof}  
If $r \geqs 3$ then the main theorem of \cite{GMT} implies that $C_1C_2C_3$ contains elements of arbitrarily large order (and indeed elements of infinite order if $k$ is not algebraic over a finite field). In addition, the same conclusion holds for $C_1C_2$, aside from a short list of classes $C_1$ and $C_2$ given in \cite[Theorem 1.1]{GMT} and in each of these cases one can check that $d_1 + d_2 > n$. Therefore, the closed subvariety $X_m :=\{ x \in X \,:\, |G(x)| \leqs m \}$ is proper for all $m \in \mathbb{N}$ and thus $\{x \in X \,:\, \mbox{$G(x)$ is infinite}\}$ contains the complement of a countable union of closed subvarieties and is therefore generic (and nonempty unless $k$ is algebraic over a finite field).
\end{proof} 

For the remainder of Section \ref{ss:subspaces}, we are mainly interested in the action of subgroups of $G$ of the form $G(x)$ on  varieties of appropriate $m$-dimensional subspaces of $V$ with $m = 1$ or $2$. Our first result on $1$-spaces can be viewed as an extension of  \cite[Lemma 2.15]{Ger}. 

\begin{lem} \label{l:1-spaces} 
Let $H$ be the stabilizer in $G$ of a $1$-dimensional subspace of $V$, which is either 
nondegenerate (if $p \ne 2$) or nonsingular (if $p=2$) when $G = \SO(V)$. If $\sum_i d_i \leqs n(r-1)$, then $X_H$ is contained in a proper closed subvariety of $X$. 
\end{lem}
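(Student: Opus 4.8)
The plan is to derive this from Lemma~\ref{l:basic}: setting $Y = G/H$, it suffices to prove that $\sum_{i=1}^r \dim Y^{x_i} < (r-1)\dim Y$. First I would identify $Y$ with a $G$-stable subvariety of the projective space $\mathbb{P}(V)$ of $1$-dimensional subspaces of $V$. If $G = \SL(V)$, then $H$ is the parabolic subgroup stabilizing a $1$-space and $Y = \mathbb{P}(V)$; if $G = \Sp(V)$, then every $1$-space of $V$ is totally isotropic, so $H$ is again a maximal parabolic subgroup and $Y = \mathbb{P}(V)$. If $G = \SO(V)$ with $p \ne 2$ (respectively $p = 2$), then Witt's theorem shows that $G$ acts transitively on the set $Y_0$ of nondegenerate (respectively nonsingular) $1$-spaces of $V$ — here one checks that a reflection fixing the chosen line lies in $\OO(V) \setminus G$, so that the $G$-orbit of the line coincides with its $\OO(V)$-orbit — whence $Y$ maps bijectively onto $Y_0$. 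Since $Y_0$ is a nonempty open subvariety of $\mathbb{P}(V)$ (its complement being the quadric of singular points), in every case $Y$ is irreducible of dimension $n - 1$.

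Next I would bound $\dim Y^{x_i}$ for each $i$. A $1$-space $\langle v\rangle$ is fixed by $x_i$ precisely when $v$ is an eigenvector of $x_i$, so $\mathbb{P}(V)^{x_i}$ is a disjoint union of the projective subspaces $\mathbb{P}(E_\mu)$ over the eigenvalues $\mu$ of $x_i$ on $V$, where $E_\mu$ is the corresponding eigenspace; hence $\dim \mathbb{P}(V)^{x_i} = \a(x_i) - 1 = d_i - 1$. Since $Y^{x_i}$ maps bijectively into $\mathbb{P}(V)^{x_i}$ under the identification above, we get $\dim Y^{x_i} \leqs d_i - 1$ in all cases. (In the orthogonal cases more is true: a fixed nondegenerate line forces its eigenvalue $\mu$ to satisfy $\mu^2 = 1$, and a fixed nonsingular line in characteristic $2$ forces $\mu = 1$; but the cruder bound already suffices here.)

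Finally, summing these estimates and invoking the hypothesis $\sum_i d_i \leqs n(r-1)$ gives
\[
\sum_{i=1}^r \dim Y^{x_i} \leqs \sum_{i=1}^r (d_i - 1) = \Big(\sum_{i=1}^r d_i\Big) - r \leqs n(r-1) - r < n(r-1) - (r-1) = (n-1)(r-1) = (r-1)\dim Y ,
\]
so Lemma~\ref{l:basic} applies and $X_H$ is contained in a proper closed subvariety of $X$. I expect the only point genuinely requiring care is the identification of $Y = G/H$ with the relevant subvariety of $\mathbb{P}(V)$ in the orthogonal setting — in particular confirming that $\SO(V)$, and not merely $\OO(V)$, is transitive on nondegenerate (respectively nonsingular) $1$-spaces, so that $\dim Y = n - 1$. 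Everything else is a short computation.
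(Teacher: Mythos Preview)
Your proposal is correct and follows essentially the same approach as the paper: identify $Y=G/H$ with (an open subset of) $\mathbb{P}(V)$ so that $\dim Y = n-1$ and $\dim Y^{x_i} \leqs d_i-1$, then apply the inequality $\sum_i(d_i-1) \leqs n(r-1)-r < (n-1)(r-1)$ together with Lemma~\ref{l:basic}. The paper's proof is terser and simply asserts the two dimension facts without the explicit identification, but the argument is the same.
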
 

\begin{proof} 
Set $Y=G/H$ and observe that $\dim Y =  n -1$ and $\dim Y^g  \leqs d-1$ for all $g \in G$, where $d = \a(g)$.  Since $\sum_i d_i \leqs n(r-1)$, we deduce that 
$\sum_i (d_i-1) <  (r-1) \dim Y$ and the result follows via Lemma \ref{l:basic}.
\end{proof} 

\begin{rem} \label{r:linearalgebra} 
Let $x = (g_1, \ldots, g_r) \in X$ and let $U_i$ be a $d_i$-dimensional eigenspace of $g_i$ on $V$. Notice that if $\sum_i d_i > n(r-1)$ then $\bigcap_{i}U_i$ is nonzero
and thus $G(x)$ fixes a $1$-dimensional subspace of $V$. In particular, if $G=\SL(V)$ or $\Sp(V)$ then $X_H = X$ and the converse to Lemma \ref{l:1-spaces} holds. However, if $G = \SO(V)$ then $G(x)$ may fix a totally singular $1$-space and we cannot conclude that $X_H$ is dense in $X$.
\end{rem}

The next result handles the action of orthogonal groups on totally singular $1$-spaces. 

\begin{lem} \label{l:sing1}   
Let $G=\SO(V)$ and let $H$ be the stabilizer of a $1$-dimensional totally singular subspace of $V$. 
If $\sum_i d_i \leqs n(r-1)$, then either
\begin{itemize}\addtolength{\itemsep}{0.2\baselineskip}
\item[{\rm (i)}] $X_H$ is a proper closed subvariety of $X$; or
\item[{\rm (ii)}] $r=2$ and $x_1,x_2$ are quadratic. 
\end{itemize} 
\end{lem}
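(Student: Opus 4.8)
The plan is to realize $Y=G/H$ as the projective quadric of totally singular $1$-spaces of $V$, so that $\dim Y=n-2$ and $H$ is a parabolic subgroup of $G$; in particular $X_H$ is closed by Lemma \ref{l:parabolic}, so by Lemma \ref{l:basic} it suffices to prove
\[
\sum_{i=1}^{r}\dim Y^{x_i} < (r-1)(n-2)
\]
except when $r=2$ and $x_1,x_2$ are quadratic. Write $d_i=\a(x_i)$, so $d_i=\dim V^{x_i}$ when $x_i$ is unipotent and $d_i$ is the largest eigenspace dimension when $x_i$ is semisimple. The first, elementary bound is $\dim Y^{x_i}\le d_i-1$, since $Y^{x_i}$ embeds in the fixed-point set of $x_i$ on $\mathbb{P}(V)$.

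The crux is a refined bound: $\dim Y^{x_i}\le d_i-2$ unless $x_i$ is \emph{special}, where "special'' means that the largest eigenspace (resp.\ the fixed space) of $x_i$ is totally singular; and a special element has $d_i\le\lfloor n/2\rfloor$, with equality forcing $x_i$ to be quadratic on $V$. For semisimple $x_i$, a fixed singular line lies in a single eigenspace $V_\l$: if $\l^2\ne 1$ then $V_\l$ is totally singular and is paired by the form with $V_{\l^{-1}}$, of the same dimension, so $V_\l$ contributes all of $\mathbb{P}(V_\l)$ to $Y^{x_i}$ and $\dim V_\l\le n/2$; if $\l=\pm 1$ then $V_\l$ is nondegenerate and its singular lines form a quadric of dimension $\dim V_\l-2$. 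Hence $\dim Y^{x_i}=d_i-1$ precisely when the largest eigenspace $V_\l$ has $\l^2\ne1$, which gives $d_i\le\lfloor n/2\rfloor$, and if $d_i=n/2$ then $V=V_\l\oplus V_{\l^{-1}}$ so $x_i$ has minimal polynomial $(t-\l)(t-\l^{-1})$. For unipotent $x_i$, $Y^{x_i}$ is the variety of singular lines in $V^{x_i}$: if the form does not vanish identically on $V^{x_i}$ these form a quadric of dimension $d_i-2$; if it does vanish then $V^{x_i}$ is totally singular, hence totally isotropic for the associated bilinear form (also in characteristic $2$), so $d_i\le\lfloor n/2\rfloor$, and if $d_i=n/2$ then $(V^{x_i})^{\perp}=V^{x_i}$, which together with the standard containment $\mathrm{im}(x_i-1)\subseteq(V^{x_i})^{\perp}$ yields $(x_i-1)^2=0$. (One also notes that $d_i=1$ forces $x_i$ to be special, so non-special elements have $d_i\ge 2$.)

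With these bounds, the remaining argument is arithmetic. If $r=2$ then $\dim Y^{x_1}+\dim Y^{x_2}\le(d_1-1)+(d_2-1)\le n-2$ using $d_1+d_2\le n$; a strict inequality gives (i), while equality forces both $x_i$ to be special with $d_1+d_2=n$, hence $d_1=d_2=n/2$ and both quadratic, which is (ii). If $r\ge 3$, let $k$ be the number of special $x_i$. When $k\le 1$, the hypothesis $\sum_i d_i\le n(r-1)$ gives $\sum_i\dim Y^{x_i}\le\sum_i d_i-2r+k\le n(r-1)-2r+1<(r-1)(n-2)$. When $k\ge 2$, bound the special terms by $\lfloor n/2\rfloor-1$ and the remaining ones by $n-3$; the resulting sum is at most $k(\lfloor n/2\rfloor-1)+(r-k)(n-3)$, which a short computation shows is strictly less than $(r-1)(n-2)$ for all $r\ge 3$ (in the even case using $n\ge 6$, and in the odd case using the sharper bound $d_i\le(n-1)/2$ for special $x_i$). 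I expect the main obstacle to be the refined bound of the second paragraph — the precise description of the equality case $\dim Y^{x_i}=d_i-1$ — and in particular the orthogonal groups in characteristic $2$, where one must keep careful track of the quadratic form versus its alternating bilinear form and of which eigenspaces and fixed spaces are totally singular; once that analysis is in place, everything else is elementary counting.
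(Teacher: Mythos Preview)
Your proof is correct and follows essentially the same approach as the paper: both realize $Y=G/H$ as the projective quadric, invoke Lemma~\ref{l:parabolic} for closedness and Lemma~\ref{l:basic} for the reduction, establish the same dichotomy $\dim Y^{x_i}\le d_i-2$ versus $\dim Y^{x_i}=d_i-1$ (the latter forcing $d_i\le n/2$, with equality giving quadratic), and finish with the same arithmetic. The only minor difference is that the paper handles the unipotent case by first passing to the smallest class in the closure with the same $d_i$ and reading off whether $V^{x_i}$ is totally singular from the resulting Jordan shape, whereas you argue directly from whether the quadratic form vanishes on $V^{x_i}$; your route is slightly more streamlined but not materially different.
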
  

\begin{proof}  
Set $Y=G/H$ and note that $X_H$ is closed by Lemma \ref{l:parabolic}, so we only need to show that $X_H$ is proper (unless $r=2$ and the $x_i$ are quadratic). We will apply Lemma \ref{l:basic} to do this, which means that we need to estimate $\dim Y^{x_i}$. Note that $\dim Y = n-2$.

First assume $x_i$ is unipotent and $p \ne 2$, so $d_i$ is equal to the number of Jordan blocks of $x_i$ on $V$. By replacing $C_i$ by a unipotent class $y_i^G$ in its closure with $\a(y_i) = \a(x_i) = d_i$, we may assume that every Jordan block of 
$x_i$ on $V$ has size $\ell$ or $\ell-1$ for some $\ell \geqs 2$ (with at least one Jordan block of size $\ell$); see Section \ref{sss:orthpn2}. If $\ell \geqs 3$, then there are no Jordan blocks of size $1$ and so the fixed space $V^{x_i}$ is totally singular and $\dim Y^{x_i} = d_i -1$. The same conclusion holds if $\ell=2$ and there are no Jordan blocks of size $1$. Finally, suppose $\ell=2$ and $x_i$ has a Jordan block 
of size $1$. Here $x_i$ fixes a nondegenerate $1$-space and we claim that $\dim Y^{x_i} = d_i -2$. To see this, first observe that $Y^{x_i}$ is precisely the subvariety of totally singular $1$-dimensional subspaces of $V^{x_i}$. Let $\mathbb{P}^1(V^{x_i})$ be the variety of $1$-dimensional subspaces of $V^{x_i}$, so $\dim \mathbb{P}^1(V^{x_i}) = d_i-1$. The nondegenerate $1$-spaces in $\mathbb{P}^1(V^{x_i})$ form a nonempty open subset and so the variety of totally singular $1$-spaces in $V^{x_i}$ has codimension
$1$ in $\mathbb{P}^1(V^{x_i})$. This justifies the claim. 

Now assume $x_i$ is semisimple and $p \ne 2$. If $x_i$ has a totally singular eigenspace of dimension $d_i$, then $\dim Y^{x_i} = d_i -1$. If not, then a $d_i$-dimensional eigenspace $W$ of $x_i$ on $V$ is nondegenerate (and corresponds to an eigenvalue $\pm 1)$; the largest irreducible component of $Y^{x_i}$ corresponds to the subvariety of totally singular $1$-spaces in $W$, which has dimension $d_i-2$.  

For both unipotent and semisimple $x_i$ (with $p \ne 2$), we observe that $d_i \leqs n/2$ whenever $\dim Y^{x_i} = d_i - 1$. First assume   
$\dim Y^{x_i} = d_i - 1$ for $i=1,2$. Then 
\[
\sum_{i=1}^{r} \dim Y^{x_i}  \leqs n - 2  + \sum_{i=3}^r \dim Y^{x_i},
\]
which is less than $(r-1) \dim Y = (r-1)(n-2)$ unless $r=2$ and each $x_i$ is quadratic. If we exclude the latter situation, the desired result follows from Lemma \ref{l:basic}. Similarly, if we assume 
$\dim Y^{x_i} = d_i - 1$ for only one $i$, then 
\[
\sum_{i=1}^r \dim Y^{x_i} =  1 + \sum_{i=1}^r (d_i-2) < (r-1)(n-2)
\]
and again the result holds. Finally, if $\dim Y^{x_i} < d_i - 1$ for all $i$, then 
\[
\sum_{i=1}^r \dim Y^{x_i} = \sum_{i=1}^r (d_i - 2)  \leqs (r-1)n - 2r < (r-1)(n-2)
\]
and once again the result follows.

To complete the proof, let us assume $p=2$. If $x_i$ is unipotent and some Jordan block has size $1$, then a generic fixed vector is nonsingular and the above argument goes through. Similarly, for semisimple classes we can repeat the argument given above. 
\end{proof}

For $2$-spaces we will need the following result.

\begin{lem} \label{l:2vs1}   
Let $G=\SL(V)$ with $n \geqs 3$ and let $H$ be the stabilizer of a $2$-dimensional subspace of $V$. If $\sum_i d_i \leqs n(r-1)$, then either
\begin{itemize}\addtolength{\itemsep}{0.2\baselineskip}
\item[{\rm (i)}]  $X_H$ is contained in a proper closed subvariety of $X$; or  
\item[{\rm (ii)}] $r=2$ and $x_1,x_2$ are quadratic. 
\end{itemize} 
\end{lem}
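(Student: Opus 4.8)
The plan is to mimic the proof of Lemma \ref{l:sing1}, applying the dimension criterion of Lemma \ref{l:basic} to the homogeneous space $Y = G/H$, where now $H$ is the stabilizer of a $2$-dimensional subspace of $V$ and $G = \SL(V)$. Here $\dim Y = 2(n-2)$. Since $Y$ is a Grassmannian, hence a projective (complete) variety, $X_H$ is closed by Lemma \ref{l:parabolic}, so it only remains to show $X_H$ is proper unless $r=2$ and both $x_i$ are quadratic. By Lemma \ref{l:basic} it suffices to prove
\[
\sum_{i=1}^r \dim Y^{x_i} < (r-1)\dim Y = 2(r-1)(n-2),
\]
unless we are in the exceptional case. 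So the task reduces to bounding $\dim Y^{x_i}$ for a single element $x_i$ of prime order modulo $Z(G)$ in terms of $d_i = \a(x_i)$.

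The first key step is the elementary linear-algebra estimate: if $g \in G$ fixes a $2$-space $U$, then $U$ is spanned by eigenvectors of $g$, so $U$ meets at most two eigenspaces of $g$. Writing the eigenspace dimensions as $m_1 \geqs m_2 \geqs \cdots$ (for semisimple $g$, with the obvious generalized-eigenspace version for unipotent $g$), the variety of $g$-invariant $2$-spaces is a union of pieces: a copy of the Grassmannian of $2$-spaces inside each eigenspace $V_j$, of dimension $2(m_j-2)$, together with, for each pair $j<j'$, a piece of dimension $m_j + m_{j'} - 2$ (a $\mathbb{P}(V_j)\times\mathbb{P}(V_{j'})$). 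Hence
\[
\dim Y^{x_i} = \max\Big\{\, 2(d_i - 2),\ \max_{j<j'}\big(m_j + m_{j'} - 2\big)\,\Big\}.
\]
Since $m_j + m_{j'} \leqs n$ always, and $m_j + m_{j'} - 2 \leqs 2(d_i-2)$ exactly when $m_{j'} \geqs \tfrac12(m_j)$ fails in a controlled way, one checks that in all cases $\dim Y^{x_i} \leqs 2(d_i - 2)$ unless $x_i$ is quadratic, i.e. has at most two (generalized) eigenspaces with $d_i = m_1$ possibly large; precisely, if $x_i$ is quadratic then $\dim Y^{x_i}$ can be as large as $2(d_i-2)$ if $d_i \geqs n/2$ but is bounded by $n-2 = \dim Y/2$ when the two eigenspaces are more balanced, and in general $\dim Y^{x_i} \leqs n - 2$ for quadratic $x_i$. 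The upshot is the clean bound $\dim Y^{x_i} \leqs \max\{2(d_i-2),\, n-2\}$, with $\dim Y^{x_i} \leqs 2(d_i-2)$ whenever $x_i$ is non-quadratic.

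Now I finish by case analysis exactly as in Lemma \ref{l:sing1}. If every $x_i$ is non-quadratic, then $\sum_i \dim Y^{x_i} \leqs \sum_i 2(d_i-2) = 2\sum_i d_i - 4r \leqs 2n(r-1) - 4r < 2(r-1)(n-2)$, so $X_H$ is proper. If exactly one $x_i$, say $x_1$, is quadratic, then $\sum_i \dim Y^{x_i} \leqs (n-2) + \sum_{i\geqs 2} 2(d_i - 2) \leqs (n-2) + 2\big(n(r-1) - d_1\big) - 4(r-1)$, and using $d_1 \geqs n/2$ (valid since $x_1$ is quadratic and noncentral, its largest eigenspace has dimension at least $n/2$ — or more carefully $d_1 \geqs \lceil n/2\rceil$) one gets this is at most $(n-2) + 2(r-1)(n-2) - (n-2) = 2(r-1)(n-2)$... so I must be slightly more careful and use the sharper bound $\dim Y^{x_1} \leqs \max\{2(d_1-2), n-2\}$ together with $d_1 + d_i \leqs$ something, or simply observe that a single quadratic factor still leaves a strict inequality because $\sum_{i\geqs 2} d_i \leqs n(r-1) - d_1 < n(r-1) - n/2$; tracking constants shows the inequality is strict for $r \geqs 2$ unless $r=2$. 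And if $r=2$ with both $x_1,x_2$ quadratic we land in case (ii). The main obstacle is the bookkeeping in this last step — pinning down exactly when the quadratic bound $\dim Y^{x_i} \leqs n-2$ versus $2(d_i-2)$ is the operative one, and confirming that the only way to fail strict inequality is $r=2$ with two quadratic elements; the $\SL_n$ setting is actually easier than the orthogonal case of Lemma \ref{l:sing1} since there is only one type of $2$-space to consider and no singular/nondegenerate dichotomy.
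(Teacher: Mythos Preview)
Your overall strategy matches the paper's: bound each $\dim Y^{x_i}$ and apply Lemma~\ref{l:basic}. But your key inequality is false. You claim $\dim Y^{x_i} \leqs 2(d_i-2)$ whenever $x_i$ is non-quadratic; yet if $x_i$ is semisimple with three eigenspaces each of dimension $d_i = n/3$ (certainly non-quadratic), your own formula gives $\dim Y^{x_i} = d_i + d_i - 2 = 2(d_i - 1)$. The relevant dichotomy is not quadratic versus non-quadratic but the size of the \emph{second}-largest eigenspace $d_i'$: one has $\dim Y^{x_i}$ equal to $2(d_i-1)$, $2d_i-3$, or $2(d_i-2)$ according as $d_i' = d_i$, $d_i' = d_i-1$, or $d_i' \leqs d_i - 2$. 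The paper splits on how many indices satisfy $d_i' \geqs d_i - 1$, using that any such index has $d_i + d_i' \leqs n$, hence $d_i \leqs (n+1)/2$. With at most two such indices the crude bound $\sum_i \dim Y^{x_i} \leqs 4 + \sum_i(2d_i-4)$ already gives the required inequality, with equality forcing $r=2$ and both $x_i$ quadratic; with three or more such indices one has $r \geqs 3$ and the constraint $d_i \leqs (n+1)/2$ provides enough slack. Your visible fumbling in the ``exactly one quadratic'' case is a symptom of having the wrong dichotomy.

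A second, smaller gap: your fixed-point formula $\max\{2(d_i-2),\ m_j + m_{j'} - 2\}$ is only valid for semisimple $x_i$; for unipotent elements there is a single eigenvalue and the description of invariant $2$-planes is genuinely different, so the ``obvious generalized-eigenspace version'' does not apply. The paper sidesteps this entirely by invoking \cite[Proposition~2.2.1]{GL2} at the outset to reduce to the semisimple case.
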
 

\begin{proof}    
Set $Y = G/H$ and note that $\dim Y = 2(n-2)$. 
By \cite[Lemma 3.35 ]{GL}, we may assume that each $x_i$ is semisimple.  
Let $g \in G$ be a noncentral semisimple element with $\a(g) = d$ and let $d'$ be the dimension of the second largest eigenspace of $g$ on $V$. Then one of the following holds:
\begin{itemize}\addtolength{\itemsep}{0.2\baselineskip}
\item[{\rm (a)}] $d'=d$ and $\dim Y^g = 2(d-1)$.
\item[{\rm (b)}] $d'=d-1$ and $\dim Y^g = 2d-3$.
\item[{\rm (c)}] $d' \leqs d -2$ and $\dim Y^g = 2(d-2)$.
\end{itemize}

Let $d_i'$ be the dimension of the second largest eigenspace of $x_i$ on $V$. If $d_i' \leqs d_i -2$ for all but at most two $i$, then using the bound $\sum_i d_i \leqs n(r-1)$ we deduce that 
\[
\sum_{i=1}^r \dim Y^{x_i}  \leqs 4 +  \sum_{i=1}^r (2d_i - 4)  \leqs 2(r-1)(n-2) = (r-1) \dim Y.
\]
Moreover, we see that equality holds if and only if $r=2$ and $d_i = d_i'$ for $i=1,2$, in which case $n$ is even and $x_1,x_2$ are quadratic.

Now assume that $d_i ' \geqs d_i -1$ for $i=1,2,3$, so $\sum_{i=1}^3 \dim Y^{x_i} \leqs 2\sum_{i=1}^3d_i-6$. If $n$ is even, then $d_i \leqs n/2$ for $i=1,2,3$ and thus 
\[
\sum_{i=1}^r\dim Y^{x_i} \leqs 2\sum_{i=1}^rd_i - 2r \leqs 3n+2(r-3)(n-1)-2r < (r-1)\dim Y
\]
since $n \geqs 4$. Similarly, if $n$ is odd and $i \in \{1,2,3\}$ then either $d_i = (n+1)/2$ and $\dim Y^{x_i} = 2d_i - 3$, or $d_i  \leqs (n-1)/2$ and $\dim Y^{x_i} \leqs 2d_i -2$. By arguing as above, we deduce that  $\sum_{i} \dim Y^{x_i}  <  (r-1) \dim Y$ and the result follows.
\end{proof} 

By applying the previous lemma, we obtain the following result concerning the action of 
symplectic and orthogonal groups on nondegenerate $2$-spaces.  

\begin{lem} \label{l:nondeg2}   
Let $G=\Sp(V)$ or $\SO(V)$ with $n \geqs 3$, and let $H$ be the stabilizer of a $2$-dimensional nondegenerate   subspace of $V$. If $\sum_i d_i \leqs n(r-1)$, then either
\begin{itemize}\addtolength{\itemsep}{0.2\baselineskip}
\item[{\rm (i)}] $X_H$ is contained in a proper closed subvariety of $X$; or
\item[{\rm (ii)}] $r=2$ and $x_1,x_2$ are quadratic. 
\end{itemize} 
\end{lem}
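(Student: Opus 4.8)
The plan is to apply Lemma \ref{l:basic} with $Y = G/H$, so the task reduces to bounding $\sum_{i=1}^r \dim Y^{x_i}$ and showing that it is strictly smaller than $(r-1)\dim Y$ unless we are in case (ii). Since $k$ is algebraically closed, any two nondegenerate $2$-dimensional subspaces of $V$ are isometric (Witt's theorem), so $G$ acts transitively on them and $Y$ may be identified with the corresponding variety; a routine computation then gives $\dim Y = \dim G - \dim H = 2(n-2)$ in both cases $G = \Sp(V)$ and $G = \SO(V)$. For $x \in G$, the fixed-point set $Y^x$ is the variety of $x$-invariant nondegenerate $2$-spaces, which is an open (possibly empty) subvariety of the variety $\mathrm{Gr}_2(V)^x$ of all $x$-invariant $2$-spaces. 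As $\dim \mathrm{Gr}_2(V) = 2(n-2) = \dim Y$, this yields $\dim Y^{x_i} \leqs \dim \mathrm{Gr}_2(V)^{x_i}$ for each $i$. (If $Y^{x_i}$ is empty for some $i$, then $X_H$ is empty and conclusion (i) holds trivially, so we may assume each $Y^{x_i}$ is nonempty.)

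Now $\mathrm{Gr}_2(V)$ is precisely the homogeneous space $\SL(V)/\tilde H$ considered in Lemma \ref{l:2vs1}, where $\tilde H$ is the stabilizer in $\SL(V)$ of a $2$-space. Arguing exactly as in the proof of that lemma --- reducing to the case where each $x_i$ is semisimple via \cite[Proposition 2.2.1]{GL2}, writing $d_i = \a(x_i)$ and letting $d_i'$ be the dimension of the second largest eigenspace of $x_i$ on $V$, and then running through the three cases (a)--(c) recorded there --- and using the hypothesis $\sum_i d_i \leqs n(r-1)$, we obtain
\[
\sum_{i=1}^r \dim \mathrm{Gr}_2(V)^{x_i} \leqs (r-1)\cdot 2(n-2) = (r-1)\dim Y,
\]
with equality forcing $r=2$ and $x_1, x_2$ to be quadratic on $V$. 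Combining this with the previous paragraph, if we are not in that configuration then $\sum_i \dim Y^{x_i} < (r-1)\dim Y$, and Lemma \ref{l:basic} gives conclusion (i); otherwise conclusion (ii) holds.

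I do not expect a serious obstacle here, since no input is needed beyond Lemma \ref{l:2vs1} and the dimension count $\dim G/H = 2(n-2)$. The one point requiring care is the bookkeeping involved in importing the case analysis from the proof of Lemma \ref{l:2vs1}: one must be sure that the inequality established there is genuinely a bound on the dimension of the fixed-point variety of $x_i$ on the full $2$-space Grassmannian (it is), so that it transfers to the a priori smaller fixed-point variety $Y^{x_i} \subseteq G/H$ consisting of nondegenerate $2$-spaces. In effect the content of the lemma is that passing from all $2$-spaces of $V$ to the nondegenerate ones cannot increase any fixed-point dimension while leaving the ambient dimension unchanged.
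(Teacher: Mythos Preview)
Your proposal is correct and follows essentially the same route as the paper's proof: identify $Y=G/H$ with the open subset of nondegenerate $2$-spaces inside the full Grassmannian $Z=\mathrm{Gr}_2(V)$, use $\dim Y=\dim Z$ and $\dim Y^{x_i}\leqs\dim Z^{x_i}$, and then invoke the inequality established in (the proof of) Lemma~\ref{l:2vs1} together with Lemma~\ref{l:basic}. The only cosmetic difference is that the paper simply cites Lemma~\ref{l:2vs1} for the fixed-point bound, whereas you propose to re-run its case analysis; either way the content is the same.
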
  

\begin{proof}  
First observe that $G$ acts transitively on $Y = G/H$, which is a dense
open subset of the variety $Z$ of all $2$-dimensional subspaces of $V$ and thus  
$\dim Y = \dim Z$. Since $\dim Y^g \leqs \dim Z^g$ for all $g \in G$, the proof of Lemma \ref{l:2vs1} implies that $\sum_i \dim Y^{x_i} < (r-1) \dim Y$ unless $r=2$ and $x_1,x_2$ are quadratic. Now apply Lemma \ref{l:basic}.
\end{proof} 

To close this section, we present Lemma \ref{l:oinsp} below on the action of $\Sp(V)$ on the homogeneous space $Y = \Sp(V)/\OO(V)$ when $p=2$ (as explained in the proof of the lemma, this can be viewed as a subspace action by identifying $\Sp_n(k)$ with the orthogonal group ${\rm O}_{n+1}(k)$). This justifies the comments in Remark \ref{r:orth} and it explains the extra condition in Theorem \ref{t:main2} when $G = \Sp(V)$ and $p=2$.  

In order to prove the lemma, we need the following well known fact about symplectic and orthogonal groups in characteristic $2$.   

\begin{lem} \label{l:serre}   
Let $G=\Sp(V)$ and $H = \OO(V)$, where $n \geqs 4$ and $p=2$. Then every element of $G$ is conjugate to an element of $H$.
\end{lem}

\begin{proof}  
Let $x \in G$ and write $x=su=us$, where $s$ is semisimple and $u$  is unipotent.  If $s$ has $3$ or more distinct eigenvalues on $V$, then $x$ preserves an orthogonal decomposition $V = V_1 \perp V_2$, where each $V_i$ is a nondegenerate subspace (with respect to the defining symplectic form on $V$), and the result follows by induction. If $s$ has exactly $2$ distinct eigenvalues, then $C_G(s)$ is the stabilizer of a pair of complementary totally isotropic spaces and this subgroup embeds in some conjugate of $H$. So we may assume that $s=1$ and $x$ is unipotent. We can argue as above if $x$ commutes with a nontrivial semisimple element, so we may assume $x$ is a distinguished unipotent element. As before, if $x$ preserves an orthogonal decomposition, then the result follows by induction. The only distinguished unipotent elements in $G$ that act indecomposably on $V$ are regular, which act on $V$ with a single Jordan block (see \cite[Chapter 6]{LS}). In this case, one can write down such an element in $H$, or one can appeal to the classification of conjugacy classes of unipotent elements in $H$, as described in \cite{LS}. 
\end{proof} 

\begin{lem}\label{l:oinsp}  
Let $G=\Sp(V)$ and $H = \OO(V)$, where $n \geqs 4$ and $p=2$. Define $x_i \in G$ as in \eqref{e:X} and set $e_i  = \dim V^{x_i}$. 
\begin{itemize}\addtolength{\itemsep}{0.2\baselineskip}
\item[{\rm (i)}] If $\sum_i e_i < n(r-1)$, then $X_H$ is contained in a proper closed subvariety of $X$.  
\item[{\rm (ii)}]  If $\sum_i e_i  \geqs  n(r-1)$, then $\Delta$ is empty. 
\end{itemize}
\end{lem}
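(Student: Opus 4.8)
The plan is to exploit the exceptional isomorphism $G = \Sp_n(k) \cong \OO_{n+1}(k)$ available when $p=2$ (here $n$ is even), which turns the action of $G$ on $Y = G/H$ into a subspace action. Let $W$ denote the natural $(n+1)$-dimensional module for $\OO_{n+1}(k)\cong G$, equipped with a nondegenerate quadratic form $Q$. Since $n+1$ is odd and $p=2$, the radical of the associated bilinear form is a $1$-space $\langle w_0\rangle$ with $Q(w_0)\ne 0$, and $W/\langle w_0\rangle$ is $G$-isomorphic to the natural symplectic module $V$. Under this identification, $H=\OO(V)$ becomes the stabilizer in $G$ of a \emph{nondegenerate hyperplane} $U_0$ of $W$ (equivalently, a complement of $\langle w_0\rangle$): indeed $\OO(W)$ is transitive on such hyperplanes by Witt's theorem, the stabilizer of one of them is isomorphic to $\OO_n(k)$, and it maps isomorphically onto $\OO(V)\leqs\Sp(V)=G$. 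Hence $Y=G/H$ is identified with the open subvariety of $\PP(W^*)$ consisting of nondegenerate hyperplanes of $W$, so $\dim Y=n$.

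The technical heart is the following identity: for every $g\in\OO(W)$ one has $\dim W^g=\dim V^g+1$, equivalently $\dim[g,W]=\dim[g,V]$. The natural map $W^g\to(W/\langle w_0\rangle)^g=V^g$ has kernel $\langle w_0\rangle$ (as $gw_0=w_0$), and it is onto: if $w\in W$ satisfies $gw-w=cw_0$, then applying $Q$ and using $Q(gw)=Q(w)$ gives $c^2Q(w_0)=0$, so $c=0$ because $Q(w_0)\ne 0$; thus $gw=w$. I would also record that $W^G=\langle w_0\rangle$ (since $G$ has no nonzero fixed vector on $V$) and $[G,W]=W$ (its image in $V$ is all of $V$, and $W$ has no $G$-invariant hyperplane complementing $\langle w_0\rangle$, else $G=\OO(W)$ would embed in $\OO_n(k)$, contradicting $\dim\OO_{n+1}(k)>\dim\OO_n(k)$).

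For part (i): if a nondegenerate hyperplane of $W$ (i.e. one avoiding $w_0$) is $x_i$-invariant, then its annihilator line in $W^*$ is an eigenvector of $x_i^*$ whose eigenvalue is forced to be $1$ (because $x_i$ fixes $w_0$); hence $Y^{x_i}\subseteq\PP((W^*)^{x_i^*})$ and $\dim Y^{x_i}\leqs\dim W^{x_i}-1=e_i$ by the key identity. Since $\sum_i e_i<n(r-1)=(r-1)\dim Y$, Lemma \ref{l:basic} shows that $X_H$ is contained in a proper closed subvariety of $X$. For part (ii), suppose for contradiction that $G(x)=G$ for some $x=(x_1,\ldots,x_r)\in X$, and set $x_0=(x_1\cdots x_r)^{-1}$. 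Applying Scott's Theorem (Theorem \ref{t:scott}) to the faithful action of $G$ on $W$ — the fixed-point and commutator subspaces of $G$ coincide with those of the dense abstract subgroup $\langle x_1,\ldots,x_r\rangle$, exactly as in the proof of Lemma \ref{l:adjointprod} — yields $\sum_{i=0}^r\dim[x_i,W]\geqs\dim W-\dim W^G+\dim[G,W]=(n+1)-1+(n+1)=2n+1$. By the key identity $\dim[x_i,W]=n-e_i$ for $1\leqs i\leqs r$, while $\dim[x_0,W]\leqs n$ since $\dim W^{x_0}\geqs 1$. Therefore $2n+1\leqs n+\sum_{i=1}^r(n-e_i)$, which rearranges to $\sum_{i=1}^r e_i\leqs n(r-1)-1$, contradicting the hypothesis $\sum_i e_i\geqs n(r-1)$. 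Hence $\Delta$ is empty.

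I expect the main obstacle to be the setup in the first paragraph — correctly realizing $\OO(V)\leqs\Sp(V)$ as a hyperplane stabilizer in the $\OO_{n+1}(k)$-picture, including the transitivity of $\OO(W)$ on nondegenerate hyperplanes — together with the ``onto'' half of the key identity, which is precisely the characteristic-$2$ phenomenon encoded in $Q(w_0)\ne 0$; once these are in place, both parts fall out of Lemma \ref{l:basic} and Scott's Theorem respectively.
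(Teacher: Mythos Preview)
Your proof is correct. The overall setup is equivalent to the paper's: both work with an $(n+1)$-dimensional module $W$ for $G=\Sp_n(k)$ linking $V$ and a trivial $1$-dimensional piece, and both establish the key identity $\dim W^g=\dim V^g+1$. The difference is that you take the natural $\OO_{n+1}$-module, which has a trivial \emph{socle} $\langle w_0\rangle$ and head $V$, whereas the paper takes the dual picture: an indecomposable $W$ with socle $V$ and trivial head, so that $W^G=0$. For part (i) this makes no difference; both reach $\dim Y^{x_i}\leqs e_i$ (the paper in fact gets equality) and invoke Lemma~\ref{l:basic}.

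The genuine divergence is in part (ii). Because the paper's $W$ satisfies $W^G=0$, they can argue by pure linear algebra: for any tuple $(y_1,\ldots,y_r)\in X$ one has $\sum_i\dim W^{y_i}=\sum_i(e_i+1)>(r-1)(n+1)$, so $\bigcap_iW^{y_i}\ne 0$; hence $G(x)$ fixes a nonzero vector in $W$ while $G$ does not. In your module $W^G=\langle w_0\rangle$ is nonzero, so that slick argument is unavailable, and you compensate with Scott's Theorem. Both routes are valid; the paper's is shorter and avoids the need to verify $[G,W]=W$. Had you dualised your $W$ at the outset, you would have landed on the paper's two-line proof of (ii).
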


\begin{proof} 
Set $Y=G/H$ (so $\dim Y = n$) and let $W$ be an indecomposable rational $kG$-module of 
dimension $n+1$ with socle $V$. Then $G$ acts transitively on the variety of $1$-dimensional subspaces of $W$ that are not contained in $V$ and the stabilizers
are just orthgogonal groups.  Thus we may identify this variety with $Y$. By Lemma \ref{l:serre}, each $g \in G$ fixes a complement to $V$ in $W$ and so we can identify the corresponding fixed spaces  $Y^g$ and $V^g$. In particular, $\dim Y^g  = \dim V^g$ and the bound in (i) implies that $\sum_i \dim Y^{x_i} < (r-1) \dim Y$. Now apply Lemma \ref{l:basic}.    

Now let us turn to (ii). Since each $g \in G$ fixes a complement to $V$ in $W$, it follows that $\dim W^g = \dim V^g + 1$.  Therefore, $\dim W^{x_i} = e_i +1$ and so the inequality in (ii) implies that $\bigcap_iW^{x_i}$ is nonzero. In other words, each $G(x)$ fixes a nonzero vector in $W$ and thus $G(x) \ne G$ since $W^G=0$. 
\end{proof} 

\section{Proof of Theorem \ref{t:main2}: Orthogonal groups}\label{s:ort}

In this section we prove Theorem \ref{t:main2} for the orthogonal groups $G = \SO_n(k)$. We partition the proof into two cases, according to the parity of $n$. We continue to define $X$ as in \eqref{e:X}, where each $x_i$ has prime order modulo $Z(G)$. We work over an algebraically closed field $k$ of characteristic $p \geqs 0$ that is not algebraic over a finite field. In addition, as explained in Section \ref{s:intro}, we may (and do)  assume that $k$ is uncountable, so $\Delta$ is nonempty if and only if it contains the intersection of countably many generic subsets of $X$.

\subsection{Even dimensional groups}\label{ss:soeven} 

We begin by assuming $G=\SO(V)$ with $\dim V = n=2m \geqs 6$. The cases $m \in \{3,4\}$ are excluded in the statement of Theorem \ref{t:main2} -- they require special attention and they will be handled at the end of this section (see Theorems \ref{t:so6}, \ref{t:so88} and \ref{t:so8}). So for now we will assume that $m \geqs 5$ and we make a distinction between cases according to the parity of $m$. 

\subsubsection{$m \geqs 5$ odd}

To begin with, we will assume $m \geqs 5$ is odd. We first consider the relevant cases with $X = C_1 \times C_2$ that appear in Table \ref{tab:main}. In order to state our first result, we define $x_1, x_2 \in G$ as follows:
\begin{nalign}\label{e:one}
x_1 & = (I_2, \l I_{m-1}, \l^{-1} I_{m-1}), \mbox{ or $p \ne 2$ and $x_1 = (J_3^2,J_2^{m-3})$} \\
x_2 & = (J_2^{m-1},J_1^2); \mbox{ type $a_{m-1}$ if $p=2$}
\end{nalign}
where $\l \in k^{\times}$ and $\l^2 \ne 1$ (for $p=2$, we adopt the notation for $x_2$ from \cite{AS} for unipotent involutions; see Remark \ref{r:as}). 

In the proof of Lemma \ref{l:ex:so} below we will use Gerhardt's result for $\GL_m(k)$ (see Theorem \ref{t:sl}), which we view as a Levi subgroup of the stabilizer in $G$ of a totally singular $m$-dimensional subspace of $V$. Note that such a Levi subgroup stabilizes exactly two totally singular $m$-spaces. Moreover, since $m$ is odd, these two spaces are in different $G$-orbits (recall that $G$ has two orbits on the set of totally singular $m$-spaces, with $U$ and $W$ in the same orbit if and only if $\dim U - \dim(U \cap W)$ is even; this allows us to refer to the \emph{type} of a totally singular $m$-space).

\begin{lem} \label{l:ex:so}   
Suppose $m \geqs 5$ is odd, $r=2$ and $x_1,x_2$ are defined as in \eqref{e:one}.
\begin{itemize}\addtolength{\itemsep}{0.2\baselineskip}
\item[{\rm (i)}] There exists a nonempty open subvariety $Y$ of $X$ such that for all $y \in Y$, $G(y)$ preserves a complementary pair of maximal totally singular subspaces of $V$.
\item[{\rm (ii)}] For all $x \in X$, $G(x)$ preserves maximal totally singular subspaces of $V$ of both types.  
\end{itemize}
In particular, $\Delta$ is empty.
\end{lem}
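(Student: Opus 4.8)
\emph{Plan of proof.} Write $d_i$ for the dimension of the largest eigenspace of $x_i$ on $V$. One checks that $d_1 = m-1$ (for either choice of $x_1$) and $d_2 = m+1$, so $d_1 + d_2 = 2m = n$: we sit exactly on the boundary $\sum_i d_i \leqs n(r-1)$ of the hypothesis of Theorem \ref{t:main2}. Since $k$ is uncountable it is enough to prove (i); then (ii) and the emptiness of $\Delta$ follow formally, as explained at the end. The first step is to realise $x_1$ and $x_2$ simultaneously inside a common maximal-rank Levi subgroup $L \cong \GL_m(k)$. When $x_1$ is semisimple, decompose $V = Z \perp (U \oplus U')$, where $U, U'$ are the $\l$- and $\l^{-1}$-eigenspaces (each totally singular of dimension $m-1$, in duality) and $Z = V^{x_1}$ is a hyperbolic plane $\la e \ra \oplus \la f \ra$ with $e,f$ singular; then $W = U \oplus \la e\ra$ and $W' = U' \oplus \la f\ra$ form a complementary pair of maximal totally singular subspaces, both $x_1$-invariant, and of opposite types because $m$ is odd. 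Set $P = \mathrm{Stab}_G(W)$, $P' = \mathrm{Stab}_G(W')$ and $L = P \cap P' \cong \GL(W) = \GL_m(k)$; then $x_1 \in L$ acts on $W$ as $(\l I_{m-1}, I_1)$. When $p \ne 2$ and $x_1 = (J_3^2, J_2^{m-3})$ we argue similarly, noting that $x_1$ is $G$-conjugate to the element of $L$ acting on $W$ as $(J_3, J_2^{(m-3)/2})$ (its Jordan form on $V = W \oplus W'$ is $(J_3^2, J_2^{m-3})$, which determines the $G$-class).

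Next, $x_2 = (J_2^{m-1}, J_1^2)$ is $G$-conjugate to the element $t \in L$ acting on $W$ as $(J_2^{(m-1)/2}, J_1)$: on $V = W \oplus W'$ this has Jordan form $(J_2^{m-1}, J_1^2)$, which pins down the $G$-class when $p \ne 2$, and when $p=2$ it decomposes as $W(2)^{(m-1)/2} \perp W(1)$, i.e. it has Aschbacher--Seitz type $a_{m-1}$ by Remark \ref{r:as}. Thus $x_1$ and $x_2$ are simultaneously $G$-conjugate into $L$, and the corresponding conjugacy classes $C_1^W, C_2^W$ in $L \cong \GL_m(k)$ have $\dim W^{t} = (m+1)/2$ and largest eigenspace of $x_1$ on $W$ of dimension $m-1$ (semisimple case) or $(m-1)/2$ (unipotent case).

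The heart of the argument is to upgrade this to a generic statement: one must show that
\[
X_L = \{(g_1,g_2) \in X \,:\, \la g_1, g_2 \ra \leqs L^h \mbox{ for some } h \in G\}
\]
is dense in $X$. Here Gerhardt's theorem (Theorem \ref{t:sl}) enters, applied to the classes $C_1^W, C_2^W$ inside $\GL(W) = \GL_m(k)$: in the unipotent case the two eigenspace dimensions on $W$ sum to $m = m(r-1)$ with $x_1|_W$ non-quadratic, so $\Delta$ for $\GL_m(k)$ is nonempty and a generic pair in $C_1^W \times C_2^W$ generates a subgroup containing $\SL(W)$; in the semisimple case they sum to $(3m-1)/2 > m$, so $\Delta$ for $\GL_m(k)$ is empty and a generic pair is reducible on $W$ in a controlled way (a fixed subspace of dimension exactly $(m-1)/2$, and dually on $W^*$). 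Feeding this into the dimension count for the conjugation morphism $G \times (C_1 \cap L) \times (C_2 \cap L) \to X$ — equivalently, controlling $\dim\{(G/N_G(L))^{\la g_1,g_2\ra}\}$, where the relevant bound $\sum_i \dim (G/N_G(L))^{x_i} = \dim G/N_G(L)$ is a routine calculation from \eqref{e:fpr} and the class data of Section \ref{ss:classes} — one deduces that this morphism is dominant, so $X_L$ is dense. Since $X_L$ is constructible it then contains a nonempty open subset $Y \subseteq X$, and for $y = (g_1,g_2) \in Y$ the group $G(y) = \overline{\la g_1, g_2 \ra}$ lies in a conjugate of $L$ and therefore preserves a complementary pair of maximal totally singular subspaces of $V$. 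This gives (i).

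Finally, for (ii) let $P_1, P_2$ be stabilisers of maximal totally singular subspaces of the two types. Each $X_{P_i}$ is closed by Lemma \ref{l:parabolic}, and $X_L \subseteq X_{P_1} \cap X_{P_2}$ since $L$ stabilises one space of each type. As $X_L$ is dense and $X$ is irreducible, $X_{P_i} = X$, so every $G(x)$ preserves maximal totally singular subspaces of both types; in particular each $G(x)$ lies in a proper parabolic subgroup, so $\Delta = \emptyset$. The main obstacle is the genericity step of the third paragraph: because the dimension count is exactly critical ($d_1 + d_2 = n$), one cannot simply quote Lemma \ref{l:basic}, and it is precisely Gerhardt's theorem for $\GL_m(k)$ — governing the behaviour of a generic pair inside the Levi — that makes the conjugation morphism dominant and hence forces $X_L$ to be dense.
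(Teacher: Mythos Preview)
Your treatment of the unipotent case (when $p\ne 2$ and $x_1=(J_3^2,J_2^{m-3})$) is correct and matches the paper: embedding $x_1$ in $L=\GL(W)$ as $(J_3,J_2^{(m-3)/2})$ and $x_2$ as $(J_2^{(m-1)/2},J_1)$ gives eigenspace dimensions on $W$ summing to exactly $m$, $x_1|_W$ is non-quadratic, so Gerhardt's theorem yields $\SL(W)\leqs G(y)$ for generic $y\in D_1\times D_2$. This is precisely what makes the fibre computation go through: if $G(y)\supseteq\SL_m(k)$ then $G(y)$ lies in a \emph{unique} conjugate of $L$, so the fibre $\phi^{-1}(y)$ has dimension exactly $\dim L=m^2$, and one checks $\dim D_1+\dim D_2+\dim G-\dim L=\dim X$, whence $\phi$ is dominant.

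The semisimple case has a genuine gap. Your embedding $x_1|_W=(\l I_{m-1},I_1)$ is one legitimate way to place $x_1$ in $L$, but it is the wrong choice for this argument. With it, the $L$-class $D_1$ has dimension only $2m-2$, and
\[
\dim(D_1\times D_2\times G)-\dim X \;=\; (2m-2)+\tfrac{m^2-1}{2}+(2m^2-m)-(2m^2-2)\;=\;\tfrac{m^2+2m-1}{2}\;<\;m^2\;=\;\dim L,
\]
while every fibre of $\phi$ has dimension at least $\dim L$. So $\phi$ cannot be dominant with this $D_1$. Correspondingly, your eigenspace sum on $W$ is $(m-1)+\tfrac{m+1}{2}=\tfrac{3m-1}{2}>m$, so Gerhardt gives reducibility rather than density, and there is no control on the number of conjugates of $L$ containing $G(y)$. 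The sentence ``the relevant bound $\sum_i\dim(G/N_G(L))^{x_i}=\dim G/N_G(L)$'' does not rescue this: equality in Lemma~\ref{l:basic} is exactly the inconclusive boundary case and does not imply that $X_L$ is dense.

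The fix is simply to choose the \emph{other} embedding of $x_1$ into $L$: there are many $L$-classes in $C_1\cap L$, parametrised by the splitting of the $\l$- and $\l^{-1}$-eigenspaces between $W$ and $W'$, and one should take the balanced one, namely $x_1|_W=(I_1,\l I_{(m-1)/2},\l^{-1}I_{(m-1)/2})$. Then $\dim D_1=\tfrac{m^2+2m-3}{2}$, the eigenspace dimensions on $W$ again sum to $m$, $x_1|_W$ is non-quadratic, Gerhardt applies exactly as in the unipotent case, and the dimension count $\dim D_1+\dim D_2+\dim G-\dim L=2m^2-2=\dim X$ goes through uniformly. Parts (i) and (ii) then follow as you wrote.
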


\begin{proof} 
We may view $x_1$ and $x_2$ as elements of $L= \GL_m(k)$, which is the stabilizer in $G$ of a pair of complementary maximal totally singular subspaces of $V$. More precisely, as an element of $L$ we take
\[
x_1 = (I_1,\l I_{(m-1)/2}, \l^{-1}I_{(m-1)/2}) \mbox{ or } (J_3,J_2^{(m-3)/2})
\]
and we note that the embedding of $x_2$ is unique up to conjugacy in $L$. Set $Y = D_1 \times D_2$, where $D_i = x_i^L$, and note that 
\[
\dim C_1 = m^2+m-2,\; \dim C_2 = m(m-1),
\]
\[
\dim D_1 = \frac{1}{2}(m^2 + 2m -3),\; \dim D_2 = \frac{1}{2}(m^2-1).
\]
In view of the eigenspace dimensions of $x_1$ and $x_2$ on the natural module for $L$, by applying Theorem \ref{t:sl} we deduce that $L(y)$ contains $L'=\SL_m(k)$ for generic $y \in Y$.

Consider the morphism 
\[
\phi:  D_1 \times D_2 \times G \rightarrow C_1 \times C_2,\;\; (d_1, d_2, g) \mapsto (d_1^g, d_2^g).
\] 
We claim that a generic fiber of $\phi$ has dimension $m^2$. To see this, first observe that if  $y = (d_1^g,d_2^g) \in {\rm im}(\phi)$ then
\[
\{ (d_1^{gh^{-1}},d_2^{gh^{-1}},h) \,:\, h \in L\} \subseteq \phi^{-1}(y)
\]
and thus $\dim \phi^{-1}(y) \geqs \dim L = m^2$. Therefore, it suffices to show there is a fiber of dimension $m^2$. Choose $y \in D_1 \times D_2 \subseteq C_1 \times C_2$ so
that $L' \leqs G(y) \leqs L$. Then $G(y)$ is not contained in any other conjugate of $L$, so $\phi(d_1, d_2, g) = y$ only if $g \in L$ and $(d_1^g, d_2^g) = y$.  In particular, the fiber $\phi^{-1}(y)$ is determined by $y$ and so it has dimension $m^2$. 

Therefore, in view of the dimensions of $C_i$ and $D_i$ given above, we deduce that $\phi$ is dominant and thus the image of $\phi$ contains a nonempty open subvariety of $X = C_1 \times C_2$.   If $x \in X$ is in the image of $\phi$, then $G(x)$ is conjugate to a subgroup of $L$ and thus (i) holds. Finally, let us observe that the set of totally singular $m$-spaces of a given type can be identified with the homogeneous space $G/P$ for some maximal parabolic subgroup $P$ of $G$. By Lemma \ref{l:parabolic}, $X_P$ is closed and thus (i) implies that $X_P=X$. This establishes part (ii).
\end{proof}
 
We can now establish Theorem \ref{t:main2} for $G = \SO(V)$, where $\dim V = 2m$ and 
$m \geqs 5$ is odd. Recall that $C_i = x_i^G$ and $d_i = \a(x_i)$ is the maximal dimension of an eigenspace of $x_i$ on $V$.
 
\begin{thm} \label{t:so2nodd}    
If $m \geqs 5$ is odd and $\sum_i  d_i \leqs n(r-1)$, then $\Delta$ is empty if and only if $r=2$ and either $x_1,x_2$ are quadratic, or defined as in \eqref{e:one} (up to ordering).
\end{thm}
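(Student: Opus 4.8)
The plan is to treat the two implications separately; the forward (``if'') direction is immediate from results already proved. If $r=2$ and $x_1,x_2$ are quadratic on $V$ then, since $n=2m\ge 10\ge 3$, Lemma~\ref{l:quadratic} shows $\langle g_1,g_2\rangle$ is reducible on $V$ for every $(g_1,g_2)\in X$, so $\Delta=\emptyset$; if instead $r=2$ and $x_1,x_2$ are as in \eqref{e:one}, then $\Delta=\emptyset$ by Lemma~\ref{l:ex:so}. From here on I would assume $\sum_i d_i\le n(r-1)$ and that $(C_1,\dots,C_r)$ is none of these configurations, aiming to prove $\Delta\ne\emptyset$. Since $k$ is uncountable, Theorem~\ref{t:gen} reduces this to exhibiting $G(x)=G$ either for one tuple or on a generic subset of $X$; Lemma~\ref{l:closures} lets me replace each $C_i$ by a class in its closure with the same $d_i$, and Lemma~\ref{l:infinite} lets me assume $G(x)$ is generically positive-dimensional.

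The main case is when at least one of the classes, say after relabelling $C_r$, contains an element fixing a nonsingular vector. The inductive device is the maximal subgroup $H=\SO_{2m-1}(k)$ (of type $B_{m-1}$, isomorphic to $\Sp_{2m-2}(k)$ when $p=2$), realised as the stabiliser of a nonsingular $1$-space of $V$; since $2m-1\ge 9$, the lower-rank case of Theorem~\ref{t:main2} applies to $H$ on its natural module. When enough of the classes admit a common fixed nonsingular vector $v$ one restricts the corresponding tuple to $H=\SO(v^\perp)$, controls the $d$-invariants of the restricted classes using $\sum_i d_i\le n(r-1)$, and applies the lower-rank theorem to generate $H$ densely; Corollary~\ref{c:wedge2odd}, applied to the action of $H$ on $\wedge^2$ of its natural module (the adjoint module of $H$ when $p\ne 2$), is exactly what shows that the restricted classes avoid the exceptional list of the lower-rank theorem \emph{unless} $(C_1,\dots,C_r)$ is already one of the configurations in \eqref{e:one} --- one checks directly that, for instance, $x_1=(J_3^2,J_2^{m-3})$ restricts to $(J_3,J_2^{m-2})$ on a nondegenerate hyperplane and $x_2=(J_2^{m-1},J_1^2)$ to $(J_2^{m-1},J_1)$. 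Adjoining a further element from the remaining class that does not fix $v$, the resulting subgroup contains a conjugate of $H$ and is strictly larger than it, so $G(x)=G$ by maximality of $H$, and Theorem~\ref{t:gen} finishes this case.

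The hard cases are $r=2$, and more generally any situation in which too few classes contain elements with a common fixed nonsingular vector; the extreme instance is a class such as $x_i=(\l I_m,\l^{-1}I_m)$ with $\l^2\ne 1$, which has $d_i=m$ and zero fixed space, so that no conjugate of it lies in any $\SO_{2m-1}(k)$. Here I would instead invoke Corollary~\ref{c:sr}: since $G=\SO_n(k)$ is not a symplectic group in characteristic $2$, it suffices to produce one tuple $x$ with $G(x)^0$ irreducible on $V$ and one tuple $y$ with $G(y)$ containing a strongly regular element. The second is routine, since strongly regular elements are dense and open in $G$ (Remark~\ref{r:strong}) and products of two noncentral classes contain elements of large order, so such an element can be found in $C_1C_2$ or in a Levi subgroup and its conjugates, the obstructing pairs having $d_1+d_2>n$ and hence being excluded. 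For the first, each fixed-vector-free class preserves a complementary pair of totally singular $m$-spaces and so lies in a conjugate of the Levi $\GL_m(k)$, and one uses Gerhardt's Theorem~\ref{t:sl} for $\GL_m(k)$ together with a transversality argument to produce a tuple generating a subgroup of $G$ irreducible on $V$; the subspace-stabiliser lemmas (Lemmas~\ref{l:sing1}, \ref{l:1-spaces} and~\ref{l:nondeg2}), supplemented by dimension counts on the remaining Grassmannians, are used to ensure that the relevant subgroups cannot be contained in a parabolic or in the stabiliser of a small nondegenerate subspace.

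The step I expect to be the main obstacle is precisely this analysis of the $r=2$ and fixed-vector-free cases, where the clean inductive reduction to $\SO_{2m-1}(k)$ is unavailable and one must argue through $\GL_m(k)$-Levis and Scott's theorem; closely related is the bookkeeping needed to confirm that the exceptional configurations produced by Corollary~\ref{c:wedge2odd} (after restriction to $H$) are exactly those in Table~\ref{tab:main} and no others, to handle the characteristic $2$ isogeny $B_{m-1}\leftrightarrow C_{m-1}$, to dispose of the small values $r\in\{3,4\}$ that fall outside the main argument, and to verify the base of the induction at $\SO_9(k)$.
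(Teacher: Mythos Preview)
Your overall strategy is materially different from the paper's, and the specific implementation has genuine gaps.

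The paper does \emph{not} induct through $H=\SO_{2m-1}(k)$. Its main engine is the Levi $L=\GL_m(k)$ of the stabiliser $P=QL$ of a totally singular $m$-space $W$. After passing (for unipotent classes) to the smallest class with the given number of Jordan blocks, every $x_i$ embeds in $L$; Gerhardt's theorem gives $\SL_m(k)\le G(y)^0$ for some $y$, and then the crucial step is to show that for some $x\in X$ one has $QL'\le G(x)^0$. This is where Corollary~\ref{c:wedge2odd} enters: it is applied to the $x_i\in L=\GL_m(k)$ acting on the $kL$-module $Q\cong\wedge^2(W)$ with $\dim W=m$ odd, not to $\SO_{2m-1}$ acting on the exterior square of its natural module as you propose. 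The exceptional pairs in Corollary~\ref{c:wedge2odd}(ii), read back from $\GL_m$ to $\SO_{2m}$, are precisely the configurations \eqref{e:one}; in all other cases one obtains $QL'\le G(x)^0$ for some $x$, and then Lemma~\ref{l:submodules-conn}, Corollary~\ref{c:toralrank} and Corollary~\ref{c:sr} force $G(x)^0$ to be generically irreducible of rank $\ge m-1$, hence equal to $G$. The remaining Case 2 (unipotent $x_i$ not the smallest with $d_i$ blocks) is dispatched by closure arguments reducing to Case 1.

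Your $\SO_{2m-1}$ scheme cannot cover the non-exceptional cases. Concretely, your claim that $x_1=(J_3^2,J_2^{m-3})$ ``restricts to $(J_3,J_2^{m-2})$ on a nondegenerate hyperplane'' is false on two counts: first, all Jordan blocks of $x_1$ have size $\ge 2$, so $V^{x_1}$ is totally singular and $x_1$ fixes no nonsingular vector, hence lies in no conjugate of $\SO_{2m-1}(k)$; second, $(J_3,J_2^{m-2})$ has the even part $2$ with odd multiplicity $m-2$ (since $m$ is odd) and is therefore not even a unipotent class in $\SO_{2m-1}(k)$ for $p\ne 2$. Nor can closures rescue this while keeping $d_1=m-1$: no partition of $2m$ dominated by $(3^2,2^{m-3})$ with exactly $m-1$ parts has a part equal to $1$. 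More broadly, any semisimple $x_i$ with trivial $1$-eigenspace (for instance $(\lambda I_{m-1},\lambda^{-1}I_{m-1},\mu,\mu^{-1})$ with $\lambda,\mu\ne\pm1$) fails to embed in $\SO_{2m-1}(k)$, and many such classes are non-exceptional; your proposal pushes all of these into the ``hard case'', where the sketch via $\GL_m$ is essentially the paper's actual argument but left undeveloped. The ``adjoining a further element'' step is also structurally unclear: if all $r$ classes are needed to generate $H$ densely there is nothing left to adjoin, and if only $r-1$ are used the inequality $\sum_{i<r} d_i'\le (2m-1)(r-2)$ does not follow from $\sum_i d_i\le 2m(r-1)$.
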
 

\begin{proof} 
If $r=2$ and $x_1,x_2$ are either quadratic or defined as in \eqref{e:one}, then $\Delta$ is empty by Lemmas \ref{l:quadratic} and \ref{l:ex:so}. Therefore, it remains to show that $\Delta$ is nonempty in all other cases. We partition the proof into two cases. In order to explain the case distinction, recall that if $x_i$ is unipotent then there is a unique unipotent conjugacy class $y^G$ of minimal dimension with $\a(y) = d_i$ (see Section \ref{ss:classes}). We will refer to $y^G$ as the \emph{smallest} unipotent class containing elements with $d_i$ Jordan blocks on $V$ and it will be useful (in Case 2) to note that $y^G$ is contained in the closure of $x_i^G$.

\vs

\noindent \emph{Case 1. If $x_i$ is unipotent, then $C_i=x_i^G$ is the smallest conjugacy class  in $G$ of unipotent elements with $d_i$ Jordan blocks on $V$.}

\vs

Suppose the given condition holds for all unipotent elements $x_i$ in \eqref{e:X}. Let $P=QL$ be the stabilizer in $G$ of a totally singular $m$-space $W$ of $V$, where $Q$ is the unipotent radical and $L = \GL(W) = \GL_m(k)$ is a Levi subgroup. Note that $L$ is the stabilizer of a decomposition $V = W \oplus W'$, where $W'$ is also a totally singular $m$-space (since $m$ is odd, $W$ and $W'$ represent the two $G$-orbits on the set of such spaces). We may identify $Q$ with the $kL$-module $\wedge^2(W)$.

Up to conjugacy, we may embed each $x_i$ in $L$. Indeed, this is clear if $x_i$ is semisimple since $L$ contains a maximal torus of $G$; for unipotent $x_i$, it follows from the hypothesis in Case 1 and the properties of unipotent classes  discussed in Section \ref{ss:classes} (specifically, the Jordan blocks of $x_i$ occur with even multiplicity). If $x_i$ is unipotent, then $C_i \cap L = x_i^L$. On the other hand, if $x_i$ is semisimple then we may assume that if $\l \ne \pm 1$ is an eigenvalue of $x_i$ on $V$, then the multiplicities of $\l$ on $W$ and $W'$ differ by at most $1$.

Let $d_i' = \lceil d_i/2\rceil$ be the maximal dimension of an eigenspace of $x_i$ on $W$. We claim that $\sum_i d_i' \leqs m(r-1)$. This is clear if each $d_i$ is even. Similarly, if exactly one $d_i$ is odd then $\sum_id_i \leqs 2m(r-1)-1$ and once again the claim follows. More generally, suppose $\ell \geqs 2$ of the $d_i$ are odd and note that $d_i \leqs m$ if $d_i$ is odd, otherwise $d_i \leqs 2m-2$. If $(\ell,r) \ne (2,2)$, then 
\[
\sum_{i=1}^r d_i \leqs \ell m +2(r-\ell)(m-1) \leqs 2m(r-1)-\ell
\]
and the result follows. Finally, if $\ell=r=2$ then we may assume $x_1$ or $x_2$ is non-quadratic, so $d_1+d_2 \leqs 2m-2$ and this justifies the claim. 

Set 
\[
Y = D_1 \times \cdots \times D_r = x_1^L \times \cdots \times x_r^L \subseteq X.
\] 
Since $\sum_i d_i' \leqs m(r-1)$, Theorem \ref{t:sl} implies that for generic $y \in Y$, either $L(y)$ contains $L'={\rm SL}_{m}(k)$, or $r=2$ and the $x_i$ are quadratic elements of $L$ with respect to $W$. In the latter situation, the $x_i$ also act quadratically on $V$, which is a case we have already handled.  

  By applying Lemma \ref{l:submodules-conn}, it follows that for generic $x \in X$, $G(x)^0$ is either irreducible on $V$, or it has exactly two composition factors of dimension $m$.  In addition, the rank of $G(x)$ is generically at least $m-1$ by Corollary \ref{c:toralrank}. Since $L'=\SL_m(k)$ contains regular semisimple elements with distinct eigenvalues on  $V$ (that is, $L'$ contains elements that are strongly regular on $V$; see Definition \ref{d:sr}), it follows that for generic $x \in X$, $G(x)^0$ contains strongly regular elements on $V$ (by Lemma \ref{l:sr}(ii)).   In particular, $G(x)^0$ does not generically preserve a nondegenerate $m$-space (since $m$ is odd, the stabilizer in $G$ of a nondegenerate $m$-space does not contain an element with distinct eigenvalues on $V$). As a consequence, either $G(x)^0$ is generically irreducible on $V$, or it has precisely two composition factors (both $m$-dimensional) and any proper invariant subspace of $V$ 
is totally singular.   
Since the totally singular $m$-spaces of a fixed type (i.e. in a given $G$-orbit) form an irreducible projective variety, by applying Lemma \ref{l:parabolic} we deduce that either 
\begin{itemize}\addtolength{\itemsep}{0.2\baselineskip}
\item[{\rm (a)}] $G(x)^0$ acts irreducibly on $V$ for generic $x \in X$; or
\item[{\rm (b)}] for all $x \in X$, $G(x)^0$ stabilizes a totally singular $m$-space (of a fixed  type).
\end{itemize}

If (a) holds, then Corollary \ref{c:sr} implies that $\Delta$ is nonempty. Therefore, to complete the argument in Case 1, we need to rule out (b).

Seeking a contradiction, suppose (b) holds.  Fix $z = (z_1, \ldots, z_r) \in X$ such that $L' \leqs  G(z)^0 \leqs G(z) \leqs L$. Consider the set 
\[
X_0 := z_1^Q \times \cdots \times z_r^Q \subseteq X
\]
and note that $G(y) \leqs P$ and $P' \leqs G(y)Q$ for all $y \in X_0$. We observe that if $y \in X_0$, then either $G(y)$ is contained in a complement to $Q$ in $P$, or $G(y)$ contains $Q$ (since $QL' \leqs QG(y)$). Moreover, if $y_1,y_2$ are distinct elements of $X_0$ and $Q \not\leqs G(y_i)$ for $i=1,2$, then $G(y_1)$ and $G(y_2)$ are contained in distinct complements to $Q$. Since $H^1(L',Q)=0$ by \cite{JP}, it follows that the space of complements to $Q$ in $P$ coincides with the space of $Q$-conjugates of $L$ and so has dimension $m(m-1)/2$ as a variety. On the other hand, aside from the special cases recorded in the statement of the theorem, we see that 
\[
\dim X_0  = r \dim Q - \sum_{i=1}^r\dim Q^{x_i} > \dim Q = \frac{1}{2}m(m-1)
\]
by Corollary \ref{c:wedge2odd} and thus $QL' \leqs G(y)^0$ for some $y \in X_0$. In particular, there exists $y \in X_0$ such that $G(y)^0$ fixes a unique totally singular $m$-space (namely, $W$) and it follows that the set of $y \in X$ such that $G(y)^0$ fixes a totally singular $m$-space in the other orbit is contained a proper closed subvariety of $X$. 
 
But by applying the same argument with respect to the opposite parabolic subgroup of $G$ (namely, the stabilizer of the totally singular $m$-space $W'$) we see that for some $y \in X$, $G(y)^0$  does not fix any totally singular $m$-space in the orbit of $W$.  This is a contradiction and the proof is complete in Case 1. 

\vs

\noindent \emph{Case 2. There exists a unipotent $x_i$ such that $C_i=x_i^G$ is not the smallest conjugacy class of unipotent elements in $G$ with $d_i$ Jordan blocks on $V$.}

\vs

To complete the proof of the theorem, we may assume that we are in the situation described in Case 2. Let $y_i^G$ be the smallest unipotent class containing elements with $d_i$ Jordan blocks on $V$ and recall that $y_i^G$ is contained in the closure of $x_i^G$. In view of Lemma \ref{l:closures}, we may assume that $G(x) \ne G$ for all $x \in \bar{X} \setminus{X}$, which implies (by our work in Case 1) that $r=2$ and either 
\begin{itemize}\addtolength{\itemsep}{0.2\baselineskip}
\item[{\rm (a)}] $y_1$ and $y_2$ are quadratic; or
\item[{\rm (b)}] $y_1$ and $y_2$ have the form given in \eqref{e:one}, up to ordering.
\end{itemize}

First assume $y_1$ and $y_2$ are both quadratic, so $d_1=d_2=m$ (since $d_1 + d_2 \leqs 2m$ and $d_i \geqs m$ for each $i$).  But since $m$ is odd, there are no quadratic
unipotent elements in $G$ with $d_i=m$, so both classes are semisimple and thus $x_1,x_2$ are both quadratic, which is one of the special cases appearing in the statement of the theorem.
For the remainder, we may assume $y_1,y_2$ have the form given in \eqref{e:one}, up to ordering, so $d_1 = m-1$ and $d_2=m+1$. We will consider separately the cases $p=2$ and $p \ne 2$.

Suppose $p=2$, so $x_1=y_1$ is semisimple, $y_2$ is a unipotent involution of type $a_{m-1}$ and the condition in Case 2 forces $x_2$ to be of type $c_{m-1}$. Set $Y = y_1^G \times y_2^G \subseteq \bar{X}$. As explained in the analysis of Case 1, there exists $y \in Y$ such that $\SL_m(k) \leqs G(y)$, where $\SL_m(k)$ fixes a decomposition $V = W \oplus W'$ into totally singular $m$-spaces. By Corollary \ref{c:toralrank}, the rank of $G(x)$ is at least $m-1$ for generic $x \in X$. Then by arguing as above, using Lemma \ref{l:submodules}, we deduce that $\Delta$ is nonempty if $G(x)^0$ is generically irreducible on $V$.  

To complete the proof for $p=2$, we may assume $G(x)^0$ is reducible on $V$ for all $x \in X$, with two $m$-dimensional composition factors for generic $x$.   
In particular, $G(x)^0$ generically fixes a totally singular $m$-space. Notice that $x_1$ and $x_2$ both commute with transvections and so the two classes $C_1$ and $C_2$ are invariant under $G.2 = \OO_{n}(k)$. Therefore, $G(x)^0$ generically preserves totally singular $m$-spaces of both types, which implies that $G(x)^0$ is generically contained in a Levi subgroup $\GL_m(k)$.   But this is a contradiction since every unipotent involution in a Levi subgroup of this form is of type $a$ in the notation of \cite{AS}; in particular, no conjugate of $x_2$ is contained in such a subgroup.

Finally, let us turn to the case $p \ne 2$.  By arguing as above for $p=2$, we see that either $\Delta$ is nonempty, or each $G(x)^0$ is contained in a Levi subgroup of the form $\GL_m(k)$ (indeed, $G(x)^0$ generically preserves totally singular $m$-spaces of both types, in which case Lemma \ref{l:parabolic} forces this property to hold for all $x \in X$). Seeking a contradiction, let us assume each $G(x)^0$ is contained in a Levi subgroup of the form $\GL_m(k)$. There are two cases to consider.

First assume $x_2$ and $y_2$ are not conjugate. By passing to the closures of $x_1^G$ and $x_2^G$, we may assume that $x_1$ and $y_1$ are conjugate and $\dim x_2^G$ is as small as possible (subject to the constraints). This means that we may assume $x_2$
has a Jordan block of size $3$ and $x_1$ acts nontrivially on a $3$-dimensional nondegenerate space. Since $\SO_3(k)$ can be topologically generated by conjugates of any two nontrivial elements other than involutions (see \cite[Theorem 4.5]{Ger}), there exists $x \in X$ such that $G(x)$ induces $\SO_3(k)$ on a nondegenerate $3$-space. For such an element $x$, $G(x)^0$ does not preserve a totally singular $m$-space (since such a space would have to be contained in the orthogonal complement of
  the nondegenerate $3$-space).  Therefore, by applying Lemma \ref{l:parabolic} we deduce that the set of $x \in X$ such that $G(x)$ preserves a totally singular $m$-space (of either type) is a proper closed subvariety of $X$.  But this is incompatible with the fact that each $G(x)^0$ is contained in a Levi subgroup of the form $\GL_m(k)$ and we have reached a contradiction.

Now assume $x_2$ and $y_2$ are conjugate. Given the assumption in Case 2, it follows that $x_1$ is unipotent and not conjugate to $y_1 = (J_3^2,J_2^{m-3})$. If $x_1$ is not contained in a Levi subgroup of the form $\GL_m(k)$, then we can repeat the argument above for $p=2$ to  obtain a contradiction. So we may assume that the multiplicity of each Jordan block of $x_1$ is even. Moreover, by passing to closures, we may assume that $x_1$ either
has a Jordan block of size $4$, or at least four Jordan blocks of size $3$ (with $d_1= m -1)$.   Let $\Omega$ be the variety of totally singular $m$-spaces of a fixed type. By arguing as in the proof of Lemma \ref{l:ex:so}, it follows that $\dim \Omega^{x_1} < \dim \Omega^{y_1}$ and thus
\[
\dim \Omega^{x_1} + \dim \Omega^{x_2}  < \dim \Omega^{y_1} + \dim \Omega^{y_2}
= \dim \Omega.
\] 
Then by applying Lemma \ref{l:basic}, we deduce that $G(x)$ does not generically fix an $m$-space in $\Omega$ and this final contradiction completes the proof of the theorem. 
 \end{proof}   

\subsubsection{$m \geqs 6$ even}

Now let us assume $m \geqs 6$ is even. In the following lemma we define $C_i = x_i^G$ for  $i=1,2$ as follows:
\begin{nalign}\label{e:two}
x_1 & = (I_2, \l I_{m-1}, \l^{-1} I_{m-1}), \mbox{ or $p \ne 2$ and $x_1 = (J_3^2,J_2^{m-4}, J_1^2)$} \\
x_2 & = (J_2^{m}); \mbox{ type $a_{m}$ or $a_m'$ if $p=2$}
\end{nalign}
where $\l \in k^{\times}$ and $\l^2 \ne 1$ (note that  $d_1 = m-1$ if $x_1$ is semisimple and $d_1= m$ if $x_1$ is unipotent). Recall that if $p=2$ then there are three $G$-classes of unipotent involutions with Jordan form $(J_2^m)$, with representatives labelled $a_m$, $a_{m}'$ and $c_m$ in \cite{AS} -- the involutions of type $a_m$ and $a_m'$ are conjugate in $G.2 = \OO(V)$.

\begin{lem} \label{l:ex:so2}   
Suppose $m \geqs 6$ is even, $r=2$ and $x_1,x_2$ are defined as in \eqref{e:two}. \begin{itemize}\addtolength{\itemsep}{0.2\baselineskip}
\item[{\rm (i)}] There exists a nonempty open subvariety $Y$ of $X$ such that for all $y \in Y$, $G(y)$ preserves a complementary pair of maximal totally singular subspaces of $V$.
\item[{\rm (ii)}] For all $x \in X$, $G(x)$ preserves maximal totally singular subspaces of $V$. 
\end{itemize}
In particular, $\Delta$ is empty.
\end{lem}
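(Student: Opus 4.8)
The plan is to follow the proof of Lemma \ref{l:ex:so} closely, adjusting for the fact that when $m$ is even a complementary pair of maximal totally singular subspaces lies in a single $G$-orbit. Fix a decomposition $V = W \oplus W'$ into complementary maximal totally singular subspaces, and let $L = \GL(W) = \GL_m(k)$ be the corresponding Levi subgroup; since $m$ is even, $W$ and $W'$ lie in the same $G$-orbit, so $L$ is a Levi factor of a maximal parabolic subgroup $P = \mathrm{Stab}_G(W)$ and $L \leqs P$. First I would embed each $x_i$ in $L$: for the semisimple $x_1$ I take $x_1|_W = (I_1, \l I_{m/2}, \l^{-1}I_{m/2-1})$; for the unipotent $x_1$ (so $p \ne 2$) I take $x_1|_W = (J_3, J_2^{(m-4)/2}, J_1)$; and for $x_2 = (J_2^m)$ I take $x_2|_W = (J_2^{m/2})$ (when $p = 2$ and $x_2$ has type $a_m'$ rather than $a_m$ one passes to the other family, or equivalently applies an involutory graph automorphism, so we may assume $x_2$ has type $a_m$). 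In every case the largest eigenspace of $x_i$ on $W$ has dimension $m/2$, so writing $d_i'$ for this dimension we have $\sum_i d_i' = m = m(r-1)$, and $x_1, x_2$ are not both quadratic on $W$ because $x_1$ has three distinct eigenvalues (resp.\ a Jordan block of size $3$) on $W$. Hence Theorem \ref{t:sl}, applied with $n = m \geqs 3$, shows that $L(y) \geqs L' := \SL_m(k)$ for all $y$ in a generic subset of $Y := x_1^L \times x_2^L \subseteq X$.

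Next I would run the dominance argument. Consider the morphism $\phi \colon x_1^L \times x_2^L \times G \to C_1 \times C_2$ given by $(d_1, d_2, g) \mapsto (d_1^g, d_2^g)$. As in Lemma \ref{l:ex:so}, each fibre contains a copy of $L$ (conjugate the first two coordinates and compensate in the third), so every fibre has dimension at least $\dim L = m^2$. For the opposite bound I would fix $y = (d_1, d_2) \in x_1^L \times x_2^L$ with $L' \leqs G(y) = \overline{\la d_1, d_2 \ra} \leqs L$, which exists by the previous paragraph: if $\phi(e_1, e_2, g) = y$ then $G(y) = \overline{\la e_1, e_2 \ra}^g \leqs L^g$, so $L^g$ contains $L' = \SL(W)$. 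Since the natural and dual modules of $\SL_m(k)$ are non-isomorphic for $m \geqs 3$, the only $m$-dimensional $L'$-submodules of $V$ are $W$ and $W'$, and likewise $L^g = \GL(W^g)$ stabilises exactly the two totally singular $m$-spaces $W^g$ and $(W')^g$; hence $\{W^g, (W')^g\} = \{W, W'\}$, so $L^g$ stabilises the decomposition $V = W \oplus W'$ and therefore $L^g = L$, i.e.\ $g \in N_G(L)$. As $N_G(L)$ has identity component $L$, this gives $\dim \phi^{-1}(y) \leqs m^2$, so the generic fibre of $\phi$ has dimension exactly $m^2$. A direct computation of $\dim C_i$ and $\dim x_i^L$ — which I would carry out in the (essentially four) sub-cases according to whether $x_1$ is semisimple or unipotent and whether $p = 2$ — then verifies $\dim x_1^L + \dim x_2^L + \dim G - m^2 = \dim C_1 + \dim C_2$, so $\phi$ is dominant and $\mathrm{im}(\phi)$ contains a nonempty open subset $Y_0$ of $X$. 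For $x \in Y_0$ we have $G(x) \leqs L^g$ for some $g \in G$, so $G(x)$ preserves the complementary pair $W^g, (W')^g$ of maximal totally singular subspaces; this is (i).

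For (ii) I would argue as in Lemma \ref{l:ex:so}(ii): the totally singular $m$-spaces in the $G$-orbit of $W$ form the projective variety $G/P$, so $X_P$ is a closed subvariety of $X$ by Lemma \ref{l:parabolic}. Part (i) gives $Y_0 \subseteq X_P$ since $L^g \leqs P^g$, and as $X$ is irreducible and $Y_0$ is a nonempty open subset we get $X = \overline{Y_0} \subseteq X_P$, hence $X_P = X$. Thus every $G(x)$ lies in a conjugate of $P$ and so preserves a maximal totally singular subspace of $V$; in particular $G(x) \ne G$ for all $x$, so $\Delta$ is empty.

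The step I expect to be the main obstacle is the fibre-dimension bound for $\phi$: establishing that the generic fibre has dimension exactly $m^2$ rests on the rigidity statement that $L$ is the unique conjugate of itself containing $\SL(W)$, which in turn relies on the representation-theoretic fact that the natural and dual $\SL_m(k)$-modules are non-isomorphic for $m \geqs 3$ (this is what pins down the two totally singular $m$-spaces and forces $g \in N_G(L)$). The accompanying dimension bookkeeping is routine but has to be checked in each sub-case, and a little care is needed with the two $\SO(V)$-classes of $a$-type involutions when $p = 2$.
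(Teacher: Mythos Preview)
Your proposal is correct and follows essentially the same approach as the paper, which simply states that the proof is ``essentially identical to that of Lemma \ref{l:ex:so}'' after noting that $x_1^G = x_1^{G.2}$ while $x_2$ preserves a totally singular $m$-space of a fixed type. Your expansion faithfully fills in the details the paper omits, including the necessary adjustment that $N_G(L) = L.2$ when $m$ is even (since a complementary pair $W, W'$ now lies in a single $G$-orbit), and your handling of the $a_m$ versus $a_m'$ distinction via the graph automorphism is in the same spirit as the paper's remark.
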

 
\begin{proof}  
First observe that $x_1^G = x_1^{G.2}$, which implies that $x_1$ preserves totally singular $m$-spaces of both types. On the other hand, $x_2$ preserves a totally singular $m$-space of a fixed type. With this observation in hand, the proof of the lemma is essentially identical to that  of Lemma \ref{l:ex:so} and we omit the details.
\end{proof} 

\begin{thm} \label{t:so2neven}     
If $m \geqs 6$ is even and $\sum_i  d_i \leqs n(r-1)$, then $\Delta$ is empty if and only if $r=2$ and either
\begin{itemize}\addtolength{\itemsep}{0.2\baselineskip} 
\item[{\rm (i)}] the $x_i$ are quadratic, or defined as in \eqref{e:two}, up to ordering; or
\item[{\rm (ii)}] $p \ne 2$ and $x_1 = (J_3,J_2^{m-2},J_1)$, $x_2 = (J_2^m)$, up to ordering.
\end{itemize}
\end{thm}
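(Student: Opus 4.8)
The plan is to follow the same two-case structure as the proof of Theorem \ref{t:so2nodd}, adapted to the parity change. First I would verify the ``if'' direction: if $r=2$ and $x_1,x_2$ are quadratic, then $\Delta$ is empty by Lemma \ref{l:quadratic}; if they are as in \eqref{e:two} (case (i)) then Lemma \ref{l:ex:so2} gives emptiness; and case (ii) requires a short separate argument, essentially another application of the method of Lemma \ref{l:ex:so2}, showing that $x_1 = (J_3,J_2^{m-2},J_1)$ and $x_2=(J_2^m)$ can both be embedded in a Levi subgroup $\GL_m(k)$ (the stabilizer of a complementary pair of totally singular $m$-spaces), so that $G(x)$ always preserves a totally singular $m$-space of a fixed type and $\Delta$ is empty. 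For this I would check the eigenvalue/Jordan-block multiplicities are compatible with such an embedding and then run the dominant-morphism argument of Lemma \ref{l:ex:so2}.

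For the ``only if'' direction I would partition into Case 1 (every unipotent $x_i$ lies in the smallest class with $d_i$ Jordan blocks) and Case 2 (some unipotent $x_i$ is larger), exactly as before. In Case 1, embed each $x_i$ in the Levi $L = \GL_m(k) = \GL(W)$ stabilizing $V = W \oplus W'$ with $W, W'$ totally singular $m$-spaces; identify the unipotent radical $Q$ with $\wedge^2(W)$; check $\sum_i d_i' \leqs m(r-1)$ where $d_i' = \lceil d_i/2\rceil$; apply Gerhardt's Theorem \ref{t:sl} in $L$ to get $\SL_m(k) \leqs L(y)$ generically (unless the $x_i$ are quadratic on $W$, hence on $V$); then use Lemma \ref{l:submodules-conn} and Corollary \ref{c:toralrank} to force $G(x)^0$ to be generically irreducible on $V$ or to have exactly two $m$-dimensional composition factors with any proper invariant subspace totally singular. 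Since $m$ is now \emph{even}, the crucial subtlety is that the stabilizer of a nondegenerate $m$-space \emph{can} contain strongly regular elements, so the argument using Lemma \ref{l:sr}(ii) to exclude the nondegenerate case no longer works verbatim. Instead I would argue directly: a nondegenerate $m = n/2$ space splits $V$ as an orthogonal sum of two nondegenerate $m$-spaces, and generic irreducibility of $G(x)^0$ on the $m$-dimensional factor (coming from $\SL_m(k)$) together with the classification of maximal rank subgroups, plus the observation that for generic $x$ the two composition factors cannot both be realized on orthogonally complementary nondegenerate spaces (one would use Lemma \ref{l:submodules-conn}(iii) on $C_{\mathrm{End}(V)}$ or a transvection-fixing argument as in the $p=2$ part of Theorem \ref{t:so2nodd}), to reduce to: either $G(x)^0$ is generically irreducible on $V$, whence $\Delta \neq \emptyset$ by Corollary \ref{c:sr}, or $G(x)^0$ generically stabilizes a totally singular $m$-space. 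In the latter case I would run the opposite-parabolic argument verbatim from Case 1 of Theorem \ref{t:so2nodd}: pass to $X_0 = z_1^Q \times \cdots \times z_r^Q$, use $H^1(\SL_m(k),Q)=0$ (via \cite{JP}) so the complements to $Q$ form a variety of dimension $\binom{m}{2}$, and use a dimension count for $\dim X_0 = r\dim Q - \sum_i \dim Q^{x_i}$ against $\dim Q = \binom{m}{2}$ to find $y \in X_0$ with $Q\,\SL_m(k) \leqs G(y)^0$, fixing a \emph{unique} totally singular $m$-space; applying the same at the opposite parabolic yields a contradiction. The needed dimension estimate is the even-$m$ analogue of Corollary \ref{c:wedge2odd}; since the excerpt only proves the odd case, I would either prove the even analogue (the methods of Lemma \ref{l:extbd} and Section \ref{ss:linalg} apply directly, with the special cases in \eqref{e:two} and case (ii) being precisely the equality cases), or reorganize to quote it as a lemma.

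In Case 2, let $y_i^G$ be the smallest unipotent class with $d_i$ Jordan blocks; it lies in the closure of $x_i^G$. By Lemma \ref{l:closures} we may assume $G(x) \neq G$ on $\bar X \setminus X$, so Case 1 forces $r=2$ and the $y_i$ to be quadratic or as in \eqref{e:two} or case (ii). If the $y_i$ are quadratic then $d_1 = d_2 = m$; since $m$ is even there \emph{do} exist quadratic unipotent involutions with $m$ Jordan blocks (type $a_m$, $a_m'$, $c_m$), so unlike the odd case I cannot immediately conclude $x_1,x_2$ are semisimple — I would have to check each possibility for $x_i$ lying above such a $y_i$ and show it yields $\Delta \neq \emptyset$ unless the $x_i$ are themselves quadratic (which is already listed). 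If $y_1 = (J_3^2, J_2^{m-4}, J_1^2)$ or $(J_3, J_2^{m-2}, J_1)$ and $y_2 = (J_2^m)$, I would follow the $p\neq 2$ analysis of Case 2 of Theorem \ref{t:so2nodd}: split into $x_2$ conjugate to $y_2$ or not; in the non-conjugate case pass to closures so $x_2$ has a $J_3$ and $x_1$ acts nontrivially on a nondegenerate $3$-space, use $\SO_3(k)$-generation (\cite[Theorem 4.5]{Ger}) to produce $x$ with $G(x)$ inducing $\SO_3(k)$ on a nondegenerate $3$-space, hence not preserving any totally singular $m$-space, contradicting Lemma \ref{l:parabolic}; in the conjugate case (so $x_1$ unipotent, not of the listed forms) either $x_1 \notin \GL_m(k)$, handled by a transvection-invariance argument as in the $p=2$ part, or pass to closures so $x_1$ has a $J_4$ or four blocks of size $3$, compare $\dim \Omega^{x_1} < \dim \Omega^{y_1}$ for $\Omega$ the variety of totally singular $m$-spaces of a fixed type, and apply Lemma \ref{l:basic}. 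For $p=2$ in Case 2, $x_1 = y_1$ is semisimple, $y_2 = (J_2^m)$ of type $a_m$ (or $a_m'$), and the Case 2 hypothesis forces $x_2$ to be of type $c_m$; I would run the $p=2$ subargument from Theorem \ref{t:so2nodd} almost verbatim, using that $C_1, C_2$ are $\OO(V)$-invariant (they commute with transvections) so $G(x)^0$ generically preserves totally singular $m$-spaces of both types, hence lies in a Levi $\GL_m(k)$, contradicting the fact that no conjugate of a type-$c_m$ involution lies in such a Levi. I expect the main obstacle to be the even-$m$ replacement for the strongly-regular argument that ruled out the nondegenerate $m$-space in Case 1: because $m = n/2$ is even the nondegenerate-subspace stabilizer is no longer eliminated by Lemma \ref{l:sr}, so extra care (a separate endomorphism-algebra or fixed-transvection argument, together with the classification of irreducible maximal rank subgroups in Lemma \ref{l:largeranksubs}) is needed to reduce to the totally singular case before the opposite-parabolic argument can be applied.
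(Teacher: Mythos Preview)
Your proposal has a concrete error in the ``if'' direction and takes a genuinely different (and problematic) route for the converse.

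For case (ii), you claim $x_1 = (J_3,J_2^{m-2},J_1)$ embeds in the Levi $\GL_m(k)$. It does not: embedding in $\GL_m(k)$ requires every Jordan block size to occur with even multiplicity on $V$, and here $J_3$ and $J_1$ each occur once. The paper's argument is instead that $(J_3,J_2^{m-2},J_1)$ lies in the \emph{closure} of the class $(J_3^2,J_2^{m-4},J_1^2)$ from \eqref{e:two}, so Lemma \ref{l:ex:so2}(ii) (which is stated for all $x\in X$, hence passes to closures) gives the result.

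For the converse, the paper does \emph{not} mimic the $m$-odd proof with the $\GL_m(k)$ Levi. It works instead with the stabilizer $P=QL$ of a totally singular $1$-space, so $L' \cong \SO_{n-2}(k) = \SO_{2(m-1)}(k)$ with $m-1$ odd, and uses Theorem \ref{t:so2nodd} as a black box: if one can find $g_i$ in $\overline{C_i}\cap L$ generating a dense subgroup of $L'$, then $G(x)^0$ generically has a composition factor of dimension $\geqs n-2$, hence (with the $1$-space and nondegenerate $2$-space results from Section \ref{ss:subspaces}) is generically irreducible and primitive of rank $\geqs m-1$, and Lemma \ref{l:largeranksubs} finishes. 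The bulk of the proof is constructing suitable $g_i$ and chasing down the finitely many ways this inductive step lands in an exceptional case of Theorem \ref{t:so2nodd}.

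Your approach is not obviously salvageable. You correctly identify that the strongly-regular trick no longer excludes the nondegenerate $m$-space stabilizer when $m$ is even, but there is a second structural problem you did not flag: when $m$ is even, a complementary pair $W\oplus W'$ of totally singular $m$-spaces lies in the \emph{same} $G$-orbit (since $\dim W - \dim(W\cap W')=m$ is even). Thus the opposite-parabolic argument from Theorem \ref{t:so2nodd} collapses: producing $y$ with $QL'\leqs G(y)^0$ shows $G(y)^0$ fixes a unique $m$-space of type $A$ and none of type $B$, but if the generic statement you are trying to contradict is ``$G(x)^0$ fixes a type-$A$ space'', there is no contradiction. The paper's reduction to $\SO_{n-2}$ sidesteps both this orbit issue and the need for an even-$m$ analogue of Corollary \ref{c:wedge2odd} (which the paper explicitly notes is ``more complicated'' and avoided).
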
 

\begin{proof}   
First observe that $\Delta$ is empty in (i) and (ii). This is clear if the $x_i$ are quadratic and it follows from Lemma \ref{l:ex:so2} if $x_1$ and $x_2$ are defined as in \eqref{e:two}. If (ii) holds, then $C_1$ is in the closure of the corresponding unipotent class in \eqref{e:two}, so once again Lemma \ref{l:ex:so2} implies that $\Delta$ is empty. It remains to show that $\Delta$ is nonempty in all other cases.

Let $P=QL$ be the stabilizer in $G$ of a totally singular $1$-space $\la v \ra$, where $Q$ is the unipotent radical and $L$ is a Levi subgroup. Set $W=v^{\perp}/\langle v \rangle$, which is a nondegenerate space of dimension $n-2$, and note that we may identify $L'$ with $\SO(W)$.   Let $D_i$ be the closure of $C_i$.  In terms of this notation, we make the following claim.

\vs

\noindent \emph{Claim. $\Delta$ is nonempty if there exists $g_i \in D_i \cap L$ such that  
the Zariski closure of $\langle g_1, \ldots, g_r \rangle$ contains $L'$.}

\vs

To see this, suppose we can find elements $g_i$ with this property. Then Lemma \ref{l:submodules} implies that for generic $x \in \bar{X}$ (and hence also for generic $x \in X$),  $G(x)^0$ has a composition factor on $V$ of dimension at least $n-2$. In particular, $G(x)$ is not generically an irreducible imprimitive subgroup of $G$ with respect to the natural module $V$.  In addition, by combining the bound $\sum_i  d_i \leqs n(r-1)$ with our results in Section \ref{ss:subspaces} on subspace stabilizers, we deduce that $G(x)$ does not generically fix a $1$-space nor a nondegenerate $2$-space. Therefore, $G(x)$ is generically irreducible and primitive on $V$ (which implies that $G(x)^0$ is also generically irreducible).  By applying Corollary \ref{c:toralrank}, we deduce that $G(x)$ has rank $m - 1$ or $m$ for generic $x \in X$. But since $m \geqs 6$, Lemma \ref{l:largeranksubs} implies that $G$ does not have any proper connected irreducible subgroups of rank $m-1$ or $m$. Therefore, $G(x) = G$ for generic $x \in X$ and this justifies the claim.

It remains to establish the existence of the $g_i$. First assume that $x_i$ is semisimple and fix a scalar $\l \in k^{\times}$ such that the $\l$-eigenspace of $x_i$ has dimension $d_i$. Note that $D_i = C_i$ and choose $g_i \in D_i \cap L$ with $g_i v = \l v$.  Let $d_i'$ be the dimension of the largest eigenspace of $g_i$ on $W$. If we can choose $\l \ne \pm 1$, then either $d_i \leqs m$ and $d_i'=d_i-1$, or  $d_i \leqs 2m/3$ and  $d_i'=d_i$ (in the latter case, $x_i$ has at least three distinct eigenvalues with $d_i$-dimensional eigenspaces on $V$). On the other hand, if the only $d_i$-dimensional eigenspace corresponds to an eigenvalue $\pm 1$, then either $d_i'=d_i-2$, or $x_i$ is an involution and $d_i' = d_i=m$.

Now assume $x_i$ is unipotent. If $d_i > m$, then $x_i$ has at least two Jordan blocks of size $1$ (and if $p=2$, at least four such blocks) since the total number of Jordan blocks is even.  Therefore, in this situation we can choose $g_i \in C_i \cap L$ with $d_i'=d_i-2$ (that is, $g_i$ has $d_i-2$ Jordan blocks on $W$). Now assume  $d_i \leqs m$ and consider a Jordan block of $x_i$ of size $e > 1$.  If $e$ is odd, then the closure of $C_i$ contains an element with two Jordan blocks of size $1$ (and one of size $e-2$), so in this case we can choose $g_i \in D_i \cap L$ with $d_i'=d_i$. Similarly, if $e$ is even, then $x_i$ has at least two Jordan blocks of size $e$ and the closure of $C_i$ contains an element with two Jordan blocks of size $1$ (and two of size $e-1$), so once again we can choose $g_i \in D_i \cap L$ with $d_i'=d_i$.  

For $r \geqs 3$, if we choose $g_i \in D_i \cap L$ as above then $\sum_i d_i' \leqs (n-2)(r-1)$ and by applying Theorem \ref{t:so2nodd} we deduce that the closure of $\la g_1, \ldots, g_r \ra$ contains $L' = \SO(W)$ as required. 

Finally, suppose $r =2$, $d_1 + d_2 \leqs n$ and we are not in cases (i) or (ii) in the statement of the theorem. Then the previous argument goes through unless the chosen $g_i \in L$ are among the special cases arising in the statement of Theorem \ref{t:so2nodd}. It just remains to handle these special cases.

If $g_1$ and $g_2$ are both quadratic on $W$, then the condition $d_1 + d_2 \leqs n$ implies that $x_1$ and $x_2$ are both quadratic on $V$, as in part (i) of the theorem. 
So we may assume that $g_1$ and $g_2$ are as in \eqref{e:one}, up to reordering.  In particular, $g_2 = (J_2^{m-2},J_1^2)$, which is of type $a_{m-2}$ if $p=2$.  Given the above construction of $g_2$ from $x_2$, it follows that the Jordan form of $x_2$ is one of the following:
\begin{equation}\label{e:list}
(J_2^m), \; (J_2^{m-2},J_1^4), \; (J_3,J_2^{m-2},J_1),\; (J_3^2,J_2^{m-4},J_1^2).
\end{equation}

First assume $g_1 = (I_2, \l I_{m-2}, \l^{-1}I_{m-2})$ on $W$, so $x_1 = (I_2, \l I_{m-1}, \l^{-1}I_{m-1})$ and $d_1 = m-1$. In turn, this implies that $d_2 \leqs m + 1$, ruling out the second possibility for $x_2$ in \eqref{e:list}. We now consider the cases $p=2$ and $p \ne 2$ separately.

Suppose $p=2$. Here $x_2 = (J_2^m)$ is the only option and we may assume $x_2$ is of type $c_m$ (if $x_2$ has type $a_m$ or $a_m'$ then we are in one of the cases recorded in \eqref{e:two}). To handle this case, we switch parabolics and work in the stabilizer
of a totally singular $m$-space $U$. By replacing $x_2$ by an element of type $a_m$ in the closure of $C_2$, we see that for some $x \in \bar{X}$, $G(x)$ contains the derived subgroup $\SL_m(k)$ of a Levi subgroup of the stabilizer of $U$. Similarly, if we replace $x_2$ by an involution of type $a_m'$ then there exists $y \in \bar{X}$ such that $G(y)$ contains the corresponding subgroup in the stabilizer of $U'$, where $U$ and $U'$ represent the two $G$-orbits on the set of all totally singular $m$-spaces. In the usual way, this shows that either 
\begin{itemize}\addtolength{\itemsep}{0.2\baselineskip} 
\item[{\rm (a)}] $G(x)^0$ is generically irreducible with rank at least $m-1$; or 
\item[{\rm (b)}] $G(x)$ generically preserves totally singular $m$-spaces of both types
and the smallest composition factor of $G(x)$ on $V$ is $m$-dimensional for generic $x \in X$. 
\end{itemize}
If (a) holds then $\Delta$ is nonempty. To eliminate (b), observe that the intersection of two totally singular $m$-spaces of different types is nontrivial since $m$ is even. In particular, if (b) holds then $G(x)$ generically preserves a space of dimension less than $m$, which is a contradiction.

Now let us assume that $p \ne 2$ (we are continuing to assume that $g_1 = (I_2, \l I_{m-2}, \l^{-1}I_{m-2})$ on $W$). We consider the possibilities for $x_2$ recorded in \eqref{e:list}. If $x_2 = (J_2^m)$ then we are in one of the cases in \eqref{e:two}, so we may assume $x_2$ is one of the final two possibilities in \eqref{e:list}. In both cases, the closure of $C_2$ contains an element with Jordan form $(J_2^m)$ and we note that there are two such $G$-classes, which are fused in $G.2 = \OO(V)$. We can now argue as in the $p=2$ case, working with the stabilizers of totally singular $m$-spaces of both types.

To complete the proof, we may assume $p \ne 2$ and $g_1 = (J_3^2, J_2^{m-4})$. From the construction of $g_2$ given above, this forces $x_1 = (J_3^2, J_2^{m-4},J_1^2)$ and once again we need to inspect the possibilities for $x_2$ given in \eqref{e:list}. As above, the bound
$d_1 + d_2 \leqs n$ rules out the second possibility, while $x_2=(J_2^m)$ gives one of the cases in \eqref{e:two}. In the final two cases, we can argue as above: there exists $x \in \bar{X}$ such that $G(x)^0$ contains the derived subgroup of a Levi subgroup of the stabilizer of a totally singular $m$-space of either type and we conclude that $G(x)=G$ for generic
$x \in X$.  
\end{proof} 
 
\subsubsection{$m \in \{3,4\}$}

Here we consider the groups $\SO_6(k)$ and $\SO_8(k)$.    Since $\SO_6(k)$ is isogenous to $\SL_4(k)$, we can use Theorem \ref{t:sl} to state a result in terms of the $6$-dimensional orthogonal module $V$ and the $4$-dimensional linear module $W$ (note that $V = \wedge^2(W)$ as a module for $\SL_4(k)$). As before, we set $d_i = \a(x_i)$ with respect to the action of $x_i$ on $V$. In parts (i) and (iii), we write $\l$ for a scalar in $k^{\times}$ with $\l^2 \ne 1$. 

\begin{thm} \label{t:so6}  
If $m=3$ and $\sum_i  d_i \leqs 6(r-1)$, then $\Delta$ is empty if and only if one of the following holds:
\begin{itemize}\addtolength{\itemsep}{0.2\baselineskip}
\item[{\rm (i)}] $r=3$ and each $x_i$ is of the form $(\l I_3, \l^{-1}I_3)$ or $(J_2^2,J_1^2)$.
\item[{\rm (ii)}] $r=2$ and $x_1, x_2$ are both quadratic on $W$.  
\item[{\rm (iii)}] $r=2$, $x_1 = (\l I_3, \l^{-1}I_3)$ or $(J_2^2,J_1^2)$, and $x_2$ is nonregular (up to ordering).
\end{itemize}
\end{thm}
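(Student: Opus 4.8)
The plan is to deduce Theorem~\ref{t:so6} from Gerhardt's Theorem~\ref{t:sl} via the exceptional isogeny between $\SO_6(k)$ and $\SL_4(k)$. Since the centre of a simple algebraic group lies in its Frattini subgroup, for any tuple of conjugacy classes the set $\Delta$ is nonempty for $\SO_6(k)$ if and only if it is nonempty for $\SL_4(k)$. Writing $W$ for the natural $4$-dimensional $\SL_4(k)$-module, the $6$-dimensional orthogonal module is $V=\wedge^2(W)$, and each conjugacy class of $\SO_6(k)$ pulls back to a conjugacy class of $\SL_4(k)$ that is well defined up to multiplication by scalars and up to the graph automorphism (which interchanges $W$ with its dual and the two $\SO_6(k)$-orbits of maximal totally singular subspaces of $V$). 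Since scalars and the graph automorphism preserve the quantity $\a_W(x):=\a(x)$ computed on $W$, and preserve the property of acting quadratically on $W$, this translation is well posed. By Theorem~\ref{t:sl}, the $\SL_4(k)$-version of $\Delta$ is empty if and only if $\sum_i\a_W(x_i)>4(r-1)$, or $r=2$ and $x_1,x_2$ are both quadratic on $W$.

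The next step is to set up the dictionary between the invariants of a class on $W$ and those of its image on $V$, writing $d_i=\a(x_i)$ for the value computed on $V$. For semisimple $x_i$ one reads off the eigenvalues of $x_i$ on $V$ as the pairwise products of its eigenvalues on $W$; for unipotent $x_i$ one counts the Jordan blocks of $x_i$ on $V$ using Lemma~\ref{l:tensor} for $J_a\otimes J_b$ and $\wedge^2(J_a)$ (the relevant block counts agree in all characteristics, so this also covers $p=2$, where the isogeny persists). The facts I expect to extract are: (a) $\a_W(x)=3$ exactly when $x$ is semisimple with eigenvalue multiplicities $(3,1)$ on $W$ or is a transvection $(J_2,J_1^2)$ on $W$, and these two cases correspond on $V$ precisely to the classes $(\l I_3,\l^{-1}I_3)$ and $(J_2^2,J_1^2)$; (b) $x$ is regular in $\SO_6(k)$ if and only if it is regular in $\SL_4(k)$, i.e.\ $\a_W(x)=1$, since an element of $\GL$ is regular exactly when every eigenspace is a line and regularity is preserved by the isogeny; and (c) if $x_1,x_2$ are both quadratic on $W$ then each $d_i\in\{3,4\}$, with $d_i=3$ only in the case $(3,1)$, so the standing hypothesis $\sum_i d_i\leqs 6$ forces both $x_i$ to have type $(3,1)$, that is, both of the form $(\l I_3,\l^{-1}I_3)$ on $V$.

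With the dictionary in place the conclusion follows from a short case analysis on $r$, using that $\a_W(x)\leqs 3$ for every noncentral $x$. If $r\geqs 4$ then $\sum_i\a_W(x_i)\leqs 3r\leqs 4(r-1)$, so neither condition of Gerhardt's theorem holds and $\Delta$ is nonempty; accordingly there is nothing to record. If $r=3$ then $\sum_i\a_W(x_i)>8$ if and only if $\a_W(x_i)=3$ for every $i$, which by (a) is exactly case~(i). If $r=2$ then $\sum_i\a_W(x_i)>4$ if and only if, after reordering, $\a_W(x_1)=3$ and $\a_W(x_2)\geqs 2$, which by (a) and (b) is exactly case~(iii); and the remaining disjunct of Gerhardt's theorem, together with (c), yields case~(ii), which under the standing hypothesis is itself contained in~(iii). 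Combining the three values of $r$ shows that $\Delta$ is empty if and only if one of (i), (ii), (iii) holds; and in each of these cases emptiness also follows directly from the corresponding $\SL_4(k)$ statement, so both implications are established.

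I expect the dictionary, and in particular the identification in~(b), to be the main obstacle: one must verify that the intrinsic notion of a regular element of $\SO_6(k)$ (minimal centraliser dimension) corresponds under the isogeny exactly to $\a_W=1$ on $W$, uniformly over semisimple, unipotent and mixed classes and in all characteristics. The remaining ingredients are the bounded but slightly tedious computation of eigenvalue multiplicities and Jordan block counts on $\wedge^2(W)$, together with the elementary arithmetic of the case analysis above.
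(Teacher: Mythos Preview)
Your approach is correct and is exactly what the paper intends: the paper gives no proof of Theorem~\ref{t:so6} at all, merely stating that ``since $\SO_6(k)$ is isogeneous to $\SL_4(k)$, we can use Theorem~\ref{t:sl} to state a result in terms of the $6$-dimensional orthogonal module $V$ and the $4$-dimensional linear module $W$.'' Your translation via the isogeny, together with the dictionary (a)--(c) and the short case analysis on $r$, supplies precisely the omitted details; the computations you indicate (eigenvalue products on $\wedge^2(W)$, Jordan block counts via Lemma~\ref{l:tensor}, and the equivalence regular $\Leftrightarrow \a_W=1$ for semisimple or unipotent elements) are all routine and correct.
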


The result for $\SO_8(k)$ is necessarily more complicated because there are three restricted irreducible $8$-dimensional modules (each a twist of the other by a triality graph automorphism). Moreover, the dimensions of the eigenspaces on the three modules can differ for a given element. For example, if $p \ne 2$ and $x$ has Jordan form $(J_3,J_1^5)$ on one of the $8$-dimensional modules, then it has Jordan form $(J_2^4)$ on the other two.

We will work with the simply connected group $G= \Spin_8(k)$ and we use the standard high weight notation to denote the three modules of interest, namely $V_j = L(\omega_j)$ for $j=1,3$ and $4$. For $g \in G$, let $\alpha_j(g)$ be the dimension of the largest eigenspace of $g$ on $V_j$. In particular, for the elements $x_i$ in \eqref{e:X}, set $d_{ij} = \alpha_j(x_i)$.  

\begin{thm} \label{t:so88}  
If $G = \Spin_8(k)$, $r \geqs 4$ and the $x_i$ in \eqref{e:X} are noncentral, then  
$\Delta$ is nonempty. 
\end{thm}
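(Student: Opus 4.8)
The plan is to reduce to the case $r=4$ and then run the trichotomy of Theorem~\ref{t:generic}. Throughout we may assume $k$ is uncountable (Theorem~\ref{t:gen}), so it suffices to prove that $\Delta$ is generic. For the reduction, suppose $r\geqs5$. Since $C_1,C_2$ are noncentral there exist $g_1\in C_1$ and $g_2\in C_2$ with $g_1g_2$ noncentral; set $D=(g_1g_2)^G$. Then $g_1g_2\in\langle g_1,g_2\rangle$, and $D\times C_3\times\cdots\times C_r$ is a product of $r-1\geqs4$ noncentral classes, so induction together with Lemma~\ref{l:products} gives $\Delta\neq\emptyset$. Hence assume $r=4$, and suppose for a contradiction that $\Delta=\emptyset$. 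Let $\tau$ be a triality automorphism of $G$, permuting $V_1,V_3,V_4$ cyclically. A short inspection of the conjugacy classes of $G$, reducing the cases $j\in\{3,4\}$ to $j=1$ via $\tau$, shows that $\alpha_j(g)\leqs6$ for every noncentral $g\in G$ and every $j\in\{1,3,4\}$; consequently $\sum_{i=1}^4 d_{ij}\leqs24=n(r-1)$ for each such $j$.

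First I would eliminate alternative (ii) of Theorem~\ref{t:generic}, namely that every $G(x)$ lies in a proper parabolic subgroup. By Remark~\ref{r:parab} there would then be a fixed maximal parabolic $P$ with $X=X_P$, and $P$ stabilises a totally singular subspace of $V_1$ of dimension $1$, $2$ or $4$. The case of a totally singular line is excluded by Lemma~\ref{l:sing1} (using $\sum_i d_{i1}\leqs n(r-1)$ and $r\neq2$). If $P$ stabilises a maximal (i.e.\ $4$-dimensional) totally singular subspace then $P$ is one of the two spinor parabolics, interchanged with the point stabiliser under $\tau$; applying $\tau^{\pm1}$ to all the classes $C_i$ — again a product of noncentral classes with empty $\Delta$, since $\tau$ is an automorphism — reduces this to the previous case. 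The remaining case is the triality-stable parabolic stabilising a totally singular $2$-space: here, with $Y=G/P$ the isotropic Grassmannian (so $\dim Y=9$), one checks $\dim Y^{x_i}\leqs5$ for each $i$ by bounding the variety of totally singular $x_i$-invariant $2$-spaces of $V_1$, whence $\sum_i\dim Y^{x_i}<(r-1)\dim Y$ and Lemma~\ref{l:basic} forces $X_P$ to be a proper subvariety, a contradiction.

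Hence alternative (iii) of Theorem~\ref{t:generic} holds: there is a proper $G$-irreducible subgroup $H\leqs G$, unique up to conjugacy, with $X_H$ containing a nonempty open subset and $G(x)$ conjugate to $H$ for generic $x$. Working in the Levi subgroup $\GL_4(k)$ of the point stabiliser $P_1\leqs G$, whose derived group $\SL_4(k)=\Spin_6(k)$ acts as $\wedge^2$ of its natural module on the $6$-dimensional nondegenerate section $v^\perp/\langle v\rangle$ of $V_1$, I would embed suitable classes in the closures $\overline{C_i}$ into this Levi (exactly as in Case~1 of the proofs of Theorems~\ref{t:so2nodd} and~\ref{t:so2neven}), verify that the corresponding eigenspace dimensions on $v^\perp/\langle v\rangle$ sum to at most $6(r-1)$, and invoke Gerhardt's theorem (Theorem~\ref{t:sl}) to obtain $x^\ast\in\overline{X}$ with $\SL_4(k)\leqs G(x^\ast)^0$. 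By Lemma~\ref{l:submodules-conn}, generic $G(x)^0$ then has a composition factor of dimension $\geqs6$ on $V_1$. Carrying out the same construction in the Levi of one of the spinor parabolics $P_3$, on which $\SL_4(k)$ acts as the sum of its natural and dual modules and so contains elements with $8$ distinct eigenvalues on $V_1$, and applying Lemma~\ref{l:sr}(ii), generic $G(x)^0$ also contains a strongly regular element on $V_1$. Combining these facts with Lemmas~\ref{l:1-spaces} and~\ref{l:nondeg2} (no invariant nondegenerate $1$- or $2$-space generically), the $G$-irreducibility of $H$ (no invariant totally singular subspace), the self-duality of $V_1$, and the connected-component bookkeeping from the proof of Theorem~\ref{t:so2nodd}, forces generic $G(x)^0$ to act irreducibly on $V_1$; moreover it has rank $\geqs3$ by Corollary~\ref{c:toralrank}. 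It then remains, using the subgroup structure of $D_4$, to eliminate each connected proper subgroup of $\Spin_8(k)$ acting irreducibly on $V_1$: the stabilisers of nondegenerate proper subspaces of $V_1$ (the maximal-rank subgroups $\SO_m(k)\times\SO_{8-m}(k)$, and $\SO_3(k)\times\SO_5(k)$) are reducible on $V_1$; those of rank at most $2$ (for instance $\PGL_3(k)$, acting as the adjoint module, and the subgroups of $\Sp_2(k)\otimes\Sp_4(k)$) are excluded by the rank bound; this leaves the two triality-classes of $\Spin_7(k)$ irreducible on $V_1$, each being the stabiliser of a nondegenerate line in $V_3$ or $V_4$ and hence excluded by Lemma~\ref{l:basic} on $\mathbb{P}(V_j)$ since $\sum_i(d_{ij}-1)\leqs20<21=(r-1)\cdot7$, together with the tensor-product subgroup $\Sp_2(k)\otimes\Sp_4(k)$, which by inspection of its weights is reducible on $V_3$ with an invariant nondegenerate $3$-space and is excluded by Lemma~\ref{l:basic} on the relevant Grassmannian. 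Therefore $H=G$, contradicting $\Delta=\emptyset$, and so $\Delta\neq\emptyset$.

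The hard part is that $n=8$ is too small for the uniform rank estimate of Lemma~\ref{l:largeranksubs}, so the final elimination of the $G$-irreducible overgroups of $G(x)$ must be carried out by hand from the maximal-subgroup structure of $D_4$. The two most delicate points are the triality-stable parabolic stabilising a totally singular $2$-space (which cannot be reduced to the point stabiliser via triality, so needs a direct dimension count), and the book-keeping — via triality — of how each surviving candidate subgroup decomposes each of the three $8$-dimensional modules $V_1,V_3,V_4$, since a subgroup that is irreducible on $V_1$ (such as $\Spin_7(k)$ or $\Sp_2(k)\otimes\Sp_4(k)$) may be reducible on another of these, which is exactly what permits its elimination.
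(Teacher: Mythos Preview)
Your strategy is sound but takes a considerably more elaborate route than the paper, and two steps (the bound $\dim Y^{x_i}\leqs5$ on the isotropic $2$-Grassmannian, and the elimination of $\Sp_2(k)\otimes\Sp_4(k)$ via its claimed action on $V_3$) are left as unverified assertions. The paper's proof avoids both. It first passes to $\bar X$ via Lemma~\ref{l:closures} and replaces each $x_i$ by a semisimple element of prime order or a long root element in the closure of $C_i$; then it applies Theorem~\ref{t:sl} in the $\GL_4(k)$-Levis of \emph{all three} end-node parabolics $P_1,P_3,P_4$ to produce tuples in $\bar X$ generating each derived Levi. One such Levi gives a composition factor of dimension $\geqs6$ on $V_1$, another (acting as $W\oplus W^*$) forces the smallest composition factor to have dimension $\geqs4$, and since $4+6>8$ this makes generic $G(x)^0$ irreducible on $V_1$ directly --- no separate treatment of the triality-fixed parabolic $P_2$ is needed. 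The decisive simplification is in the endgame: since generic $G(x)^0$ contains a strongly regular element on $V_1$, Lemma~\ref{l:sr}(iii) (cf.\ Corollary~\ref{c:sr}) forces it into a proper \emph{maximal rank} subgroup if $\Delta=\emptyset$, and $\Spin_8(k)$ has no proper connected maximal-rank subgroup irreducible on $V_1$. Both $\Spin_7(k)$ and $\Sp_2(k)\otimes\Sp_4(k)$ have rank $3<4$, so they are excluded by this dichotomy automatically; your case-by-case analysis of how these subgroups act on $V_3$ and $V_4$ is therefore unnecessary.
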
  

\begin{proof}   
Let $V = L(\omega_j)$ for $j=1,3$ or $4$, and let $d_i$ be the maximal dimension of an eigenspace of $x_i$ on $V$. Note that $d_i \leqs 6$ and thus $\sum_i d_i \leqs 8(r-1)$.
In view of Lemma \ref{l:closures}, it suffices to show that $G(x) = G$ for some $x \in \bar{X}$. Since the closure of $x_i^G$ contains the semisimple part of $x_i$ (see \cite[p.92]{Ste}, for example), we may assume that each $x_i$ is either semisimple or unipotent. In fact, by the same argument, we may assume that each $x_i$ is either semisimple of prime order or a long root element (that is, a unipotent element with Jordan form $(J_2^2,J_1^4)$ on $V$).

Let $W$ be a totally singular $4$-dimensional subspace of $V$ and let $P=QL$ be the stabilizer of $W$ in $G$, where $Q$ is the unipotent radical and $L = \GL(W)$ is a Levi subgroup. By applying Theorem \ref{t:sl}, we deduce that there exists $x \in \bar{X}$ such that $G(x)^0$ contains $L' = \SL_4(k)$. Similarly, by applying a triality graph automorphism, there exists $y \in \bar{X}$ such that $G(y)^0$ contains $\SL(W') = \SL_4(k)$, where $W$ and $W'$ represent the two $G$-orbits on totally singular $4$-spaces. We can also find $z \in \bar{X}$ such that $G(z)^0$ contains $\SO_6(k)$, which is the derived subgroup of a Levi subgroup of the stabilizer of a totally singular $1$-space (recall that $\SO_6(k)$ and $\SL_4(k)$ are isogenous, so the latter claim also follows from Theorem \ref{t:sl}). 

These observations imply that for generic $x \in X$, the smallest composition factor of $G(x)^0$ on $V$ is at least $4$-dimensional and the largest is at least 
$6$-dimensional, whence $G(x)^0$ is generically irreducible on $V$ and has rank $3$ or $4$.     Moreover, $G(x)^0$ generically contains semisimple elements with distinct eigenvalues on $V$ (by Lemma \ref{l:sr}) and so either $\Delta$ is nonempty, or $G(x)^0$ is generically contained in a proper maximal
rank subgroup of $G$ (cf. Corollary \ref{c:sr}). But $G$ has no proper maximal rank irreducible connected subgroups and thus $\Delta$ is nonempty. 
\end{proof}

Our main result for $8$-dimensional orthogonal groups is the following. Note that in the statement of this theorem we return to assuming that the $x_i$ in \eqref{e:X} have prime order modulo $Z(G)$. Also note that if $x_i$ is an involution as described in part (i) or (ii) of the theorem, then $C_i = x_i^G$ is ${\rm Aut}(G)$-invariant and so $x_i$ acts the same way on all three $8$-dimensional modules.

\begin{thm} \label{t:so8}   
If $G=\Spin_8(k)$ and $\sum_i d_{ij}  \leqs  8(r-1)$ for all $j$, then $\Delta$ is empty if and only if $r=2$ and either 
\begin{itemize}\addtolength{\itemsep}{0.2\baselineskip}
\item[{\rm (i)}] $p \ne 2$ and $x_1 = x_2 = (-I_4,I_4)$; or 
\item[{\rm (ii)}] $p=2$ and $x_1=x_2$ are involutions of type $c_4$. 
\end{itemize}
\end{thm}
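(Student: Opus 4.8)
The plan is to handle the two implications separately. For the ``if'' direction, note that in both (i) and (ii) the elements $x_1,x_2$ have quadratic minimal polynomial on the $8$-dimensional natural $\SO_8(k)$-module $V=L(\omega_1)$: the semisimple involution $(-I_4,I_4)$ has minimal polynomial $t^2-1$ when $p\ne 2$, and a $c_4$-type involution has Jordan form $(J_2^4)$, hence minimal polynomial $(t-1)^2$, when $p=2$. Since $\dim V=8\geqs 3$, Lemma \ref{l:quadratic} shows $\langle x_1,x_2\rangle$ acts reducibly on $V$, so it is not Zariski dense in $G$ and $\Delta$ is empty.

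For the ``only if'' direction I argue the contrapositive: assuming we are \emph{not} in case (i) or (ii), I will exhibit a tuple in $\bar X$ that topologically generates $G$, which suffices by Lemma \ref{l:closures}. The case $r\geqs 4$ is immediate from Theorem \ref{t:so88} (where the eigenspace hypotheses are not even needed), so we may assume $r\in\{2,3\}$. Following the proof of Theorem \ref{t:so88}, let $P_1,P_3,P_4$ represent the three $G$-classes of maximal parabolic subgroups — the stabiliser of a totally singular $1$-space and the stabilisers of totally singular $4$-spaces of the two types — which are permuted by a triality graph automorphism and whose Levi factors have derived subgroups $\SO_6(k)$, $\SL_4(k)$ and $\SL_4(k)$ respectively. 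For each $j\in\{1,3,4\}$, after replacing each $x_i^G$ by a suitable semisimple or unipotent class in its closure if necessary, I will conjugate each $x_i$ into the Levi of $P_j$ so that its restriction to the natural $4$-dimensional module for $\SL_4(k)$ (respectively the $6$-dimensional module for $\SO_6(k)$, using the isogeny between $\SL_4(k)$ and $\SO_6(k)$) has largest eigenspace of dimension essentially $\lceil d_{ij}/2\rceil$. The hypothesis $\sum_i d_{ij}\leqs 8(r-1)$ for each of the three values of $j$ then translates into the bound $\sum_i d_i'\leqs 4(r-1)$ (respectively $\leqs 6(r-1)$) needed to apply Gerhardt's Theorem \ref{t:sl}; apart from a short list of configurations, this produces, for each of the three parabolics, an element of $\bar X$ whose associated group contains the derived subgroup of the corresponding Levi.

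Granting this, the conclusion is routine. By Lemma \ref{l:submodules} and Lemma \ref{l:submodules-conn}, $G(x)^0$ generically has a composition factor of dimension at least $6$ on $V$ (from the $\SO_6(k)$ case) and smallest composition factor of dimension at least $4$ (from an $\SL_4(k)$ case), which forces $G(x)^0$ to be generically irreducible on $V$; by Corollary \ref{c:toralrank} it has rank at least $3$; and since $\SL_4(k)\leqs \SO_8(k)$ contains semisimple elements with $8$ distinct eigenvalues on $V$, Lemma \ref{l:sr} shows $G(x)^0$ generically contains a strongly regular element on $V$. By Corollary \ref{c:sr}, either $\Delta$ is nonempty — as required — or $G(x)^0$ is generically contained in a proper connected maximal-rank subgroup of $G$; but $\SO_8(k)$ has no proper connected maximal-rank subgroup acting irreducibly on $V$, so $\Delta$ is nonempty.

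The main obstacle is the middle step: one must check, case by case over all admissible semisimple and unipotent classes, that the degenerations and Levi embeddings can be carried out while simultaneously respecting the eigenspace bounds on all three $8$-dimensional modules — this is exactly where the hypothesis ``$\sum_i d_{ij}\leqs 8(r-1)$ for all $j$'' is genuinely used, since an element can be small on one of the three modules but large on another — and that the characteristic $2$ complications (where the $G$-class of a unipotent element is not determined by its Jordan form on $V$, and $a_4$-type involutions behave differently from $c_4$-type ones under triality and with respect to the $\GL_4(k)$-Levis) are correctly accounted for. The expectation, which is the content of the theorem, is that the argument always succeeds when $r=3$, while when $r=2$ the only obstructions occur when both $x_i$ are quadratic with $d_{ij}=4$ for all $j$ — precisely the pairs in (i) and (ii), for which $\Delta$ is already empty by the ``if'' direction. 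For the borderline pairs for which a naive degeneration lands in an excluded configuration but the original $x_i$ do not (for instance, when some $x_i$ degenerates to a quadratic element), one argues more directly, as in the proofs of Theorems \ref{t:so2nodd} and \ref{t:so2neven}: for example, by using that $\SO_3(k)$ is topologically generated by conjugates of any two non-involutions \cite[Theorem 4.5]{Ger} to break out of the relevant subspace stabilisers, or by exploiting the half-spin structure directly.
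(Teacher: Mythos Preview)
Your strategy is essentially the paper's: the ``if'' direction via Lemma~\ref{l:quadratic}, the reduction to $r\in\{2,3\}$ via Theorem~\ref{t:so88}, and then the use of the three maximal parabolics (permuted by triality) together with Theorem~\ref{t:sl} to force $G(x)^0$ to be generically irreducible on $V$ of rank at least $3$ with strongly regular elements, whence $\Delta$ is nonempty since $\SO_8(k)$ has no proper connected maximal-rank irreducible subgroup.

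The difference is one of completeness rather than method. You explicitly flag the case analysis as ``the main obstacle'' and leave it as an ``expectation''; the paper actually carries it out, and not via your rough heuristic ``largest eigenspace $\approx\lceil d_{ij}/2\rceil$ on the Levi module'', but by choosing specific degenerations in each branch. For $r=2$ the paper first observes that if both $x_i$ are quadratic on $V_1$ then the hypothesis on all three $j$ forces case (i) or (ii), so one may take $x_1$ non-quadratic; it then splits according to whether the $x_i$ are unipotent or semisimple and whether $d_i<4$ or $d_i=4$, passing in each case to an explicit element in the closure (for instance $(J_4^2)$ paired with a long root element when $d_1<4$, or $(J_3^2,J_1^2)$ paired with $(J_2^4)$ when $d_1=d_2=4$) for which the embedding into a $\GL_4(k)$-Levi visibly satisfies the bounds for Theorem~\ref{t:sl}. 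For $r=3$ it reduces, again via closures, to $d_1\leqs 4$ with $x_2,x_3$ long root elements and repeats the argument. So your outline is correct, but the verification you defer is precisely where the proof lives.
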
 

\begin{proof}  
To begin with, let us assume $r=2$. Clearly, if (i) or (ii) holds then $x_1$ and $x_2$ are quadratic and thus $\Delta$ is empty by Lemma \ref{l:quadratic}. It remains to show that $\Delta$ is nonempty in all other cases. Set $V=V_1$ and $d_i = d_{i1}$ for $i=1,2$. Note that if $x_1$ and $x_2$ are both quadratic on $V$, then either $d_{1j}+d_{2j}>8$ for some $j$ in $\{1,3,4\}$, which is incompatible with the hypothesis of the theorem, or we are in one of the special cases (i) or (ii) in the statement of the theorem. Therefore, we may assume that $x_1$ is not quadratic. There are several cases to consider.

First assume $x_1$ and $x_2$ are both unipotent, so $p \ne 2$ since $x_1$ is non-quadratic. Suppose $d_1 < 4$, in which case $p \ne 3$ since $x_1$ has order $p$ modulo $Z(G)$. Then by passing to closures, we may assume that $x_1= (J_4^2)$ and $x_2 = (J_2^2,J_1^4)$ is a long root element. By Theorem \ref{t:sl},
we can choose $y \in \bar{X}$ so that $G(y)^0$ induces $\SL_4(k)$ on a $4$-dimensional totally singular subspace of $V$. Then for generic $x \in X$, the smallest composition factor of $G(x)^0$ on $V$ is at least $4$-dimensional. By applying the same argument to $V_3$ and 
$V_4$, we see that $G(x)^0$ generically has a composition factor on $V$ of dimension at least $6$ (since a Levi subgroup of the stabilizer of a totally singular $1$-space is conjugate via triality to a Levi of the stabilizer of a totally singular $4$-space). It follows that $G(x)^0$ is generically irreducible on $V$, with rank at least $3$ and it contains elements with distinct eigenvalues on $V$. But there are no proper connected subgroups of $G$ with these properties, whence $G(x) = G$ for generic $x \in X$.   If $d_2 < 4$, the result follows by interchanging $x_1$ and $x_2$. 

Now assume $d_i \geqs 4$ for $i=1,2 $ (we are continuing to assume that $r=2$ and the $x_i$ are unipotent, with $x_1$ non-quadratic). Then $d_1=d_2=4$ and by passing to closures, if necessary, we may assume that $x_1=(J_3^2, J_1^2)$ and $x_2 = (J_2^4)$.  Note that $x_1$ is conjugate to an element in a Levi subgroup ${\rm GL}(W)$ of the stabilizer in $G$ of a totally singular $4$-space $W$. It is also conjugate to an element in a Levi subgroup ${\rm GL}(W')$, where $W$ and $W'$ represent the two $G$-orbits on the set of totally singular $4$-spaces in $V$. On the other hand, $x_2$ is conjugate to an element in ${\rm GL}(W)$ or ${\rm GL}(W')$, but not both (in other words, $x_2$ only fixes a pair of complementary totally singular $4$-spaces in one of the two $G$-orbits). 
Therefore, Theorem \ref{t:sl} implies that we can find $y \in \bar{X}$ such that $G(y)$ induces $\SL_4(k)$ on a totally singular $4$-space and we can now repeat the argument presented in the previous paragraph.

Next assume $r=2$ and $x_1$ is semisimple (and non-quadratic), so $x_1$ is conjugate to elements in both Levi subgroups ${\rm GL}(W)$ and ${\rm GL}(W')$ described above. The same conclusion holds if $x_2$ is semisimple. On the other hand, if $x_2$ is unipotent then there is a unipotent element $y$ in the closure of $C_2$ such that $\a(y) = d_2$ (with respect to $V$) and some conjugate of $y$ is contained in ${\rm GL}(W)$ or $\GL(W')$.  We can now argue as above.

To complete the proof, it remains to show that $\Delta$ is nonempty when $r=3$ (since this immediately gives the result for all $r \geqs 3$). Without loss of generality, we may assume that $d_1 \leqs 4$.  Note that if $x_2$ and $x_3$ are unipotent, then  by passing to closures we may assume that $x_2=x_3 = (J_2^2,J_1^4)$ are long root elements. In all cases, by arguing as above for $r=2$, we see that there exists $x,y \in \bar{X}$ such that  
$G(x)$ induces a subgroup containing $\SL_4(k)$ on a totally singular $4$-space and $G(y)$ has a $6$-dimensional composition factor on $V$. The result now follows as above.
\end{proof}  

\subsection{Odd dimensional groups}\label{ss:odddim}  
 
To complete the proof of Theorem \ref{t:main2} for orthogonal groups, we may assume $G = \SO_n(k)$, where $n = 2m+1$, $m \geqs 1$ and $p \ne 2$. We continue to adopt the notation of the previous section (in particular, note that $Z(G)=1$ and the $x_i$ in \eqref{e:X} have prime order). We begin by handling a special case.

\begin{lem} \label{l:3222}   
Suppose $m$ is odd and $r=2$, where $x_1$ is unipotent, $d_1 = m$ and $d_1+d_2 \leqs n$. Then $\Delta$ is nonempty.
\end{lem}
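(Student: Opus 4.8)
The plan is to reduce to Gerhardt's theorem (Theorem~\ref{t:sl}) and the lower-rank orthogonal cases by passing to a parabolic subgroup, and then to force density using the subspace-stabilizer estimates of Section~\ref{ss:subspaces} together with the low-rank classification of Lemma~\ref{l:largeranksubs}. First I would invoke Lemma~\ref{l:closures} to replace $X$ by its closure. By the description of unipotent classes in Section~\ref{sss:orthpn2}, and because $m$ is odd, the unique smallest unipotent class of $G$ with $m$ Jordan blocks on $V$ has Jordan form $(J_3, J_2^{m-1})$ (here $m-1$ is even, so this class is admissible), and it lies in the closure of $x_1^G$; hence we may assume $x_1 = (J_3, J_2^{m-1})$, and likewise pass $x_2$ to the smallest class with $d_2$ Jordan blocks when $x_2$ is unipotent. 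One checks that $V^{x_1}$ is then a maximal totally singular $m$-space, the summand carrying the $J_3$-block contributing a distinguished isotropic line to it.

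Next I would pass to the stabilizer $P = QL$ in $G$ of a carefully chosen totally singular $1$-space $\langle v\rangle$, so that $L' \cong \SO_{n-2}(k) = \SO_{2m-1}(k)$ acts on $W = v^{\perp}/\langle v\rangle$. Taking $v$ in the socle of the $J_3$-block of $x_1$ gives $g_1 := x_1|_W$ with Jordan form $(J_2^{m-1}, J_1)$, so $\a(g_1) = m$; note this $g_1$ is precisely one of the distinguished elements occurring in the lower-rank orthogonal result. When the largest eigenspace of a suitable $P$-conjugate of $x_2$ on $V$ is totally singular, we may take $v$ inside it, so that $g_2 := x_2|_W$ has $\a(g_2) = d_2 - 1 \leqs m-1$ and hence $\a(g_1)+\a(g_2) \leqs 2m-1 = (n-2)(r-1)$. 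The statement of Theorem~\ref{t:main2} for $\SO_{2m-1}(k)$ (available by induction on $n$) then shows that, after possibly replacing $v$ and the $P$-conjugate of $x_2$, the closure of $\langle g_1, g_2\rangle$ is all of $\SO(W)$, unless $(g_1,g_2)$ lies in one of the finitely many exceptional families there; since $g_1 = (J_2^{m-1},J_1)$ this is a short explicit list, which I would dispatch by hand (for instance using that $\SO_3(k) \cong \PGL_2(k)$ is topologically generated by any two of its non-involutory elements, by \cite[Theorem 4.5]{Ger}). In the remaining generic situation, Lemma~\ref{l:submodules-conn} yields that $G(x)^0$ has a composition factor of dimension at least $n-2$ on $V$ for generic $x \in X$.

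From this point I would argue as in the proofs of Theorems~\ref{t:so2nodd} and~\ref{t:so2neven}. The hypothesis $d_1 + d_2 \leqs n$ together with Lemmas~\ref{l:1-spaces}, \ref{l:sing1} and~\ref{l:nondeg2} shows that $G(x)$ does not generically fix a $1$-space, a totally singular $1$-space, or a nondegenerate $2$-space, so $G(x)^0$ is generically irreducible and primitive on $V$. By Corollary~\ref{c:toralrank} we have ${\rm rk}\, G(x) \geqs m-1$ generically, and for $n \geqs 9$ Lemma~\ref{l:largeranksubs} rules out any proper connected irreducible subgroup of $G$ of such rank, so $G(x) = G$ generically; the small cases $m \leqs 3$ and any residual exceptional sub-cases I would settle directly, using Lemma~\ref{l:sr} and Corollary~\ref{c:sr} to eliminate the maximal-rank subgroups. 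Then $\Delta$ is generic, hence nonempty by Theorem~\ref{t:gen}.

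The main obstacle is the bookkeeping of eigenspace dimensions on the $(n-2)$-dimensional module $W$ when the largest eigenspace of $x_2$ on $V$ fails to be totally singular — in particular when $x_2$ is an involution, or more generally a semisimple element having $\pm 1$ as an eigenvalue with a $(\pm1)$-eigenspace of dimension $m$ or $m+1$ — because then no admissible choice of $\langle v\rangle$ achieves $\a(g_1)+\a(g_2) \leqs 2m-1$. For these configurations one must argue differently, for example by passing instead to the stabilizer of a nondegenerate subspace of $V$, or by reducing $x_1$ further within its closure (at the cost of a stronger bound on $d_2$) and invoking the even-dimensional case Theorem~\ref{t:so2nodd}. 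Carrying out these remaining cases, and identifying the exceptional pairs inherited from the $\SO_{2m-1}(k)$ analysis, is where the real work lies.
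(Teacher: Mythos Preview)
Your high-level strategy (reduce to a Levi, use induction, then rule out the remaining reducible possibilities via fixed-point estimates on flag varieties) is the same as the paper's, but your choice of parabolic is different and this is where the trouble lies. You pass to the stabilizer of a totally singular $1$-space, so the Levi factor is $\SO_{2m-1}(k)$; the paper instead uses the stabilizer of a totally singular $2$-space, with derived Levi $L' = \SL_2(k)\times\SO_{2m-3}(k)$, and does induction on Lemma~\ref{l:3222} itself, jumping from $m$ to $m-2$ (both odd). This seemingly minor change is decisive.

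With your reduction, the image $g_1$ of $x_1=(J_3,J_2^{m-1})$ on $W=v^\perp/\langle v\rangle$ is $(J_2^{m-1},J_1)$ with $m-1$ even. This is precisely the distinguished unipotent element in part~(ii) of Theorem~\ref{t:oddorthogonal} for $\SO_{2(m-1)+1}(k)$, so you land in the exceptional case whenever $g_2$ has the form $(I_1,\lambda I_{m-1},\lambda^{-1}I_{m-1})$. And this is not an isolated accident: for instance if $x_2=(I_1,\lambda I_m,\lambda^{-1}I_m)$ and you put $v$ in the $\lambda$-eigenspace (as you propose), then $g_2$ is exactly of this exceptional form. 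So your ``short explicit list'' to be dispatched by hand contains genuine content --- it is essentially the whole lemma again in those cases. There is also a logical issue: Theorem~\ref{t:oddorthogonal} for $\SO_{2m-1}(k)$, which you want to invoke, itself relies on Lemma~\ref{l:3222} for smaller odd parameters through its own inductive proof, so you would need to set up a careful joint induction rather than treat the theorem as already available.

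By contrast, the paper's $2$-space parabolic produces an element $(J_3,J_2^{m-3})$ on the $\SO_{2m-3}(k)$ factor, which is of the same shape as $x_1$ one step down, so Lemma~\ref{l:3222} feeds directly into itself with no exceptional cases. The remaining work in the paper is then a direct computation of $\dim\Omega^{x_i}$ for the variety $\Omega$ of totally singular $2$-spaces (not $1$-spaces), which cleanly rules out the reducible alternative. Your approach could perhaps be salvaged, but the exceptional-case analysis you defer is the heart of the matter, and the paper's choice of parabolic is exactly what sidesteps it.
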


\begin{proof}   
We use induction on $m$, noting that the case $m=1$ follows by applying \cite[Theorem 4.5]{Ger} with respect to the isogenous group $\SL_2(k)$ (note that $x_1$ is a regular unipotent element). 

For the remainder, let us assume that $m \geqs 3$.  By considering the closure of $x_1^G$ and appealing to the information on unipotent classes in Section \ref{ss:classes}, we may assume that $x_1 = (J_3,J_2^{m-1})$. Note that the $m$-dimensional fixed space $V^{x_1}$ is totally singular, so $x_1$ fixes a $2$-dimensional totally singular subspace of the natural module $V$. Also note that $d_2 \leqs m+1$.  

\vs

\noindent \emph{Case 1. $x_2$ is unipotent.}

\vs

Here $d_2$ is odd, so $d_2 \leqs m$. Since $C_1$ is contained in the closure of any unipotent class containing elements with at most $m$ Jordan blocks on $V$, by passing to closures we may assume that $x_1=x_2$.     

Let $P = QL$ be the stabilizer in $G$ of a totally singular $2$-dimensional subspace $W$ of $V$, where $Q$ is the unipotent radical and $L$ is a Levi subgroup. As observed above, we may assume that $x_i \in P$. Note that we may identify the $kL'$-module $Q/Q'$ with the tensor product $U \otimes U'$, where $U$ and $U'$ are the respective natural modules of the components of $L' = \SL_2(k) \times \SO_{2m-3}(k)$. Set $Y=D_1 \times D_2$, where $D_i$ is the set of elements in $C_i \cap L$ acting nontrivially on $W$. Then each $g_i \in D_i$ has Jordan form $(J_3,J_2^{m-3})$ on the nondegenerate $(2m-3)$-space preserved by $L$, so by induction and the result for $\SL_2(k)$ (see \cite[Theorem 4.5]{Ger}), we deduce that $G(y)$ contains $L'$ for generic $y \in Y$.

The K\"unneth formula gives $H^1(L', Q/Q')=0$ and one checks that  
\[
\dim \, [g_i, Q/Q'] = 2m-2 > 2m-3 = \frac{1}{2}\dim Q/Q'
\]
for all $(g_1,g_2) \in Y$. Therefore, if we fix $(g_1,g_2) \in Y$ then there exists $q_1,q_2 \in Q$ such that  $\langle g_1^{q_1}, g_2^{q_2} \rangle$ is Zariski dense
in $P'$.   By Lemma \ref{l:submodules-conn}, this implies that for generic $x \in X$,  $G(x)^0$ has a composition factor of dimension at least $2m-3$
and does not fix a $1$-space. If $m \geqs 5$ then $2m - 3  > m$ and so $G(x)$ cannot generically be imprimitive and irreducible on $V$. On the other hand, if $m=3$ then
$\dim V = 7$ is a prime and $G$ has no positive dimensional imprimitive subgroups. So in all cases, $G(x)$ is not generically imprimitive and irreducible on $V$. 
Clearly, no element of $C_i$ acts nontrivially on a nondegenerate $2$-space and we also note that $G(x)$ cannot act irreducibly on a $4$-space (each element in $C_i$ would have Jordan form $(J_2^2)$ on such a space and so the action of $G(x)$ would be  reducible by Lemma \ref{l:quadratic}).  Therefore, we see that for generic $x \in X$, either $G(x)^0$ acts irreducibly on $V$, or $G(x)$ preserves a totally singular $2$-space. Moreover, $G(x)^0$ generically contains elements with distinct eigenvalues on $V$ (that is, elements that are strongly regular on $V$). 

Let $\Omega = G/P$ be the variety of $2$-dimensional totally singular subspaces of $V$. We need to compute $\dim \Omega^{x_1}$. Suppose $W \in \O^{x_1}$. If $x_1$ acts trivially on $W$, then $W$ is contained in the $1$-eigenspace $V^{x_1}$, which as noted above is a totally singular 
$m$-space. The variety of $2$-dimensional subspaces of $V^{x_1}$ has dimension $2(m-2)$. On the other hand, if $x_1$ is nontrivial on $W$, then $W$ contains a nonzero vector
in the hyperplane 
\[
V_0 =\{v \in V \, :\,  (x_1-I_n)^2v=0\}.
\]
Let $V_0'$ denote the set of singular vectors in $V_0$.  This is a hypersurface in $V_0$ and so it has dimension $n-2 = 2m-1$.
Let $V_0'' = V_0' \setminus {V^{x_1}}$ and consider the map from $V_0''$ to $\Omega^{x_1}$ given by $v \mapsto \langle v,  x_1v \rangle$.   This is a surjection and all fibers are $2$-dimensional  (the fiber of $\langle v,  x_1v \rangle$ consists of the vectors $av + bx_1v$ with $a \ne 0$). Therefore $\dim \Omega^{x_1} = 2m-3$ and thus
\[
\dim \Omega^{x_1} + \dim \Omega^{x_2} = 4m - 6 < \dim \Omega = 4m - 5,
\]
so for generic $x \in X$,  $G(x)$ does not fix a totally singular $2$-space.  

We conclude that  $G(x)$ is generically irreducible on $V$ (and also primitive). 
Finally, since $G(x)$ generically contains elements that are strongly regular on $V$, we deduce that either $G(x)$ is generically contained in a proper maximal rank subgroup of $G$, or $\Delta$ is nonempty. But $G$ does not have a proper primitive irreducible maximal rank subgroup, whence $\Delta$ is nonempty. 

\vs

\noindent \emph{Case 2. $x_2$ is semisimple.}

\vs

To complete the proof, we may assume $x_2$ is semisimple. First suppose $x_2$ is an involution, so  $x_2 = (-I_{m+1},I_m)$ since $d_2 \leqs m+1$. Define $P = QL$ and $\O = G/P$ as in Case 1. Note that we may embed $x_2$ in $L$ so that it has distinct eigenvalues on the $2$-dimensional totally singular subspace $W$ preserved by $P$. Visibly we have $\dim \Omega^{x_2} = 2m-3$  (the largest component arises by choosing totally singular $1$-spaces from each eigenspace of $x_2$) and the result now follows by repeating the argument in Case 1 for $C_1 = C_2$.

Finally, let us assume $x_2$ is semisimple of odd prime order. If $\dim V^{x_2}=d_2$ then $d_2$ is odd and thus $d_2 \leqs m$. If not, then since each eigenvalue $\l \in k^{\times}\setminus \{\pm 1\}$ has the same multiplicity as $\l^{-1}$, we still deduce that $d_2 \leqs m$. As above, by induction we may assume that $x_2 \in L$ and $L' \leqs G(x)$ for generic $x \in X$. Then a straightforward calculation shows that $\dim \Omega^{x_2} \leqs 2m-3$ and we can now repeat the argument in Case 1.
\end{proof}   

We will also need the following technical lemma on fixed spaces for the action of $G = \SO_9(k)$ on totally singular $4$-spaces.

\begin{lem} \label{l:so9}   
Suppose that $m=4$ and $\Omega$ is the variety of $4$-dimensional totally singular subspaces of the natural module $V$.  
\begin{itemize}\addtolength{\itemsep}{0.2\baselineskip}
\item[{\rm (i)}] If  $g = (J_3^3)$ or   $(I_3, \l I_3, \l^{-1}I_3)$ for some $\l \in k^{\times}\setminus \{\pm 1\}$, then $\dim \Omega^g=3$.
\item[{\rm (ii)}] If $g =(J_2^4, J_1)$, then $\dim \Omega^g=6$.  
\end{itemize}
\end{lem}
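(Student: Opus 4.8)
The plan is to compute the dimension of the fixed point variety $\Omega^g$ directly in each case, exploiting the structure of $V$ as a direct sum of $g$-invariant pieces and the elementary parametrisation of totally singular $4$-spaces fixed by $g$. Throughout, recall that $n = 2m+1 = 9$, so a totally singular $4$-space is maximal, and $\Omega = G/P$ for $P$ a parabolic subgroup; in particular $\dim \Omega = \dim G/P$, which here equals $10$. A totally singular $4$-space $W$ with $gW = W$ decomposes $W$ according to the generalised eigenspace structure of $g$, so the count reduces to understanding how $W$ can sit inside the restricted pieces of $V$ on which $g$ acts.

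For part (ii), take $g = (J_2^4,J_1)$, a unipotent element with a $5$-dimensional fixed space $V^g$. Since $(g-1)^2 = 0$, any $g$-invariant subspace decomposes via the filtration $0 \subset (g-1)V \subseteq V^g \subset V$, and one checks $(g-1)V$ is a totally singular $4$-space contained in $V^g$, which is itself a $5$-dimensional space with radical $(g-1)V$ (so $V^g$ has a nondegenerate $1$-dimensional quotient). A $g$-fixed totally singular $4$-space $W$ must lie in $V^g$ (because $W \subseteq W^\perp$ forces $W \subseteq V_0 := \{v : (g-1)^2 v = 0\} = V$ trivially, but the totally singular condition together with $gW=W$ forces $(g-1)W \subseteq W$, hence $(g-1)W$ is totally singular inside $W$; a short argument using $(v,gv)=(v,v)=0$ shows $W \subseteq V^g$). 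Then $W$ is just a totally singular $4$-subspace of the degenerate $5$-space $V^g$; since $V^g$ has a $4$-dimensional radical $(g-1)V$, the totally singular $4$-spaces of $V^g$ are exactly the hyperplanes of $V^g$ not equal to $(g-1)V$ — wait, more carefully one parametrises them, obtaining a variety of dimension $6$. I would make this precise by identifying $\Omega^g$ with an open subset of $\mathbb{P}^4(V^g)^* \times (\text{choices})$ or, cleaner, by using Lemma \ref{l:fpr} / equation \eqref{e:fpr}: $\dim \Omega - \dim \Omega^g = \dim g^G - \dim(g^G \cap P)$, and computing the right-hand side from the known centraliser dimension of $(J_2^4,J_1)$ in $\SO_9(k)$ and in the Levi of $P$.

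For part (i), both elements $g = (J_3^3)$ and $g = (I_3,\lambda I_3, \lambda^{-1}I_3)$ have the feature that $V$ decomposes (or filters) into three "blocks" of dimension $3$, and a $g$-fixed totally singular $4$-space is highly constrained: in the semisimple case $W$ must respect the eigenspace decomposition $V = V_1 \perp V_\lambda \perp V_{\lambda^{-1}}$ (with $V_1$ nondegenerate of dimension $3$ and $V_\lambda, V_{\lambda^{-1}}$ totally singular of dimension $3$, paired with each other), so a totally singular $W$ can meet $V_1$ only in a totally singular subspace of $V_1$, hence trivially or in a $1$-space, and meets $V_\lambda \oplus V_{\lambda^{-1}}$ in a totally singular subspace of that nondegenerate $6$-space. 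The largest such $W$ arises by taking $W \subseteq V_\lambda$ entirely? No — $\dim V_\lambda = 3 < 4$, so $W$ must use a $1$-dimensional totally singular piece of $V_1$ together with a totally singular $3$-space of $V_\lambda \oplus V_{\lambda^{-1}}$; counting: the totally singular $1$-spaces in $V_1$ form a conic (dimension $1$), and for each choice the totally singular $3$-spaces of the complementary $6$-space containing a fixed totally singular $1$-space of $V_\lambda$... I will instead just invoke \eqref{e:fpr} again, computing $\dim g^G - \dim(g^G \cap P)$ from centraliser dimensions, which is the most uniform route and avoids delicate Grassmannian bookkeeping. The main obstacle I anticipate is the unipotent case $g = (J_3^3)$: here the generalised eigenspace is all of $V$, the filtration $(g-1)^2 V \subset V^g$ has dimensions $3 \subset 3$ (so $(g-1)^2V = V^g$, a totally singular $3$-space), and one must carefully describe which totally singular $4$-spaces are $g$-invariant — this requires analysing the action of $g$ on the $6$-dimensional space $(V^g)^\perp/V^g \cong V/V^g$ more delicately, and I expect the cleanest resolution is again via the centraliser computation $\dim C_G((J_3^3))$ together with the dimension of $C_G((J_3^3)) \cap P$ where $P$ stabilises a suitable totally singular $4$-space, confirming the answer $\dim\Omega^g = 10 - (\dim g^G - \dim(g^G\cap P)) = 3$.
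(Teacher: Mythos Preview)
Your proposal has a genuine error in part (ii). The claim that every $g$-invariant totally singular $4$-space $W$ satisfies $W \subseteq V^g$ is false: the condition $(g-1)W \subseteq W$ only says $g$ acts on $W$, not that it acts trivially. In fact, the paper stratifies $\Omega^g$ by $\dim(W\cap V^g)\in\{2,3,4\}$ and shows the dominant component, of dimension $6$, comes from the case $\dim(W\cap V^g)=2$, where $g$ acts on $W$ with Jordan form $(J_2^2)$. The unique $W$ contained in $V^g$ is the radical $(g-1)V$ itself, contributing a $0$-dimensional component. So your direct count, based on hyperplanes of $V^g$, cannot produce the right answer.

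Your fallback to \eqref{e:fpr} is not an independent route: computing $\dim(g^G\cap P)$ amounts to identifying the $P$-orbits of elements in $P$ that are $G$-conjugate to $g$ and finding the one of largest dimension, which is precisely the geometric case analysis you are trying to avoid. The phrase ``centraliser dimension \ldots\ in the Levi of $P$'' suggests you intend $\dim(g^G\cap P)=\dim P-\dim C_L(g)$ for a suitable embedding of $g$ in the Levi $L$, but that is not correct in general (there are typically several $P$-classes in $g^G\cap P$, and the dominant one need not meet $L$). So the appeal to \eqref{e:fpr} does not bypass the work.

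For part (i) your semisimple analysis is on the right track and matches the paper's brief argument. For $g=(J_3^3)$, however, you again defer to \eqref{e:fpr} without doing the computation; the paper instead argues directly, splitting $\Omega^g$ into the locus $\Omega_1$ where $g$ acts quadratically on $W$ (forcing $W\supset V^g$, giving $\dim\Omega_1=1$) and the locus $\Omega_0$ where $g$ has a $J_3$ block on $W$, which it analyses via the fibration $W\mapsto W\cap V^g$ onto hyperplanes of $V^g$ with $1$-dimensional fibres, yielding $\dim\Omega_0=3$. You will need a comparable concrete argument; the centraliser shortcut does not close the gap.
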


\begin{proof} 
First observe that $\dim \Omega = 10$. 
The result for $g= (I_3, \l I_3, \l^{-1}I_3)$ is clear since each $W$ in $\O^g$ intersects the nondegenerate $1$-eigenspace of $g$ in a totally singular $1$-space, while $\dim (W \cap U) \leqs 3$ if $U$ is the $\l$-eigenspace.

Now assume $g = (J_3^3)$ is unipotent. Let $\Omega_1$ be the set of spaces in $\Omega^g$
on which $g$ acts quadratically and let $U$ be a space in $\Omega_1$. Then $U$ is a subspace of $W= \ker(g-I)^2$, which is a $6$-dimensional space with a $3$-dimensional radical $W_1$ (the fixed space of $g$ on $V$).  Note that $W/W_1$ is a nondegenerate $3$-space and so every maximal totally singular subspace is $1$-dimensional. It follows that $U$ must contain $W_1$ and the map $U \mapsto U/W_1$ from $\Omega_1$ to the set of $1$-dimensional totally singular subspaces of $W/W_1$ is an isomorphism of varieties. Therefore $\dim \Omega_1 =1$.   

So it suffices to show that $\dim \Omega_0 = 3$, where $\Omega_0$ is the set of spaces $U$ in $\Omega^g$ such that $g$ acts on $U$ with a Jordan block of size $3$. Let $U$ be a space in $\Omega_0$ and set $U_1 = U \cap V^g$, which is a $2$-dimensional space. Then  
$U/U_1$ is a $g$-invariant totally singular $2$-dimensional subspace of $U_1^{\perp}/U_1$, which is $5$-dimensional and nondegenerate. Moreover, $g$ has Jordan form $(J_3,J_1^2)$ on this $5$-space. Let $R$ be the variety of $2$-dimensional totally singular subspaces in $U_1^{\perp}/U_1$, so $R$ is irreducible and $\dim R = 3$.  We can identify $R$ with the variety of $1$-dimensional subspaces in a $4$-dimensional symplectic space. Under this identification, since $g$ has Jordan form $(J_2^2)$ on the symplectic $4$-space, we see that 
$\dim R^g=1$. Therefore, the variety of $g$-invariant totally singular $4$-spaces whose intersection is a fixed hyperplane in $V^g$ is $1$-dimensional. Let $f$ be the morphism from $\Omega_0$ to the variety of hyperplanes in $V^g$ sending $U$ to $U \cap V^g$.  The image of $f$ is $2$-dimensional and we have shown that every fiber is $1$-dimensional, whence $\dim \Omega_0=3$ as required. 

To complete the proof, let us assume $g=(J_2^4, J_1)$. Let $S = V^g$, so $\dim S =5$ and the radical $R = {\rm im}(g-1)$ of $S$ is $4$-dimensional.
Let $\Omega_i$ be the set of spaces $U$ in $\Omega^g$ with $\dim(U \cap S) = i$. We claim that $\dim \Omega_i \leqs 6$ for each $i$, with equality when $i=2$. Fix $U$ in $\Omega^g$ and observe that $\dim (U \cap S) \geqs 2$ since $g$ is quadratic.
 
If $\dim (U \cap S) = 4$, then $U = R$ and thus $\dim \Omega_4=0$. Next assume $\dim(U \cap S)=3$. Here $U/(U \cap S)$ is a $1$-dimensional totally singular subspace of
the $3$-dimensional nondegenerate space $(U \cap S)^{\perp}/(U \cap S)$. Now the variety of $1$-dimensional totally singular subspaces of a $3$-dimensional orthogonal space has dimension $1$ and we deduce that $\dim \Omega_3 = 4$ since the variety of hyperplanes in $S$ is $3$-dimensional. Finally, suppose $\dim (U \cap S)=2$. Here $U/(U \cap S)$ is a totally singular $g$-invariant subspace of the nondegenerate $5$-space $(U \cap S)^{\perp}/(U \cap S)$. Since $g$ acts nontrivially on this space, it must correspond to a long root element in $\SO_5(k)$ (that is, it must have Jordan form $(J_2^2,J_1)$ on this $5$-space). We can identify the action of $g$ on the variety of totally singular $2$-spaces in this orthogonal $5$-space with the action of $(J_2,J_1^2)$ on $1$-dimensional subspaces of the corresponding $4$-dimensional symplectic space. It follows that the fixed space of $g$ on the variety of $2$-dimensional totally singular subspaces of $(U \cap S)^{\perp}/(U \cap S)$ is $2$-dimensional. Finally, since the variety of $2$-dimensional totally singular subspaces of $S$ is $4$-dimensional, we conclude that $\dim \Omega_2= 6$. This justifies the claim and the proof of the lemma is complete.
\end{proof}  
 
We are now ready to prove Theorem \ref{t:main2} for odd-dimensional orthogonal groups.  Note that if $r=2$ and $x_1, x_2$ are quadratic then $d_1 + d_2 > n$ and so this case does not arise in the following statement.  
 
\begin{thm}\label{t:oddorthogonal}  
Suppose $G = \SO_{n}(k)$, where $n = 2m+1$, $m \geqs 1$ and the $x_i$ in \eqref{e:X} have prime order. If $\sum_i  d_i \leqs n(r-1)$ then $\Delta$ is empty if and only if one of the following holds:
\begin{itemize}\addtolength{\itemsep}{0.2\baselineskip} 
\item[{\rm (i)}] $r=3$, $m=2$ and $x_i = (J_2^2,J_1)$ for all $i$.
\item[{\rm (ii)}] $r=2$, $m \geqs 2$ is even, $x_1 = (J_2^m,J_1)$ and $x_2 = (I_1,\l I_m, \l^{-1}I_m)$ for some $\l \in k^{\times} \setminus \{\pm 1\}$, up to ordering.
\end{itemize}
\end{thm}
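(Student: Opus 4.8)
The plan is to prove this by induction on $m$, following the pattern of the arguments for $\SO_{2m}(k)$ in Theorems~\ref{t:so2nodd} and~\ref{t:so2neven}. The base cases $m\in\{1,2,3,4\}$, i.e. $\SO_3(k)$, $\SO_5(k)$, $\SO_7(k)$ and $\SO_9(k)$, are handled separately: $m=1$ is \cite[Theorem~4.5]{Ger} applied to the isogenous group $\SL_2(k)$, the case $m=2$ follows from the symplectic results via the isogeny $\Sp_4(k)\to\SO_5(k)$, and $m=4$ uses the fixed-space computations in Lemma~\ref{l:so9}. As always we may assume $k$ is uncountable, so it suffices either to produce a generic subset of $X$ contained in $\Delta$, or (via Lemma~\ref{l:closures}) a single topologically generating tuple in $\bar X$. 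Note that the hypothesis $\sum_i d_i\le n(r-1)$ already excludes the $r=2$ quadratic configuration of Theorem~\ref{t:main2}(i), since every quadratic element of $\SO_{2m+1}(k)$ has an eigenspace of dimension at least $m+1$.

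For the ``only if'' direction, consider case~(i): here $G=\SO_5(k)$, $r=3$ and each $x_i=(J_2^2,J_1)$. Passing to the isogenous group $\Sp_4(k)$, the class $(J_2^2,J_1)$ on the $5$-dimensional orthogonal module corresponds to the class of transvections $t_v\colon x\mapsto x+(x,v)v$, and any three transvections $t_{v_1},t_{v_2},t_{v_3}$ fix pointwise the nonzero subspace $\langle v_1,v_2,v_3\rangle^{\perp}$; hence $\langle t_{v_1},t_{v_2},t_{v_3}\rangle$ is never Zariski dense and $\Delta$ is empty. In case~(ii), with $r=2$, $m$ even, $x_1=(J_2^m,J_1)$ and $x_2=(I_1,\lambda I_m,\lambda^{-1}I_m)$, I would show that every $G(x)$ preserves a totally singular $m$-dimensional subspace of $V$. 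Since $G$ has one orbit $\Omega$ on such subspaces and $X_P$ is closed by Lemma~\ref{l:parabolic} (where $P$ is the stabilizer of one of them), it is enough to prove $\dim X_P=\dim X$. A direct calculation gives $\dim\Omega^{x_1}=\tfrac14 m(m+2)$ and $\dim\Omega^{x_2}=\tfrac14 m^2$, and their sum equals $\dim\Omega=\tfrac12 m(m+1)$; consequently $\Omega^{x_1}\cap g\,\Omega^{x_2}\neq\emptyset$ for generic $g\in G$, so the generic pair $(x_1,x_2^g)\in C_1\times C_2$ has a common fixed point on $\Omega$. Thus $X_P$ is dense, hence all of $X$, and $\Delta=\emptyset$.

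For the ``if'' direction I would carry out the inductive step. Let $P=QL$ be the stabilizer of a totally singular $1$-space $\langle v\rangle$, and set $W=v^{\perp}/\langle v\rangle$, a nondegenerate space of dimension $n-2$ with $L'=\SO(W)$. After passing to the closure of $X$ and replacing each $x_i$ by a suitable representative of a class in $\bar C_i$, one chooses $g_i\in\bar C_i\cap L$ whose largest eigenspace on $W$ has dimension $d_i'$ with $\sum_i d_i'\le (n-2)(r-1)$; the bound $\sum_i d_i\le n(r-1)$ makes this possible outside a short explicit list of boundary configurations, handled just as in Case~2 of the proof of Theorem~\ref{t:so2nodd}, the sub-case ``$m$ odd, $x_1$ unipotent with $m$ Jordan blocks on $V$'' being covered by Lemma~\ref{l:3222}. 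Applying the inductive hypothesis (and the base cases) to $\SO(W)$ gives $L'=\SO_{n-2}(k)\le G(x)^0$ for generic $x$, unless the $g_i$ realize one of the exceptional pairs on $W$. Granting $L'\le G(x)^0$, Lemma~\ref{l:submodules-conn} shows $G(x)^0$ has a composition factor of dimension at least $n-2$ on $V$, which both rules out generic imprimitive action and, together with Lemmas~\ref{l:1-spaces}, \ref{l:sing1}, \ref{l:nondeg2} and the bound on $\sum_i d_i$, forces $G(x)$ to be generically irreducible and primitive on $V$. By Corollary~\ref{c:toralrank}, ${\rm rk}\,G(x)\ge{\rm rk}\,L'=m-1$ for generic $x$, whereas Lemma~\ref{l:largeranksubs}(ii) bounds the rank of a proper connected irreducible subgroup of $\SO_{2m+1}(k)$ by $(m+1)/2<m-1$ once $m\ge4$; hence $G(x)=G$ generically and $\Delta\ne\emptyset$. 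For each of the finitely many exceptional pairs on $W$, I would determine the possible Jordan or eigenvalue data of $(x_1,x_2)$ on $V$ and then either establish density by switching to the stabilizer of a totally singular $m$-space and invoking Theorem~\ref{t:sl} for $\SL_m(k)$, or induce $\SO_3(k)$ on a nondegenerate $3$-space via \cite[Theorem~4.5]{Ger}, or apply Lemma~\ref{l:3222}, or else recognize that $(x_1,x_2)$ is one of the exceptions (i), (ii).

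The main obstacle is the case analysis in the inductive step: one must enumerate exactly which configurations arise as $\bar C_i\cap L$ when $\sum_i d_i\le n(r-1)$ holds with equality or near-equality, track how the exceptional pairs on $W$ (the quadratic ones, the Lemma~\ref{l:3222} pairs, and part~(ii) for $\SO_{n-2}$) lift to data on $V$, and for each reroute the argument or identify a genuine exception. The boundary case $\sum_i d_i=n(r-1)$ is the most delicate, since there the subspace-stabilizer inequalities become equalities and one must use the precise closure relations among unipotent classes recorded in Section~\ref{ss:classes}; verifying the fixed-space dimension $\dim\Omega^{x_1}=\tfrac14 m(m+2)$ in case~(ii) is a representative sample of the classical computations involved.
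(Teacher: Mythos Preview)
Your outline for the inductive step and your argument for case~(i) via transvections in $\Sp_4(k)$ are correct and match the paper. The principal gap is in your ``only if'' argument for case~(ii). You compute $\dim\Omega^{x_1}+\dim\Omega^{x_2}=\dim\Omega$ for the variety $\Omega$ of totally singular $m$-spaces and then assert that ``consequently $\Omega^{x_1}\cap g\,\Omega^{x_2}\ne\emptyset$ for generic $g\in G$''. This inference is not valid: the dimension equality is precisely the threshold at which Lemma~\ref{l:basic} ceases to apply, and it does \emph{not} force nonempty intersection. On a projective homogeneous variety, closed subvarieties of complementary dimension can have zero intersection product (think of non-dual Schubert classes of complementary length in a flag variety), and then $A\cap gB=\emptyset$ for generic $g$. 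To rescue this route you would have to show that the projection from the incidence variety $\{(g_1,g_2,\omega)\in C_1\times C_2\times\Omega:g_i\omega=\omega\}$ to $C_1\times C_2$ is generically finite onto its closed image, which amounts to a Chow-class computation you have not carried out.

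The paper bypasses this entirely with a one-line argument: one checks $\dim C_1=m^2$ and $\dim C_2=m^2+m$, so $\dim X=2m^2+m=\dim G<\dim G+{\rm rk}\,G$; since $p\ne 2$ is good for type $B_m$, Corollary~\ref{c:adjreducible} then gives that every $G(x)$ acts reducibly on $V$, whence $\Delta=\emptyset$.

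A smaller point on the inductive step: the paper does not treat $m=3,4$ as separate base cases. For $m=3$ (where Lemma~\ref{l:largeranksubs} is unavailable since $n=7<9$) one needs a specific argument inside Case~2.1 to exclude the rank-$2$ irreducible subgroups $G_2$ and $A_2$, using that their maximal tori contain strongly regular elements on $V$ whereas Theorem~\ref{t:toralrank}(ii) forces $f(G(x))$ into the closure of $f(QL')$, where every element has $1$-eigenspace of dimension at least $3$. And your ``short explicit list of boundary configurations'' (the paper's Case~2.2, where $\sum_id_i'>(r-1)(n-2)$) is longer than you suggest: for even $m$ it splits into three substantive subcases (unipotent $x_2$ with $r\geqs 3$, unipotent $x_2$ with $r=2$, semisimple $x_2$), each requiring its own device, including producing an $\SL_2(k)^{m/2}$ inside some $G(y)$ and the fixed-space counts of Lemma~\ref{l:so9}. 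Your toolbox for these is correct; the execution is just more intricate than indicated.
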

 
\begin{proof}   
First we show that $\Delta$ is empty in cases (i) and (ii). Suppose $m=2$. Here we work in the isogenous group $\Sp(W) = \Sp_4(k)$, in which case $x_1$ has Jordan form $(J_2,J_1^2)$ on $W$ and thus any three conjugates of $x_1$ fix a nonzero vector in $W$. This gives the desired conclusion in (i). In (ii) we calculate that 
$\dim C_1 = m^2$ and $\dim C_2 = m^2+m$, so $\dim X = \dim G$ and thus Corollary \ref{c:adjreducible} implies that $G(x)$ acts reducibly on $V$ for all $x \in X$. In particular, $\Delta$ is empty. 
  
To complete the proof of the theorem, it remains to show that $\Delta$ is nonempty in all other cases. We proceed by induction on $m$, noting that $\SO_3(k) \cong {\rm PSL}_2(k)$ and so the result holds for $m=1$ by \cite[Theorem 4.5]{Ger}.
 
Now assume $m \geqs 2$. In view of Lemmas \ref{l:1-spaces}, \ref{l:sing1} and \ref{l:nondeg2}, we see that the bound $\sum_i  d_i \leqs n(r-1)$ implies that for generic $x \in X$, $G(x)$ does not fix a $1$-space (of any type) nor a nondegenerate $2$-space. The case $m=2$ requires special attention.
  
\vs

\noindent \emph{Case 1. $m=2$.}

\vs  
  
For $m=2$ we claim that it suffices to prove that $G(x)$ is generically irreducible on $V$ (recall  that $G(x)$ is generically positive dimensional
by Lemma \ref{l:infinite}). To justify the claim, first observe that the only proper positive dimensional irreducible subgroup of $G$ is $H={\rm PSL}_2(k)$, up to conjugacy, with $p \ne 3$. So let us assume $p \ne 3$ and set $Y = G/H$. Let $g \in H$ be a nontrivial element. Since $\dim (g^G \cap  H)  \leqs 2$ and $\dim g^G \geqs 6$, it follows that $\dim Y^g < \frac{1}{2}\dim Y$ (see \eqref{e:fpr}) and thus Lemma \ref{l:basic} implies that $G(x)$ is not generically contained in a conjugate of $H$.  This justifies the claim.
 
As noted above, $G(x)$ does not generically preserve a $1$-dimensional subspace of $V$ nor a nondegenerate $2$-space. Therefore, $G(x)$ is either generically irreducible (in which case $\Delta$ is nonempty, as explained above), or every $G(x)$ fixes a totally singular $2$-space. The stabilizer of a totally singular $2$-space in $G$ corresponds to the stabilizer in $\Sp_4(k)$ of a $1$-dimensional subspace in the $4$-dimensional symplectic module $W$. Therefore, we just need to determine when the standard inequality $\sum_i d_i' \leqs 4(r-1)$ holds, where $d_i'$ is the maximal dimension of an eigenspace of $x_i$ on $W$. 

This clearly holds if $r \geqs 4$. For $r=3$, the inequality fails if and only if each $x_i$ is a transvection on $W$, which gives the case recorded in part (i) of the theorem. Finally, suppose $r=2$ and the inequality does not hold. Then up to reordering, noting that $d_1+d_2 \leqs 5$, we may assume $x_1$ is a transvection on $W$ and $x_2$ is not regular (that is, $x_2$ has a $2$-dimensional eigenspace on $W$). If $x_2$ is unipotent or an involution, then $d_1+d_2 > 5$, which is a contradiction. The remaining case is given in (ii).  

\vs

\noindent \emph{Case 2. $m \geqs 3$.}

\vs 

Now assume $m \geqs 3$. Let $P=QL$ be the stabilizer in $G$ of a totally singular $1$-space  spanned by $v \in V$, where $Q$ is the unipotent radical and $L$ is a Levi subgroup. Choose $g_i \in C_i \cap P$ and let $d_i'$ be the dimension of the largest eigenspace of $g_i$ acting on the nondegenerate $(n-2)$-dimensional space $U = v^{\perp}/\langle v \rangle$. We embed each $g_i$ in $P$ so that $d_i'$ is as small as possible.  
 
\vs

\noindent \emph{Case 2.1. $\sum_i d_i' \leqs (r-1)(n-2)$.}

\vs 
 
Suppose $\sum_i d_i' \leqs (r-1)(n-2)$. To begin with, we will assume we are not in one of the special cases recorded in parts (i) and (ii)  (with respect to the action of $g_i$ on $U$).  Then by induction, $G(x)^0$ generically has a composition factor on $V$ of dimension at least $n-2$. Since $G(x)$ does not generically fix a nondegenerate $2$-space nor any $1$-space, it follows that $G(x)$ is generically irreducible on $V$ with rank $m-1$ or $m$. If $m \geqs 4$, then Lemma \ref{l:largeranksubs} implies that there is no proper subgroup of $G$ with these properties, whence $G(x) = G$ for generic $x \in X$.   

Now assume $m=3$. If $G(x)$ has rank $3$ for any $x$ (and so for generic $x$), then $G(x)$ is generically a rank $3$ irreducible subgroup. By inspection, we see that there is no proper subgroup of $G$ with this property and thus $\Delta$ is nonempty. Therefore, we may assume $G(x)$ has rank $2$ for generic $x \in X$. Recall that there exists $x \in X$ such that $QL' \leqs G(x)$. This implies that $f(G(x))$ is contained in the closure of $f(L')$, where $f: G \to \mathcal{M}_7(\textbf{x})$ is the morphism in \eqref{e:char}, sending $g \in G$ to its characteristic polynomial on $V$, which is contained in the variety 
$\mathcal{M}_7(\textbf{x})$ of monic polynomials in $k[\textbf{x}]$ of degree $7$. In particular, this implies that $G(x)$ does not contain elements with distinct eigenvalues on $V$ (since the $1$-eigenspace of any element in $QL'$ is at least $3$-dimensional). Now the only connected irreducible rank $2$ subgroups of $G$ are $G_2$ and $A_2$ (the latter occurring only for $p=3$). But the weight spaces on $V$ for the maximal tori of these subgroups are all $1$-dimensional, so they both contain regular semisimple elements and we have reached a contradiction.
 
To complete the argument in Case 2.1, we may assume that we are in one of the special cases recorded in parts (i) and (ii) of the theorem (in terms of the action of $g_i$ on the $(n-2)$-space $U$). In particular, $r \leqs 3$ and $m$ is odd.

First assume $m=3$. If $x_i$ is unipotent and not of the form $(J_2^2,J_1^3)$, then $x_i$ has a Jordan block of size at least $3$ and clearly we may choose $g_i \in C_i \cap P$ so that it does not have Jordan form $(J_2^2,J_1)$ on $U$. It follows that if we are forced to be in one of the special cases recorded in (i) and (ii) (with respect to the action on $U$), then the original elements $x_i \in G$ must also be of the form given in one of these special cases (for the action on $V$). This is a contradiction.

Now assume $m \geqs 5$.  Here $r=2$ and we may assume that $x_1$ is unipotent and $x_2 = (I_1, \l I_m, \l^{-1}I_m)$ is semisimple. The condition $d_1+d_2 \leqs n$ implies that $d_1 \leqs m+1$ and thus $d_1 \leqs m$ since $m$ is odd. In particular, $x_1$ must have a Jordan block of size at least $3$ and as noted above we may choose $g_1 \in C_i \cap P$ so that it does not have Jordan form $(J_2^{m-1},J_1)$ on $U$. Once again, we have reached a contradiction. 

\vs

\noindent \emph{Case 2.2. $\sum_i d_i' > (r-1)(n-2)$.}

\vs 

For the remainder of the proof we may assume that $\sum d_i ' > (r-1)(n -2)$. Note that if $x_i$ is semsimple, then either $d_i'=d_i-2$ or one of the following holds:
\begin{itemize}\addtolength{\itemsep}{0.2\baselineskip} 
\item[{\rm (a)}] $x_i$ is an involution with $d_i= m+1$ and $d_i'=m$; 
\item[{\rm (b)}] $x_i$ has odd order, $d_i \leqs m$ and $d_i'=d_i-1$;  or
\item[{\rm (c)}] $x_i$ has odd order and $d_i = d_i' \leqs n/3$.
\end{itemize}
Similarly, if $x_i$ is unipotent, then either $d_i'=d_i-2$, or $x_i$ has at most one Jordan
block of size $1$ and $d_i = d_i' \leqs m+\e$, where $\e=1$ if $m$ is even, otherwise $\e=0$ (note that $d_i$ is always odd if $x_i$ is unipotent).   

Next observe that if $d_i' = d_i - 2$ for all but at most one $i$, then $\sum_i d_i' \leqs (r-1)(n-2)$, which is a contradiction. Similarly, the above observations imply that if 
$d_i' \ne d_i - 2$ for three distinct $i$, say $i=1,2,3$, then 
$\sum_{i=1}^3  d_i' \leqs 2(2n-1)$ and thus $\sum_i d_i' \leqs (r-1)(n-2)$. Therefore, up to reordering the $C_i$, we may assume that $d_i' = d_i - 2$ if and only if $i \geqs 3$. Notice that if $x_1$ and $x_2$ are semisimple, then either $d_i'=d_i-1$ or $d_i = d_i' \leqs n/3$ for $i=1,2$, which implies that $\sum_i d_i' \leqs (r-1)(n-2)$. Therefore, we may assume $x_1$ is unipotent with $d_1 = d_1' \leqs m+\e$.  In particular, $d_1$ is odd. 
 
First assume $m \geqs 3$ is odd, so $d_1 \leqs m$ (since $d_1$ is odd). Suppose $x_2$ is  unipotent. If $d_2 < m$, then $d_2 \leqs m-2$ and this is incompatible with the bound on $\sum_i d_i'$. On the other hand, if $d_2 = m$ then by passing to the closures of $C_1$ and
 $C_2$, we may assume that $x_1=x_2 = (J_3,J_2^{m-1})$. Let us also observe that if $x_2$ is semisimple, then $d_2 \leqs m+1$ and $d_1 = m$ (indeed, if $d_1 \leqs m-2$ then it is easy to check that $\sum_id_i' \leqs (r-1)(n-2)$). Therefore, in both cases Lemma \ref{l:3222} implies that $\la y_1,y_2 \ra$ is Zariski dense in $G$ for generic $(y_1,y_2) \in C_1 \times C_2$ and the result follows.
 
To complete the proof, we may assume $m \geqs 4$ is even and $x_1$ is unipotent with $d_1 = d_1' \leqs m+\e$. We partition the analysis into three subcases.

\vs

\noindent \emph{Case 2.2.1. $m \geqs 4$ even, $x_2$ is unipotent, $r \geqs 3$.}

\vs 

Here we assume $r \geqs 3$ and $x_2$ is unipotent, so $d_1 + d_2 = d_1' + d_2' > n-2$ and $d_i \leqs m + 1$ for $i=1,2$. By passing to closures, we may assume that $x_1=x_2 = (J_2^m,J_1)$.  Note that for any class $C_3$  we have $\sum_{i=1}^3 d_i \leqs 2n$ and so we may assume that $r=3$.  Since $\sum_id_i'>2(n-2)$, it follows that $d_3' \geqs n-4$ and $d_3 = d_3'+2 \geqs n-2$, so $x_3$ is one of the following:
\[
(J_3,J_1^{n-3}), \; (J_2^2,J_1^{n-4}),\; (-I_{n-1},I_1).
\]

First assume $x_3$ is unipotent. Then by passing to closures once again, we may assume that $x_3 = (J_2^2,J_1^{n-4})$. By applying \cite[Theorem 4.5]{Ger}, we see that there exists $(y_1,y_2) \in C_1 \times C_2$ such that $J = \SL_2(k)^{m/2}$ is the Zariski closure of $\la y_1, y_2 \ra$ (as a $kJ$-module, $V$ is a direct sum of $m$ 
$2$-dimensional spaces and a copy of the trivial module). In particular, there exists a conjugacy class $D  = z^G \subseteq C_1C_2$ of prime order semisimple elements such that each eigenspace of $z$ on $V$ is at most  $2$-dimensional.  If $d'$ denotes the dimension of the largest eigenspace of $z$ on $U$, then $d'+d_3'  = n-2$. Therefore, if we set $X' = D \times C_3$ then our earlier work in Case 2.1 implies that $G = G(y)$ for generic $y \in X'$ and we conclude by applying Lemma \ref{l:products}.  

To complete the analysis of Case 2.2.1, we may assume $r=3$ and $x_3 = (-I_{n-1},I_1)$ is a pseudoreflection. 
Fix $(y_1,y_2) \in C_1 \times C_2$ such that the Zariski closure of $\langle y_1, y_2 \rangle$ is the subgroup $J = \SL_2(k)^{m/2}$ described above. Note that $J$ fixes only finitely many nondegenerate subspaces of $V$ and so there is a nonempty open subset of $C_3$ such that no element in this open set fixes a proper nondegenerate space fixed by $J$.     

Next observe that the variety of $2$-dimensional totally singular subspaces fixed by $J$ is $1$-dimensional. Let us also note that for $y_3 \in C_3$, 
the variety of $y_3$-invariant totally singular $2$-spaces coincides with the variety of all $2$-dimensional totally singular subspaces of the $(-1)$-eigenspace of $y_3$. The latter variety has codimension at least $2$ in the variety of all totally singular $2$-dimensional subspaces of $V$. Therefore, we may choose $y_3 \in C_3$ so that it does not preserve any $J$-invariant totally singular $2$-space (nor any $J$-invariant proper nondegenerate subspace). Given $x = (y_1,y_2,y_3) \in X$ with the above properties, we see that $G(x)$ is either irreducible, or it must preserve a totally singular subspace. But $y_3$ preserves such a space if and only if it is contained in its $(-1)$-eigenspace.  So if $G(x)$ fixes a totally singular space $W$, then any $J$-invariant subspace of $W$ is $G(x)$-invariant as well.   Since
every irreducible $kJ$-submodule of $V$ is $2$-dimensional (or trivial), this would imply that $G(x)$ fixes a totally singular $2$-space, contrary to the choice of
$y_3$.   Thus for generic $y_3 \in C_3$,  $G(x)$ is irreducible.  

Finally, we claim that $G(x)$ acts primitively on $V$ (for $x = (y_1,y_2,y_3) \in X$ as above).
Suppose $G(x)$ is imprimitive, so it preserves a decomposition $V = V_1 \oplus \cdots \oplus V_t$, where $t \geqs 3$ is odd and $G(x)$ acts transitively on the set of summands (since $G(x)$ is irreducible). Since $J<G(x)$ is connected, it must fix each summand in this decomposition. But then $\la y_3 \ra$ must act transitively on the summands, which is a contradiction since $t \geqs 3$ and $y_3$ is an involution. Therefore, $G(x)$ is a primitive irreducible group containing psuedoreflections and it is well known that this implies that  $G(x)=G$ (see \cite[Theorem 8.3]{GS} for a much more general result).  
   
\vs

\noindent \emph{Case 2.2.2. $m \geqs 4$ even, $x_2$ is unipotent, $r=2$.}

\vs 

Now assume $r = 2$ and $x_2$ is unipotent, so $d_i = d_i' \leqs m+1$ is odd and we have $2m \leqs d_1+d_2 \leqs 2m+1$. Up to reordering, we may assume that $d_1= m+1$ and $d_2 = m-1$. Then by passing to closures, we may assume that $x_1 = (J_2^m,J_1)$ and $x_2 = (J_3^3,J_2^{m-4})$. Let $P=QL$ be the stabilizer in $G$ of a totally singular $m$-space $W$, where $Q$ is the unipotent radical and $L$ is a Levi subgroup.   

First assume that $m=4$. Here we can choose $y_1 \in C_1$ so that it has Jordan form $(J_3,J_1)$ on $W$. By Theorem \ref{t:sl}, there exists $x \in X$ so that $G(x)=G(x)^0$ acts as $\SL_4(k)$ on $W$ and acts uniserially on $V$. For generic $x \in X$ it follows that $G(x)$ has rank at least $3$, the smallest nonzero $G(x)$-invariant subspace is at least $4$-dimensional and $G(x)$ is generically primitive. Since $x_2$ does not preserve a $4$-dimensional nondegenerate space, we see that $G(x)$ is either generically irreducible and primitive, or it generically preserves a totally singular $4$-space.  Let $\Omega$ be the homogeneous variety of totally singular $4$-spaces.  By Lemma \ref{l:so9} we have $\dim \Omega^{x_1} =3$ and 
$\dim \Omega^{x_2}=6$. Therefore, $\dim \Omega^{x_1} + \dim \Omega^{x_2} < 
\dim \Omega = 10$ and so for generic $x$, $G(x)$ does not preserve a totally singular $4$-space. In view of Lemma \ref{l:largeranksubs}, we conclude that $\Delta$ is nonempty.
 
Now assume $m \geqs 6$.   We can choose $y \in X$ such that $G(y)=QL'$  (here we are using Theorem \ref{t:sl}, working with elements in $C_1$ and $C_2$ that stabilize a totally singular $m$-space).   Note that $\mathrm{End}_{QL'} (V) = k$ and so for generic $x \in X$, $\dim \mathrm{End}_{G(x)^0}(V) = 1$ and thus $V$ is an indecomposable $kG(x)^0$-module.   By induction, we can choose $x \in X$ so that $G(x) = \SL_2(k) \times \SO_{n-4}(k)$, which is the derived
subgroup of a Levi subgroup of the stabilizer of a totally singular $2$-space. Therefore, for generic $x \in X$ we observe that the smallest nonzero $G(x)^0$-invariant subspace has dimension at least $m$, and $G(x)^0$ also has a composition factor of dimension at least $n-4$.  This forces $G(x)^0$ to be generically irreducible of rank at least
$m-1$ and then Lemma \ref{l:largeranksubs} implies that $G(x)=G$ for generic $x$. 

\vs

\noindent \emph{Case 2.2.3. $m \geqs 4$ even, $x_2$ is semisimple.}

\vs 
  
Finally, to complete the proof of the theorem we may assume $m \geqs 4$ is even and $x_2$ is semisimple. Recall that $x_1$ is unipotent with $d_1 = d_1'$ and we may assume that either 
$d_2'=d_2 -1$, or $m=4$ and $d_2=d_2'=3$. The condition $\sum_id_i'>(r-1)(n-2)$ implies that 
$d_1' + d_2' \geqs 2m$, so $x_1 = (J_2^m,J_1)$ and either $x_2 = (I_1,\l I_m, \l^{-1}I_m)$, or $x_2 = (-I_m,I_{m+1})$, or $m=4$ and $d_2=d_2'=3$.     
 
First assume $x_2 = (I_1,\l I_m, \l^{-1}I_m)$, so $d_1' + d_2' =2m$. If $r \geqs 3$, then 
\[
\sum_{i=1}^rd_i'  \leqs 2m+(r-2)(n-3) \leqs (r-1)(n-2),
\]
which is incompatible with the defining condition of Case 2.2. On the other hand, if $r=2$ then we are in the special case identified in part (ii) of the theorem. 

Next assume $x_2 = (-I_m, I_{m+1})$. Here $d_1 = d_1'=d_2=m+1$ and $d_2'=m$, so the condition $\sum_id_i \leqs n(r-1)$ implies that $r \geqs 3$. In addition, the inequality 
$\sum d_i' > (r-1)(n-2)$ implies that $r=3$ and $x_3 = (-I_{2m}, I_1)$. As in Case 2.2.1, we can choose $y_i \in C_i$, $i =1,2$ such that $J = \SL_2(k)^{m/2}$ is the Zariski closure of $\langle y_1, y_2 \rangle$. Then by repeating the argument in Case 2.2.1, we can 
find $y_3 \in C_3$ such that $G(x)=G$ for $x = (y_1,y_2,y_3) \in X$.

Finally, let us assume $m=4$ and $d_2=d_2'=3$.  First observe that we can choose $a \in X$ such that $G(a)$ contains $L' = \SL_4(k)$, where $L$ is a Levi subgroup of the stabilizer in $G$ of a totally singular $4$-space. In particular, the rank of $G(x)$ is generically at least $3$ and as above it suffices to show that $G(x)$ is generically irreducible on $V$. 

Since $G(a)$ does not preserve any $2$-dimensional or $3$-dimensional spaces, and $G(x)$ does not generically fix a totally singular $4$-space by Lemma \ref{l:so9}, it follows that if $G(x)$ is not generically irreducible, then $G(x)$ must preserve a nondegenerate $5$-space for generic $x \in X$. On such a $5$-space, an element of $C_2$ either has a $3$-dimensional $1$-eigenspace, or two $2$-dimensional eigenspaces. In the first case, we see that $G(x)$ fixes a $1$-space, while $G(x)$ fixes a $2$-space in the latter (this is the exception for $m=2$). As noted above, neither possibility can occur, so $G(x)$ does not generically fix a nondegenerate $5$-space and the proof of the theorem is complete. 
\end{proof} 
 
\section{Proof of Theorem \ref{t:main2}: Symplectic groups}\label{s:symp}

In this section we complete the proof of Theorem \ref{t:main2} by handling the symplectic groups $G = \Sp(V) = \Sp_n(k)$, where $n=2m$ with $m \geqs 2$ and $k$ is an uncountable algebraically closed field of characteristic $p \geqs 0$. We continue to define $X$ as in \eqref{e:X}, where each $x_i$ has prime order modulo $Z(G)$. We consider separately the cases where $p \ne 2$ and $p=2$. 

 \subsection{Odd characteristic}\label{ss:odd}
   
In this section we assume $p \ne 2$, so $Z = Z(G) = \la -I_n \ra$. We begin by considering the    special case $m=2$.

\begin{thm} \label{t:sp4odd} 
Suppose $m=2$ and $p \ne 2$. If $\sum_i d_i \leqs 4(r-1)$ then $\Delta$ is empty if and only if one of the following holds (up to ordering and conjugacy):
\begin{itemize}\addtolength{\itemsep}{0.2\baselineskip}
\item[{\rm (i)}] $r=2$ and $x_1,x_2$ are quadratic. 
\item[{\rm (ii)}] $r=2$, $x_1 = (-I_2,I_2)$ and $x_2$ is non-regular.  
\item[{\rm (iii)}] $r=3$, $x_1 = x_2 = (-I_2,I_2)$ and $x_3$ is quadratic.  
\item[{\rm (iv)}] $r=4$ and $x_i = (-I_2,I_2)$ for all $i$. 
\end{itemize}
\end{thm}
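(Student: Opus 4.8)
## Proof proposal for Theorem \ref{t:sp4odd}

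The plan is to split the argument cleanly into the ``obstruction'' direction (showing $\Delta$ is empty in cases (i)--(iv)) and the much harder ``generation'' direction (showing $\Delta$ is nonempty otherwise). For the obstruction direction, case (i) is immediate from Lemma \ref{l:quadratic}. For case (ii), since $d_1 = 2$ and $x_2$ non-regular means $d_2 \geq 2$, one computes that a suitable coset-variety fixed-point estimate forces reducibility; more directly, $x_1 = (-I_2,I_2)$ is quadratic and $x_2$ non-regular should yield $\sum_i \dim Y^{x_i} \geq \dim Y$ for $Y$ the variety of nondegenerate $2$-spaces, so that Lemma \ref{l:basic} fails to help but a direct eigenspace-intersection argument (Remark \ref{r:linearalgebra}) or the adjoint-module bound of Corollary \ref{c:adjreducible} applies. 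Indeed $\dim C_1 = 4$ and for $x_2$ non-regular $\dim C_2 \leq 6$, so $\dim X \leq 10 = \dim G$, and I would invoke Corollary \ref{c:adjreducible} (the characteristic here is good for $C_2$-type, and $\mathrm{Lie}(\Sp_4)$ is simple for $p \neq 2$) to conclude $G(x)$ is reducible for all $x$. Cases (iii) and (iv) are handled the same way: with $x_1 = x_2 = (-I_2,I_2)$ (so $\dim C_i = 4$) and $x_3$ quadratic (so $\dim C_3 \leq 6$), we get $\dim X \leq 14 < \dim G + {\rm rk}\,G = 12$... here I must be careful: $\dim G + {\rm rk}\,G = 10+2 = 12$, so I should instead note $\dim X \le 4+4+6 = 14 > 12$ and this crude bound does not suffice; the correct approach for (iii)--(iv) is to pass to a smaller conjugacy class inside $C_1 C_2$ or to use Scott's theorem more carefully, or simply to exhibit directly that four conjugates of a short-root-type involution $(-I_2,I_2)$ always fix a common $1$-space or line in some associated module. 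I expect the cleanest route for (iii) and (iv) is to observe that $(-I_2,I_2)$ acts quadratically and to track composition factors: by a direct $\Sp_4$-specific argument (or reduction via the exceptional isogeny $\Sp_4 \sim \mathrm{SO}_5$, working with the $5$-dimensional module where $(-I_2,I_2)$ becomes a reflection-type element $(-I_2, I_3)$) one checks $\sum e_i > 5(r-1)$ on the $5$-space, forcing a common fixed vector.

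For the generation direction, the strategy mirrors the inductive scheme used throughout Section \ref{s:ort}, but specialized to the low-rank group $\Sp_4(k)$. First I would dispose of the finiteness issue: by Lemma \ref{l:infinite}, $G(x)$ is generically infinite under our hypotheses (either $r \geq 3$, or $r=2$ and $d_1 + d_2 \leq 4$; the borderline $d_1 + d_2 = 4$ with both quadratic is case (i), already excluded, and otherwise $d_1 + d_2 \le 4$ with at least one non-quadratic is impossible since non-quadratic forces $d_i \le 1$... so in fact for $r = 2$ outside case (i) we must have $d_1 + d_2 \le 4$ fail to be the issue — I should recheck: if $x_1$ is non-quadratic then it has at least three distinct eigenvalues or a Jordan block of size $\geq 3$, giving $d_1 \le 2$ for $\Sp_4$; combined with $d_1 + d_2 \le 4$ this is fine). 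The main engine is then: use the stabilizer $H$ of a totally isotropic $2$-space (a maximal parabolic $P = QL$ with $L = \GL_2(k)$) and apply Theorem \ref{t:sl} for $\SL_2(k)$ (precisely, \cite[Theorem 4.5]{Ger}) to the action on the $2$-dimensional Levi module. Concretely, for each $x_i$ choose $g_i \in C_i \cap P$ with smallest possible largest-eigenspace $d_i'$ on the $2$-dimensional space $W$; then check $\sum_i d_i' \leq 2(r-1)$ in all remaining cases, so that generically $L(y)$ contains $\SL_2(k)$. Combined with the subspace-stabilizer lemmas (Lemmas \ref{l:1-spaces}, \ref{l:nondeg2}) which show $G(x)$ generically fixes neither a $1$-space nor a nondegenerate $2$-space, and with Lemma \ref{l:submodules-conn} to propagate a large composition factor, I would conclude that for generic $x$ either $G(x)^0$ is irreducible on $V$, or $G(x)$ stabilizes a totally isotropic $2$-space. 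In the latter sub-case, a fixed-point count on the Lagrangian Grassmannian $\Omega = \Sp_4/P$ (using $\dim \Omega = 3$ and explicit $\dim \Omega^{g_i}$ values) via Lemma \ref{l:basic} rules it out outside the listed exceptions. In the irreducible case, I invoke that $G(x)^0$ generically contains a strongly regular (regular semisimple with distinct eigenvalues) element — since $\SL_2(k) \le G(x)$ supplies one via Lemma \ref{l:sr} — so by Corollary \ref{c:sr} either $\Delta$ is nonempty or $p = 2$ (excluded here); hence $\Delta$ is nonempty.

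The key remaining work is the bookkeeping of which combinations of prime-order classes in $\Sp_4(k)$ ($p \neq 2$) actually force one of the exceptions. The relevant classes are: semisimple $(-I_2,I_2)$, $(\lambda I_2, \lambda^{-1}I_2)$, $(-I_2, \mu, \mu^{-1})$, $(\lambda, \lambda^{-1}, \mu, \mu^{-1})$ with distinct eigenvalues, and unipotent $(J_2^2)$, $(J_2, J_1^2)$, $(J_4)$, $(J_3,J_1)$ (the last two only when $p \in \{$relevant primes$\}$; $(J_3,J_1)$ needs $p \ne 2$ to have order $p$, $(J_4)$ needs $p \ge 2$... order $p$ modulo center forces block sizes $\le p$, so $(J_4)$ requires $p \ge 3$ actually $p \ge 2$ but $p\ne2$ so $p \ge 3$, and even then $(J_4)$ has order $> p$ unless... a size-$4$ Jordan block has order $p$ iff $p \ge 4$, i.e. $p \ge 5$). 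I would organize the remaining cases by $r$: for $r \geq 5$ the bound $\sum d_i' \le 2(r-1)$ is automatic and generation holds; for $r = 4$, only all-$(-I_2,I_2)$ fails (case (iv)); for $r = 3$, the failures are exactly (iii); for $r = 2$, after excluding (i), the failure is exactly (ii). The main obstacle I anticipate is the low-rank case $m = 2$ itself: unlike higher-rank symplectic groups, $\Sp_4(k)$ has the exceptional connected maximal-rank and non-maximal-rank subgroups (e.g. $\mathrm{SL}_2 \times \mathrm{SL}_2$, $\mathrm{SO}_4$, $\mathrm{SL}_2$ via $\Sym^3$, the $A_1$ subgroups, $\mathrm{SO}_3$), so ``no proper irreducible subgroup of the right rank'' is false and I cannot simply cite Lemma \ref{l:largeranksubs} as in the $n \geq 6$ case. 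Instead I must, for each candidate irreducible proper overgroup of $G(x)$, either rule it out by the strong-regularity/Corollary \ref{c:sr} argument (which handles maximal-rank subgroups) or by a fixed-point dimension estimate on $G/H$ using Lemmas \ref{l:basic}, \ref{l:basic2} (which handles the finitely many classes of irreducible $A_1$-type and $\mathrm{SO}_3$-type subgroups). Carefully enumerating these finitely many subgroup classes and confirming each is generically avoided — together with the delicate eigenvalue/Jordan-block casework to pin down exactly cases (ii)--(iv) — is where the real effort lies; the structural skeleton is routine given the machinery of Sections \ref{s:thm1} and \ref{s:prel2}.
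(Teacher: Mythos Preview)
Your direct-attack approach is workable in outline but is both far more laborious than the paper's proof and contains a genuine gap. The paper simply uses the isogeny $\Sp_4(k)\cong{\rm Spin}_5(k)$ and reduces the entire statement to Theorem~\ref{t:oddorthogonal}, which was already proved in Section~\ref{ss:odddim}: one tabulates, for each prime-order class in $\Sp_4(k)$, the maximal eigenspace dimension $\beta(g)$ on the $5$-dimensional orthogonal module $W$ (using $W\oplus k\cong\wedge^2 V$), and then reads off exactly which tuples violate $\sum_i\beta(g_i)\leqs 5(r-1)$ or land in the $\SO_5$ exceptions. You actually stumble onto this idea in your obstruction paragraph, but you miscompute the key case: $(-I_2,I_2)$ acts on $W$ as $(-I_4,I_1)$ with $\beta=4$, not as $(-I_2,I_3)$; with the correct value, the emptiness of $\Delta$ in cases (ii)--(iv) is immediate from the eigenspace inequality on $W$, and the converse drops out of Theorem~\ref{t:oddorthogonal} with no further work.

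The specific gap in your direct approach is the claim that $\SL_2(k)=L'$ (the derived Levi of the stabilizer of a totally isotropic $2$-space $W$) supplies strongly regular elements on $V$. It does not: an element of $L'$ with eigenvalues $\lambda,\lambda^{-1}$ on $W$ has eigenvalues $\lambda,\lambda^{-1},\lambda^{-1},\lambda$ on $V=W\oplus W^*$, so every element of $L'$ has repeated eigenvalues on $V$ and Corollary~\ref{c:sr} cannot be invoked this way. You would instead need to rule out the irreducible $A_1<\Sp_4$ (via $\Sym^3$) by a direct fixed-point estimate on $G/A_1$ as in Lemma~\ref{l:basic}, which you do mention as a fallback; but at that point you are redoing by hand, and with extra case analysis, exactly what the isogeny to $\SO_5$ gives you for free. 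Commit to the isogeny from the start and the proof collapses to a short translation table.
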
 

\begin{proof}  
First assume that we are in one of the cases labelled (i)-(iv). If $y = (y_1, \ldots, y_r) \in X$ then in each case it is easy to check that the $y_i$ preserve a common $1$-dimensional subspace of the $5$-dimensional orthogonal module $kG$-module $W$. In particular, $\Delta$ is empty. To complete the proof, we need to show that $\Delta$ is nonempty in all the remaining cases.

This essentially follows from the corresponding result for ${\rm Spin}_5(k)$ in Theorem \ref{t:oddorthogonal}. For each $g \in G$, let $\alpha(g)$ (respectively, $\b(g)$) be the dimension of the largest eigenspace of $g$ on $V$ (respectively, $W$). Since $W \oplus k \cong \wedge^2(V)$, we have the following relationship between $\alpha$ and $\beta$ (here $\l \in k^{\times}$ is a scalar with  $\l^2 \ne 1$):

\begin{itemize}\addtolength{\itemsep}{0.2\baselineskip}
\item[{\rm (a)}] If $g = (-I_2,I_2)$ then $\alpha(g) = 2$ and $\beta(g)=4$.
\item[{\rm (b)}] If $g$ is semisimple, $\a(g) = 2$ and $g$ is not an involution, then either $g = (\l I_2, \l^{-1}I_2)$ and $\beta(g) =3$,
or $g = (I_2, \l I_1, \l^{-1}I_1)$ and $\beta(g) = 2$.
\item[{\rm (c)}] If $g$ is regular, then $\alpha(g)= \beta(g)=1$.  
\item[{\rm (d)}] If $g = (J_2, J_1^2)$ is a long root element, then $\alpha(g)= \beta(g) = 3$.
\item[{\rm (e)}] If $g = (J_2^2)$ is a short root element, then $\alpha(g)=2$ and $\beta(g)=3$.
\end{itemize} 
 
So by applying Theorem \ref{t:oddorthogonal}, we deduce that if $r \geqs 4$ then $\Delta$ is nonempty unless $r=4$ and each $x_i$ is an involution acting as $(-I_4,I_1)$ on $W$. This corresponds to the case recorded in part (iv). Similarly, the exceptional cases for $r=2,3$ are easily determined from the above information in (a)-(e).
\end{proof}

\begin{rem}  
The previous result implies that if $G = \Sp_4(k)$ and $p \ne 2$, then $\Delta$ is nonempty if and only if there exists $x \in X$ such that $G(x)^0$ is irreducible on both the symplectic and orthogonal $kG$-modules. 
\end{rem} 
      
For the remainder of Section \ref{ss:odd}, we may assume that $m \geqs 3$. It turns out that the cases $m \in \{3,4\}$ also require special attention. In particular, we will need the following technical lemma on fixed point spaces for certain actions of $G = \Sp_6(k)$. Recall that $\a(g)$ denotes the dimension of the largest eigenspace of $g \in G$ on the natural module $V$.
   
   \begin{lem} \label{l:fpsp6} 
   Suppose $m=3$, $p \ne 2$ and let $P = QL$ be the stabilizer in $G$ of a totally isotropic $3$-space, where $Q$ is the unipotent radical and $L$ is a Levi factor. Set $X_1 = G/P$ and $X_2 = G/N$, where $N=N_G(L) = L.2$, so $\dim X_1=6$ and $\dim X_2 = 12$. Let $g \in G$ be an element of prime order modulo $Z(G)$.
   \begin{itemize}\addtolength{\itemsep}{0.2\baselineskip}
   \item[{\rm (i)}]  If $g = (-I_2,I_4)$ then $\dim X_1^g = 4$ and $\dim X_2^g = 8$.
   \item[{\rm (ii)}] If $g = (J_2,J_1^4)$ or $(J_2^3)$ then $\dim X_1^g= 3$ and $g$ has no fixed points on $X_2$.
   \item[{\rm (iii)}]  If $g = (J_2^2,J_1^2)$ or $g = (I_4, \l I_1, \l^{-1}I_1)$ for some $\l \in k^{\times}\setminus \{\pm 1\}$, then $\dim X_1^g = 3$ and $\dim X_2^g = 6$.
   \item[{\rm (iv)}]  If $g = (\l I_3, \l^{-1} I_3)$ for some $\l \in k^{\times}\setminus \{\pm 1\}$, then 
   $\dim X_1^g = 2$ and $\dim X_2^g = 4+\e$, where $\e=2$ if $\l^2 = -1$, otherwise 
   $\e=0$.  
   \item[{\rm (v)}] If $g = (J_3^2)$ then
   $\dim X_1^g =2$ and $\dim X_2^g = 4$. 
   \item[{\rm (vi)}] If $g$ is semisimple of odd order and $\a(g) \leqs 2$, then $\dim X_1^g \leqs 2$ and $\dim X_2^g = 1+\e$, where $\e=3$ if $1$ is an eigenvalue, otherwise $\e=0$.  
   \end{itemize}
   \end{lem}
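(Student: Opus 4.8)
The plan is to translate everything into the geometry of isotropic subspaces. I would identify $X_1 = G/P$ with the variety of maximal totally isotropic $3$-subspaces of the natural module $V$, so that $X_1^g$ is the closed subvariety of $g$-invariant such subspaces; and I would identify $X_2 = G/N$ with the variety of \emph{unordered} pairs $\{U,U'\}$ of complementary maximal totally isotropic subspaces of $V$ (the quotient $G/L$ parametrising the ordered pairs, and $N = N_G(L) = L.2$ interchanging the two members), so that $X_2^g$ is the subvariety of $g$-invariant unordered complementary pairs. This recovers $\dim X_1 = \dim G - \dim P = 21 - 15 = 6$ and $\dim X_2 = \dim G - \dim N = 21 - 9 = 12$.

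For each element $g$ in the list I would decompose $V$ under the action of $g$: when $g$ is semisimple, as the orthogonal sum $V_{1}\perp V_{-1}\perp\bigoplus_{\mu^{2}\ne 1}(V_{\mu}\oplus V_{\mu^{-1}})$, using that $V_{\pm 1}$ are nondegenerate and that the symplectic form pairs $V_{\mu}$ perfectly with $V_{\mu^{-1}}$ (so these are mutually dual totally isotropic subspaces); when $g$ is unipotent, using instead the flag $\ker(g-1)\subseteq\ker(g-1)^{2}\subseteq\cdots$ together with the structure of $V^{g}=[g,V]$ for the relevant classes recalled in Section \ref{ss:classes}. A $g$-invariant subspace meets each summand (resp. each step of the flag) in a subspace, and the condition that it be totally isotropic translates into orthogonality and duality relations among these intersections. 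Each resulting stratum of $X_1^{g}$, resp. of $X_2^{g}$, is then an open subset of a product of ordinary Grassmannians, Lagrangian Grassmannians of the nondegenerate summands, and (for $X_2$) varieties of complementary Lagrangian pairs, whose dimensions are classical; taking the maximum over strata yields the asserted values. For $X_2^{g}$ there is one additional feature: since $g$ has prime order modulo $Z(G)$, it permutes the two members of any fixed pair, and it can interchange them only if $g^{2}$ stabilises both members, i.e. $g^{2}\in Z(G)$; hence an extra ``swap'' stratum appears precisely when $g$ has order $2$ modulo the centre. This is the source of the correction $\e = 2$ in (iv), where $\l^{2}=-1$ forces $g^{2}=-I_{n}$ and the swap stratum is the (two-to-one) image of the dense open locus of Lagrangians $U$ with $U\cap gU = 0$. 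Likewise the correction $\e = 3$ in (vi) arises from the nondegenerate $g$-fixed summand $V_{1}$ when $1$ is an eigenvalue: it contributes a factor equal to the variety of complementary Lagrangian pairs of the symplectic space $V_{1}$, on which $g$ acts trivially. Finally, in the cases where $g$ lies in no conjugate of the Levi $L$ — which, by the even-multiplicity criteria of Section \ref{ss:classes}, happens for $g = (J_{2}^{3})$ and $g=(J_{2},J_{1}^{4})$ — the variety $X_2^{g}$ is empty, giving the corresponding assertion in (ii).

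Throughout, I would use the identity $\dim Y - \dim Y^{g} = \dim g^{G} - \dim(g^{G}\cap H)$ from \eqref{e:fpr}, applied with $(Y,H) = (X_1,P)$ and with $(Y,H)=(X_2,N)$, as an independent check of each value: $\dim g^{G}$ is read off from the centralizer dimension of $g$, and $\dim(g^{G}\cap P)$, $\dim(g^{G}\cap N)$ from the orbit structure of $g$ inside the parabolic and its normalizer, again via Section \ref{ss:classes}. The step I expect to be the main obstacle is the exceptional casework (iv)--(vi): correctly deciding when the extra component appears (the $\l^{2}=-1$ and ``$1$ is an eigenvalue'' dichotomies), computing the dimension of that component, and checking whether or not it dominates the generic stratum. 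These points depend on small-rank coincidences special to $\Sp_{6}(k)$ and on the dimensions of isotropic Grassmannians in low dimension, and are where errors are easiest to make; everything else is routine linear algebra once the stratification is in place.
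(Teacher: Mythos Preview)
Your proposal is correct and in fact more detailed than the paper's own argument. The paper simply invokes the identity
\[
\dim Y - \dim Y^{g} \;=\; \dim g^{G} - \dim(g^{G}\cap H)
\]
from \eqref{e:fpr} with $(Y,H)=(X_1,P)$ and $(Y,H)=(X_2,N)$, says ``we omit the calculations'', and records only the single structural observation you also isolated: if $g\in N$ then either $g\in L$ or $g^{2}$ is the central involution. You propose instead to work directly with the geometry of isotropic subspaces, stratifying $X_1^{g}$ and $X_2^{g}$ by the intersection pattern with the eigenspace (or Jordan-flag) decomposition of $V$, and to use the dimension formula only as a cross-check. Both routes reach the same numbers; yours makes explicit where each contribution comes from (in particular the ``swap'' stratum in (iv) when $\lambda^{2}=-1$, and the extra factor from the nondegenerate fixed summand $V_{1}$ in (vi)), while the paper's is terser but opaque. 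Your self-identified obstacle is exactly right: the casework in (iv)--(vi) is where the bookkeeping is delicate, but it requires no idea beyond what you have already described.
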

   
   \begin{proof}  
   This is a straightforward computation using \eqref{e:fpr}, which states that if $Y = G/H$ is a homogeneous space, then 
   \[
   \dim Y - \dim Y^g = \dim g^G - \dim (g^G \cap H)
   \]
   for all $g \in H$. We omit the calculations. Note that if $g \in N$, then either $g \in L$, or $g^2$ is the central involution in $G$.
   \end{proof} 
   
We can now prove the main result for $G = \Sp_6(k)$ with $p \ne 2$ (note that the elements in parts (ii) and (iii) are described up to conjugacy and multiplication by $-1$).

   \begin{thm} \label{l:sp6odd} 
   Suppose $m=3$, $p \ne 2$ and the $x_i$ in \eqref{e:X} have prime order modulo $Z(G)$. If 
$\sum_i  d_i \leqs 6(r-1)$ then $\Delta$ is empty if and only if one of the following holds (up to ordering):
\begin{itemize}\addtolength{\itemsep}{0.2\baselineskip}
\item[{\rm (i)}] $r=2$ and $x_1,x_2$ are quadratic. 
   \item[{\rm (ii)}] $r=2$, $x_1 = (-I_2,I_4)$ and $x_2 = (J_3^2)$ or $(I_2, \l I_2, \l^{-1}I_2)$ for some $\l \in k^{\times} \setminus \{\pm 1\}$. 
   \item[{\rm (iii)}] $r=3$ and $x_i = (-I_2,I_4)$ for all $i$. 
   \end{itemize}
   \end{thm}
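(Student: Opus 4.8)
The plan is to establish the two implications separately, proving the ``only if'' (genericity) direction by an induction in the spirit of Theorems~\ref{t:so2neven} and~\ref{t:oddorthogonal}, working inside the Levi subgroups of the stabilisers of a totally isotropic $1$-space and of a maximal totally isotropic $3$-space of $V$.

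For the ``if'' direction one checks that $\Delta$ is empty in each of (i)--(iii). Case~(i) is immediate from Lemma~\ref{l:quadratic}. For case~(ii), a direct computation of centraliser dimensions gives $\dim X = \dim C_1 + \dim C_2 = 22 < 24 = \dim G + {\rm rk}\,G$ in both subcases; since $p \ne 2$ the Lie algebra $\mathfrak{sp}_6$ is simple, so Corollary~\ref{c:adjreducible} shows $G(x)$ is reducible on $V$ for every $x \in X$. Case~(iii) is the subtle one, since here $\dim X = 24$ and the adjoint bound is no longer strict; instead I would use the $14$-dimensional irreducible module $L(\omega_3)$, which for $p \ne 2$ may be identified with the kernel of the contraction $\wedge^3 V \to V$. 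A short eigenspace computation shows that an involution of type $(-I_2, I_4)$ acts on $L(\omega_3)$ with a $10$-dimensional eigenspace, so for three such elements $\sum_i \alpha_{L(\omega_3)}(x_i) = 30 > 28 = (r-1)\dim L(\omega_3)$; hence the largest eigenspaces meet in a positive-dimensional subspace, so every $G(x)$ preserves a common eigenline of $L(\omega_3)$ and is therefore reducible on this irreducible module, and $\Delta$ is empty.

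For the genericity direction, assume $\sum_i d_i \le 6(r-1)$ and that the $x_i$ are not as in (i)--(iii). By Lemma~\ref{l:infinite} we may discard tuples with $G(x)$ finite, and by Lemma~\ref{l:closures} it suffices to produce one tuple in $\bar X$ that is Zariski dense in $G$; passing to closures, we may take each $x_i$ to be semisimple of prime order or unipotent, and reduce unipotent classes to minimal ones. The argument has two ingredients. First, every element of $\Sp_6(k)$ fixes a totally isotropic $1$-space, so we may embed each $x_i$ (or a suitable element of its closure) into $P_1 = QL$ with $L' \cong \Sp_4(k)$ acting on the $4$-dimensional section $W_0 = v^{\perp}/\langle v\rangle$; outside the exceptional configurations one arranges $\sum_i d_i' \le 4(r-1)$ for the induced eigenspace dimensions $d_i'$ on $W_0$, and Theorem~\ref{t:sp4odd} (traced through its own exceptional list) then yields a choice of elements whose closure contains $\Sp_4(k)$, so by Lemma~\ref{l:submodules-conn} the group $G(x)^0$ generically has a composition factor of dimension at least $4$ on $V$. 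Secondly, embedding into the Siegel parabolic $P_3 = QL$ with $L \cong \GL_3(k)$ and invoking Gerhardt's Theorem~\ref{t:sl} for $\SL_3(k)$ yields a tuple for which $G(x)^0$ generically contains an element that is regular semisimple (hence strongly regular) on $V$. Combining the $4$-dimensional composition factor with Lemmas~\ref{l:1-spaces} and~\ref{l:nondeg2} then eliminates every proper $G(x)^0$-invariant subspace of $V$: a totally isotropic $2$- or $3$-space, or an imprimitive decomposition, would force all composition factors to have dimension at most $3$, while a $1$-space, a nondegenerate $2$-space, or a nondegenerate $4$-space is excluded by the bound on $\sum_i d_i$. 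Hence $G(x)^0$ is generically irreducible on $V$, and by Lemma~\ref{l:sr}(iii) either $\Delta$ is nonempty, or $G(x)$ is generically contained in a maximal closed maximal-rank subgroup $H$; but then $H^0 \supseteq G(x)^0$ acts irreducibly on $V$, which is impossible for a proper connected maximal-rank subgroup of $\Sp_6(k)$, so $H = G$ and $\Delta$ is nonempty. In the boundary cases where $\sum_i d_i' \le 4(r-1)$ fails --- so that only the $3$-dimensional composition factor from the $\GL_3$-Levi is available and $G(x)$ could a priori preserve a totally isotropic $3$-space or a complementary pair of such --- one rules these out directly via Lemma~\ref{l:basic} and the fixed-point dimensions recorded in Lemma~\ref{l:fpsp6}, the point being that $\sum_i \dim X_1^{x_i} < (r-1)\dim X_1$ and $\sum_i \dim X_2^{x_i} < (r-1)\dim X_2$ hold precisely outside the configurations in (ii) and (iii).

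The main obstacle, as in the sibling theorems of Section~\ref{s:ort}, is the bookkeeping in this last step: arranging the parabolic and the embeddings of the $x_i$ so that the inductive hypotheses apply with exactly the exceptional set (i)--(iii) emerging (together with the ``trivially reducible'' tuples with $\dim X < \dim G + {\rm rk}\,G$, which must be checked to lie among (i)--(iii)); handling the reductions to root elements via closures without inadvertently landing in an excluded case, which will require the ``switch parabolics'' manoeuvre of Theorem~\ref{t:so2neven}; and verifying the finitely many fixed-point inequalities coming from Lemma~\ref{l:fpsp6} for the remaining boundary tuples, especially those with $r = 2$.
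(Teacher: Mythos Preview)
Your proposal is broadly sound and shares the paper's inductive strategy via the two parabolics $P_1$ and $P_3$, but there are genuine differences worth noting in both directions.

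For the ``if'' direction your arguments are cleaner than the paper's. The paper treats (ii) and (iii) uniformly by a dominance argument: it takes $Y$ an irreducible component of $X\cap L^r$ (with $L=\GL_3$ the Siegel Levi), uses Theorem~\ref{t:sl} to show $\SL_3(k)\leqslant G(y)$ for generic $y\in Y$, and then a fiber-dimension count using Lemma~\ref{l:fpsp6} shows the morphism $G\times Y\to X$ is dominant, whence $G(x)$ generically preserves a totally isotropic $3$-space. Your adjoint-module bound for (ii) and the $L(\omega_3)$ eigenspace count for (iii) are valid shorter alternatives (for (iii) one checks that $L(\omega_3)$ is indeed $14$-dimensional irreducible for every $p\ne 2$, which it is).

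For the ``only if'' direction the paper's endpoint differs from yours. Rather than producing strongly regular elements and reducing to maximal-rank subgroups via Lemma~\ref{l:sr}(iii), the paper first proves a Claim: $\Delta$ is nonempty whenever $G(x)$ is generically \emph{primitive} on $V$. This is established by listing the maximal positive-dimensional primitive closed connected subgroups of $\Sp_6(k)$ --- namely $A_1A_1=\SO_3(k)\otimes\Sp_2(k)$ and, for $p\ne 3,5$, an irreducible $A_1$ --- and ruling each out via Lemma~\ref{l:basic}. The subsequent case analysis (Cases 1--3) then aims only at primitivity of $G(x)$, a weaker target than irreducibility of $G(x)^0$; this sidesteps the Clifford-type step implicit in your sketch (that a $4$-dimensional $G(x)^0$-composition factor together with irreducibility of $G(x)$ forces $G(x)^0$ itself irreducible, since no proper divisor of $6$ is at least $4$). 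Your max-rank route can be made to work too, but be aware that the $P_3$ ingredient is not always available: a transvection $(J_2,J_1^4)$ acts trivially on every invariant totally isotropic $3$-space (if $e\in W$ and some $w\in W$ has nonzero $f$-component then $(e,w)\ne 0$), so its image in the Siegel Levi is trivial. The paper's Case~2 handles this explicitly by noting such an $x_i$ forces $r\geqslant 3$ and then passing to a product class in $C_2C_3$; this is precisely the ``switch parabolics'' bookkeeping you anticipate, and it is where most of the real work lies.
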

   
   \begin{proof}    
   First we show that $\Delta$ is empty in the cases described in parts (i)-(iii). This is clear in (i) (see Lemma \ref{l:quadratic}).  In cases (ii) and (iii) we claim that $G(x)$ generically fixes a totally isotropic $3$-space (and so every $G(x)$ fixes a totally isotropic $3$-space).
   
   To see this, let $Y$ be an irreducible component of $X \cap L^r$ of maximal dimension, where $L$ is a Levi subgroup of the stabilizer of a totally singular $3$-space.
   By applying Theorem \ref{t:sl}, we deduce that $G(y)$ contains $L'=\SL_3(k)$ for generic $y \in Y$. Next consider the map 
   $f: G \times  Y \to X$ given by $f(g,y)=y^g$, where $y^g = (y_1^g, \ldots, y_r^g)$ for $y = (y_1, \ldots, y_r)$. Then for generic $y \in Y$, we have $f^{-1}(y) =\{(g,y) \,:\, g \in L\}$ since $G(y)$ contains $\SL_3(k)$.  It follows that a generic fiber of $f$ is $9$-dimensional and thus Lemma \ref{l:fpsp6} implies that $f$ is dominant and thus $X_L$ is open and dense in $X$. The claim follows. 
   
        It remains to show that $\Delta$ is nonempty in the remaining cases. The following claim will play a key role in the proof.  

\vs

\noindent \emph{Claim. $\Delta$ is nonempty if $G(x)$
      is generically primitive on $V$.}
      
      \vs
      
To prove the claim, let us assume $G(x)$ is generically primitive and note that this is equivalent to assuming that $G(x)^0$ is generically irreducible on $V$.
    Now the only maximal primitive positive dimensional closed connected subgroups of $G$ are $A_1A_1 = \SO_3(k) \otimes \Sp_2(k)$ and $A_1$ (with $p \ne 3,5$ in the latter case). Let $H = A_1A_1$ and set $\O = G/H$, so $\dim \O = 15$. Then $\dim (C_i \cap H) \leqs 4$, with $\dim (C_i \cap H) = 2$ if $x_i$ is an involution. In addition, we note that $C_i \cap H$ is empty if $x_i = (J_2,J_1^4)$ and by applying \eqref{e:fpr} we conclude that $\dim \Omega^g  \leqs  7$ for all noncentral $g \in G$.  Therefore, Lemma \ref{l:basic} implies that $G(x)$ is not generically contained in a conjugate of $H$. An even easier argument handles the case $H = A_1$ since $\dim (C_i \cap H) \leqs 2$ for all $i$. This justifies the claim. 
      
      We now partition the remainder of the proof into several cases. 

\vs

\noindent \emph{Case 1. $x_1$ is either semisimple with at least $4$ distinct eigenvalues, or unipotent with Jordan form $(J_6)$, $(J_4,J_2)$ or $(J_4,J_1^2)$.}

\vs

Let $P=QL$ be the stabilizer of a $1$-space $\la v \ra$, where $Q$ is the unipotent radical and $L$ is a Levi subgroup with $L' = \Sp_4(k)$. We may assume each $x_i$ is contained in $P$ and by applying Theorem \ref{t:sp4odd} we see that there exists $y \in X$ such that $G(y)$ is contained in $P$ and it induces $\Sp_4(k)$ on the nondegenerate $4$-space $v^{\perp}/\la v \ra$. By Lemma \ref{l:submodules-conn}, $G(x)^0$ has a composition factor on $V$ of dimension at least $4$ for generic $x \in X$. Now the bound $\sum_i d_i \leqs 6(r-1)$ implies that $G(x)$ does not generically fix a $1$-space nor a nondegenerate $2$-space (see Lemmas \ref{l:1-spaces} and \ref{l:nondeg2}) and thus $G(x)$ is generically irreducible and has rank $2$ or $3$.  Since $G(x)^0$ generically has a composition factor of dimension at least $4$, it follows that $G(x)$ is generically primitive and this implies that $\Delta$ is nonempty by the above claim.

\vs

\noindent \emph{Case 2. $x_1 = (J_3^2)$ or $(I_2, \l I_2, \l^{-1}I_2)$ for some $\l \in k^{\times}\setminus \{\pm 1\}$.}

\vs

Let $P = QL$ be the stabilizer of a totally isotropic $3$-space $W$ in $V$. Here $Q$ is the unipotent radical and $L = \GL(W)$ is a Levi subgroup fixing a decomposition $V = W \oplus W'$, where $W'$ is a complementary totally isotropic $3$-space. Without loss of generality, we may assume $x_1$ is a regular semisimple or unipotent element of $L$. 

Suppose $x_2 \ne (J_2,J_1^4)$. Then by replacing $x_2$ by a suitable conjugate, we may assume that  $x_2 \in P$ and $x_2Q \in P/Q$ is nontrivial, so by Theorem \ref{t:sl} there exists $x \in X$ such that $P' \leqs G(x)^0Q$. As a consequence, for generic $x \in X$,  the smallest composition factor
of $G(x)^0$ on $V$ is at least $3$-dimensional.  Therefore, either 
\begin{itemize}\addtolength{\itemsep}{0.2\baselineskip}
\item[(a)] $G(x)^0$ is generically irreducible on $V$; or 
\item[(b)] $G(x)^0$ is contained in a conjugate of $P$ for all $x \in X$.
\end{itemize}
Now assume $x_2 = (J_2,J_1^4)$, in which case the inequality $\sum_id_i \leqs 6(r-1)$ implies that $r \geqs 3$. If $x_3 \ne (J_2,J_1^4)$ then the previous argument implies that (a) or (b) holds. On the other hand, if $x_3 = (J_2,J_1^4)$ then we can find a noncentral element $y \in C_2C_3$ of prime order with $y \ne (J_2,J_1^4)$ and once again we deduce that (a) or (b) holds. 

Recall that if (a) holds then $G(x)$ is generically primitive and we conclude that $\Delta$ is nonempty. Therefore, it remains to eliminate case (b). That is, we need to identify an element $x \in X$ such that $G(x)^0$ does not fix a totally isotropic $3$-space.

Suppose $x_2$ is not an involution. For $i=1,2$ we can choose $y_i$ in the closure of $C_i$ such that the closure of $\la y_1, y_2 \ra$ induces $\Sp_2(k)$ on some nondegenerate $2$-space (note that if $x_1$ is unipotent, then we may assume $y_1$  acts nontrivially on a nondegenerate $2$-space). If we now take a tuple $x \in X$ with $y_1$ and $y_2$ in the first and second coordinates, then 
$G(x)^0$ does not fix a totally isotropic $3$-space and we conclude that $\Delta$ is nonempty. 

Finally, let us assume each $x_i$ is an involution for $i \geqs 2$. If $r \geqs 3$, then we can repeat the previous argument, working with a noncentral element $y \in C_2C_3$ with $y^2 \ne 1$. And if $r=2$ then we are in case (ii) in the statement of the theorem.

\vs

To complete the proof of the theorem, we may assume $d_i \geqs 3$ for all $i$.

\vs

\noindent \emph{Case 3.  $d_i \geqs 3$ for all $i$.}

\vs 

First assume that $d_1=d_2=3$. Then $x_1$ and $x_2$ are quadratic, so we may assume $r \geqs 3$. We can choose $y_i \in C_i$ so that the Zariski
closure of  $\langle y_1, y_2 \rangle$ contains $\SL_2(k)^3$ and so contains a regular semisimple element of prime order. By applying Lemma \ref{l:products}, we can now complete the argument as in Case 1.  

Next assume $d_1=3$ and $d_i \geqs 4$ for all $i \geqs 2$, in which case the condition $\sum_id_i \leqs 6(r-1)$ implies that $r \geqs 3$. Note that if $i \geqs 2$ then $x_i$ is either an involution, or a unipotent element with Jordan form $(J_2^2,J_1^2)$ or $(J_2,J_1^4)$, or a semisimple element of the form $(I_4,\l I_1,\l^{-1}I_1)$ for some $\l \in k^{\times}\setminus \{\pm 1\}$. In each case we can find conjugates $y_i \in C_i$ for $i \geqs 2$ that act nontrivially on a nondegenerate $4$-space $W$ and trivially on the nondegenerate $2$-space $W^{\perp}$.     In addition, we can choose a conjugate $y_1$ of $x_1$ so that its largest eigenspace on $W$ is $2$-dimensional. 
 
Suppose $r \geqs 4$. Since $x_1$ is not an involution, Theorem \ref{t:sp4odd} implies that we can choose $y \in X$ such that $G(y)$ induces $\Sp(W)$ on $W$.  
Then $G(x)^0$ is either generically irreducible on $V$, or generically it acts irreducibly on a $4$-dimensional nondegenerate subspace, whence the same is true for $G(x)$.
Since $G(x)$ does not generically preserve a $1$-space nor a nondegenerate $2$-space, we deduce that $G(x)$ is generically primitive and the result follows. If $r=3$, then the same argument applies unless $x_2$ and $x_3$ are involutions. But in this case we observe that  $C_2C_3$ contains a semisimple element of the form $(I_2, \l I_2, \l^{-1}I_2)$ and thus Case 2 applies.  

Finally, let us assume $d_i \geqs 4$ for all $i$. First assume that $r = 3$, in which case the bound $\sum_id_i \leqs 6(r-1)$ implies that $d_i=4$ for all $i$. If each $x_i$ is an involution then we are in case (iii), so let us assume $x_1$ is not an involution.   We can choose $y$ so that $G(y)^0$ induces
a subgroup containing $\SL_3(k)$ on a totally isotropic $3$-space. Then either $G(x)^0$ is generically irreducible and the result follows, or every $G(x)^0$ preserves a totally isotropic $3$-space. Let us assume we are in the latter situation and let $\Omega$ be the variety of totally isotropic $3$-spaces, so $\dim \Omega = 6$. Since $d_i \geqs 4$, it follows that no  $x_i$ interchanges two spaces in $\O$ and thus every $G(x)$ has a fixed point on $\Omega$. However, Lemma \ref{l:fpsp6} gives $\dim \Omega^{x_i} \leqs 4$, with equality if and only if $x_i$ is an involution, whence
$\sum_{i} \dim \Omega^{x_i} < (r-1)\dim \Omega$
and Lemma \ref{l:basic} implies that $G(x)$ does not generically fix a totally isotropic $3$-space. This is a contradiction. An entirely similar argument applies if $r \geqs 4$ (including the case where $r=4$ and each $x_i$ is an involution).  
\end{proof} 
     
In order to handle the general case, we need the following result concerning the action of a symplectic group on the variety of maximal totally isotropic subspaces of the natural module. Here we allow $p=2$.

\begin{lem}\label{l:spti} 
Suppose $G = \Sp(V)= \Sp_{n}(k)$, where $n = 2m$, $m \geqs 1$.  Let $Y = G/P$, where $P=QL$ is the stabilizer of a totally isotropic $m$-space $W$, with unipotent radical $Q$ and Levi subgroup $L = \GL(W)$. Then 
\[
\dim Y^g  =  \dim C_Q(g)  =  \dim \Sym^2(W)^g
\]
for all unipotent elements $g \in L$. 
\end{lem}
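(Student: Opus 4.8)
The plan is to prove the two equalities separately, deriving the second from the structure of the Siegel parabolic and the first from a short dimension count at the base point combined with a known equidimensionality property (with a self-contained alternative via the formula \eqref{e:fpr}).

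\emph{The second equality.} I would begin by recalling the standard description of the unipotent radical $Q$ of $P$: it is abelian and, as a $kL$-module with $L = \GL(W)$, one has $Q \cong \Sym^2(W)$ (in characteristic $2$ this is the symmetric square $\Sym^2(W)$ in the sense used in Lemma \ref{l:tensor}, i.e.\ the module on which a regular unipotent acts with $\lceil a/2 \rceil + \varepsilon$ Jordan blocks; this is exactly the module afforded by $Q$). Since $g \in L$, the conjugation action of $g$ on $Q$ is carried by this isomorphism to the linear action of $g$ on $\Sym^2(W)$, and the fixed-point subgroup $C_Q(g)$ corresponds to the fixed subspace $\Sym^2(W)^g$. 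Hence $\dim C_Q(g) = \dim \Sym^2(W)^g$ with no further work.

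\emph{The first equality.} For the lower bound I would use the big cell: the opposite unipotent radical $Q^-$ acts on $Y = G/P$ with open dense orbit $C = Q^-\cdot[P]$, and the orbit map $Q^- \to C$ is an isomorphism of varieties which is equivariant for the conjugation action of $L$ (because $L$, hence $g$, normalises $Q^-$ and fixes $[P]$). Therefore $Y^g \cap C = C^g \cong C_{Q^-}(g)$, a linear space of dimension $\dim C_{Q^-}(g) = \dim C_Q(g)$ (as $Q^-$ and $Q$ are dual $kL$-modules, their $g$-fixed spaces have equal dimension), so $\dim Y^g \geqs \dim C_Q(g)$. For the upper bound, note $W = [P] \in Y^g$, and the tangent space to the fixed-point scheme $Y^g$ at $W$ is $(T_W Y)^g = (\mathfrak{g}/\mathfrak{p})^g$; since $\mathfrak{g}/\mathfrak{p} \cong \mathfrak{q}^-$ as $L$-modules and $g \in L$, this has dimension $\dim\mathfrak{c}_{\mathfrak{q}^-}(g) = \dim C_Q(g)$. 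Thus the component of $Y^g$ through $W$ has dimension at most $\dim C_Q(g)$. To conclude $\dim Y^g \leqs \dim C_Q(g)$ I would invoke the equidimensionality of $(G/P)^g$ for unipotent $g$ (a ``Lagrangian Springer fibre'' here), which is the Spaltenstein-type statement available in \cite{Sp}; this forces every component of $Y^g$ to have the same dimension as the one through $W$.

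\emph{Main obstacle and an alternative.} The delicate point is the upper bound: justifying the equidimensionality of $(G/P)^g$ in all characteristics, in particular $p=2$ (which is relevant since the lemma is used for symplectic groups in characteristic $2$). If one prefers to avoid this, here is a self-contained alternative. By \eqref{e:fpr}, $\dim Y^g = \dim Q - \dim g^G + \dim(g^G \cap P)$, since $\dim(G/P) = \dim Q$. Every unipotent element of $P$ has the form $q\ell$ with $q \in Q$, $\ell \in L$ unipotent, and it fixes $W$, acting as $\ell$ on $W$ and as the contragredient of $\ell$ on $V/W$; this forces the Jordan type of $\ell$ on $W$ to be dominated by that of $g$, and a comparison of $P$-orbit dimensions shows that $g^P$ (where $\ell \sim_L g$, $q=1$) is the largest $P$-class in $g^G\cap P$. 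Using $C_P(g) = C_Q(g)\rtimes C_L(g)$ (valid since $g\in L$), one gets $\dim(g^G\cap P) = \dim g^P = \dim g^L + \dim[g,Q]$; substituting and simplifying with $\dim G = 2\dim Q + \dim L$ reduces the desired equality to the centralizer identity $\dim C_G(g) = \dim C_L(g) + 2\dim C_Q(g)$. This identity is transparent when $p$ is good for $G$ (so $C_G(g)$ is smooth): the $\mathbb{Z}$-grading $\mathfrak{g} = \mathfrak{q}^- \oplus \mathfrak{l} \oplus \mathfrak{q}$ is $\mathrm{Ad}(g)$-stable, and taking $g$-fixed points gives $\dim\mathfrak{c}_{\mathfrak{g}}(g) = 2\dim C_Q(g) + \dim C_L(g)$. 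For $p=2$ one observes that the two characteristic-$2$ corrections — the jump in $\dim C_{\Sp(V)}(g)$ and the jump in $\dim \Sym^2(W)^g$ — are precisely matched; this is exactly the consistency already exploited in the proof of Lemma \ref{l:tensor}(iii) (via Lemma \ref{l:oinsp}), so the identity, and hence the lemma, holds in all characteristics.
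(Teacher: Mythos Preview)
Your arguments for the second equality $\dim C_Q(g)=\dim\Sym^2(W)^g$ and for the lower bound $\dim Y^g\geqs\dim C_Q(g)$ are correct and are essentially the paper's (the paper phrases the lower bound as ``$Q$ acts simply transitively on the totally isotropic complements to $W$'', which is the same open cell).

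For the upper bound, both of your routes differ from the paper and each has a real problem. In your alternative via \eqref{e:fpr}, the crucial step ``$g^P$ is the largest $P$-class in $g^G\cap P$'' is not proved. You correctly show that any $h=q\ell\in g^G\cap P$ has $\ell$ of Jordan type $\mu\leqs\pi$ on $W$, but this does not by itself bound $\dim h^P$ (or $\dim(g^G\cap P)$): one must also control the fibre $\{q\in Q:q\ell\in g^G\}$, and you do not. In fact, once you grant the centralizer identity $\dim C_G(g)=\dim C_L(g)+2\dim C_Q(g)$, the inequality $\dim(g^G\cap P)\leqs\dim g^P$ is, via \eqref{e:fpr}, \emph{equivalent} to the upper bound $\dim Y^g\leqs\dim C_Q(g)$ you are trying to establish, so it cannot serve as an intermediate step without an independent argument. (Separately, your justification of the centralizer identity for $p=2$ appeals to Lemma~\ref{l:tensor}(iii), but that lemma treats only $g=J_a$; the general $g$ case would need more.)

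Your first approach is cleaner but rests on the equidimensionality of the Spaltenstein variety $(G/P)^g$. This is known for $\GL_n$ and for Borel subgroups, but it is not a general fact for arbitrary parabolics in other classical types, and \cite{Sp} does not obviously supply it for the Siegel parabolic in characteristic $2$. You rightly flag this, but without it the tangent-space bound at $W$ only controls the component through $W$, which is exactly what the big-cell argument already gives.

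The paper avoids both issues by a direct induction on $m$. For a $g$-fixed Lagrangian $W''$ meeting $W$ nontrivially, pick a $g$-fixed line $U\subseteq W''\cap W$ and pass to $U^\perp/U\cong k^{n-2}$; by induction $\dim Y(U)^g=\dim C_{Q_1}(g)$, and Lemma~\ref{l:tensor} computes $\dim C_Q(g)-\dim C_{Q_1}(g)$ in terms of the number $c$ of Jordan blocks of $g|_W$ of size $\geqs\ell$ (where $U$ sits at the bottom of a length-$\ell$ chain). Since the admissible lines $U$ for a given $\ell$ form a $(c-1)$-dimensional family, the union over $U$ has dimension at most $\dim C_Q(g)$, giving the upper bound in all characteristics.
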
 

\begin{proof}   
First observe that we may identify $Q$ with the $kL$-module $\Sym^2(W)$, so the equality $\dim C_Q(g)  =  \dim \Sym^2(W)^g$ is clear.
We proceed by induction on $m$, noting that the case $m=1$ is trivial. Now assume $m \geqs 2$ and observe that $\dim Y^g \geqs \dim C_Q(g)$ since $Q$ acts simply transitively on the set of totally isotropic complements to $W$ in $V$.

Let $U$ be a $1$-dimensional $g$-invariant subspace of $W$ and let  $Y(U)$ 
be the set of spaces in $Y$ containing $U$. Then $U^{\perp}/U$ is a nondegenerate $(n-2)$-space and we may view $g$ as an element in a Levi subgroup of the corresponding parabolic subgroup $P_1 = Q_1L_1$ of $\Sp(U^{\perp}/U)$ (namely, the stabilizer of the maximal totally isotropic space $W/U$). Then there exists a positive integer $\ell$ such that $g = J_{\ell} \oplus M$ as an element of $L = \GL(W)$ and $g = J_{\ell-1} \oplus M$ as an element of $L_1 = \GL(W/U)$. By induction, the dimension of the fixed space of $g$ acting on $Y(U)$ is $\dim C_{Q_1}(g)$, which coincides with the dimension of the fixed space of $g$ acting on $\Sym^2(W/U)$.   

By applying Lemma \ref{l:tensor}, it follows that if $\ell$ is odd and $p \ne 2$, then 
\[
\dim C_Q(g) - \dim C_{Q_1}(g)  =  \frac{1}{2}(\ell +1) - \frac{1}{2}(\ell-1) + c = c + 1,
\]
where $c$ is the number of Jordan blocks of $g$ on $M$ that have size at least $\ell$.
Similarly, if $\ell$ is even and $p \ne 2$, then the same argument shows that the difference in centralizer dimensions is $c$.
If $p = 2$, then Lemma \ref{l:tensor} still implies that the difference in centralizer dimensions is at least $c$.  

Now if the action of $g$ on $W$ has the form as above, then $U$ is contained in $W^{(g-1)^{\ell-1}}$ and the variety of such $1$-spaces (in the projective space of $W$) has dimension $c -1$. Therefore, the dimension of the fixed space of $g$ on $Y'$, the variety of totally isotropic subspaces which intersect $W$ nontrivially, is at most $\dim C_Q(g)$ and the result follows.
\end{proof} 

     We also need some fixed point space computations for $G = \Sp_8(k)$. 
     Recall that $\a(g)$ is the maximal dimension of an eigenspace of $g$ on the natural module $V$.  
     
     \begin{lem} \label{l:fpsp8} 
     Suppose $m=4$, $p \ne 2$ and $\O$ is the variety of totally isotropic 
     $4$-spaces in $V$, so $\dim \O = 10$. Then
     \[
     \dim \Omega^g = \left\{\begin{array}{ll}
     7 & \mbox{if $g = (-I_2,I_6)$} \\
     6 & \mbox{if $g = (-I_4,I_4)$} \\
     4 & \mbox{if $g = (J_3^2,J_1^2)$}\\
     3 & \mbox{if $g = (J_3^2,J_2)$}
     \end{array}\right.
     \]
     \end{lem}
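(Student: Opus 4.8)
The proof will be a case-by-case computation of $\dim \Omega^g$, and in every case the underlying tool is the fixed-point formula \eqref{e:fpr}: writing $\Omega = G/P$ with $P = QL$ the stabilizer of a fixed totally isotropic $4$-space $W$ (so that $\dim P = \dim \Sym^2(W) + \dim\GL_4(k) = 10 + 16 = 26$ and $\dim\Omega = 36 - 26 = 10$), one has $\dim\Omega - \dim\Omega^g = \dim g^G - \dim(g^G \cap P)$ whenever $g$ is conjugate into $P$. This hypothesis is satisfied by all four elements, since a semisimple element lies in a maximal torus of $L$ and a unipotent element of $\Sp_8(k)$ lies in a Borel subgroup. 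For the two semisimple elements it is cleanest to bypass any class-dimension bookkeeping and describe $\Omega^g$ directly.

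For $g = (-I_2, I_6)$ and $g = (-I_4, I_4)$ I would use the eigenspace decomposition $V = V_+ \perp V_-$ into the $(+1)$- and $(-1)$-eigenspaces, each a nondegenerate symplectic subspace: a subspace $U$ is $g$-invariant precisely when $U = (U\cap V_+)\oplus(U\cap V_-)$, and $U$ is then a maximal totally isotropic subspace exactly when $U\cap V_\pm$ is totally isotropic in $V_\pm$ and the two dimensions sum to $4$. When $g = (-I_2,I_6)$ the factor $V_-\cong\Sp_2(k)$ contains only isotropic lines, forcing the split $(3,1)$; thus $\Omega^g$ is the product of the variety of maximal totally isotropic subspaces of $\Sp_6(k)$, of dimension $21 - 15 = 6$, with $\mathbb{P}^1$, giving $\dim\Omega^g = 7$. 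When $g = (-I_4,I_4)$ both factors are $\Sp_4(k)$, each with maximal isotropics of dimension $2$, forcing the split $(2,2)$; so $\Omega^g$ is the product of two copies of the variety of maximal isotropics of $\Sp_4(k)$, each of dimension $10 - 7 = 3$, giving $\dim\Omega^g = 6$.

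For $g = (J_3^2, J_1^2)$ every Jordan block size occurs with even multiplicity, so $g$ is conjugate into the Levi $L = \GL(W) = \GL_4(k)$, acting on $W$ with Jordan form $(J_3, J_1)$. Here Lemma \ref{l:spti} applies directly and gives $\dim\Omega^g = \dim\Sym^2(W)^{(J_3,J_1)}$, which is the number of Jordan blocks of $J_3\oplus J_1$ on $\Sym^2(W)$. Decomposing $\Sym^2(J_3\oplus J_1) = \Sym^2(J_3)\oplus(J_3\otimes J_1)\oplus\Sym^2(J_1)$ and invoking Lemma \ref{l:tensor} (with $p\ne 2$, so that the exceptional term in part (iii) of that lemma does not occur) yields $2 + 1 + 1 = 4$ blocks, hence $\dim\Omega^g = 4$.

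The remaining case $g = (J_3^2, J_2)$ is the one I expect to be the main obstacle, since the part $2$ occurs with odd multiplicity, so $g$ is not conjugate into a Levi $\GL_4(k)$ and Lemma \ref{l:spti} is unavailable. I see two routes. The first uses \eqref{e:fpr} head-on: the standard centralizer formula for unipotent classes in $\Sp_{2n}(k)$ in good characteristic (see \cite{LS}) gives $\dim C_{\Sp_8(k)}(g) = 12$, hence $\dim g^G = 24$, and one then has to show $\dim(g^G\cap P) = 17$ by enumerating the finitely many $P$-conjugacy classes meeting $g^G$ — these are indexed by the possible Jordan types of $g$ on $W$, namely $(3,1)$, $(2,2)$ and $(2,1,1)$ — and computing each of their dimensions via their centralizers in $P = QL$. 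The second route mimics the stratification in Lemma \ref{l:so9}: one first checks that the $3$-dimensional fixed space $V^g$ is totally isotropic (each Jordan block contributes a fixed vector, and $V^g\subseteq{\rm im}(g-1) = (V^g)^\perp$), then stratifies $\Omega^g$ by $d := \dim(U\cap V^g)$, which equals the number of Jordan blocks of $g|_U$ and is therefore forced to lie in $\{2,3\}$ (the value $d=4$ would force $U\subseteq V^g$, impossible since $\dim V^g = 3$, and $d=1$ would require a Jordan block of size $4$, which $g$ does not have); the stratum $d = 3$, where $V^g\subseteq U$ and $U/V^g$ is an invariant line in the $2$-dimensional symplectic space $(V^g)^\perp/V^g$ on which $g$ acts trivially, has dimension $1$, while the stratum $d = 2$ fibres over the Grassmannian of $2$-planes $T\subset V^g$, of dimension $2$, with $1$-dimensional fibres given by the $g$-invariant maximal isotropic subspaces of the $4$-dimensional symplectic space $T^\perp/T$, contributing dimension $3$; so $\dim\Omega^g = 3$. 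In either approach the delicate point — and the step I expect to require the most care — is the $d = 2$ stratum (equivalently, the $(2,1,1)$-on-$W$ stratum of $g^G\cap P$): one has to pin down the induced $g$-action on the $4$-dimensional symplectic quotient $T^\perp/T$ for a generic $2$-plane $T\subset V^g$ and count its invariant maximal totally isotropic subspaces.
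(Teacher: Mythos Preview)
Your treatment of the two involutions and of $(J_3^2,J_1^2)$ coincides with the paper's. For $(J_3^2,J_2)$ your second route is a genuinely different decomposition. The paper fixes a reference Lagrangian $W$ with $g|_W = (J_3,J_1)$, singles out the line $U_0 = W^g \cap [g,W]$ (the socle of the $J_3$-block in $W$), and splits $\Omega^g$ according to whether or not $U_0 \subseteq W'$; each piece is then handled by passing to a quotient $\langle v\rangle^\perp/\langle v\rangle$ isomorphic to the natural module for $\Sp_6(k)$ and applying Lemma~\ref{l:spti} there to the element $(J_3^2)$. You instead stratify intrinsically by $d = \dim(U \cap V^g)$, which is cleaner in that no auxiliary $W$ is chosen, and the $d=3$ piece falls out immediately. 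The trade-off is exactly the subtlety you flag: the Jordan type of $g$ on $T^\perp/T$ is not constant across the Grassmannian of $2$-planes in $V^g$. One checks that it is $(J_2^2)$ for every $T$ except the single plane $T_0 = {\rm im}(g-1)^2$ (the span of the two $J_3$-socles inside $V^g$), where it degenerates to $(J_2,J_1^2)$; over that exceptional $T_0$ every $g$-invariant Lagrangian in $T_0^\perp/T_0$ contains the transvection center, which coincides with $V^g/T_0$, so the $d=2$ fibre there is empty rather than $1$-dimensional. Away from $T_0$ the fibre is nonempty of dimension $1$, so the $d=2$ stratum still has dimension $3$ and your conclusion stands. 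Both arguments ultimately feed into Lemma~\ref{l:spti} in a smaller symplectic group; the paper descends to $\Sp_6(k)$, you to $\Sp_4(k)$.
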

     
     \begin{proof} 
     This is clear if $g$ is an involution since any totally isotropic $4$-space fixed by $g$ is of the form $U_1 \perp U_{-1}$, where $U_{\l}$ is a totally isotropic subspace
     of the nondegenerate $\l$-eigenspace of $g$ on $V$. Now assume $g = (J_3^2,J_2)$ or $(J_3^2,J_1^2)$. By replacing $g$ by a suitable conjugate, we may assume $g$ is contained in the stabilizer $P=QL$ of a totally isotropic $4$-space $W$, where $Q$ is the unipotent radical of $P$ and $L$ is a Levi subgroup. If $g = (J_3^2,J_1^2)$ then we may assume $g$ is contained in $L$ and thus Lemma \ref{l:spti} implies that $\dim \Omega^g = \dim C_Q(g) = 4$. 
     
    Finally, let us assume $g = (J_3^2,J_2)$. Without loss of 
     generality, we may assume that $g$ has Jordan form $(J_3,J_1)$ on $W$. Since $g$ does not fix any totally isotropic $4$-space $W'$ with $W \cap W'=0$, it follows that $(W \cap W')^g$ contains a $1$-space $\la v \ra$ for all $W' \in \Omega^g$. 
     
     Let $U = W^g \cap [g,W]$ and note that $\dim U =1$. Then $\Omega^g = \Omega_1 \cup \Omega_2$ is the union of two subvarieties: $\Omega_1$ comprises the totally isotropic $4$-spaces $W'$ with 
     $U \subseteq W'$ and $\Omega_2$ consists of the spaces $W'$ such that $U \cap W' = 0$. It suffices to show that $\dim \O_1 \leqs 3$ and $\dim \O_2 = 3$.  
     
     Suppose $W' \in \O_2$. Then $W'/\la v \ra$ is a totally isotropic $3$-space in the nondegenerate $6$-space $v^{\perp}/\la v \ra$ and $g$ induces a Jordan block of size $3$ on $W/\la v \ra$.  By Lemma \ref{l:spti}, a unipotent element in $\Sp_6(k)$ of the form $(J_3^2)$ has a $2$-dimensional fixed point space on the variety of totally 
     isotropic $3$-spaces of the natural module for $\Sp_6(k)$.   The map $W' \mapsto W' \cap W^g$ defines  a morphism from $\Omega_2$ to the variety of $1$-dimensional subspaces of $W^g$.   Since $\dim W^g = 2$, the latter variety is $1$-dimensional and we conclude that $\dim \Omega_2=3$.
     
     Now assume $W' \in \O_1$. Then $W'/U$ is a $3$-dimensional totally isotropic subspace of $U^{\perp}/U$.
     Note that $g$ acts on $W/U$ with 
     two Jordan blocks and there exists an element $h$ in the closure of $g^P$ that is contained in a corresponding Levi subgroup of $\Sp_6(k)$.
     Then by Lemma \ref{l:spti}, $h$ has a $3$-dimensional fixed space on the variety of totally isotropic $3$-spaces in $U^{\perp}/U$ and so the same is true for $g$. This gives $\dim \Omega_1 \leqs 3$ and thus $\dim \Omega^g=3$ as required.
          \end{proof} 
    
     We are now in a position to establish our main result for symplectic groups with $p \ne 2$. We will apply induction on $m$, noting that special care is required for $m=4$. In the statement, we allow $\l = \mu$ in part (ii).
     
    \begin{thm} \label{t:spodd}  
    Suppose $G = \Sp_{n}(k)$, where $n = 2m$, $m \geqs 4$, $p \ne 2$ and the $x_i$ in \eqref{e:X} have prime order modulo $Z(G)$. If $\sum_i  d_i \leqs n(r-1)$ then $\Delta$ is empty if and only if one of the following holds (up to ordering):
\begin{itemize}\addtolength{\itemsep}{0.2\baselineskip} 
\item[{\rm (i)}] $r=2$ and $x_1,x_2$ are quadratic.
\item[{\rm (ii)}] $r=2$, $m=4$, $x_1 = (-I_4,I_4)$ and $x_2$ is either $(I_4, \l I_1, \l^{-1}I_1,\mu I_1, \mu^{-1}I_1)$ or $(J_3^2, J_1^2)$, with $\l,\mu \in k^{\times}\setminus \{\pm 1\}$.
\item[{\rm (iii)}] $r=3$, $m=4$, $x_1= x_2 = (-I_2, I_6)$ and $x_3 = (-I_4, I_4)$.
\end{itemize}
   \end{thm}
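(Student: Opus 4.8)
The proof naturally splits into the two implications, and in both it runs parallel to the arguments already given for $\Sp_6(k)$ in Theorem~\ref{l:sp6odd} and for odd-dimensional orthogonal groups in Theorem~\ref{t:oddorthogonal}, with $m=4$ as the delicate boundary case. For the ``only if'' direction we must check that $\Delta$ is empty in cases (i)--(iii). Case (i) is immediate from Lemma~\ref{l:quadratic}. For (ii) and (iii), where $m=4$, the plan is to show that $G(x)$ generically --- and hence, by Lemma~\ref{l:parabolic}, for \emph{all} $x$ --- preserves a totally isotropic $4$-space of $V$. I would do this exactly as in Theorem~\ref{l:sp6odd}: take $P=QL$ the stabilizer of a totally isotropic $4$-space, with $L=\GL_4(k)$; embed representatives of the $C_i$ in $L$; let $Y$ be an irreducible component of $X\cap L^r$ of maximal dimension; apply Theorem~\ref{t:sl} to get $\SL_4(k)\leqs G(y)$ for generic $y\in Y$; and then show the morphism $G\times Y\to X$, $(g,y)\mapsto y^g$, is dominant by a dimension count fed by the fixed-point-dimension values of Lemma~\ref{l:fpsp8}. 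The equality $\sum_i\dim\Omega^{x_i}=(r-1)\dim\Omega$ with $\Omega=G/P$ is precisely what singles these configurations out.

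\textbf{Converse: induction on $m$.} For the ``if'' direction I would induct on $m$, the base case being $m=4$ (with $\Sp_6(k)$ and $\Sp_4(k)$, i.e.\ Theorems~\ref{l:sp6odd} and \ref{t:sp4odd}, available for the Levi subgroups). The mechanism is the now-familiar one. Let $P_1=Q_1L_1$ be the stabilizer of a nondegenerate $1$-space $\langle v\rangle$, so $L_1'=\Sp_{n-2}(k)$; embedding the $x_i$ in $P_1$ and applying the inductive hypothesis (for $m\geqs 5$) or Theorem~\ref{l:sp6odd} (for $m=4$) to the induced action on $v^\perp/\langle v\rangle$, together with Lemma~\ref{l:submodules-conn}, gives that for generic $x$ the group $G(x)^0$ has a composition factor on $V$ of dimension at least $n-2$, unless the induced tuple falls into one of the short lists of exceptions in Theorems~\ref{t:sp4odd} and \ref{l:sp6odd}. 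One can also argue through the stabilizer of a totally isotropic $m$-space (Levi $\GL_m(k)$, after passing to class closures so that each $x_i$ embeds), which by Theorem~\ref{t:sl} and Lemma~\ref{l:submodules-conn} forces a composition factor of dimension at least $m$ or makes the $x_i$ quadratic on $V$. Since $\sum_i d_i\leqs n(r-1)$, Lemmas~\ref{l:1-spaces} and \ref{l:nondeg2} show $G(x)$ does not generically fix a $1$-space or a nondegenerate $2$-space; together with the composition-factor bound (which also excludes a totally isotropic $2$-space, since a comp.\ factor of dimension $n-2$ cannot survive passage through such a parabolic) this yields that $G(x)$ is generically irreducible and primitive on $V$, with rank $m-1$ or $m$ by Corollary~\ref{c:toralrank}. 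For $m\geqs 5$ one then has $\lfloor n/4\rfloor+1<m-1$, so Lemma~\ref{l:largeranksubs}(i) --- using $p\neq 2$, which kills the $\SO_n(k)$ alternative --- rules out any proper connected irreducible subgroup of this rank, whence $G(x)=G$ generically.

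\textbf{The exceptional configurations for the Levi inductions} (the $x_i$ inducing quadratic elements, pseudoreflections, etc.\ on $v^\perp/\langle v\rangle$ or on the totally isotropic $m$-space) would be disposed of as in Theorem~\ref{l:sp6odd}: one replaces $C_iC_j$ by a suitable class $D$ of prime-order semisimple elements with small eigenspaces, applies the $\Sp_4$/$\Sp_6$ results or the previous step to $D\times C_3\times\cdots\times C_r$, and concludes via Lemma~\ref{l:products}; or one chooses conjugates acting nontrivially on a nondegenerate subspace to break the invariant isotropic space. Where $r\leqs 4$ and all $x_i$ are involutions or quadratic, a direct check pins the configuration down to one of (i)--(iii).

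\textbf{Main obstacle.} The genuine difficulty is the base case $m=4$, where $\lfloor n/4\rfloor+1=3=m-1$, so Lemma~\ref{l:largeranksubs} no longer automatically excludes $G(x)$ from being a proper rank-$3$ subgroup; equivalently, one must still rule out $G(x)$ generically preserving a totally isotropic $4$-space. This is exactly what Lemma~\ref{l:fpsp8} is for: after reducing (via closures) to the smallest admissible class representatives, one computes $\dim\Omega^{x_i}$ on the variety $\Omega$ of totally isotropic $4$-spaces and checks $\sum_i\dim\Omega^{x_i}<(r-1)\dim\Omega$ in every case \emph{except} those listed in (ii) and (iii), so Lemma~\ref{l:basic} forces generic irreducibility otherwise. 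Carrying out the case bookkeeping for $m=4$ --- organising by whether each $x_i$ is an involution, quadratic, unipotent with a Jordan block of size $\geqs 3$, or semisimple with at most two nontrivial eigenvalue pairs, and extracting exactly the families (ii) and (iii) --- will be the most laborious part, but it is a direct analogue of the $\Sp_6(k)$ analysis already completed.
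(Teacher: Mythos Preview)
Your overall architecture --- the necessity direction via a dominant morphism from $G\times Y$ to $X$, and the sufficiency by induction on $m$ through the Levi $\Sp_{n-2}(k)$ of a point stabilizer, finishing with Lemma~\ref{l:largeranksubs} for $m\geqs 5$ --- is exactly the paper's. There is, however, a genuine gap in your handling of the base case $m=4$.

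You write that the failure of Lemma~\ref{l:largeranksubs} for $m=4$ is ``equivalently'' the need to rule out a totally isotropic $4$-space. These are different problems. If $G(x)^0$ has a composition factor of dimension $\geqs n-2=6$ (which you do obtain in the non-exceptional descent cases), then it \emph{already} cannot preserve a totally isotropic $4$-space, since the resulting filtration forces composition factors of dimension $4,4$. The real obstruction left over when Lemma~\ref{l:largeranksubs} gives only ${\rm rk}\,H\leqs 3$ is the \emph{irreducible} rank-$3$ subgroup $H=A_1^3=\Sp_2(k)\otimes\Sp_2(k)\otimes\Sp_2(k)$, which preserves no proper subspace at all and therefore is untouched by any argument about $\Omega=G/P$ or by Lemma~\ref{l:fpsp8}. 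Your plan, as written, never excludes $G(x)^0$ from landing in $A_1^3$.

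The paper deals with $A_1^3$ by a different mechanism: since the induction produces $y\in X$ with $L'=\Sp_6(k)\leqs G(y)$, Theorem~\ref{t:toralrank}(ii) says that $f(G(x))$ lies in the closure of $f(L')$ for all $x$, where $f$ sends $g$ to its characteristic polynomial on $V$. Every element of $\Sp_6(k)<\Sp_8(k)$ has $1$ as (at least) a double root of its characteristic polynomial, whereas a maximal torus of $A_1^3$ has all weight spaces on $V$ one-dimensional; this incompatibility rules out $A_1^3$. Lemma~\ref{l:fpsp8} is then used only in a single residual descent case (namely when $x_2=(J_3^2,J_2)$, which cannot embed in the Levi $\GL_4(k)$), not as a blanket tool; note also that the lemma tabulates $\dim\Omega^g$ for just four specific classes, so your proposal to ``check $\sum_i\dim\Omega^{x_i}<(r-1)\dim\Omega$ in every case'' would require substantially more computation than the lemma supplies.

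Two minor points: in a symplectic space every $1$-dimensional subspace is totally isotropic, so ``the stabilizer of a nondegenerate $1$-space'' should read ``the stabilizer of a $1$-space''; and before invoking the inductive hypothesis on $\Sp_{n-2}(k)$ you need the bound $\sum_i d_i'\leqs (n-2)(r-1)$, which requires the short case split (a)--(c) on how $d_i'$ relates to $d_i$ that the paper carries out explicitly.
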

   
   \begin{proof}  
   As usual, first observe that $\Delta$ is empty if the conditions in (i), (ii) or (iii) are satisfied. This is clear in (i). Now consider (ii) and (iii), so $m=4$. Let  
   $L = \GL_4(k)$ be the stabilizer in $G$ of a pair of complementary totally isotropic $4$-spaces. By applying Theorem \ref{t:sl}, there exists $y \in X$ such that $L' = \SL_4(k) \leqs G(y) \leqs L$. 
   For each $i$, let $D_i$ be an $L$-class in $C_i \cap L$ of maximal dimension and set $Y = D_1 \times \cdots \times D_r$. Then for generic $y \in Y$, $G(y)$ is contained in a unique conjugate of $L$. It is straightforward to compute $\dim X = \sum_i \dim C_i$ and $\dim Y = \sum_i \dim D_i$, which gives 
        \[
       \dim G + \dim Y = \dim X + \dim L.
       \]
  Consider the morphism $f: G \times Y \rightarrow X$ defined by $f(g, y) = y^g$. Fix $y \in Y$ such that $G(y)$ is contained in a unique conjugate of $L$ and consider the fiber $f^{-1}(y)$. Since $G(y)$ has a unique fixed point on $G/L$, this implies that $g \in N_G(L) = L.2$ for all $(g,z) \in f^{-1}(y)$, so a generic fiber of $f$ has dimension at most $\dim L$. It follows that $f$ is dominant
       (and the dimension of a generic fiber is precisely $\dim L$) and thus $G(x)$ is conjugate to a subgroup of $L$ for generic $x \in X$. We conclude that $\Delta$ is empty if (ii) or (iii) holds. 
       
      To complete the proof, we will use induction on $m$ to prove that $\Delta$ is nonempty in all the remaining cases. 
  
  Let $P = QL$ be the stabilizer in $G$ of a $1$-dimensional subspace $\la v \ra$ of $V$, where $Q$ is the unipotent radical and $L$ is a Levi subgroup. Let $W$ be the nondegenerate 
  $(n-2)$-space $v^{\perp}/\la v \ra$. We may assume that each $x_i$ is contained in $P$ and we write $g_i \in \Sp(W)$ for the induced action of $x_i$ on $W$ (note that $g_i$ is quadratic on $W$ only if $x_i$ is quadratic on $V$). We define $d_i'$ to be the dimension of the largest eigenspace of $g_i$ on $W$ and we assume the embedding of $x_i$ in $P$ is chosen to minimize $d_i'$. Notice that one of the following holds:
   \begin{itemize}\addtolength{\itemsep}{0.2\baselineskip} 
\item[{\rm (a)}] $d_i'= d_i -2$.
   \item[{\rm (b)}]  $d_i'=d_i -1$ and $d_i \leqs n/2$. 
   \item[{\rm (c)}]   $d_i'=d_i$ and $d_i \leqs n/3$.
   \end{itemize}
   
We claim that $\sum_i d_i' \leqs (n-2)(r-1)$. Let $\ell$ be the number of $i$ with $d_i' = d_i-2$. If $\ell \leqs 1$ then
\[
\sum_{i=1}^{r}d_i' \leqs \sum_{i=1}^rd_i - 2(r-1) \leqs n(r-1) - 2(r-1) = (n-2)(r-1)
\]
as required. Similarly, if $\ell \geqs 2$ then
\[
\sum_{i=1}^{r}d_i' \leqs (r-2)(n-3) + 2(n/2-1) \leqs (n-2)(r-1).
\]
This justifies the claim. Then by induction, excluding the cases where $m \in \{4,5\}$ and the $g_i$ line up with one of the special cases for $\Sp_{2(m-1)}(k)$ in the statement of the theorem (for $m=5$) or Theorem \ref{l:sp6odd} (for $m=4$), it follows that $G(x)^0$ generically has a composition factor on $V$ of dimension at least $n-2$ and has rank $m-1$ or $m$.
  
Suppose that either $m \geqs 6$, or $m \in \{4,5\}$ and the $g_i$ do not correspond to one of the special cases for $\Sp_{2(m-1)}(k)$. By Lemmas \ref{l:1-spaces} and \ref{l:nondeg2}, the condition $\sum_id_i \leqs n(r-1)$ implies that $G(x)$ does not fix a $1$-space nor a nondegenerate $2$-space, whence $G(x)^0$ is generically irreducible on $V$ (with rank $m-1$ or $m$, as noted above). For $m \geqs 5$ we find that there is no proper closed connected subgroup of $G$ with these properties and thus $\Delta$ is nonempty. Now assume $m=4$. Here $G$ has an irreducible subgroup of rank $3$ with connected component 
\[
H = A_1^3 = {\rm Sp}_{2}(k) \otimes {\rm Sp}_{2}(k) \otimes {\rm Sp}_{2}(k).
\]
By considering the characteristic polynomials on $V$ of elements in $\Sp(W) = \Sp_6(k)$ (every such polynomial has $1$ as a double root) and applying Theorem \ref{t:toralrank}(ii), we deduce that $G(x)^0$ is not generically conjugate to $H$ and the result follows. 

To complete the proof, we may assume $m \in \{4,5\}$ and the $g_i$ correspond to one of the special cases for $\Sp_{2(m-1)}(k)$. Recall that we are assuming (i) does not hold, so we never descend to the case where $r=2$ and $g_1,g_2$ are quadratic on $W$.

First assume $m=4$, $r=3$ and we descend to the special case described in part (iii) of Theorem \ref{l:sp6odd}. Here each $x_i$ is an involution in $G = \Sp_{8}(k)$ and the condition $\sum_{i}d_i \leqs 16$ implies that we may assume $d_1=4$.  
Suppose $d_2=4$. Here we can find a semisimple element $h \in C_1C_2$ of prime order with four distinct $2$-dimensional eigenspaces on $V$ and thus $Y = h^G \times x_3^G$ does not descend to a special case for $\Sp_6(k)$. Then by the previous argument, it follows that $\Delta$ is nonempty if $d_2=4$ (and similarly if $d_3=4$). Therefore, we may assume $d_2=d_3=6$, which corresponds to the special case described in part (iii) of the theorem (up to reordering).
  
Next assume $m=4$, $r=2$ and we descend to one of the possibilities in part (ii) of Theorem  \ref{l:sp6odd}. Since we are assuming (iii) does not hold (in the statement of the theorem we are proving), it follows that neither $g_1$ nor $g_2$ is of the form $(I_2, \l I_2, \l^{-1}I_2)$. The remaining possibility is that we descend to the case where $g_1 = (-I_2,I_4)$ and $g_2 = (J_3^2)$, up to scalars and ordering. Since we are assuming (ii) does not hold, it follows that $x_2 = (J_3^2,J_2)$. Then by applying Theorem \ref{t:sl} with respect to a Levi subgroup of the stabilizer in $G$ of a totally isotropic $4$-space, we deduce that $G(y)$ contains the derived subgroup of this parabolic subgroup for some $y \in X$ (and in particular, $G(x)$ has  rank $3$ or $4$ for generic $x \in X$). Since the centralizer of $G(y)^0$ in $\mathrm{End}(V)$ is $1$-dimensional, it follows that either $G(x)^0$
is generically irreducible on $V$, or every $G(x)$ fixes a totally isotropic $4$-space. The latter possibility does not arise since
\[
\dim \Omega^{x_1} + \dim \Omega^{x_2} \leqs 6 + 3 < \dim \Omega = 10,
\]
where $\Omega$ is the variety of $4$-dimensional totally isotropic subspaces of $V$ (see Lemma \ref{l:fpsp8}). Therefore, $G(x)^0$ is generically irreducible on $V$ and we claim that this forces $\Delta$ to be nonempty. 
  
To justify the claim, let $T$ be a maximal torus of $L' =\SL_4(k)$, where $L$ is a Levi subgroup of the stabilizer in $G$ of a totally isotropic $4$-space $W$. Then 
\[
{\rm Lie}(G) \cong \mathfrak{gl}_4(k) \oplus \Sym^2(W) \oplus \Sym^2(W^*)
\]
and thus the nonzero weight spaces for $T$ on the adjoint module ${\rm Lie}(G)$ are all $1$-dimensional.  Therefore, $T$ contains strongly regular elements with respect to the adjoint module and so $G(x)^0$ generically contains strongly regular elements. As a consequence, either $G(x)=G$ for generic $x$, or $G(x)^0$ is contained in a proper maximal rank subgroup that acts irreducibly on the natural module for $G$. But there are no such subgroups and so we conclude that $\Delta$ is nonempty. 
   
Finally, let us assume $m=5$ and we descend to one of the special cases for $\Sp_8(k)$ in the statement of the theorem, so $r \leqs 3$. If $r=3$ then each $x_i$ is an involution and the condition $\sum_id_i \leqs 20$ implies that at least two of the $x_i$ are of the form $(-I_6,I_4)$ up to scalars. But these elements descend to involutions in $\Sp_8(k)$ of the form $(-I_4,I_4)$, which is not a special case. Now suppose $r=2$. Here we may assume $x_1 = (-I_6, I_4)$, so $d_2 \leqs 4$ and thus $d_2' \leqs 3$. But once again this does not correspond to a special case for $\Sp_8(k)$ and the proof of the theorem is complete. 
\end{proof}
   
\subsection{Even characteristic}\label{ss:even}
 
In this section we complete the proof of Theorem \ref{t:main2} by handling the symplectic groups $G = {\rm Sp}_{n}(k)$ with $n=2m \geqs 4$ and $p=2$. With reference to \eqref{e:X}, recall that $d_i$ denotes the maximal dimension of an eigenspace of $x_i$ on $V$. In this section we also define $e_i = \dim V^{x_i} \leqs d_i$ for $i = 1, \ldots, r$. Note that $e_i = d_i$ if $x_i$ is unipotent. As explained in Lemma \ref{l:oinsp}, $\Delta$ is empty if $\sum_ie_i \geqs n(r-1)$. 
Also note that $Z(G)=1$ since $p=2$.

We begin by considering the case $m=2$, which requires special attention. Here $G = \Sp_4(k)$ has two $4$-dimensional restricted irreducible $kG$-modules, namely $V_j = L(\omega_j)$ for $j=1,2$, which are interchanged by a graph automorphism $\tau$. Note that $\tau$ does not preserve eigenspace dimensions in general. For example, $\tau$ interchanges long and short root elements, so if $x$ has Jordan form $(J_2,J_1^2)$ on $L(\omega_1)$, then it has Jordan form $(J_2^2)$ on $L(\omega_2)$ (that is, $\tau$ fuses the $G$-classes containing $b_1$ and $a_2$ involutions, with respect to the notation in \cite{AS}). On the other hand, the dimension of the largest eigenspace of a semisimple element is invariant under $\tau$, but the set of eigenvalues is not preserved in general. For example, $\tau$ takes a quadratic semisimple element of the form $(\l I_2, \l^{-1}I_2)$ to one of the form $(I_2,\mu I_1, \mu^{-1}I_1)$, and vice versa. 

For $j = 1,2$, set $e_{ij} = \dim V_j^{x_i}$ and let $d_{ij}$ be the maximal dimension of an eigenspace of $x_i$ on $V_j$.
 
\begin{thm}\label{l:sp4even} 
Suppose $m=p=2$ and the $x_i$ in \eqref{e:X} have prime order. If $\sum_i d_{ij} \leqs 4(r-1)$ and $\sum_i e_{ij} < 4(r-1)$ for $j=1,2$, then $\Delta$ is nonempty.
\end{thm}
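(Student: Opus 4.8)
The plan is to reduce the case $G = \Sp_4(k)$ with $p=2$ to the corresponding odd-characteristic analysis via the isogeny between $\Sp_4(k)$ and $\SO_5(k)$, exactly as was done in the proof of Theorem \ref{t:sp4odd} for $p \ne 2$, but now working with the alternative exceptional isogeny available in characteristic $2$. Concretely, $\Sp_4(k)$ with $p=2$ is isogenous to $\SO_5(k)$ (indeed $L(\omega_1)$ is a constituent of $\wedge^2$ of the $4$-dimensional module), so every generation statement on $X = C_1 \times \cdots \times C_r$ can be transferred between the two groups. Since Theorem \ref{t:oddorthogonal} (or rather its $\Sp_4(k)$ shadow, Theorem \ref{t:sp4odd}) is not directly available for $p=2$, I would instead run the argument internally: use the two $4$-dimensional modules $V_1, V_2$ and the graph automorphism $\tau$ interchanging them, and push everything through the $\SO_5$-type picture.

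First I would record what the hypothesis $\sum_i d_{ij} \leqs 4(r-1)$ and $\sum_i e_{ij} < 4(r-1)$ buys us: by Lemmas \ref{l:1-spaces} and \ref{l:nondeg2}, for each $j$ the group $G(x)$ does not generically fix a $1$-space or a nondegenerate $2$-space of $V_j$, and by Lemma \ref{l:oinsp}(i) (applied in the orthogonal incarnation, where the condition $\sum_i e_{ij} < 4(r-1)$ is exactly the relevant bound) the group $G(x)$ does not generically lie in $\SO_n(k) < \Sp_n(k)$ for the appropriate embedding. Next I would dispose of the small cases $r \geqs 3$: by Lemma \ref{l:infinite}, $G(x)$ is generically infinite, and one can push elements into a Levi subgroup of the stabilizer of a totally isotropic $2$-space (a $\GL_2(k) = \SL_2(k).T$ Levi) or into $\Sp_2(k) \times \Sp_2(k)$-type subgroups and apply Theorem \ref{t:sl} for $\SL_2(k)$ together with \cite[Theorem 4.5]{Ger} to force $G(x)^0$ to be generically irreducible on one of the $V_j$. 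The bookkeeping here is a finite check over the possible Jordan/eigenvalue types (long root elements $b_1$, short root elements $a_2$, the involutions $c_2$, and semisimple $(\l I_2, \l^{-1}I_2)$ or $(I_2, \mu I_1, \mu^{-1}I_1)$), using that $\tau$ swaps $b_1 \leftrightarrow a_2$ and swaps the two types of quadratic semisimple elements, so that the $d_{ij}$ condition for $j=1$ and $j=2$ together eliminate the configurations that would otherwise obstruct generation.

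Then I would handle the generic structure: once $G(x)^0$ is generically irreducible on some $V_j$ and does not generically fix a $1$-space, a nondegenerate $2$-space, or lie in $\SO_4(k)$, the only maximal positive-dimensional subgroups of $\Sp_4(k)$ left to exclude are the imprimitive subgroup $\Sp_2(k) \wr S_2$ and the subfield-type/$\tau$-twisted copies; here I would invoke Lemma \ref{l;primitive} together with Corollary \ref{c:toralrank} (the rank of $G(x)$ is generically $2$) and Lemma \ref{l:largeranksubs}-style inspection of low-rank subgroups of $\Sp_4(k)$ to conclude $G(x) = G$ generically, hence $\Delta$ is nonempty. The remaining $r=2$ cases reduce, after passing to closures (Lemma \ref{l:closures}), to showing that two non-quadratic elements (the quadratic case is excluded since quadratic elements on $V_1$ or $V_2$ would violate one of the two $\sum d_{ij}$ bounds, given how $\tau$ acts) generate densely; this is where I would lean on \cite[Theorem 4.5]{Ger} for $\SL_2(k)$ inside a Levi and the fixed-point-space counting of Lemma \ref{l:spti}.

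The main obstacle I anticipate is the careful interplay between the two modules $V_1$ and $V_2$ under $\tau$: a given conjugacy class can be quadratic on one module but not the other (e.g. $b_1$ versus $a_2$), and eigenvalue multisets of semisimple classes are not $\tau$-invariant, so the hypothesis must be used in its full strength "for $j=1,2$" form rather than for a single module. Getting the case division exactly right — matching each pair $(C_1, C_2)$ (or triple/quadruple) against which module it descends well on, and verifying that no configuration consistent with both bounds is a genuine obstruction — is the delicate part; everything else is a routine application of the preliminary lemmas and Gerhardt's $\SL_n$ theorem.
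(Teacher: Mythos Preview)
Your outline would probably succeed with enough care, but it takes a much longer route than the paper. The paper's proof is essentially three observations, all exploiting the graph automorphism $\tau$ structurally rather than class-by-class. First, the hypothesis for $j=1$ (via Lemma \ref{l:1-spaces}) says $G(x)$ does not generically fix a $1$-space in $V_1$; the hypothesis for $j=2$ says the same for $V_2$, and since $\tau$ interchanges the two conjugacy classes of maximal parabolics of $G$, this immediately means $G(x)$ does not generically fix a totally isotropic $2$-space in $V_1$ either. Together with Lemma \ref{l:nondeg2} this already gives generic irreducibility on both modules, with no Levi descent, no $\SL_2$ argument, and no case split on $r$. Second, the two imprimitive maximal subgroups $\Sp_2(k)\wr S_2$ and $\GL_2(k).2$ are carried by $\tau$ to $\OO_4(k)$ and to a reducible subgroup respectively; the first is ruled out by Lemma \ref{l:oinsp} using the $e_{ij}$ bound, and the second is already excluded by irreducibility on $V_2$. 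That is the whole argument.

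By contrast, your plan to embed into Levi subgroups, invoke Theorem \ref{t:sl} for $\SL_2(k)$, appeal to \cite[Theorem 4.5]{Ger}, and then inspect low-rank subgroups is doing by hand what $\tau$ does for free. Two small caveats on your sketch: the reference to an ``$\SO_5$'' picture is a red herring here (the paper stays entirely inside $\Sp_4$ and uses only the $\tau$-symmetry between $V_1$ and $V_2$), and Lemma \ref{l:largeranksubs} does not apply to $\Sp_4$ since it requires $n\geqs 6$, so you would need a separate ad hoc check there. None of this is fatal, but the paper's argument avoids all of it.
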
 

\begin{proof}  
First observe that if $r=2$ and $x_1$ and $x_2$ are both quadratic on $V_1$ then $e_{12}+e_{22} \geqs 4$, which violates the hypothesis. Therefore, this situation does not arise.

More generally (for all $r \geqs 2$), we observe that $G(x)$ does not generically preserve a $1$-dimensional subspace of $V_1$ or $V_2$. And since a graph automorphism interchanges the two conjugacy classes of maximal parabolic subgroups, as well as the modules $V_1$ and $V_2$, we see that $G(x)$ does not 
generically fix a totally isotropic $2$-space in either representation. Furthermore, Lemma \ref{l:nondeg2} implies that $G(x)$ does not generically preserve a nondegenerate $2$-space and thus $G(x)$ is generically irreducible on both modules.   

The maximal imprimitive subgroups of $G$ with respect to $V_1$ are of the form 
$\Sp_2(k) \wr S_2$ and $\GL_2(k).2$, corresponding to the stabilizers in $G$ of a suitable direct sum decomposition of $V_1$ into two nondegenerate $2$-spaces and two totally isotropic $2$-spaces, respectively. Under the graph automorphism, the first subgroup is sent to $\OO_4(k)$ and the second
is mapped to a reducible subgroup. The inequality $\sum_i e_i < 4(r-1)$ implies that $G(x)$ is not generically contained in a conjugate of $\OO_4(k)$ (see Lemma \ref{l:oinsp}) and we have already noted that $G(x)$ is generically irreducible on $V_1$ and $V_2$. This implies that $G(x)$ is generically primitive with respect to both modules and we conclude that $G(x)=G$ for generic $x$. 
\end{proof}

We note some immediate consequences. Recall that \emph{short root elements} are involutions of type $a_2$ in the notation of \cite{AS}.

\begin{cor} \label{c:sp4even}   
Suppose $m=p=2$ and the $x_i$ in \eqref{e:X} have prime order. Then $\Delta$ is nonempty if any of the following hold:
 \begin{itemize}\addtolength{\itemsep}{0.2\baselineskip}
   \item[{\rm (i)}]  $x_i$ is a regular semisimple element for some $i$.   
   \item[{\rm (ii)}] $r \geqs 3$ and none of the $x_i$ are long or short root elements.
   \item[{\rm (iii)}] $r \geqs 5$.    
   \end{itemize}
\end{cor}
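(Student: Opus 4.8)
The plan is to obtain all three parts as immediate consequences of Theorem \ref{l:sp4even}: in each case I will verify that $\sum_i d_{ij} \leqs 4(r-1)$ and $\sum_i e_{ij} < 4(r-1)$ for $j = 1, 2$, and then invoke that theorem. The first step is to tabulate, for every noncentral element $x$ of prime order in $G = \Sp_4(k)$ with $p = 2$ and for each of the two $4$-dimensional modules $V_j = L(\omega_j)$, the values $\a(x)\,(=d)$ and $\dim V_j^x\,(=e)$. There are only finitely many classes to consider: the unipotent involutions of type $b_1$, $a_2$ and $c_2$ (in the Aschbacher--Seitz notation of Remark \ref{r:as}), and the semisimple elements of odd prime order, which are conjugate to $(\l I_2, \l^{-1}I_2)$, to $(I_2, \mu I_1, \mu^{-1}I_1)$, or to a regular semisimple element with four distinct eigenvalues $\{\l, \l^{-1}, \mu, \mu^{-1}\}$. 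Using the facts recalled before Theorem \ref{l:sp4even} — that the graph automorphism $\t$ interchanges $V_1$ and $V_2$ and the classes of $b_1$- and $a_2$-elements, preserves the largest eigenspace dimension of a semisimple element, and swaps the classes of $(\l I_2, \l^{-1}I_2)$ and $(I_2, \mu I_1, \mu^{-1}I_1)$ — one reads off the following: on each $V_j$ a noncentral prime-order element has $d \leqs 3$ and $e \leqs 3$, with the value $3$ occurring only for a $b_1$-element and only on the module on which it has Jordan form $(J_2, J_1^2)$; if $x$ is not a long or short root element (i.e.\ not of type $b_1$ or $a_2$) then $d \leqs 2$ and $e \leqs 2$ on both $V_j$; and if $x$ is regular semisimple then $d = 1$ and $e = 0$ on both $V_j$ (for the latter one checks that $1$ is not an eigenvalue of $x$ on $V_1$, nor on $V_2$, whose eigenvalues are the four products $\l^{\pm 1}\mu^{\pm 1}$).

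With this data in hand, the three parts reduce to arithmetic. For (iii): if $r \geqs 5$ then $\sum_i d_{ij} \leqs 3r \leqs 4(r-1)$ and $\sum_i e_{ij} \leqs 3r < 4(r-1)$ for $j = 1, 2$, so Theorem \ref{l:sp4even} gives $\Delta \ne \emptyset$. For (ii): if no $x_i$ is a long or short root element then $\sum_i d_{ij} \leqs 2r \leqs 4(r-1)$ and, since $r \geqs 3$, $\sum_i e_{ij} \leqs 2r < 4(r-1)$ for $j = 1, 2$, and again Theorem \ref{l:sp4even} applies. For (i): if, say, $x_1$ is regular semisimple then for $j = 1, 2$ we get $\sum_i d_{ij} \leqs 1 + 3(r-1) \leqs 4(r-1)$ and $\sum_i e_{ij} \leqs 0 + 3(r-1) < 4(r-1)$ for every $r \geqs 2$, so the conclusion follows from Theorem \ref{l:sp4even} once more.

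Since the whole argument is a bounded case-check feeding into Theorem \ref{l:sp4even}, there is no serious obstacle. The one point that needs a little care is that the fixed-space dimension $e$ of a semisimple element is \emph{not} $\t$-invariant — for instance $(\l I_2, \l^{-1}I_2)$ has no nonzero fixed vector on $V_1$ but a $2$-dimensional fixed space on $V_2$ — so in part (i), where for $r = 2$ one genuinely needs $e_{1j} = 0$ for \emph{both} $j$, the vanishing of the fixed space of a regular semisimple element has to be verified on $V_2$ as well as on the natural module; this is exactly the small computation with the products $\l^{\pm 1}\mu^{\pm 1}$ indicated above.
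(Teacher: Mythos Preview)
Your proof is correct and is exactly the verification the paper has in mind: the corollary is stated in the paper as an ``immediate consequence'' of Theorem~\ref{l:sp4even} with no argument given, and you have carefully checked the required inequalities on $d_{ij}$ and $e_{ij}$ in each case. The tabulation of $(d,e)$-values on both modules (including the check that a regular semisimple element has trivial fixed space on $V_2$ via the products $\l^{\pm1}\mu^{\pm1}$) is accurate, and the arithmetic bounds for parts (i)--(iii) are all correct.
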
 

By combining the previous two statements, we can present a result for $G = \Sp_4(k)$ in terms of the natural symplectic module (as in Theorem \ref{t:main2}). 

\begin{thm}\label{t:sp4even1} 
Suppose $m=p=2$ and the $x_i$ in \eqref{e:X} have prime order.  If $\sum_i d_{i} \leqs 4(r-1)$ and $\sum_i e_{i} < 4(r-1)$ with respect to the $4$-dimensional symplectic module, then $\Delta$ is empty if and only if one of the following holds (up to ordering):
\begin{itemize}\addtolength{\itemsep}{0.2\baselineskip}
   \item[{\rm (i)}]  $r=2$ and $x_1,x_2$ are quadratic.
   \item[{\rm (ii)}]  $r=3$, $x_1,x_2$ are short root elements and $x_3$ is quadratic.
   \item[{\rm (iii)}] $r=4$ and each $x_i$ is a short root element. 
   \end{itemize}
\end{thm}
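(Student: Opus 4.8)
The plan is to derive Theorem~\ref{t:sp4even1} by combining Theorem~\ref{l:sp4even} and Corollary~\ref{c:sp4even} with the two elementary emptiness criteria, Lemmas~\ref{l:quadratic} and~\ref{l:oinsp}. Throughout, write $V_1 = L(\omega_1)$ for the natural $4$-dimensional module, $V_2 = L(\omega_2)$ for the other restricted irreducible, and let $\tau$ be the graph automorphism with $V_2 \cong {}^{\tau}V_1$; here $\tau$ is an abstract automorphism of $G$ and a homeomorphism in the Zariski topology, and for all $x \in G$ the fixed-space and largest-eigenspace dimensions satisfy $\dim V_2^{x} = \dim V_1^{\tau(x)}$ and $d_{i2}(x) = d_{i1}(\tau(x))$. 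Recall also that $\tau$ interchanges the class $b_1$ of transvections (Jordan form $(J_2,J_1^2)$ on $V_1$) with the class $a_2$ of short root elements (Jordan form $(J_2^2)$), fixes the class $c_2$, and interchanges the semisimple class of type $(\lambda I_2,\lambda^{-1}I_2)$ with that of type $(I_2,\mu I_1,\mu^{-1}I_1)$. First I would check that $\Delta$ is empty in cases (i)--(iii). Case (i) is immediate from Lemma~\ref{l:quadratic}, as $n=4\geqs 3$. For (ii) and (iii), apply $\tau$ to the tuple: each short root element $x_i$ becomes a transvection $\tau(x_i)$ with $\dim V_1^{\tau(x_i)}=3$, and in (ii) the quadratic element $x_3$ becomes a quadratic unipotent element or a semisimple element of type $(I_2,\mu I_1,\mu^{-1}I_1)$, so $\dim V_1^{\tau(x_3)}\geqs 2$ in every case. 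Hence $\sum_i\dim V_1^{\tau(x_i)}$ equals $4(r-1)=12$ in (iii) and is at least $4(r-1)=8$ in (ii), so Lemma~\ref{l:oinsp}(ii) shows $\langle\tau(x_1),\dots,\tau(x_r)\rangle$ is not dense, hence neither is $\langle x_1,\dots,x_r\rangle$.

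For the converse, suppose $\Delta$ is empty. By Corollary~\ref{c:sp4even} we may assume $r\leqs 4$, that no $x_i$ is regular semisimple, and --- since a regular unipotent element of $\Sp_4(k)$ has order $4$ and is thus excluded by the prime-order hypothesis when $p=2$ --- that each $x_i$ lies in one of the classes $b_1$, $a_2$, $c_2$ or is semisimple of type $(\lambda I_2,\lambda^{-1}I_2)$ or $(I_2,\mu I_1,\mu^{-1}I_1)$. The aim is to apply Theorem~\ref{l:sp4even}, which requires $\sum_i d_{ij}\leqs 4(r-1)$ and $\sum_i e_{ij}<4(r-1)$ for both $j=1,2$; the case $j=1$ is the hypothesis. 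From the $\tau$-correspondence, the pair $(\dim V^{x},d(x))$ of fixed-space and largest-eigenspace dimensions changes under $\tau$ only on the root and quadratic-semisimple classes, namely $b_1\colon(3,3)\mapsto(2,2)$, $a_2\colon(2,2)\mapsto(3,3)$, type $(\lambda I_2,\lambda^{-1}I_2)\colon(0,2)\mapsto(2,2)$, and type $(I_2,\mu I_1,\mu^{-1}I_1)\colon(2,2)\mapsto(0,2)$, while $c_2$ is fixed. If the inequalities for $j=2$ also hold, Theorem~\ref{l:sp4even} gives $\Delta\neq\emptyset$, a contradiction.

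It remains to treat the tuples for which the $j=2$ inequalities fail. Writing $n_b,n_a,n_c,n_s,n_t$ for the number of the $x_i$ of each of the five types, so that $n_b+n_a+n_c+n_s+n_t=r$, one has $\sum_i d_{i2}=\sum_i d_{i1}+(n_a-n_b)$ and $\sum_i e_{i2}=\sum_i e_{i1}+(n_a-n_b)+2(n_s-n_t)$. A short arithmetic check --- using $\sum_i e_{i1}\leqs 4(r-1)-1$, $\sum_i d_{i1}\leqs 4(r-1)$ and $e_{i2}\leqs d_{i2}$ --- shows that the possibility $\sum_i e_{i2}<4(r-1)\leqs\sum_i d_{i2}$ forces $n_t\geqs 1$, $n_b=n_c=n_s=0$, $n_a\leqs 1$, which is incompatible with $\sum_i e_{i1}<4(r-1)$, so this case does not arise; and that $\sum_i e_{i2}\geqs 4(r-1)$ forces the tuple, up to ordering, to be one of $r=2$ with $x_1,x_2$ both quadratic, $r=3$ with $x_1,x_2$ short root and $x_3$ quadratic, or $r=4$ with all $x_i$ short root --- that is, one of (i), (ii), (iii), for which $\Delta$ is indeed empty by the first part. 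I expect the main obstacle to be precisely this arithmetic analysis: carefully tracking the fixed-space and largest-eigenspace dimensions on the two $4$-dimensional modules under $\tau$ and verifying that the tuples lying on the boundary of the hypotheses of Theorem~\ref{l:sp4even} are exactly the three exceptional families; everything else reduces to quoting Theorem~\ref{l:sp4even}, Corollary~\ref{c:sp4even} and Lemmas~\ref{l:quadratic} and~\ref{l:oinsp}.
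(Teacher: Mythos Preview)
Your approach is correct and matches the paper's, which simply notes that the result follows by combining Theorem~\ref{l:sp4even} and Corollary~\ref{c:sp4even}; you have supplied the explicit case analysis the paper omits. One small point of sloppiness: in your first failure case the inequality should be strict, $\sum_i d_{i2} > 4(r-1)$ (otherwise the $j=2$ conditions hold), and the conclusion ``$n_a\leqs 1$, incompatible with $\sum_i e_{i1}<4(r-1)$'' is really only the $r=2$ step --- for $r\in\{3,4\}$ one instead observes that $\sum_i d_{i2}=2r+n_a>4(r-1)$ forces $n_a>2r-4$, which together with $n_t\geqs 1$ (from $\sum d_{i2}-\sum e_{i2}=2n_t>0$) already overflows $n_a+n_t\leqs r$ --- but this is a cosmetic rewrite, not a gap.
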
 

Next we turn to the case $G = {\rm Sp}_{6}(k)$, which also requires special attention.

\begin{thm}\label{l:sp6even} 
Suppose $m=3$, $p=2$ and the $x_i$ in \eqref{e:X} have prime order. If $\sum_i  d_i \leqs 6(r-1)$ and $\sum_i e_i < 6(r-1)$, then $\Delta$ is empty if and only if $r=2$ and $x_1,x_2$ are quadratic.
\end{thm}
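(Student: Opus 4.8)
The plan is to follow the same template used throughout Section~\ref{s:symp} but now taking full advantage of the characteristic~$2$ relationship between $\Sp_6(k)$ and the orthogonal groups. First observe that if $r=2$ and $x_1,x_2$ are quadratic then $\Delta$ is empty by Lemma~\ref{l:quadratic} (and the hypothesis $\sum_i e_i<6(r-1)$ is automatically compatible since quadratic elements have $e_i\leqs 4<6$). So it remains to prove that $\Delta$ is nonempty in all other cases. As usual we may assume $k$ is uncountable, so it suffices to exhibit a generic subset of $X$ contained in $\Delta$, equivalently to force $G(x)$ to be generically irreducible and primitive on $V$ and of rank either $m-1=2$ or $m=3$, then rule out every proper closed connected subgroup with those properties. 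The bounds $\sum_id_i\leqs 6(r-1)$ and $\sum_ie_i<6(r-1)$ feed directly into Lemmas~\ref{l:1-spaces}, \ref{l:nondeg2} and \ref{l:oinsp} respectively, which show that $G(x)$ does not generically fix a $1$-space, a nondegenerate $2$-space, or lie in a conjugate of $\OO_6(k)=\OO(V)$.

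The main work is to produce, for some $y\in\bar X$, a subgroup $G(y)$ large enough to force a big composition factor on $V$. Here I would split into cases according to whether some $x_i$ is a regular semisimple element (or, after passing to closures as in Section~\ref{ss:classes}, whether the $x_i$ can be taken to avoid being long or short root elements), and otherwise handle the remaining ``small'' classes by hand. In the generic case, embed the $x_i$ in the stabilizer $P=QL$ of a totally isotropic $3$-space, with $L=\GL_3(k)$, and apply Gerhardt's theorem (Theorem~\ref{t:sl}, or for $\SL_2$ the version in \cite[Theorem 4.5]{Ger}) inside $L$ after checking the eigenspace inequality $\sum_id_i'\leqs 3(r-1)$ for the induced action on the natural $L$-module; then Lemma~\ref{l:submodules-conn} gives $G(x)^0$ a composition factor of dimension at least $3$, so any proper invariant subspace is totally isotropic of dimension $\leqs 3$. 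Alternatively, as in the proof of Theorem~\ref{l:sp6odd}, one can instead embed in the stabilizer of a $1$-space and use the $\Sp_4(k)$ result (Theorem~\ref{t:sp4even1} / Corollary~\ref{c:sp4even}) to get a composition factor of dimension at least $4$; combining the two Levi arguments, together with Corollary~\ref{c:sr} and the observation that $\Sp_6(k)$ has no proper connected irreducible maximal-rank subgroup, forces $G(x)=G$ generically. For the handful of small-class configurations (the $x_i$ being short root elements $(J_2^3)$, long root elements $(J_2,J_1^4)$, short involutions $(J_2^2,J_1^2)$, or semisimple involutions $(-I_2,I_4)=$ a transvection-type element, $(-I_4,I_2)$, $(\l I_3,\l^{-1}I_3)$, etc.), I would argue exactly as in the $p\ne2$ proof of Theorem~\ref{l:sp6even}'s odd-characteristic analogue (Theorem~\ref{l:sp6odd}): use that products of two such classes contain elements of larger support, so that unless we are in case~(i) the hypothesis on $\sum_ie_i$ already forces $r\geqs 3$ and one can replace a pair $C_iC_j$ by a single class $D\subseteq C_iC_j$ avoiding the bad list, then apply Lemma~\ref{l:products}. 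Fixed-point computations on the variety $\Omega$ of totally isotropic $3$-spaces (analogous to Lemma~\ref{l:fpsp6}, via \eqref{e:fpr} and Lemma~\ref{l:spti}) then rule out case~(b) that $G(x)$ generically stabilises such an $\Omega$-point, since one checks $\sum_i\dim\Omega^{x_i}<(r-1)\dim\Omega=6(r-1)$ outside the excluded cases.

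The hard part will be the bookkeeping in characteristic~$2$: the unipotent classes $(J_2^2,J_1^2)$, $(J_3^2)$ and the various $a$-, $b$-, $c$-type involutions are not distinguished by Jordan form alone on $V$ (Remark~\ref{r:as}), and one must be careful that ``passing to closures'' (Lemma~\ref{l:closures}) does not accidentally land in a quadratic pair — which, after descending to a Levi, would correspond precisely to case~(i). I expect the delicate point to be verifying, for each descent step, that the $\sum_ie_i<6(r-1)$ strict inequality is preserved or that one is genuinely in the excluded quadratic case; in particular the element $(-I_2,I_4)$ in $\Sp_6(k)$ with $p=2$ is a transvection (Jordan form $(J_2^2,J_1^2)$ with $e_i=4$) and pairs of these already have $\sum_ie_i=8>6$, so they cannot occur with $r=2$, which is exactly why — unlike Theorem~\ref{l:sp6odd}(ii),(iii) — no exceptional configuration survives here beyond the quadratic one. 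I would conclude by assembling these cases: in every situation other than~(i), $G(x)$ is generically irreducible and primitive on $V$ with rank $2$ or $3$, contains strongly regular elements (Lemma~\ref{l:sr}), and hence equals $G$ generically, so $\Delta$ is nonempty.
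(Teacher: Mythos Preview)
Your overall strategy is the same as the paper's: embed in the Levi of a $P_1$ or $P_3$ parabolic, invoke Theorem~\ref{t:sl} or the $\Sp_4$ result (Corollary~\ref{c:sp4even}) to force a large composition factor, use Lemmas~\ref{l:1-spaces}, \ref{l:nondeg2}, \ref{l:oinsp} to eliminate subspace stabilizers and the orthogonal subgroup, and then observe that $\SO_6(k)$ is the only proper connected irreducible subgroup of large rank. That is exactly how the paper proceeds, and your sketch of the product-of-classes trick (Lemma~\ref{l:products}) to handle small classes also mirrors the paper's Case~3 argument.

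However, your treatment of the characteristic-$2$ element list is wrong in several places, and this is not cosmetic. Since $p=2$ there are \emph{no} semisimple involutions: the elements ``$(-I_2,I_4)$'' and ``$(-I_4,I_2)$'' you list are simply the identity, and your assertion that ``$(-I_2,I_4)$ in $\Sp_6(k)$ with $p=2$ is a transvection (Jordan form $(J_2^2,J_1^2)$)'' is doubly incorrect --- the element is trivial, and in any case a transvection (long root element) has Jordan form $(J_2,J_1^4)$, not $(J_2^2,J_1^2)$. Likewise $(J_2^3)$ is a $b_3$-type involution, not a short root element; the short root elements are the $a_2$-type involutions with Jordan form $(J_2^2,J_1^2)$. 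These misidentifications matter because the case split in the paper's proof hinges precisely on which unipotent classes embed in which Levi (e.g.\ $a$-type but not $b$- or $c$-type involutions lie in $\GL_3(k)$), on which classes contain elements acting nontrivially on a nondegenerate $2$-space, and on which products $C_iC_j$ contain semisimple elements of a specified shape. Your sketch of ``argue exactly as in Theorem~\ref{l:sp6odd}'' would therefore not go through as written: the paper's actual case analysis (Cases~1, 2, 3.1--3.4) is organized around the genuine $p=2$ classes (semisimple with $\geqs 4$ eigenvalues; $(I_2,\lambda I_2,\lambda^{-1}I_2)$; $(\lambda I_3,\lambda^{-1}I_3)$; the unipotent classes $b_1$, $a_2$, $c_2$, $b_3$) and several of the reductions --- for instance showing in Case~3.2 that $G(x)$ cannot generically be $\Sp_2(k)^3$ by analysing how each $x_i$ permutes the summands --- have no analogue in your outline.
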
 

\begin{proof}   
As usual, if $r=2$ and the $x_i$ are quadratic, then $\Delta$ is empty by Lemma \ref{l:quadratic} and therefore it remains for us to show that $\Delta$ is nonempty in all the remaining cases. We partition the proof into several subcases. Let $V$ be the natural module.
 
 \vs
 
 \noindent \emph{Case 1. $x_1$ is semisimple with at least four distinct eigenvalues on $V$.}
 
 \vs
 
Let $U$ be a $1$-dimensional subspace of $V$ and choose a conjugate of $x_1$ so that it acts as a regular semisimple element on the nondegenerate $4$-space $U^{\perp}/U$. Then  Corollary \ref{c:sp4even} implies that there exists $x \in X$ such that $G(x)$ induces $\Sp_4(k)$ on $U^{\perp}/U$ and thus $G(x)^0$ generically has a composition factor on $V$ of dimension at least $4$. Since $G(x)$ does not generically fix a nondegenerate
$2$-space (see Lemma \ref{l:nondeg2}), this implies that $G(x)$ is generically primitive and  irreducible on $V$ with rank $2$ or $3$.  But $\OO_6(k)$ (and its connected component) are the only proper
subgroups of $G$ with these properties and the inequality $\sum_ie_i<6(r-1)$ rules out the possibility that $G(x)$ is generically contained in such a subgroup (see Lemma \ref{l:oinsp}). It follows that $\Delta$ is nonempty.

 \vs
 
 \noindent \emph{Case 2. $d_1 = 2$.}
 
 \vs
 
Here $x_1$ is semisimple and in view of Case 1 we may assume it is of the form $(I_2, \l I_2, \l^{-1}I_2)$ for some scalar $1 \ne \l \in k^{\times}$. In particular, $d_1 = e_1 = 2$.  
 
First assume $r=2$, so $d_2 \leqs 4$ and $e_2 \leqs 3$. Note that if $d_2=4$ then $e_2=d_2$, which violates the bound $e_1+e_2 \leqs 5$, whence $d_2 \leqs 3$. Now $x_1$ preserves
a $3$-dimensional totally isotropic subspace of $V$, acting as a regular semisimple element on this $3$-space. Therefore, Theorem \ref{t:sl} implies that there exists $x \in X$ such that $G(x)^0$ induces $\SL_3(k)$ on such a subspace.  In addition, we can find $y \in X$ such that $G(y)^0 = \Sp_2(k)$ acts irreducibly on a nondegenerate $2$-space and does not preserve a totally isotropic $3$-space. Therefore, $G(x)^0$ is generically irreducible and has rank $2$ or $3$. Now $\SO_6(k)$ is the only proper connected subgroup of $G$ with this property, but if $G(x)^0$ is generically conjugate to $\SO_6(k)$, then $G(x)$ is generically contained in a conjugate of $\OO_6(k)$ and this is not possible by Lemma \ref{l:oinsp}.

Now assume $r \geqs 3$. If $d_i \leqs 3$ for some $i \geqs 2$ then the result follows from the argument in the previous paragraph, so we may assume $e_i=d_i \geqs 4$ for all $i \geqs 2$. If $x_2$ and $x_3$ are unipotent, then there exists a conjugacy
class $D = y^G \subseteq C_2C_3$ of elements of prime order $t$ (we can take $t=2$ if $x_2$ or $x_3$ is a transvection, otherwise $t \geqs 3$) such that the relevant inequalities still hold with respect to the variety $C_1 \times D \times C_4 \times \cdots \times C_r$. Consequently, we may assume that at most one $x_i$ is unipotent. In particular, we may assume $x_2$ is semisimple of the form $(I_4,\eta I_1, \eta^{-1}I_1)$ for some $1 \ne \eta \in k^{\times}$. Next observe that we can choose elements $y_i \in C_i$ for $i=1,2$ so that the closure of $\la y_1, y_2 \ra$ is a subgroup $H= \Sp_2(k) \times A$ preserving an orthogonal decomposition $V = U \perp U'$ into nondegenerate spaces, where $\dim U = 2$. Here $A$ is abelian and has four distinct weight spaces on $U'$, which means that $H$ preserves only finitely many subspaces of $V$. In turn, this implies that $G(x)^0$ does not generically preserve a totally isotropic $3$-space. It follows that $G(x)^0$ is generically irreducible and has rank at least $2$, whence $\Delta$ is nonempty by arguing as above.   This completes the proof in Case 2. 
 
\vs

\noindent \emph{Case 3. $d_i \geqs 3$ for all $i$.}

\vs 

If $r = 2$ then $d_1=d_2=3$ and we deduce that $x_1$ and $x_2$ are quadratic, which is the case we are excluding. For the remainder, let us assume $r \geqs 3$. There are a number of different cases to consider.

\vs

\noindent \emph{Case 3.1. $d_1=d_2 = 3$.}

\vs 

Here we may choose $y_i \in C_i$ for $i=1,2$ such that the closure of $\la y_1, y_2\ra$ is a maximal rank subgroup of the form $\Sp_2(k)^3$. Since such a subgroup
is contained in only finitely many maximal closed subgroups of $G$, it follows that we can find $y_i \in C_i$ with $i \geqs 3$ so that $G(x) = G$ for $x = (y_1, y_2, \ldots, y_r) \in X$. 
   
\vs

\noindent \emph{Case 3.2. $d_1=3$, $x_1$ semisimple and $d_i \geqs 4$ for $i \geqs 2$.}

\vs 
   
First observe that $x_1 = (\l I_3, \l^{-1}I_3)$ and $e_i=d_i$ for $i \geqs 2$. Suppose $x_2$ and $x_3$ are both semisimple, so $e_i=d_i=4$ for $i=2,3$. Then there exists a semisimple element $g \in C_2C_3$ of the form $(I_2,\mu I_2, \mu^{-1}I_2)$ so that  
\[
d_1+\a(g) + \sum_{i=4}^rd_i =  \sum_{i=1}^rd_i-6 \leqs 6(r-2)  
\]
and the desired result follows by Case 2. Similarly, if $x_2$ and $x_3$ are unipotent with Jordan form $(J_2^2,J_1^2)$ then by passing to closures we may assume they are both short root elements and therefore we can find a semisimple element $g \in C_2C_3$ of the form $(I_2,\mu I_2, \mu^{-1}I_2)$. Once again, we deduce that $\Delta$ is nonempty via Case 2.

Next suppose $x_2$ and $x_3$ have Jordan forms $(J_2^2,J_1^2)$ and $(J_2,J_1^4)$, respectively. By passing to closures, we may assume $x_2$ is a short root element. Here we can find an involution 
$g \in C_2C_3$ with Jordan form $(J_2^3)$ and so there exists $x \in \bar{X}$ such that
$G(x)^0 = \Sp_2(k)^3$. If $r \geqs 4$ then we immediately deduce that $\Delta$ is nonempty since 
$\Sp_2(k)^3$ is contained in only finitely many maximal closed subgroups of $G$. Now assume $r = 3$. Here we deduce that for generic $x \in X$, $G(x)^0$ has rank $3$ and it does not fix a nonzero totally isotropic subspace of $V$. By Lemma \ref{l:oinsp}, $G(x)^0$ is not generically $\SO_6(k)$ and so it remains to show that $G(x)^0$ is not generically of the form $\Sp_2(k)^3$.

Seeking a contradiction, suppose $G(x)^0$ is generically a subgroup of the form $\Sp_2(k)^3$. Since $G(x)$ does not generically fix a nondegenerate $2$-space (see Lemma \ref{l:nondeg2}), it follows that $G(x)$ is generically irreducible on $V$.   Now the elements in $C_1$ have odd order and they do not transitively permute the three nondegenerate spaces in an orthogonal decomposition 
\begin{equation}\label{e:decc2}
V = V_1 \perp V_2 \perp V_3
\end{equation}
preserved by $G(x)^0 = \Sp_2(k)^3$ (this would only be possible if $x_1$ is an element of order $3$ of the form $(I_2, \omega I_2, \omega^{-1}I_2)$, which is not the case since $d_1 = 3$). Similarly, no element in $C_3$ can interchange two of the summands. Therefore, since $x_2$ is an involution, we conclude that $G(x)$ does not transitively permute the $V_i$ and thus $G(x)$ is reducible, a contradiction.  

If  $x_2 = x_3 = (J_2, J_1^4)$, then $r \geqs 4$ and we can replace $C_2$ and $C_3$ by the class of short root elements (which is contained in $C_2C_3$) and argue as above.  

To complete the analysis of Case 3.2, we may assume $x_2$ is semisimple (with $d_2=4$),  $x_3$ is unipotent and $r=3$. We can choose $y_i \in C_i$ for $i =1,2$ such that the closure of $\langle y_i, y_2 \rangle $ induces $\Sp_2(k)$ on a nondegenerate $2$-space, whence $G(x)^0$ does not generically fix a totally isotropic $3$-space. By passing to closures, we may assume that $x_3$ is either a long root element or a short root element.

First assume that $x_3 = (J_2,J_1^4)$. By applying \cite[Theorem 4.5]{Ger}, we see that there exists $x \in X$ with $G(x)^0 = \Sp_2(k) \times \Sp_2(k)$, preserving an orthogonal decomposition as in \eqref{e:decc2}. Since $G(x)$ does not generically fix a nondegenerate $2$-space nor a $1$-space, it follows that either $G = G(x)$, or $G(x)$ acts imprimitively on $V$, transitively permuting the $V_i$ in \eqref{e:decc2}. But every element in $C_1$, $C_2$ and $C_3$ acts trivially on the set of summands in any orthogonal decomposition of $V$ into nondegenerate $2$-spaces, so the latter possibility is ruled out and we conclude that $\Delta$ is nonempty.

Finally, suppose that $x_3 = (J_2^2,J_1^2)$ is a short root element.  Then $x_3$ is conjugate to an element in $\GL(W)$, a Levi subgroup of the stabilizer in $G$ of a totally isotropic $3$-space $W$. By Theorem \ref{t:sl}, there exists $x \in X$ such that $G(x)^0 = \SL(W)$. Since $G(x)^0$ does not generically fix a totally isotropic $3$-space and since the smallest composition factor of $G(x)^0$ on $V$ is generically at least $3$-dimensional, it follows that $G(x)^0$ is generically irreducible and
contains elements with distinct eigenvalues on $V$.  But as noted above, $G$ does not have a proper connected subgroup with these properties and thus $G = G(x)$ for generic $x \in X$.  

\vs

\noindent \emph{Case 3.3. $d_1=3$, $x_1$ unipotent and $d_i \geqs 4$ for $i \geqs 2$.}

\vs 

Here $x_1 = (J_2^3)$ and $e_i=d_i$ for all $i$, so $\sum_i d_i < 6(r-1)$ and $r \geqs 3$. If $x_2$ and $x_3$ are long root elements, then we can replace $C_2 \times C_3$ by the class $g^G$ of short root elements, noting that the relevant inequalities are satisfied for $Y = C_1 \times g^G \times C_4 \times \cdots \times C_r$. Therefore, we may assume $d_2 = d_3 = 4$ and $r=3$.  In the usual manner, we see that there exists $x \in \bar{X}$ such that $G(x)$
induces $\SL_3(k)$ on a totally isotropic $3$-space. Also as above, there exist $y_i \in C_i$ for $i = 1, 2$
such that the closure of $\langle y_i, y_2 \rangle $ induces $\Sp_2(k)$ on a nondegenerate $2$-space. This implies that $G(x)^0$ does not generically fix a totally isotropic $3$-space and as before this allows us to conclude that $\Delta$ is nonempty.

\vs

\noindent \emph{Case 3.4. $d_i \geqs 4$ for all $i$.}

\vs 

To complete the proof of the theorem, we may assume that $d_i \geqs 4$ for all $i$. Here $e_i=d_i$ and thus $r \geqs 4$. If $x_1$ and $x_2$ are transvections then the bound $\sum_ie_i<6(r-1)$ implies that $r \geqs 5$ and we can replace $C_1 \times C_2$ by the class of short root elements (noting that the relevant inequalities are still satisfied). This reduces the problem to the case where $r=4$ and at most one $x_i$ is a transvection. If $x_1 = (J_2,J_1^4)$ and $x_2$ is unipotent, then $x_2 = (J_2^2,J_1^2)$ and there exists $g \in C_1C_2$ with $g = (J_2^3)$, so the relevant inequalities still hold for $Y = g^G \times C_3 \times \cdots \times C_r$.  Similarly, if $x_1 = x_2 = (J_2^2,J_1^2)$ then by passing to closures, we may assume they are both short root elements and we can replace $C_1 \times C_2$ by $g^G$, where $g$ is a  semisimple element of the form $(I_2, \l I_2, \l^{-1}I_2)$. In view of the previous cases we have handled, these observations reduce the problem to the case where $r=4$ and at most one $x_i$ is unipotent. 

Suppose $x_1 = (J_2,J_1^4)$. Then there exists $y \in X$ such that $G(y)^0=\Sp_2(k) \times \Sp_2(k)$ fixes an orthogonal decomposition as in \eqref{e:decc2} and by arguing as above, we deduce that either $\Delta$ is nonempty, or $G(x)$ is generically irreducible and imprimitive on $V$. In the latter situation, this means that there exists $x \in X$ such that $G(x)$ transitively permutes the summands $V_1$, $V_2$ and $V_3$ in \eqref{e:decc2}. But each element in $C_i$ acts trivially on the set of summands and we conclude that $\Delta$ is nonempty. An entirely similar argument applies if each $x_i$ is semisimple, so we may assume that $x_1 = (J_2^2,J_1^2)$. As usual, by passing to closures, we may assume that $x_1$ is a short root element and by arguing as above we can show that there exist $x,y \in X$ such that $G(x)^0$ induces $\SL_3(k)$ on a totally isotropic $3$-space and $G(y)^0$ induces $\Sp_2(k)$ on a nondegenerate $2$-space. As before, this implies that $G(x)=G$ for generic $x$ and the proof of the theorem is complete.
\end{proof}

In the next lemma we consider a special case that arises in the proof of our main theorem for symplectic groups in even characteristic.  

\begin{lem} \label{l:twoinvolutions}
Suppose $m \geqs 4$ is even, $p=2$ and $r \geqs 3$.  If $x_1$, $x_2$ are involutions with Jordan form $(J_2^m)$, then $\Delta$ is nonempty.   
\end{lem}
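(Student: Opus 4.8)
The plan is to invoke Lemma~\ref{l:closures}, so that it suffices to produce $x\in\bar X$ with $G(x)=G$. First I would pass to closures of all the classes involved: since $\bar C_i$ contains the class of the semisimple part of $x_i$ and, for unipotent $x_i$, the smallest unipotent class with the same number of Jordan blocks, and since $m$ is even so that $(J_2^m)$ is the $a_m$-type class which by Section~\ref{sss:sppn2}(v) is the unique minimal unipotent class of $G$ with $m$ Jordan blocks, I may assume $x_1,x_2$ are $a_m$-type involutions and each $x_i$ with $i\geq 3$ is semisimple of prime order or a transvection. Let $P=QL$ be the parabolic stabilising a maximal totally isotropic subspace $W$, with $L=\GL(W)=\GL_m(k)$ chosen also to stabilise a complementary maximal totally isotropic $W'$. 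By Section~\ref{sss:sppn2}(iii), conjugates of the $a_m$-type elements $x_1,x_2$ lie in $L$, acting on $W$ with Jordan form $(J_2^{m/2})$; likewise every semisimple $x_i$ lies in $L$, and (after passing to a short root element $a_2$ in its closure) so does every unipotent $x_i$ other than a transvection, acting then as a transvection on $W$. Writing $r'$ for the number of elements so placed, the hypotheses $\sum_i d_i\leq n(r-1)$ and $\sum_i e_i<n(r-1)$ — which hold automatically here, since $d_1=d_2=e_1=e_2=m$ and each remaining $d_i,e_i\leq n-1$, giving $\sum_i e_i\leq n(r-1)-(r-2)<n(r-1)$ — yield after a short count that the largest eigenspace dimensions on $W$ of these $r'$ elements sum to at most $m(r'-1)$; when $r'\geq 3$, Theorem~\ref{t:sl} then provides $y\in\bar X$ with $L'=\SL_m(k)\leq G(y)$.

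Granting $L'=\SL_m(k)\leq G(y)$, the only $L'$-invariant subspaces of $V=W\oplus W'$ are $0,W,W',V$, so by Lemma~\ref{l:submodules-conn} the smallest nonzero $G(x)^0$-invariant subspace of $V$ has dimension at least $m$ for generic $x\in X$; in particular $G(x)^0$ is generically either irreducible on $V$ or its smallest invariant subspace is a maximal totally isotropic $m$-space. Lemmas~\ref{l:1-spaces} and~\ref{l:nondeg2} (using $\sum_i d_i\leq n(r-1)$ and $r\geq 3$ to avoid the quadratic exception) rule out $G(x)$ generically fixing a $1$-space or a nondegenerate $2$-space, and Lemma~\ref{l:spti} together with Lemma~\ref{l:tensor} gives $\dim(G/P)^{x_i}=\dim\Sym^2(W)^{x_i}=m^2/4+m/2$ for $i=1,2$, which is well below half of $\dim G/P=m^2/2+m/2$; bounding $\dim(G/P)^{x_i}$ for the remaining $x_i$ similarly and applying Lemma~\ref{l:basic}, one finds $G(x)$ does not generically preserve a maximal totally isotropic subspace. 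Hence $G(x)^0$ is generically irreducible on $V$. Since $\SL_m(k)\leq G(y)$ contains regular semisimple elements with $2m$ distinct eigenvalues on $V$, Lemma~\ref{l:sr}(ii) shows $G(x)^0$ generically contains a strongly regular element on $V$, so Corollary~\ref{c:sr} forces either $\Delta$ nonempty or $G(x)^0$ generically contained in a conjugate of $\SO_n(k)$; the latter is excluded by Lemma~\ref{l:oinsp}(i) as $\sum_i e_i<n(r-1)$. Thus $\Delta$ is nonempty.

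The main obstacle is exactly the configurations in which fewer than three of the $x_i$ can be placed inside $L$, namely when $x_1,x_2$ (both quadratic on $W$) lie in $L$ but too few transvections do. If $r\geq 4$ and two of the $x_i$ ($i\geq 3$) are transvections, I would replace the corresponding pair of classes $C_i\times C_j$ by a class $D\subseteq\overline{C_iC_j}$ of odd prime order semisimple elements — two transvections generically generate an $\SL_2(k)$ on a nondegenerate $2$-space, so $C_iC_j$ contains elements $(\zeta I_1,\zeta^{-1}I_1,I_{n-2})$ — and invoke Lemma~\ref{l:products}; since $D$ now sits in $L$ with largest eigenspace $m-2$ on $W$, after this single replacement at least three elements lie in $L$ and the argument above applies. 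The genuinely special case is $r=3$ with $x_3$ a transvection: here I would keep $x_1,x_2\in L$ and $x_3$ in general position, argue that $G(x)$ is generically irreducible (no $\langle x_1,x_2\rangle$-invariant totally isotropic space is $x_3$-invariant) and, being generated by an irreducible group together with a transvection, primitive, and then conclude $G(x)=G$ from \cite[Theorem 8.3]{GS}; for $m=4$ one may instead reduce to $\Sp_6(k)$ via Theorem~\ref{l:sp6even} applied to a nondegenerate hyperplane stabiliser. I expect this last case, together with the eigenspace bookkeeping needed to verify $\sum\dim(G/P)^{x_i}<(r-1)\dim G/P$, to be the most delicate part.
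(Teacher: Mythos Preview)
Your overall architecture (embed in the $\GL_m$ Levi, invoke Theorem~\ref{t:sl}, then rule out the parabolic case) is reasonable, but there are two genuine gaps.

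First, your dichotomy ``$G(x)^0$ is generically irreducible or its smallest invariant subspace is a maximal totally isotropic $m$-space'' is incomplete: from $\SL_m\leqs G(y)\leqs L$ you only get that the smallest nonzero $G(x)^0$-submodule has dimension at least $m$, and such an $m$-space could equally well be nondegenerate. The paper closes this gap by strengthening $L'\leqs G(y)$ to $P'\leqs G(y')$ for some $y'\in X$ (via $H^1(\SL_m,W)=0$ and a fixed-point count on $Q/\operatorname{rad}(Q)$), which forces $V$ to be uniserial and hence indecomposable as a $kG(x)^0$-module generically; only then does reducibility imply a TI $m$-space. Relatedly, your claim that $\dim\Omega^{x_i}=m^2/4+m/2$ is ``well below half'' of $\dim\Omega=m(m+1)/2$ is false: in fact $m^2/4+m/2>\tfrac12\dim\Omega$. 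The total inequality $\sum\dim\Omega^{x_i}<(r-1)\dim\Omega$ may still hold, but you have not verified it for general semisimple $x_3$, and the paper deliberately avoids this route: instead it produces a semisimple $g\in C_1C_2$ with trivial fixed space and all eigenspaces $2$-dimensional, then applies Theorem~\ref{l:sp6even} to find $h\in C_3$ with $\overline{\langle g,h\rangle}$ inducing $\Sp_6(k)$ on a nondegenerate $6$-space, which directly shows $G(x)^0$ does not generically fix a TI $m$-space.

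Second, your treatment of the case $r=3$ with $x_3$ a long root element does not work as stated. If you place $x_1,x_2$ in $L$, they act on $W$ as $(J_2^{m/2})$, so by Lemma~\ref{l:quadratic} the group $\langle x_1,x_2\rangle$ has only $2$-dimensional composition factors on $W$; it therefore fixes many TI subspaces of $V$, and there is no reason a generic transvection $x_3$ avoids all of them. The paper takes a completely different tack here: it chooses $y_1,y_2$ so that $\overline{\langle y_1,y_2\rangle}=T.2$ with $T$ a rank-$m/2$ torus having all weight spaces $2$-dimensional and trivial fixed space, argues that any minimal $G(x)$-invariant subspace has dimension at most $2$, and then rules out totally isotropic $2$-spaces by a fixed-point computation on that variety (where $\dim\Omega^{x_1}=\dim\Omega^{x_2}=n-2$ and $\dim\Omega^{x_3}=2n-7$). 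Irreducibility together with the presence of long root elements then forces $G(x)\leqs\OO_n(k)$ or $G(x)=G$, and Lemma~\ref{l:oinsp} excludes the former.
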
 

\begin{proof}   
First observe that $d_1=d_2 = m$ and we are free to assume that $r=3$.  Note that we have $\sum_i e_i \leqs \sum_i d_i  <  2n$ and by passing to closures, we may assume that $x_1$ and $x_2$ are $a$-type involutions (see \cite{AS} and Remark \ref{r:as}). In particular, no element in $C_1$ or $C_2$ acts nontrivially on a nondegenerate $2$-space. 

Next observe that there exist $y_i \in C_i$ for $i =1,2$ such that the Zariski closure of $\langle y_1, y_2 \rangle$ is $H = T.2$, where $T$ is a  torus of $G$ 
of rank $m/2$ such that all of its weight spaces on $V$ are $2$-dimensional and its fixed space is trivial. Here the involutions in $H \setminus T$ act by inversion on $T$ and we note that  $H$ does not preserve any odd dimensional subspaces of $V$.  In particular, $G(x)$ does not generically preserve an odd dimensional subspace of $V$.  There are two cases to consider.

\vs

\noindent \emph{Case 1. $x_3 = (J_2,J_1^{n-2})$ is a long root element.}

\vs

First we claim that either $G(x)$ is generically irreducible on $V$, or $G(x)$ generically fixes a totally isotropic $2$-space. 

To see this, suppose $G(x)$ generically fixes a $d$-dimensional subspace $U$ with $d \geqs 1$ minimal, so $d \leqs m$ and we may assume $U$ is either nondegenerate or totally isotropic. In the nondegenerate case, $x_3$ acts trivially on $U$ or $U^{\perp}$, and the closure of $\la x_1, x_2 \ra$ preserves a $2$-dimensional subspace by Lemma \ref{l:quadratic}, whence $d \leqs 2$ and thus $d=2$. On the other hand, if $U$ is totally isotropic, then $x_3$  acts trivially on $U$ and so once again we deduce that $d=2$. The claim now follows since we have already noted that the elements in $C_1$ and $C_2$ act trivially on any nondegenerate $2$-space.   

Our next aim is to show that $G(x)$ does not generically fix a totally isotropic $2$-space, in which case the previous claim implies that $G(x)$ is generically irreducible. Let $\Omega$ be the variety of totally isotropic $2$-spaces and note that $\dim \Omega = 2n- 5$.  We claim that 
\[
\dim \Omega^{x_1} = \dim \Omega^{x_2} = n-2,\;\; \dim \Omega^{x_3} = 2n-7.
\]
In particular, Lemma \ref{l:basic2} implies that if $x = (y_1,y_2,y_3) \in X$ with $y_3 \in C_3$ generic, then $G(x)$ does not fix a totally isotropic $2$-space.  

To justify the claim, first assume $W \in \O^{x_1}$. If $x_1$ acts trivially on $W$, then 
$W$ is contained in the variety of $2$-dimensional subspaces of $V^{x_1}$, which has dimension $n-4$ since $\dim V^{x_1}=m$. Now assume $x_1$ has Jordan form $(J_2)$ on $W$ and write $W = \la w, x_1w \ra$, so $\la w \ra \ne W^{x_1}$. The $1$-dimensional subspaces of $V$ that are not contained in $V^{x_1}$ form an open subset in the variety of all $1$-dimensional subspaces of $V$. In particular, this subvariety has dimension $n-1$. Since the set of $1$-dimensional subspaces $\la w \ra$ of $W$ with $\la w \ra \ne W^{x_1}$ forms a $1$-dimensional variety, we conclude that the subvariety of $2$-spaces in $\O^{x_1}$ on which $x_1$ acts nontrivially has dimension $n-2$. Therefore, $\dim \O^{x_1} = n-2$ as claimed (and also $\dim \O^{x_2} = n-2$ since $x_1$ and $x_2$ are conjugate).

We now compute $\dim \Omega^{x_3}$. Suppose $W \in \Omega^{x_3}$ and note that $x_3$ acts trivially on $W$, which means that $W$ is contained in the $(n-1)$-space $V^{x_3}$. The variety of $2$-dimensional subspaces of $V^{x_3}$ has dimension $2n-6$ and the subvariety of totally isotropic $2$-spaces has codimension $1$. Therefore, $\dim \Omega^{x_3} = 2n-7$ as required.    

We have now shown that $G(x)$ is generically irreducible and contains long root elements. Any proper closed subgroup of $G$ with these properties is contained in $\OO_n(k)$, but the bound $\sum_ie_i<2n$ implies that $G(x)$ is not generically contained in an orthogonal subgroup (see Lemma \ref{l:oinsp}) and thus $\Delta$ is nonempty.

\vs

\noindent \emph{Case 2. $x_3$ is not a long root element.}

\vs

For the remainder, let us assume $x_3$ is not a long root element. By passing to the closure of $C_3$, we may assume that $x_3$ is either semisimple or a short root element. Let $P=QL$ be the stabilizer of a totally isotropic $m$-space $W$, where $Q$ is the unipotent radical and $L$ is a Levi subgroup. Note that we may embed each $x_i$ in $L$.

By Theorem \ref{t:sl}, there exists $y = (y_1,y_2,y_3) \in X$ such that $G(y)^0$ induces 
$\SL_m(k)$ on $W$. Moreover, since $H^1(\SL_m(k), W)=0$ (see \cite{JP}) and the sum of the dimensions of the fixed point spaces of the $x_i$ on $Q/\rad(Q) \cong W$ is
less than $m$, it follows that there exist $q_i \in Q$ such that $P' \leqs G(y')$ with $y'=(y_1^{q_1}, y_2^{q_2}, y_3^{q_3}) \in X$. As a consequence, either $G(x)^0$
is generically irreducible, or $G(x)^0$ acts uniserially on $V$ and therefore
fixes a totally isotropic $m$-space for all  $x \in X$.  

Next observe that there exists a semisimple element $g \in C_1C_2$ such that $V^g$ is trivial and every eigenspace of $g$ on $V$ is $2$-dimensional. By applying Theorem \ref{l:sp6even}, we can find $h \in C_3$
such that the closure of $\langle g, h \rangle$ induces $\Sp_6(k)$ on a nondegenerate $6$-space. Therefore, $G(x)^0$ does not generically fix a totally isotropic $m$-space and so by the observation in the previous paragraph, we deduce that $G(x)^0$ is generically irreducible, it has rank at least $m-1$
and it contains elements with distinct eigenvalues on the natural module. Therefore, either $G = G(x)$ for generic $x \in X$, or $G(x)^0$ is contained in
a maximal rank connected irreducible subgroup. But the only such subgroup is $\SO_n(k)$ and this is ruled out by the bound $\sum_i e_i<2n$. The result follows. 
\end{proof}  

We can now complete the proof of Theorem \ref{t:main2}.
  
  \begin{thm} \label{t:speven} 
  Suppose $G = \Sp_{n}(k)$, where $n = 2m$, $m \geqs 3$, $p = 2$ and the $x_i$ in \eqref{e:X} have prime order. If $\sum_i  d_i \leqs n(r-1)$ and $\sum_i e_i < n(r-1)$, then $\Delta$ is empty 
 if and only if $r=2$ and $x_1,x_2$ are quadratic.
\end{thm}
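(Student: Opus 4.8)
The plan is to follow the template of the odd-characteristic case (Theorem~\ref{t:spodd}), arguing by induction on $m$ with Theorem~\ref{l:sp6even} (the case $m=3$) as the base. The forward implication is immediate: if $r=2$ and $x_1,x_2$ are quadratic on $V$, then $\langle x_1,x_2\rangle$ is never dense by Lemma~\ref{l:quadratic}, so $\Delta$ is empty. For the converse, fix $m\geq 4$, assume we are not in the excluded case, and aim to show $G(x)=G$ for generic $x\in X$.

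First I would pass to the stabilizer $P=QL$ of an isotropic $1$-space $\langle v\rangle$, with $W=v^{\perp}/\langle v\rangle$ a nondegenerate $(n-2)$-space and $L'=\Sp(W)=\Sp_{n-2}(k)$. Embed each $x_i$ in $P$, inducing an element $g_i$ on $W$, and choose the embedding to minimise $d_i':=\alpha(g_i)$; exactly as in Theorem~\ref{t:spodd}, $d_i'$ is one of $d_i-2$, $d_i-1$ (with $d_i\leq n/2$), or $d_i$ (with $d_i\leq n/3$), a routine count gives $\sum_i d_i'\leq (n-2)(r-1)$, and the analogous estimate for $e_i':=\dim W^{g_i}$ yields $\sum_i e_i'<(n-2)(r-1)$. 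Note that the minimising embedding never makes $g_i$ trivial, so the $g_i$ have prime order and are noncentral in $L'$. Applying the inductive hypothesis (resp.\ Theorem~\ref{l:sp6even} when $m=4$) to $L'$ together with Theorem~\ref{t:sl} to absorb the unipotent radical $Q$ (here $H^1(L',Q)=0$ since $Q$ has a filtration with successive quotients $W$ and the trivial module, cf.\ \cite{JP}), I would produce $x_0\in X$ with $L'=\Sp_{n-2}(k)\leq G(x_0)^0$, unless $r=2$ and $g_1,g_2$ are both quadratic on $W$. In that last case the relations above combined with $d_1+d_2\leq n$ force $d_1=d_2=m$ and $x_1,x_2$ quadratic on $V$ — which is exactly the excluded case — so we may assume such an $x_0$ exists.

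Next, Lemma~\ref{l:submodules-conn} gives that $G(x)^0$ generically has a composition factor of dimension at least $n-2$ on $V$, while Lemmas~\ref{l:1-spaces} and~\ref{l:nondeg2} (using $\sum_i d_i\leq n(r-1)$ and that the $x_i$ are not both quadratic) show $G(x)$ does not generically fix a $1$-space or a nondegenerate $2$-space. A totally isotropic $2$-space, or any invariant subspace of dimension in $\{3,\dots,n-3\}$, or an invariant hyperplane (whose radical is an invariant $1$-space), would all be incompatible with a composition factor of dimension $\geq n-2$; hence $G(x)^0$ is generically irreducible on $V$. By Corollary~\ref{c:toralrank}, $\mathrm{rk}\,G(x)\geq \mathrm{rk}\,L'=m-1$ generically, so $\mathrm{rk}\,G(x)\in\{m-1,m\}$. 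Now Lemma~\ref{l:largeranksubs}(i) says a proper connected irreducible subgroup of $\Sp_n(k)$ has rank at most $\lfloor n/4\rfloor+1$ or equals $\SO_n(k)$: for $m\geq 5$ this rules out rank $m-1$, and a rank-$m$ subgroup must be $G$ or $\SO_n(k)$, the latter excluded by $\sum_i e_i<n(r-1)$ via Lemma~\ref{l:oinsp}; so $G(x)=G$ generically.

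The remaining obstacle — the step I expect to need the most care — is $m=4$, where Lemma~\ref{l:largeranksubs} permits a rank-$3$ irreducible connected subgroup $H$ of $\Sp_8(k)$. Here I would argue as in the corresponding step of Theorem~\ref{t:spodd}: if $G(x)^0$ were generically conjugate to such an $H$, then Theorem~\ref{t:toralrank}(ii) shows that $f(S_0)$ lies in the closure of $f(H)$, where $f$ is the characteristic-polynomial map on $V$ and $S_0$ is a maximal torus of the copy of $\Sp_6(k)=L'$ inside $G(x_0)^0$. Since $S_0$ lies in the Levi $L$, it acts trivially on $\langle v\rangle$ and on $V/v^{\perp}$, so every element of $S_0$ has $1$ as a repeated eigenvalue on $V$; as $f$ has finite fibres, $\overline{f(S_0)}$ is an irreducible $3$-dimensional subvariety of the closed locus of monic degree-$8$ polynomials with a repeated root at $1$, and it sits inside the irreducible $3$-dimensional variety $\overline{f(H)}=\overline{f(T)}$ for $T$ a maximal torus of $H$; hence the two closures coincide, forcing every element of $T$ to have $1$ as a repeated eigenvalue on $V$. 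But by inspection every rank-$3$ irreducible connected subgroup of $\Sp_8(k)$ has a maximal torus acting on $V$ with $8$ pairwise distinct eigenvalues, a contradiction. Therefore $G(x)=G$ for generic $x\in X$, and the induction is complete.
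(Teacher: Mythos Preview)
Your argument has a genuine gap when $m$ is even. The trichotomy you import from Theorem~\ref{t:spodd} --- that the minimising embedding gives $d_i' \in \{d_i-2,\, d_i-1,\, d_i\}$ with the stated constraints on $d_i$ --- fails in characteristic~$2$ for an $a$-type involution $x_i$ with Jordan form $(J_2^m)$. For such an element the fixed space $V^{x_i}$ is a totally isotropic $m$-space, and \emph{every} choice of $v\in V^{x_i}$ yields $d_i'=e_i'=m=d_i$: one checks that $(x_i-1)^{-1}\langle v\rangle$ is $(m+1)$-dimensional and, because $(w,x_iw)=0$ for all $w$ (the defining property of $a$-type), lies entirely in $v^\perp$, so $\dim W^{g_i}=m$. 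This is your case~(c) but with $d_i=n/2$, not $d_i\leqs n/3$, so the ``routine count'' $\sum_i d_i'\leqs(n-2)(r-1)$ is unjustified --- and it can genuinely fail, for instance when two of the $x_i$ are such involutions, or when one is and another equals $(\lambda I_m,\lambda^{-1}I_m)$. The same obstruction breaks your $e_i'$ estimate.

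The paper handles precisely this by splitting on the parity of $m$. For $m$ even with at least two $a$-type $(J_2^m)$ classes, it invokes the separately proved Lemma~\ref{l:twoinvolutions}, which constructs a subgroup of type $T.2$ (with $T$ a rank-$m/2$ torus) inside $\langle y_1,y_2\rangle$ and then argues directly via fixed-point counts on totally isotropic $2$-spaces and the $\SO_n$ exclusion. For $m$ even with exactly one such class $C_1$, the paper verifies that the inductive inequalities \emph{do} hold unless some other $x_j$ is $(\lambda I_m,\lambda^{-1}I_m)$; in that residual case (necessarily $r\geqs3$) it produces $(y_1,y_2)\in C_1\times C_j$ whose Zariski closure is $\Sp_2(k)^m$, containing a maximal torus and a long root subgroup, whence adjoining a generic $y_3$ gives $G$. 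Your proposal omits this entire layer. (Your separate treatment of $m=4$ via Theorem~\ref{t:toralrank}(ii) is correct and is a reasonable alternative to the observation --- implicit in the paper --- that every rank-$3$ connected irreducible subgroup of $\Sp_8(k)$ in characteristic~$2$ lies inside $\SO_8(k)$.)
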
 
  
  \begin{proof}  
  We proceed by induction on $m$, noting that the base case $m=3$ is covered by Theorem  \ref{l:sp6even}.   Assume $m \geqs 4$ and let $P=QL$ be the stabilizer in $G$ of a $1$-dimensional subspace $\la v \ra$, where $Q$ is the unipotent radical and $L$ is a Levi subgroup stabilizing a nondegenerate $(n-2)$-space (note that $L' = \Sp_{n-2}(k)$).
 By replacing each $x_i$ by a suitable conjugate, we may embed $x_i$ in $P$ and we write $g_i$ for the induced action of $x_i$ on the nondegenerate $(n-2)$-space $W = v^{\perp}/\la v \ra$. Let $d_i'$ be the maximal dimension of an eigenspace of $g_i$ and set $e_i' = W^{g_i}$.  
 
 First assume $m$ is odd. If $x_i$ is unipotent then we may assume $d_i' = e_i' \leqs d_i - 1$ (and indeed $d_i' = d_i-2$ unless $x_i$ has Jordan form $(J_2^m)$ on $V$). Similarly, if $x_i$ is semisimple and $e_i = d_i$, then we may assume that one of the following holds:
\begin{itemize}\addtolength{\itemsep}{0.2\baselineskip} 
\item[{\rm (a)}] $d_i'= d_i -2$.
   \item[{\rm (b)}]  $d_i'=d_i -1$ and $d_i \leqs n/2$. 
   \item[{\rm (c)}]   $d_i' \leqs d_i$ and $d_i \leqs n/3$.
   \end{itemize}
And if $x_i$ is semisimple with $e_i < d_i$, then we may assume that either $d_i'=d-1$, or $d_i' = d_i$ and $d_i \leqs n/4$. 
In particular, it follows that  
\begin{equation}\label{e:ineq}
\sum_i d_i' \leqs  (n-2)(r-1),\;\; \sum_i e_i' < (n-2)(r-1)
\end{equation}
and so by induction we can choose $y \in X$ such that $G(y)$ 
induces $\Sp_{n-2}(k)$ on $W$. In addition, Lemmas \ref{l:1-spaces} and \ref{l:nondeg2} imply that $G(x)$ does not generically fix a $1$-space nor a nondegenerate $2$-space, so for generic $x \in X$, $G(x)^0$ is irreducible and has rank $m-1$ or $m$. By inspecting Lemma \ref{l:largeranksubs}, we see that the only proper closed connected subgroup of $G$ with these properties is $\SO_n(k)$. However, the condition 
 $\sum_i e_i<n(r-1)$ implies that for generic $x \in X$, $G(x)$ is not contained in $\OO_n(k)$ (see Lemma \ref{l:oinsp}) and the result follows.    
  
Finally, let us assume $m$ is even. We can repeat the previous argument for $m$ odd unless at least one $x_i$ is an $a$-type involution (in the sense of \cite{AS}) with Jordan form $(J_2^m)$. Here $d_i'=d_i = e_i = e_i' = m$. If there are two such classes, then $r \geqs 3$  and Lemma \ref{l:twoinvolutions} gives the result. Now assume there is a unique such class, say $C_1$. Then the relevant inequalities in \eqref{e:ineq} are satisfied unless $x_2$ is a semisimple element of the form $(\l I_m, \l^{-1}I_m)$. If $x_2$ has this form, then $r \geqs 3$ and we note that there exist $y_i \in C_i$ for $i = 1,2$ such that $H = \Sp_2(k)^m$ is the Zariski closure of $\langle y_1, y_2 \rangle$ and the restriction of $V$ to $H$ is a direct sum of $m$ totally isotropic $2$-dimensional irreducible modules, each occurring with multiplicity $2$.   This implies that $H$ contains a maximal torus and preserves only finitely many subspaces of $V$.  Therefore, for generic $y_3 \in C_3$ it follows that $\langle H, y_3 \rangle$ is irreducible and contains a maximal torus and a long root subgroup. We conclude that $G=\langle H, y_3 \rangle$ for generic $y_3 \in C_3$ and the result follows. 
\end{proof}  

This completes the proof of Theorem \ref{t:main2}.

\section{Generic stabilizers}\label{s:generic}
   
With the proof of Theorem \ref{t:main2} in hand, we now turn to our main applications. In this section, we will prove Theorem \ref{t:main3} on generically free modules. 

First, let us recall the set up. Let $G$ be a simple algebraic group over an algebraically closed field $k$ of characteristic $p \geqs 0$ and let $V$ be a finite dimensional faithful rational $kG$-module. Set
\[
V^G = \{v \in V \,:\, gv = v \mbox{ for all $g \in G$}\}
\]
and recall that $V$ is \emph{generically free} if $G$ has a trivial generic stabilizer; that is, there exists a nonempty open subset $V_0$ of $V$ such that each stabilizer $G_v$ is trivial for all $v \in V_0$. Note that we may pass to a field extension $k'/k$ in order to establish the existence of a trivial generic stabilizer, so without loss of generality we may assume that $k$ is not algebraic over a finite field.
 
By combining Theorem \ref{t:main2} with the main results in \cite{BGG, Ger}, we will show that if $\dim V/V^G$ is sufficiently large, then $V$ is generically free. As noted in Section \ref{s:intro}, the analogous result for Lie algebras was proved in \cite{GG1} and we refer the reader to Remark \ref{r:generic} for several examples. Moreover, when combined with the results in \cite{GG1} we can prove that generic stabilizers are trivial as a group scheme under suitable hypotheses (see Corollary \ref{c:main3}). 

\vs

In view of \cite[Theorem 1.3]{Ger} (for $G = {\rm SL}_{n}(k)$) and \cite[Theorem 9]{BGG} (for exceptional groups), we may assume that $G$ 
is isogenous to either $ {\rm Sp}_n(k)$ with $n \geqs 4$, or ${\rm SO}_{n}(k)$ with $n \geqs 7$. Let 
$\mathcal{P}$ be the set of conjugacy classes of elements in $G$ of prime order (including all nontrivial unipotent elements if $p=0$). Given an integer $r \geqs 2$, let $\mathcal{P}_r$ be the set of classes $C$ in $\mathcal{P}$ such that $G$ is topologically generated by $r$ elements in $C$ and no fewer. By Theorem \ref{t:main2}, each $C \in \mathcal{P}$ is contained in some $\mathcal{P}_r$ with $r \leqs n+1$. Moreover, $C$ is contained in $\mathcal{P}_{n+1}$ if and only if one of the following holds:
\begin{itemize}\addtolength{\itemsep}{0.2\baselineskip}
\item[{\rm (a)}] $G = \Sp_n(k)$, $p=2$ and $C$ is the class of long root elements (or short root elements if $n=4$);
\item[{\rm (b)}] $G = \Sp_4(k)$, $p \ne 2$ and $C$ is the class of involutions of the form $(-I_2,I_2)$.
\end{itemize}
This observation is also a corollary of \cite[Theorem 8.1]{GS}.

Let $V$ be a finite dimensional faithful rational $kG$-module and note that in order to prove Theorem \ref{t:main3}, we may assume $V^G = 0$. Given $C \in \mathcal{P}_r$, set
\[
V(C) = \{ v \in V \,:\, \mbox{$gv = v$ for some $g \in C$}\}.
\]
By \cite[Lemma 5.1]{Ger} we have 
\[
\dim V(C) \leqs \left(1-\frac{1}{r}\right)\dim V + \dim C
\]
and \cite[Lemma 5.2]{Ger} implies that $V$ is generically free if $\dim V(C) < \dim V$ for all $C \in \mathcal{P}$. By combining these observations, we get the following result.

\begin{lem}\label{l:gf}
In terms of the above notation, $V$ is generically free if
\[
\dim V > \max\{r\dim C \,:\, C \in \mathcal{P}_r,\, r \geqs 2\} =: c(G)
\]
\end{lem}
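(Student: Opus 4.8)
\textbf{Proof proposal for Lemma \ref{l:gf}.}

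The plan is to reduce the statement to the two cited results of Gerhardt \cite[Lemmas 5.1, 5.2]{Ger} together with the classification of classes $C\in\mathcal{P}_r$ coming from Theorem \ref{t:main2}, which has just been recorded in the paragraphs above. First I would recall that, by \cite[Lemma 5.2]{Ger}, $V$ is generically free provided $\dim V(C)<\dim V$ for every $C\in\mathcal{P}$; this is the criterion I want to verify. Next, for a fixed $C\in\mathcal{P}$, I would invoke Theorem \ref{t:main2} (as summarized above) to fix the unique $r=r(C)\geqs 2$ with $C\in\mathcal{P}_r$, and then apply \cite[Lemma 5.1]{Ger}, which gives
\[
\dim V(C)\leqs\left(1-\frac1r\right)\dim V+\dim C.
\]
Rearranging, $\dim V(C)<\dim V$ is implied by $\dim C<\frac1r\dim V$, i.e.\ by $r\dim C<\dim V$. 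So if $\dim V>r(C)\dim C$ for every $C\in\mathcal{P}$, then the criterion of \cite[Lemma 5.2]{Ger} is satisfied and $V$ is generically free.

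Finally I would observe that the hypothesis $\dim V>c(G)$, where $c(G)=\max\{r\dim C:C\in\mathcal{P}_r,\ r\geqs 2\}$, is exactly the statement that $\dim V>r(C)\dim C$ for all $C\in\mathcal{P}$ simultaneously. Here I should note two small points: the maximum defining $c(G)$ is taken over a genuinely finite set, since $G$ has only finitely many unipotent classes and the semisimple classes of elements of prime order split into finitely many families for which $\dim C$ is bounded (and in any case each such $C$ lies in some $\mathcal{P}_r$ with $r\leqs n+1$ by Theorem \ref{t:main2}, so only finitely many values of $r$ occur); and the reduction at the start of the section already lets us assume $G$ is isogenous to $\Sp_n(k)$ ($n\geqs 4$) or $\SO_n(k)$ ($n\geqs 7$), with $V^G=0$, which is the setting in which \cite[Lemmas 5.1, 5.2]{Ger} are stated. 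With these remarks the argument is complete.

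The only mild obstacle is bookkeeping rather than mathematics: one must make sure the finiteness of the set over which $c(G)$ is maximized is genuinely justified (so that $c(G)$ is a well-defined integer), and that the quantity $r(C)$ used in \cite[Lemma 5.1]{Ger} really is the minimal number of conjugates of $C$ topologically generating $G$ — but this is precisely how $\mathcal{P}_r$ was defined above using Theorem \ref{t:main2}. No further input is needed; the lemma is essentially a formal consequence of the two results of \cite{Ger} and the main theorem of the present paper.
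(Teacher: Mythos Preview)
Your proposal is correct and follows exactly the approach the paper takes: the paragraph immediately preceding the lemma already records the two inputs from \cite{Ger} (Lemmas~5.1 and~5.2), and the lemma is then stated as the formal combination of these, which is precisely the rearrangement you carry out. Your additional remarks on the finiteness of the defining maximum and the standing reduction to $\Sp_n(k)$ or $\SO_n(k)$ are reasonable clarifications but not strictly needed, since the paper treats the lemma as an immediate corollary of the preceding sentence.
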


We are now ready to begin the proof of Theorem \ref{t:main3} for symplectic and orthogonal groups. As before, given $x \in G$ we will write $\a(x)$ for the maximal dimension of an eigenspace of $x$ on the natural module $V$ and we set $s = n-\a(x)$.

\begin{prop}\label{p:spgen}
The conclusion to Theorem \ref{t:main3} holds if $G = {\rm Sp}_n(k)$ with $n \geqs 4$.
\end{prop}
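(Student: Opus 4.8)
The plan is to invoke Lemma~\ref{l:gf}, so the heart of the argument is to compute (or bound) $c(G) = \max\{r\dim C : C \in \mathcal{P}_r\}$ for $G = \Sp_n(k)$ and then compare with the value $d(G)$ in Table~\ref{tab:dG}. Since $V$ is faithful, after reducing to $V^G = 0$ and passing to a field extension we need only show $\dim V > c(G)$ implies generic freeness, and then verify $d(G) \geqslant c(G)$. For each conjugacy class $C = x^G$ of an element of prime order, we have $\dim C = n^2 - \dim C_G(x) + \text{(correction)}$; more usefully, $\dim C \leqslant \dim G - \mathrm{rk}\, G$ in general, but we want a bound in terms of $r$. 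The key point is that by Theorem~\ref{t:main2}, the minimal number $r$ of conjugates of $C$ needed to topologically generate is governed by $\sum d_i \leqslant n(r-1)$ (and the $e_i$ condition when $p=2$), i.e. roughly $r \geqslant 1 + \a(x)/(n-\a(x)) \cdot$ (something), so larger classes $C$ (smaller $\a(x)$, hence larger $s = n - \a(x)$) need fewer conjugates. Writing $s = n - \a(x)$, one has $\dim C$ bounded above by a function of $s$ of order $ns$, while $r$ is bounded above by a function growing like $n/s$, so the product $r \dim C$ is of order $n^2$ uniformly; the task is to pin down the constant and the extremal classes.

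\textbf{Key steps.} First, I would make precise the inequality relating $r$ to the class: for $C = x^G \in \mathcal{P}_r$ with $\a(x) = d$, Theorem~\ref{t:main2} (together with the obstruction $\sum_i d_i > n(r-1)$) shows that $r$ is essentially $\lceil n/(n-d)\rceil$ or one more, up to the finitely many listed special cases. Second, I would bound $\dim C = \dim x^G$ in terms of $s = n-d$: for involutions and short-root-type elements $\dim C$ is small (order $ns$), and more generally $\dim C \leqslant \tfrac12 n(n+1) - \dim C_G(x)$ with $C_G(x)$ large when $d$ is large. The standard estimate here is $\dim x^G \leqslant n s - \binom{s}{2} + O(n)$ or similar, which I would extract from the centralizer dimension formulas for unipotent/semisimple classes in $\Sp_n(k)$ (Section~\ref{ss:classes} and the references to \cite{LS}). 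Third, combining, $r\dim C \lesssim \frac{n}{s}\cdot ns = n^2$, and optimizing over $s$ (the worst case being $s$ small, i.e. transvection-like classes, where $r \approx n$ and $\dim C \approx n$, or $s \approx n/2$ where $r = 2$ and $\dim C \approx \frac38 n^2$) gives $c(G) \leqslant \tfrac98 n^2$, matching $d(G)$ for $n \geqslant 6$, $(n,p)\neq(6,2)$. Fourth, I would treat the finitely many exceptional cases: $n = 4$ and $(n,p) = (6,2)$, where the classes in $\mathcal{P}_{n+1}$ (long/short root elements) force the slightly larger bound $\tfrac98 n^2 + 2$; here one does the computation for $\Sp_4(k)$ and $\Sp_6(k)$ by hand, checking $r \dim C$ for the finite list of prime-order classes, with the $(-I_2,I_2)$ class in $\Sp_4$ ($r = 5$) and the transvection class in $\Sp_6(k)$, $p=2$ ($r = 7$) being the extremal ones. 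Finally, one must double-check that the sporadic cases of Table~\ref{tab:main} and Table~\ref{tab:main2} (where $\Delta$ is empty for small $r$ but those classes still lie in $\mathcal{P}_{r'}$ for some larger $r'$) do not produce a larger value of $r\dim C$ than the generic estimate — this is a routine finite check since all such cases have $r \leqslant 4$ and the classes involved are small.

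\textbf{Main obstacle.} The hard part will be getting the constant $\tfrac98$ exactly right rather than just $O(n^2)$, and in particular showing that no class $C$ with intermediate $\a(x)$ (neither a transvection-type class nor a quadratic class) beats the extremal ones. This requires a careful case analysis of $\dim x^G$ as a function of the Jordan type (for unipotent $x$) or eigenvalue multiplicities (for semisimple $x$), combined with the precise value of $r = r(C)$ from Theorem~\ref{t:main2}; one writes $r(C)$ as a function of the multiset of eigenspace dimensions $\{d_i\}$ — but here all $r$ copies are of the \emph{same} class, so $r \cdot d \leqslant n(r-1)$, giving $r \geqslant d/(d-n/r)$... more simply $r(C) = \lceil n/s \rceil$ when $s \nmid n$ and $r(C) = n/s + 1$ when $s \mid n$, modulo the listed exceptions. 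The delicate point is the boundary behavior when $s \mid n$: there $r$ jumps up by one, and one must verify that the corresponding $\dim C$ is small enough that $r\dim C$ still does not exceed $d(G)$; this is precisely where the $n=4$ and $(n,p)=(6,2)$ exceptions come from, and pinning those down requires explicit dimension counts for the finitely many relevant conjugacy classes in $\Sp_4(k)$ and $\Sp_6(k)$.
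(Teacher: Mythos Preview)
Your proposal is correct and follows essentially the same approach as the paper: reduce via Lemma~\ref{l:gf}, bound $\dim x^G$ in terms of $s = n - \alpha(x)$ (the paper uses the explicit inequality $\dim x^G \leqslant \tfrac12(2ns - s^2 + 1)$ from \cite[Proposition~2.9]{B04}), extract the lower bound $\alpha(x) \geqslant \lceil n(r-2)/(r-1)\rceil$ from Theorem~\ref{t:main2} for $r \geqslant 4$, and treat $r = 2, 3$ and the small-$n$ exceptions by hand. The only refinement you should make is that your formula $r(C) = \lceil n/s\rceil$ versus $n/s + 1$ is not quite the right dichotomy---the paper instead organizes the argument by the value of $r$ and reads off what Theorem~\ref{t:main2} forces about $\alpha(x)$ in each case, which cleanly absorbs the quadratic and $p=2$ boundary issues.
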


\begin{proof}
Let $C = x^G \in \mathcal{P}_r$. In view of Lemma \ref{l:gf}, our goal is to show that 
\begin{equation}\label{e:rC}
r\dim C \leqs \frac{9}{8}n^2+\e,
\end{equation}
where $\e = 2$ if $n=4$ or $(n,p) = (6,2)$, otherwise $\e=0$.

First assume $n \geqs 6$. If $r=2$ then $\dim C \leqs \frac{1}{2}n^2$ (maximal if $x$ is regular) so we may assume $r \geqs 3$. By \cite[Proposition 2.9]{B04} we have 
\begin{equation}\label{e:bound}
\dim x^G \leqs \frac{1}{2}(2ns-s^2+1).
\end{equation}

Suppose $r=3$. Since $G$ is not topologically generated by two elements in $C$, by applying Theorem \ref{t:main2} we deduce that either $x$ is quadratic, or $\a(x) \geqs n/2$. In the quadratic case, we calculate that $\dim C \leqs \frac{1}{4}n(n+2)$, while \eqref{e:bound} (with $s=n/2-1$) yields $\dim C \leqs \frac{3}{8}n^2-\frac{1}{2}n$ if $\a(x)>n/2$. Now assume $\a(x) = n/2$ and $x$ is not quadratic, so $p=2$ by Theorem \ref{t:main2}. Then $x$ is semisimple with a $1$-eigenspace of dimension $n/2$  and it is easy to check that $\dim C \leqs \frac{3}{8}n^2$, with equality if $x$ has $n/2+1$ distinct eigenvalues on $V$. We conclude that $3\dim C \leqs \frac{9}{8}n^2$ in all cases.

Now assume $r \geqs 4$. If $n=6$ and $x=(-I_2,I_4)$ then $r=4$, $\dim C = 8$ and clearly $r\dim C < \frac{9}{8}n^2$. In the remaining cases, Theorem \ref{t:main2} implies that $(r-1)\a(x) \geqs n(r-2)$ (with equality only if $p=2$), so 
\begin{equation}\label{e:alpha}
\a(x) \geqs \left\lceil \frac{n(r-2)}{r-1}\right\rceil.
\end{equation}
By applying the bound in \eqref{e:bound}, we deduce that
\[
r\dim C \leqs \frac{r}{r-1}\left(1-\frac{1}{2(r-1)}\right)n^2+\frac{r}{2}.
\]
One can check that this upper bound is maximal when $r=4$, which gives
\[
r\dim C \leqs \frac{10}{9}n^2+2.
\]
Now $\frac{10}{9}n^2+2 \leqs \frac{9}{8}n^2$ if and only if $n \geqs 12$, so the cases with $n \in \{6,8,10\}$ need closer attention. By combining the bounds in \eqref{e:bound} and \eqref{e:alpha}, we reduce to the cases where $(n,r)=(8,5)$ or $(6,4)$, and also $(n,r) = (6,7)$ if $p=2$. In the latter case, $x=(J_2,J_1^4)$ is a long root element, $\dim C = 6$ and $7\dim C = 42 = \frac{9}{8}n^2+\frac{3}{2}$. Next assume $(n,r) = (6,4)$. Here the bound $(r-1)\a(x) \geqs n(r-2)$ implies that $\a(x) \in \{4,5\}$. In fact, since $r=4$, we see that $\a(x)=4$ is the only option (if $\a(x)=5$ then $r\a(x) > n(r-1)$), so $p=2$ and it is easy to check that $\dim C \leqs 10$, which yields $4\dim C < \frac{9}{8}n^2$ (note that either $x$ is semisimple of the form $(I_4,\l,\l^{-1})$, or $x$ is an involution of type $a_2$ or $c_2$ in the notation of \cite{AS}). Similarly, if $(n,r) = (8,5)$ then $\a(x)=6$, $p=2$ and we calculate that $\dim C \leqs 14$, which gives $5\dim C < \frac{9}{8}n^2$ as required.

To complete the proof of the proposition, we may assume $n=4$, so $r \leqs 5$ and $\frac{9}{8}n^2 = 18$. First assume $p \ne 2$. If $r=2$ then the desired bound $r \dim C \leqs 20$ holds since $\dim C \leqs 8$. Next assume $r=3$. If $x$ is quadratic, then $\dim C \leqs 6$ (with equality if $x = (\l I_2, \l^{-1}I_2)$ or $(J_2^2)$) and thus $3\dim C \leqs 18$. Otherwise $2\a(x)>4$, so $\a(x) = 3$ and this case does not arise since $r\a(x)>n(r-1)$. If $r=4$ then $3\a(x)>8$, so $\a(x)=3$, $x = (J_2,J_1^2)$ and the result follows since $\dim C = 4$. Finally, if $r=5$ then $x = (-I_2,I_2)$, $\dim C = 4$ and thus $5\dim C = 20 = \frac{9}{8}n^2+2$.

Finally, assume $n=4$ and $p=2$. The above argument applies when $r=2$, or if $r=3$ and $x$ is quadratic. If $r=3$ and $x$ is not quadratic, then $x = (I_2,\l,\l^{-1})$, $\dim C = 6$ and the desired bound holds. The previous argument handles the case $r=4$, and for $r=5$ we have $x = b_1$ or $a_2$, so $\dim C  = 4$ and thus $5\dim C = 20 = \frac{9}{8}n^2+2$.
\end{proof}

\begin{prop}\label{p:sogen}
The conclusion to Theorem \ref{t:main3} holds if $G = {\rm SO}_n(k)$ with $n \geqs 7$.
\end{prop}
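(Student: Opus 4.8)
The plan is to mirror the argument of Proposition~\ref{p:spgen}, now with $G = \SO_n(k)$ and $n \geqs 7$, and to establish the bound
\[
r \dim C \leqs \frac{9}{8}n^2
\]
for every class $C = x^G \in \mathcal{P}_r$, so that Lemma~\ref{l:gf} applies. As before, set $\a(x)$ to be the maximal dimension of an eigenspace of $x$ on the natural module $V$ and put $s = n - \a(x)$. First I would record the general dimension bound $\dim x^G \leqs ns - \tfrac12 s^2 + c$ for a suitable small constant $c$ (valid for elements of prime order modulo the centre in $\SO_n(k)$; compare \cite[Proposition~2.9]{B04}), together with the trivial bound $\dim C \leqs \dim G - {\rm rk}\, G \leqs \tfrac12 n(n-1)$ coming from $r = 2$. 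These two estimates handle the easy regimes.

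Next I would run the case analysis on $r$, using Theorem~\ref{t:main2} to constrain $\a(x)$ whenever $C \in \mathcal{P}_r$ with $r \geqs 3$. For $r = 3$: Theorem~\ref{t:main2} forces either $x$ quadratic on $V$ (in which case one computes $\dim C$ directly and checks $3\dim C \leqs \tfrac98 n^2$), or $\a(x) \geqs n/2$ up to the listed exceptions, and then the $ns - \tfrac12 s^2$ bound with $s \leqs n/2$ gives $\dim C \leqs \tfrac38 n^2 + O(n)$, hence $3\dim C \leqs \tfrac98 n^2$. For $r \geqs 4$, Theorem~\ref{t:main2} (outside a handful of small exceptions, such as $\SO_5$, $\SO_8$, and the $r=3,4$ rows of Tables~\ref{tab:main}, \ref{tab:main2}) gives the inequality $(r-1)\a(x) \geqs n(r-2)$, i.e. $\a(x) \geqs \lceil n(r-2)/(r-1)\rceil$, so $s \leqs n/(r-1)$; plugging into the dimension bound yields
\[
r\dim C \leqs \frac{r}{r-1}\left(1 - \frac{1}{2(r-1)}\right)n^2 + O(rn),
\]
whose right-hand side is decreasing in $r$ and already at $r = 4$ gives something of the shape $\tfrac{10}{9}n^2 + O(n)$, which is $\leqs \tfrac98 n^2$ once $n$ is large (roughly $n \geqs 10$ or so).

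The remaining work is the finite check for small $n$ — precisely $n \in \{7,8,9,10\}$ (and the excluded special rows of the tables), where the clean asymptotic inequality can fail by $O(n)$ — together with the genuinely exceptional classes listed in Theorem~\ref{t:main2}: the $\SO_5$ case does not arise here since $n \geqs 7$, but for $\SO_7$, $\SO_8$, $\SO_9$ one must enumerate the relevant small-eigenspace classes (e.g. $(J_2^m, J_1)$, $(-I_{n-1}, I_1)$, $(I_1, \l I_m, \l^{-1}I_m)$ and the triality-sensitive $8$-dimensional classes appearing in Theorems~\ref{t:so88}, \ref{t:so8}), compute $\dim C$ and $r$ for each, and verify the bound case by case. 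I expect this finite verification for $n = 7,8,9,10$ — and in particular keeping track of which class each small $x$ lies in, hence the correct value of $r$ from Tables~\ref{tab:main} and \ref{tab:main2} — to be the main obstacle, since the crude estimate $\dim x^G \leqs ns - \tfrac12 s^2$ is not tight enough there and one must instead use exact centralizer dimensions. Once all of these are checked we conclude $r\dim C \leqs \tfrac98 n^2 = d(G)$ for $G = \SO_n(k)$, so $c(G) \leqs d(G)$, and Lemma~\ref{l:gf} finishes the proof.
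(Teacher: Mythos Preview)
Your approach is correct and matches the paper's: both use Lemma~\ref{l:gf}, the dimension bound from \cite[Proposition~2.9]{B04}, and the eigenspace constraint $\a(x) \geqs \lceil n(r-2)/(r-1)\rceil$ from Theorem~\ref{t:main2} to bound $r\dim C$.

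One sharpening will save you most of the finite check you anticipate. For $\SO_n(k)$ the bound from \cite{B04} is $\dim x^G \leqs \tfrac12(2ns - s^2 - 2s)$ (note the extra $-2s$ compared to the symplectic case), so the lower-order term in your $r\geqs 4$ estimate is \emph{negative}, giving $r\dim C \leqs \tfrac{r}{r-1}\bigl(1 - \tfrac{1}{2(r-1)}\bigr)n^2 - \tfrac{rn}{r-1}$. Since $\tfrac{10}{9} < \tfrac{9}{8}$, this is already below $\tfrac98 n^2$ for \emph{all} $n$, not just large $n$; similarly the $r=3$ quadratic and $\a(x)>n/2$ cases go through directly for $n \geqs 10$ even and $n \geqs 7$ odd. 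Hence no separate verification is needed for $n \in \{7,9,10\}$. The only case requiring individual treatment is $n=8$, where triality means one must invoke Theorem~\ref{t:so88} (giving $r\leqs 4$) and compute that in fact $r\dim C \leqs 48 = \tfrac34 n^2$ for each of the finitely many relevant classes.
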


\begin{proof}
Let $C = x^G \in \mathcal{P}_r$. By Lemma \ref{l:gf}, it suffices to show that \eqref{e:rC} holds with $\e=0$. First assume $n \geqs 10$ is even and note that $\dim C \leqs \frac{1}{2}n^2-n$, so we may assume $r \geqs 3$. By \cite[Proposition 2.9]{B04} we have
\begin{equation}\label{e:bound2}
\dim x^G \leqs \frac{1}{2}(2ns-s^2-2s),
\end{equation}
where $s=n-\a(x)$ as above.

If $r=3$ then either $x$ is quadratic, or $\a(x) > n/2$. For $x$ quadratic, we calculate that $\dim C \leqs \frac{1}{4}n^2$ (maximal if $p=2$, $n \equiv 0 \imod{4}$ and $x$ is an involution of type $c_{n/2}$). Similarly, if $\a(x) > n/2$ then \eqref{e:bound2} implies that $\dim C \leqs \frac{3}{8}n^2-n+\frac{1}{2}$ and so in both cases we conclude that $3\dim C < \frac{9}{8}n^2$. Now assume $r \geqs 4$. Here \eqref{e:alpha} holds and by applying the bound in \eqref{e:bound2} we deduce that
\[
r\dim C \leqs \frac{r}{r-1}\left(1-\frac{1}{r-1}\right)n^2 - \frac{rn}{r-1} < \frac{r}{r-1}\left(1-\frac{1}{r-1}\right)n^2 <n^2,
\]
which gives the desired bound.

A very similar argument applies if $n \geqs 7$ is odd (recall that $p \ne 2$ in this case). For example, suppose $r \geqs 4$. As before, \eqref{e:alpha} holds, which in turn implies that $s \leqs n/(r-1)$ and we note that \cite[Proposition 2.9]{B04} gives
\[
\dim x^G \leqs \frac{1}{2}(2ns-s^2-2s+1).
\]
In this way, we get 
\[
r\dim C \leqs \frac{r}{r-1}\left(1-\frac{1}{2(r-1)}\right)n^2 \leqs \frac{10}{9}n^2 < \frac{9}{8}n^2
\]
and the result follows. 

Finally, let us assume $G = \SO_8(k)$. Here Theorem \ref{t:so88} implies that $r \in \{2,3,4\}$ and we claim that $r\dim C \leqs 48 = \frac{3}{4}n^2$ is best possible. To see this, first note that $\dim C \leqs 24$, so the bound holds when $r=2$. Next suppose $r=3$. If $x$ is quadratic then $\dim C \leqs 16$, with equality if $x$ is an involution of the form $(-I_4,I_4)$ or $c_4$, according to the parity of $p$. Otherwise, $2\a(x)>8$ and thus $\a(x) = 6$, but this is incompatible with the condition $r=3$ since $3\a(x) >2n$. Finally, suppose $r=4$. Here $3\a(x)>2n$, so $\a(x) = 6$ and it is straightforward to check that $\dim C \leqs 12$, with equality if and only if $x$ is semisimple of the form $(I_6, \l, \l^{-1})$, or $p \ne 2$ and $x$ is unipotent with Jordan form $(J_3,J_1^5)$, or $p=2$ and $x$ is an involution of type $c_2$. In particular, $4 \dim C \leqs 48 = \frac{3}{4}n^2$ and the proof of the proposition is complete.
\end{proof}

This completes the proof of Theorem \ref{t:main3} and we conclude this section by presenting a brief proof of Corollary \ref{c:main3}. 

\begin{proof}[Proof of Corollary \ref{c:main3}]
Define $G$, $V$, $V'$ and $d'(G)$ as in the statement of the corollary and define $V^G$ and $d(G)$ as in Theorem \ref{t:main3}. It is well known that a generic stabilizer is trivial as a group scheme if and only if there are no $k$-points and the corresponding Lie algebra is trivial (this is a special case of \cite[Proposition 3.16]{Milne}). By Theorem \ref{t:main3}, a generic stabilizer is trivial as an algebraic group if $\dim V/V^G > d(G)$, while the Lie algebra is trivial if $\dim V/V' > d'(G)$ by \cite[Theorem A]{GG1}. The result follows.
\end{proof}
 
\section{Random generation of finite simple groups}\label{s:random} 
   
In this section we prove Theorem \ref{t:main4} and Corollary \ref{c:sylow} on the generation of finite simple groups of Lie type. As discussed in Section \ref{s:intro}, Theorem \ref{t:main4} extends work of Liebeck and Shalev \cite{LiSh0, LiSh} and Gerhardt \cite{Ger} on random $(r,s)$-generation of finite classical groups, as well as similar results of Guralnick et al. \cite{BGG, GLLS} for exceptional groups of Lie type.

As in the statement of Theorem \ref{t:main4}, let $r$ and $s$ be primes with $s>2$ and let $\mathcal{S}_{r,s}$ be the set of finite simple groups whose order is divisible by both $r$ and $s$. Given a group $L$ in $\mathcal{S}_{r,s}$, let $\mathbb{P}_{r,s}(L)$ be the probability that $L$ is generated by randomly chosen elements of order $r$ and $s$ (see \eqref{e:Prs}). Our goal is to prove that if $(G_i)$ is a sequence of simple groups in $\mathcal{S}_{r,s}$ with $|G_i| \to \infty$, then either
\begin{itemize}\addtolength{\itemsep}{0.2\baselineskip}
\item[{\rm (a)}] $\mathbb{P}_{r,s}(G_i) \to 1$, or 
\item[{\rm (b)}] $(r,s) = (2,3), (3,3)$ and $(G_i)$ contains an infinite subsequence of groups of the form ${\rm PSp}_{4}(q)$. 
\end{itemize}

By combining \cite[Theorem 1.4]{Ger} and \cite[Theorem 12]{BGG} with the main theorem of \cite{LiSh}, we deduce that if $(G_i)$ is any sequence of alternating, linear, unitary, or exceptional groups in $\mathcal{S}_{r,s}$ with $|G_i| \to \infty$, then $\mathbb{P}_{r,s}(G_i) \to 1$. Therefore, in view of the main theorem of \cite{LiSh}, to complete the proof of the theorem we need to extend this result to symplectic and orthogonal groups of bounded rank, noting the anomaly of the $4$-dimensional symplectic groups when $(r,s) = (2,3)$ or $(3,3)$ (see Remark \ref{r:sp4} in Section \ref{s:intro}).

Let us briefly introduce our notational set up for the proof of Theorem \ref{t:main4}. Let $G$ be a simply connected simple algebraic group defined over an algebraically closed field $k$ of positive characteristic $p$ that is not algebraic over a finite field. Given a Steinberg endomorphism $F : G\rightarrow G$, let $G^F = G(q)$ be the fixed points of $F$ on $G$ for some $p$-power $q$, where $G(q)$ is possibly twisted. Set $Z(q) = Z(G) \cap G(q)$ and note that $G(q)/Z(q)$ is almost always a finite simple group of Lie type over $\mathbb{F}_q$ (the handful of exceptions include the groups ${\rm Sp}_4(2)$ and ${}^2F_4(2)$, which are not perfect). Let $r$ be a prime and set 
\begin{align*}
m(G,r,q) & = \max\{ \dim g^G \,:\, \mbox{$g \in G(q)$ has order $r$ modulo $Z(G)$}\} \\
\mathcal{C}(G,r,q) & = \{ g^G \,:\, \mbox{$\dim g^G = m(G,r,q)$ and $g \in G(q)$ has order $r$ modulo $Z(G)$}\}
\end{align*}
For primes $r$ and $s$, let $\mathcal{Q}(r,s)$ be the set of powers $q=p^a$ such that $G(q)$ contains elements of orders $r$ and $s$ modulo $Z(G)$. 

The following result is a key ingredient in the proof of Theorem \ref{t:main4}. In the statement, the integer $N$ is defined in \eqref{e:N}. Also recall the definition of $\Delta$ in \eqref{e:delta}.

\begin{prop}\label{p:maximal} 
Let $G={\rm Sp}_n(k)$ or ${\rm Spin}_n(k)$, where $n \geqs N$. Let $r$ and $s$ be primes with $s >2$ and assume $(r,s)\ne(2,3), (3,3)$ if $G={\rm Sp}_4(k)$. Fix $q\in \mathcal{Q}(r,s)$ and set $X = C_1 \times C_2$, where $C_1 \in \mathcal{C}(G,r,q)$ and $C_2 \in \mathcal{C}(G,s,q)$. Then $\Delta$ is nonempty. 
\end{prop}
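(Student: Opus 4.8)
The plan is to apply Theorem \ref{t:main2} directly. Fix $q \in \mathcal{Q}(r,s)$ and let $C_1 = x_1^G \in \mathcal{C}(G,r,q)$ and $C_2 = x_2^G \in \mathcal{C}(G,s,q)$, so that $x_1$ has order $r$ and $x_2$ has order $s$ modulo $Z(G)$. Since $r,s$ are primes, each $x_i$ has prime order modulo $Z(G)$, so the hypotheses of Theorem \ref{t:main2} apply once we verify the two numerical conditions $d_1 + d_2 \leqs n$ (the case $r=2$ of $\sum_i d_i \leqs n(r-1)$) and, when $G = \Sp_n(k)$ with $p=2$, $e_1 + e_2 < n$. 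I would first dispose of these: because $C_i$ has \emph{maximal} dimension among classes of elements of order $r$ (resp. $s$) modulo the center, the eigenspace dimension $d_i = \a(x_i)$ must be small. Concretely, if $d_i$ were large (close to $n$) then $\dim x_i^G$ would be correspondingly small, contradicting maximality; one can make this precise using the standard bounds such as $\dim x^G \leqs \frac12(2n s - s^2 + 1)$ with $s = n - \a(x)$ (cf. the bounds \eqref{e:bound}, \eqref{e:bound2} and \cite[Proposition 2.9]{B04}), which show $d_i$ is at most roughly $n/2$ in all the relevant cases, and certainly $d_1 + d_2 \leqs n$. The same bound on $e_i \leqs d_i$ yields $e_1 + e_2 < n$ in the $\Sp_n(k)$, $p=2$ situation — here one has to be slightly careful, using that a class of maximal dimension cannot be a transvection or small-support involution, so the inequality is strict.

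With the numerical hypotheses in hand, Theorem \ref{t:main2} says $\Delta$ is empty only in the explicitly listed exceptional cases: either $r=2$ with $x_1,x_2$ quadratic, or one of the finitely many configurations in Tables \ref{tab:main} and \ref{tab:main2} (together with the $\SO_6$, $\SO_8$ special cases handled by Theorems \ref{t:so6}, \ref{t:so88}, \ref{t:so8}). So the core of the proof is to check that none of these exceptional configurations can consist of \emph{maximal-dimensional} classes of prime-order elements — with the sole exception $G = \Sp_4(k)$, $(r,s) \in \{(2,3),(3,3)\}$, which is exactly why that case is excluded in the statement. The quadratic elements appearing in the exceptions (things like $(-I_2, I_{n-2})$, transvections, short root elements, $(J_2^m)$, $(\l I_{n/2}, \l^{-1} I_{n/2})$) all have $\a$ close to $n$ and hence $\dim C_i$ far from maximal whenever $n \geqs N$ is not tiny; I would rule these out one family at a time by comparing $\dim C_i$ with the dimension of, say, a regular semisimple class (for $r,s$ odd) or an involution class of type $(-I_{\lfloor n/2\rfloor}, I_{\lceil n/2 \rceil})$ / a suitable large-support unipotent involution (for $r=2$, $p \ne 2$ resp. $p = 2$), which has much larger dimension. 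The point is that for $n$ large, a maximal class of order-$r$ elements is essentially as "spread out" as the constraints of order $r$ allow, so $\a(x_i)$ is bounded by roughly $n/r + O(1)$ — in particular $x_i$ is not quadratic once $n$ exceeds a small bound, and none of the table entries match. The finitely many genuinely small cases ($\SO_{10}, \SO_9, \ldots, \Sp_4, \Sp_6$ near the threshold $N$) I would dispatch by direct inspection: list the maximal-dimensional classes of order $r$ and of order $s$ for each admissible $q$ and check against the tables, keeping track of the dependence on $p = {\rm char}\, k$ and on which primes $r,s$ actually divide $|G(q)|$.

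The main obstacle, I expect, will be the bookkeeping around the low-rank groups and characteristic-$2$ subtleties rather than any conceptual difficulty: for $\Sp_n(k)$ with $p=2$ one must track $a$-type versus $c$-type involutions (since the maximal-dimensional involution class depends on this, and the short/long root element exceptions in Table \ref{tab:main2} involve precisely these), and for $\SO_8(k)$ one must use the triality-symmetric statement of Theorem \ref{t:so88} and check all three $8$-dimensional modules. A secondary point requiring care is confirming that $\mathcal{C}(G,r,q)$ is nonempty and well-behaved — i.e. that $G(q)$ genuinely contains elements of order $r$ modulo $Z(G)$ of maximal $G$-class dimension — but this is exactly the content of $q \in \mathcal{Q}(r,s)$ together with the observation that maximal-dimensional prime-order classes are always $F$-stable and meet $G(q)$ for $q$ large (which is why the proposition fixes such a $q$). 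Once all exceptional configurations are excluded, Theorem \ref{t:main2} gives that $\Delta$ is nonempty, completing the proof.
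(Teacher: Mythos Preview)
Your proposal is correct and follows the same approach as the paper: apply Theorem \ref{t:main2} and verify its hypotheses by bounding the eigenspace dimensions $d_i$ for maximal-dimensional prime-order classes (the paper proves $d \leqs n/2$ when $r=2$ and $d \leqs n/2-1$ when $r>2$, with the single exception $(n,r,d)=(4,3,2)$, via explicit constructions of elements $y \in G(q)$ with $\dim y^G > \dim x^G$, and then checks conditions (a)--(d) against Tables \ref{tab:main} and \ref{tab:main2}). One minor correction: $\Spin_8(k)$ is outside the scope of this proposition since $N=10$ for even-dimensional orthogonal groups --- that case is handled separately in Proposition \ref{p:spin88} --- so your triality remark is unnecessary here.
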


\begin{proof}
Write $C_i = x_i^G$ and let $d_i$ be the dimension of the largest eigenspace of $x_i$ on the natural $n$-dimensional $kG$-module $V$. In addition, let $e_i = \dim V^{x_i}$ be the dimension of the $1$-eigenspace of $x_i$ on $V$. Then by Theorem \ref{t:main2}, it follows that $\Delta$ is nonempty if all the following conditions are satisfied:
\begin{itemize}\addtolength{\itemsep}{0.2\baselineskip}
    \item[(a)] $d_1+d_2\leqs n$; 
    \item[(b)] $e_1+e_2 < n$ if $G = {\rm Sp}_{n}(k)$ and $p=2$;
    \item[(c)] $x_1$ and $x_2$ are not both quadratic;
    \item[(d)]  $x_1$ and $x_2$ do not appear in Table \ref{tab:main} (up to ordering).
\end{itemize}
We refer the reader to \cite[Chapter 3]{BGiu} for a convenient source of information on the conjugacy classes of elements of prime order in finite classical groups.

\vs

\noindent \emph{Case 1. $G = {\rm Sp}_{n}(k)$}

\vs

To begin with, let us assume $G = {\rm Sp}_{n}(k)$ with $n \geqs 4$ and fix a conjugacy class $C = x^G$ in $\mathcal{C}(G,r,q)$. Set $e = \dim V^x$ and let $d$ be the maximal dimension of an eigenspace of $x$ on $V$. 

If $r=2$ then it is straightforward to show that $d = n/2$. For example, if $p \ne 2$ and $g \in G(q)$ has order $2$ modulo $Z(G)$, then $g$ is either $G$-conjugate to an involution of the form $(-I_{\ell},I_{n-\ell})$ 
for some even integer $2 \leqs \ell \leqs  n-2$, or an element of order $4$ of the form $(\lambda I_{n/2}, \lambda^{-1}I_{n/2})$. Now $\dim g^G = \ell(n-\ell)$ or $\frac{1}{4}n(n+2)$ in the two cases, whence $\dim g^G$ is maximal when $g = (\lambda I_{n/2}, \lambda^{-1}I_{n/2})$ and thus $d=n/2$ as claimed. 

Now assume $r>2$. We claim that either $d \leqs n/2-1$ or $(n,r,d)=(4,3,2)$.

To see this, let us first assume $r=p$, so $x$ is unipotent and the Jordan form of $x$ on $V$ corresponds to the largest partition $\pi$ of $n$ (with respect to the usual dominance ordering on partitions) with the property that all parts of $\pi$ have size at most $p$ and the multiplicity of every odd part is even (as noted in \cite[Proposition 3.4.10]{BGiu}, every partition of this form corresponds to an element of order $p$ in $G(q)$). Write $n = ap+b$ with $0 \leqs b<p$. If $a$ is even then $\pi = (p^a,b)$, otherwise $\pi = (p^{a-1},p-1,b+1)$. In both cases, $d = \lceil n/p \rceil$ and the claim quickly follows (note that $\pi = (2^2)$ if $n=4$ and $p=3$).

Now assume $r>2$ and $r \ne p$, so $x$ is semisimple. Let $i$ be the smallest positive integer such that $r$ divides $q^i-1$ and set $t = (r-1)/i$. First we establish the bound $d \leqs n/2$. To see this, suppose $d>n/2$ and note that $d=e$ since each eigenvalue $\lambda \in k$ has the same multiplicity as $\lambda^{-1}$. Suppose $i$ is even and in the notation of \cite[Section 3.4.1]{BGiu} write $x$ as a block-diagonal matrix
\[
x = (\Lambda_1^{a_1}, \ldots, \Lambda_t^{a_t},I_e) \in G(q)
\]
where each $\Lambda_j  \in \GL_i(q)$ is irreducible and each $a_j$ is a nonnegative integer (here the $\Lambda_j$ represent the distinct conjugacy classes in $\GL_i(q)$ of elements of order $r$, while $a_j$ denotes the multiplicity of $\Lambda_j$ in the block-diagonal form of $x$). Then
\[
\dim C = \dim G - \frac{1}{2}e^2-\frac{1}{2}e - \frac{i}{2}\sum_{j=1}^{t}a_j^2. 
\]
Consider the following element
\[
y = (\Lambda_1^{a_1+1}, \Lambda_2^{a_2}, \ldots, \Lambda_t^{a_t},I_{e-i}) \in G(q)
\]
of order $r$ and set $D = y^G$. Then 
\begin{equation}\label{e:CD}
\dim D = \dim C +i(e-a_1-i/2) >\dim C
\end{equation}
since $a_1<n/2i$ and $i<n/2$. This is a contradiction and one can check that a very similar argument applies when $i$ is odd. 

To complete the proof of the claim, it remains to rule out $d=n/2$ (unless $(n,r)=(4,3)$, in which case $d=2$ for every element in $G$ of order $r$). Seeking a contradiction, let us assume $d=n/2$. If $n \equiv 2 \imod{4}$ then $x$ cannot have a $d$-dimensional $1$-eigenspace (since the $1$-eigenspace of any semisimple element has to be even-dimensional) and thus $i \in \{1,2\}$, $e=0$ and $x = (\lambda I_{n/2}, \lambda^{-1}I_{n/2})$ is quadratic. But then $G(q)$ contains elements of the form $y = (\lambda I_{n/2-1},\lambda^{-1}I_{n/2-1},I_2)$ and we have $\dim y^G = \dim x^G +n-4$, which is a contradiction. Finally, suppose $n \equiv 0 \imod{4}$. If $n=4$ and $r \geqs 5$ then $G(q)$ contains regular semisimple elements of order $r$, so $d=1 = n/2-1$. Now assume $n \geqs 8$. If $e \ne n/2$ then the previous argument applies, so let us assume $e = n/2$. If $i$ is even then we can define $D = y^G$ as in the previous paragraph and we note that \eqref{e:CD} holds since $a_1 \leqs n/2i$ and $i \leqs n/2$. Once again, we have reached a contradiction. And similarly if $i$ is odd.

In view of the above bounds, and recalling that $(r,s) \ne (2,3), (3,3)$ when $n=4$, it is now easy to see that properties (a)-(d) hold when $G = \Sp_n(k)$, whence $\Delta$ is nonempty by Theorem \ref{t:main2}.

\vs

\noindent \emph{Case 2. $G = {\rm Spin}_n(k)$}

\vs

For the remainder, let us assume $G = {\rm Spin}_n(k)$ with $n \geqs 7$. The case $n$ odd is straightforward; here $p \ne 2$ and it is easy to check that $d = (n+1)/2$ if $r=2$ and $d \leqs (n-1)/2$ if $r>2$. In particular, we observe that (a), (c) and (d) are satisfied.

Now assume $n$ is even, so $n \geqs N = 10$. If $r=2$ then one can check that 
\[
d = \left\{\begin{array}{ll}
n/2 & \mbox{if $n \equiv 0 \imod{4}$} \\
n/2+1 & \mbox{if $n \equiv 2 \imod{4}$}
\end{array}\right.
\]
For example, if $p=2$ and $g \in G$ is an involution, then $\dim g^G$ is maximal when $g$ is of type $c_{n/2}$ if $n \equiv 0 \imod{4}$ (in the notation of \cite{AS}) and of type $c_{n/2-1}$ if $n \equiv 2 \imod{4}$. In particular, in the latter case $g$ has Jordan form $(J_2^{n/2-1},J_1^2)$ and thus $d = n/2+1$ (note that in this situation, there are no involutions of type $b_{n/2}$ in $G$). 

Now assume $r>2$. We claim that $d \leqs n/2-1$ if $r=p$. As in the symplectic case, the Jordan form of $x$ corresponds to the largest partition $\pi$ of $n$ with the property that all parts have size at most $p$, but here we require that every even part has an even multiplicity. Write $n = ap+b$ with $0 \leqs b < p$. If $a$ is odd then $b$ is odd and $\pi = (p^a,b)$. On the other hand, if $a$ is even then $\pi = (p^a)$ if $b=0$, otherwise $\pi = (p^a,b-1,1)$. It is now straightforward to check that $d \leqs n/2-1$. For example, suppose $p=3$. From the above observations we deduce that $d \leqs n/3+2$, which is less than $n/2$ for $n \geqs 18$. The remaining cases with $10 \leqs n \leqs 16$ can be checked directly. For instance, if $n=10$ then $\pi = (3^3,1)$ and thus $d = 4 = n/2-1$. 

Finally, suppose that $r>2$ and $r \ne p$. As before, let $i \geqs 1$ be minimal such that $r$ divides $q^i-1$. We claim that
\[
d \leqs \left\{\begin{array}{ll}
n/2 & \mbox{if $n \equiv 0 \imod{4}$} \\
n/2-1 & \mbox{if $n \equiv 2 \imod{4}$}
\end{array}\right.
\]
First we establish the bound $d \leqs n/2$ for all $n$. Seeking a contradiction, suppose $d>n/2$. As in the symplectic case, this implies that $d=e$ and it is easy to construct an element $y \in G(q)$ of order $r$ modulo $Z(G)$ with $\dim y^G > \dim x^G$, which gives the desired contradiction. For example, suppose $i$ is odd and write 
\[
x = ((\Lambda_1,\Lambda_1^{-1})^{a_1}, \ldots, (\Lambda_{t/2},\Lambda_{t/2}^{-1})^{a_{t/2}}, I_e)
\]
as in \cite[Proposition 3.5.4]{BGiu}, where $t = (r-1)/i$. Here the $\Lambda_j^{\pm}$ represent the distinct conjugacy classes of elements of order $r$ in $\GL_i(q)$ and we note that $\Lambda_j$ and $\Lambda_j^{-1}$ must have the same multiplicity $a_j$ in the block-diagonal form of $x$, as indicated by the notation. Now define
\[
y = ((\Lambda_1,\Lambda_1^{-1})^{a_1+1}, (\Lambda_2,\Lambda_2^{-1})^{a_2}, \ldots, (\Lambda_{t/2},\Lambda_{t/2}^{-1})^{a_{t/2}}, I_{e-2i}) \in G(q)
\]
and note that
\[
\dim y^G = \dim x^G + 2i(e-a_1-i-1).
\]
Since $a_1 \leqs (n/2-1)2i$ and $i \leqs (n/2-1)/2$, it is easy to check that $a_1+i+1<e$ and thus $\dim y^G > \dim x^G$ as required. A similar argument applies when $i$ is even.

This establishes the desired bound on $d$ when $n \equiv 0 \imod{4}$, so let us assume $n \equiv 2 \imod{4}$. If $d = n/2$ then $x = (\lambda I_{n/2},\lambda^{-1}I_{n/2})$ is the only option (note that the $1$-eigenspace of any semisimple element in $G$ of order $r$ modulo $Z(G)$ is even dimensional) and it is easy to see that $\dim y^G>\dim x^G$ with $y = (\lambda I_{n/2-1},\lambda^{-1}I_{n/2-1},I_2) \in G(q)$. 

With the bounds on $d$ in hand, it is straightforward to check that properties (a) and (c) hold. In addition, by inspecting Table \ref{tab:main} we observe that (d) holds. This completes the proof of the proposition. 
\end{proof}

We also need an analogous result in the special case $G = {\rm Spin}_8(k)$.

\begin{prop}\label{p:spin88}
Let $G={\rm Spin}_8(k)$ and $r,s$ be primes with $s >2$. Fix $q\in \mathcal{Q}(r,s)$ and set $X = C_1 \times C_2$, where $C_1 \in \mathcal{C}(G,r,q)$ and $C_2 \in \mathcal{C}(G,s,q)$. Then $\Delta$ is nonempty. 
\end{prop}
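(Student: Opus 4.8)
The plan is to deduce the proposition from Theorem \ref{t:so8}, which is the form of Theorem \ref{t:main2} for $G=\Spin_8(k)$ stated in terms of the three $8$-dimensional restricted irreducible modules $V_j=L(\omega_j)$ with $j\in\{1,3,4\}$. Write $C_1=x_1^G$ and $C_2=x_2^G$, and for $i\in\{1,2\}$ and $j\in\{1,3,4\}$ let $d_{ij}$ be the dimension of the largest eigenspace of $x_i$ on $V_j$. Since $X$ is a product of two classes, in the notation of Theorem \ref{t:so8} we are in the case $r=2$, so it suffices to verify that
\[
d_{1j}+d_{2j}\leqs 8 \qquad \mbox{for each } j\in\{1,3,4\},
\]
and then to rule out the two configurations described in parts (i) and (ii) of that theorem.

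Ruling out the exceptional configurations is immediate. Both (i) and (ii) require $x_1=x_2$, hence $C_1=C_2$; in (i) the common class consists of the semisimple involutions $(-I_4,I_4)$ and in (ii) of the $c_4$-type unipotent involutions, so in either case $x_1$ and $x_2$ have order $2$ modulo $Z(G)$. Since $C_1\in\mathcal{C}(G,r,q)$ and $C_2\in\mathcal{C}(G,s,q)$, this forces $r=s=2$, contrary to the hypothesis $s>2$. Thus only the eigenspace bound remains.

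For the eigenspace bound I would prove the following claim: if $t$ is a prime and $C=x^G$ is any class in $\mathcal{C}(G,t,q)$, then the largest eigenspace of $x$ on each of $V_1,V_3,V_4$ has dimension at most $4$; applying this with $t=r$ and $t=s$ then gives $d_{1j}+d_{2j}\leqs 4+4=8$. The argument is essentially the $n=8$ case of the analysis carried out for $G=\Spin_n(k)$ in the proof of Proposition \ref{p:maximal}, combined with triality. Since triality is an automorphism of $G$, it preserves the dimension of a conjugacy class and permutes the modules $V_1,V_3,V_4$; hence $\mathcal{C}(G,t,q)$ is a union of triality orbits, and within each orbit the maximum of the largest-eigenspace dimensions over the three modules is constant. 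It therefore suffices to bound, for each class in $\mathcal{C}(G,t,q)$, the largest eigenspace on the natural orthogonal module $V_1$. One then runs through the possibilities for $x$ — unipotent if $t=p$, semisimple otherwise, subdividing according to whether $t\mid q-1$ — exactly as in Case 2 of the proof of Proposition \ref{p:maximal}, using \cite[Chapter 3]{BGiu} for the description of the prime-order classes and \cite[Proposition 2.9]{B04} for the class dimensions, and checks that an element whose largest eigenspace on $V_1$ has dimension $\geqs 5$ is never of maximal class dimension among elements of order $t$ modulo $Z(G)$.

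I expect the main obstacle to be precisely this triality bookkeeping: one must be certain that passing to a half-spin module cannot convert a small eigenspace on $V_1$ into one of dimension $\geqs 5$, and this is exactly where triality-invariance of $\dim x^G$ — together with an explicit determination of which classes actually lie in $\mathcal{C}(G,t,q)$ — is needed. Everything else reduces to the finite, elementary computations already performed for $\Spin_n(k)$ in Proposition \ref{p:maximal}.
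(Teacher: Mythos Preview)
Your overall strategy matches the paper's exactly: reduce to Theorem \ref{t:so8}, rule out the exceptional configurations (i), (ii) using $s>2$, and verify the eigenspace bound $d_{1j}+d_{2j}\leqs 8$ for $j\in\{1,3,4\}$ by proving the key claim that any class in $\mathcal{C}(G,t,q)$ has largest eigenspace of dimension at most $4$ on each $V_j$.

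The one place your write-up is not quite right is the triality reduction. Your assertion that ``$\mathcal{C}(G,t,q)$ is a union of triality orbits'' need not hold when $G(q)$ is a twisted form: triality does not commute with the Steinberg endomorphism defining $\Spin_8^-(q)$ or ${}^3D_4(q)$, so the triality image of a $G$-class meeting $G(q)$ may fail to meet $G(q)$. Hence bounding $d_1$ alone for classes in $\mathcal{C}(G,t,q)$ does not automatically bound $d_3$ and $d_4$. You anticipate exactly this difficulty in your last paragraph, and the fix you hint at is what the paper actually does: it explicitly enumerates the classes of maximal dimension (separating $t=2$, $t=p>2$, and $t\ne p$ with the latter split by the order $i$ of $q$ modulo $t$), observes that in many cases the class is triality-stable so $d_1=d_3=d_4$, and in the remaining semisimple cases computes the eigenspace dimensions on all three modules directly. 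The computation in each case is short, and in every instance $d_j\leqs 4$.
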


\begin{proof}
Fix a class $C = x^G$ in $\mathcal{C}(G,r,q)$ and let $d_j$ be the maximal dimension of an eigenspace of $x$ on the $8$-dimensional irreducible $kG$-module $V_j = L(\omega_j)$ for $j = 1,3,4$. By inspecting the relevant conjugacy classes in $G$ and their images under a triality graph automorphism $\tau$ of $G$, it is straightforward to show that $d_j \leqs 4$ for all $j$.

To see this, first assume $r=2$. If $p \ne 2$ then $x$ has Jordan form $(-I_4,I_4)$ on $V_1$ and we note that $C$ is stable under $\tau$, so $d_j = 4$ for all $j$. Similarly, if $p=2$ then $x$ is a $c_4$-type involution and the same conclusion holds. Now assume $r>2$. If $r=p$ then 
\[
x = \left\{\begin{array}{ll}
(J_3^2,J_1^2) & \mbox{if $p=3$} \\
(J_5,J_3) & \mbox{if $p=5$} \\
(J_7,J_1) & \mbox{if $p \geqs 7$} 
\end{array}\right.
\]
and in each case $C$ is stable under $\tau$, whence $d_j = 4$ if $p=3$, otherwise $d_j = 2$. Finally, suppose $r>2$ and $r \ne p$. Let $i \geqs 1$ be minimal such that $r$ divides $q^i-1$, so $i \in \{1,2,3,4,6\}$. If $i \in \{3,6\}$ then $d_j = 2$ for all $j$ and similarly $d_j \in \{2,4\}$ if $i=4$.  Now assume $i \in \{1,2\}$. If $r=3$ then $x = (I_2, \lambda I_3, \lambda^{-1}I_3)$ or $(I_4, \lambda I_2, \lambda^{-1}I_2)$, noting that $\dim x^G = 18$ in both cases, so $d_j \in \{3,4\}$ in this case. For $r=5$ we get $x = (I_4,\lambda, \lambda^2,\lambda^{-1},\lambda^{-2})$ or $(\lambda I_2, \lambda^2I_2,\lambda^{-1}I_2,\lambda^{-2}I_2)$, where $\lambda \in k$ is a primitive $5$-th root of unity; in both cases $\dim x^G = 20$ and $d_j \in \{2,4\}$. Finally, if $r \geqs 7$ then $\dim x^G = 24$ and $d_j \in \{1,2\}$. 

This justifies the claim and the result now follows via Theorem \ref{t:so8}. 
\end{proof}

We are now in a position to complete the proof of Theorem \ref{t:main4}.

\begin{proof}[Proof of Theorem \ref{t:main4}]
Let $r$ and $s$ be primes with $s>2$. By combining \cite[Theorem 1.4]{Ger} and \cite[Theorem 12]{BGG} with the main results in \cite{LiSh} on classical and alternating groups of large rank and degree, respectively, we only need to consider symplectic and orthogonal groups of fixed rank. So let $G = {\rm Sp}_n(k)$ or ${\rm Spin}_n(k)$ be a simply connected simple algebraic group over an algebraically closed field $k$ of characteristic $p>0$ and assume $k$ is not algebraic over a finite field. Since ${\rm Sp}_2(k) = {\rm SL}_2(k)$ and ${\rm Spin}_6(k)$ is isogenous to ${\rm SL}_{4}(k)$, we may assume that either $n \geqs N$ (see \eqref{e:N}) or $G = {\rm Spin}_8(k)$. Let us also assume that $(r,s) \ne (2,3), (3,3)$ if $G = {\rm Sp}_4(k)$. 

Fix $q \in \mathcal{Q}(r,s)$ and set $X = C_1 \times C_2$, where $C_1 \in \mathcal{C}(G,r,q)$ and $C_2 \in \mathcal{C}(G,s,q)$. By applying Propositions \ref{p:maximal} and \ref{p:spin88} we deduce that $\Delta$ is nonempty and thus
\[
\lim_{q\in \mathcal{Q}(r,s), \, q\rightarrow\infty} \mathbb{P}_{r,s}(G(q))=1
\]
by \cite[Lemma 6.4]{Ger}. Finally, since $Z(q)$ is contained in the Frattini subgroup of $G(q)$, we deduce that the same conclusion holds for the simple groups $G(q)/Z(q)$.
 \end{proof}

Let us highlight the anomaly of the $4$-dimensional symplectic groups.

\begin{thm}\label{t:sp4rs}  
Suppose $G=\Sp_4(k)$, where $k$ has characteristic $p>0$.
\begin{itemize}\addtolength{\itemsep}{0.2\baselineskip}
\item[{\rm (i)}] If $(r,s) = (2,3)$, then $\PP_{r,s}(G(q))=0$ if $p \leqs 3$ and 
$\lim_{q \to \infty}\PP_{r,s}(G(q)) = 1/2$ if $p \geqs 5$.
\item[{\rm (ii)}] If $(r,s)=(3,3)$, then $\PP_{r,s}(G(q))=0$ if $p=3$ and
\[
\lim_{q \to \infty}\PP_{r,s}(G(q)) = \left\{ \begin{array}{ll}
3/4 & \mbox{if $p \geqs 5$} \\
1/2 & \mbox{if $p=2$.}
\end{array} \right.
\]
\end{itemize}
\end{thm}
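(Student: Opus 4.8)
The plan is to compute $\PP_{r,s}(G(q))$ as a weighted average, over pairs of conjugacy classes, of the proportion of generating pairs, and to control this average as $q\to\infty$ by the classes of maximal dimension. The input from the earlier part of the paper is twofold: first, for $F$-stable $G$-classes $C_1,C_2$ with each $C_i(q)$ a single $G(q)$-class, \cite[Lemma 6.4]{Ger} shows that the proportion of pairs in $C_1(q)\times C_2(q)$ that generate $G(q)$ tends to $1$ if $\Delta$ is nonempty for $X=C_1\times C_2$ and to $0$ otherwise; since $Z(q)$ lies in the Frattini subgroup of $G(q)$, the same holds for ${\rm PSp}_4(q)=G(q)/Z(q)$. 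Second, Theorems \ref{t:sp4odd} and \ref{t:sp4even1} determine exactly when $\Delta$ is empty for $\Sp_4(k)$. Writing $I_r(G(q))$ as a disjoint union of conjugacy classes, $\PP_{r,s}$ becomes the average of the above proportions weighted by $|C_1(q)|\cdot|C_2(q)|$, and since $|C(q)|\sim q^{\dim C}$ the limit is governed by the classes of elements of order $r$ and $s$ modulo $Z(G)$ of maximal dimension.

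The first step is a short enumeration of these classes, carried out separately for $p\geq 5$, $p=3$ and $p=2$. For $p\geq 5$ every element of order $2$ modulo $Z(G)$ is semisimple and $\Sp_4(q)$-conjugate to $(-I_2,I_2)$, of class dimension $4$, or to $(\l I_2,\l^{-1}I_2)$ with $\l^2=-1$, of class dimension $6$; both are quadratic on the natural module $V$. Likewise every element of order $3$ modulo $Z(G)$ is conjugate to $A:=(\omega I_2,\omega^{-1}I_2)$ or to $B:=(I_2,\omega,\omega^{-1})$, where $\omega$ is a primitive cube root of unity; here $A$ is quadratic on $V$ and $B$ is not, and both have class dimension $6$. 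For $p=3$ one checks that every element of order $2$ or $3$ modulo $Z(G)$ is a central scalar times a quadratic element of $V$: order $2$ is as above, and an element of order $3=p$ is unipotent with Jordan type $(J_2,J_1^2)$ or $(J_2^2)$. For $p=2$ we have $Z(G)=1$; every involution is unipotent, hence quadratic, with fixed space of dimension $e\in\{2,3\}$ on $V$, while the elements of order $3$ are again $A$ (with $e=0$) and $B$ (with $e=2$). In each case I would also verify that each listed "type" of element forms a single $G(q)$-class — split or twisted according to the congruence class of $q$ — of size $\sim q^{6}$ in the dominant cases, so that $A$ and $B$ each account for a proportion tending to $\tfrac12$ of $I_3(G(q))$, and $(\l I_2,\l^{-1}I_2)$ for a proportion tending to $1$ of $I_2(G(q))$ when $p\geq 5$.

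The cases then follow quickly. When $p=3$ every admissible pair consists of two quadratic elements of $V$, so $\langle x,y\rangle$ acts reducibly on $V$ by Lemma \ref{l:quadratic} (central scalars preserve every subspace), whence $\PP_{r,s}(G(q))=0$ for $(r,s)=(2,3)$ and $(3,3)$. When $p=2$ and $(r,s)=(2,3)$: for any involution $x$ and order-$3$ element $y$, either both are quadratic, or $y=B$ and then $e_x+e_y\geq 4$; in the latter case, arguing as in the proof of Lemma \ref{l:oinsp}, $\langle x,y\rangle$ fixes a nonzero vector in the $5$-dimensional module $W$ with $W^{G}=0$. Either way $\langle x,y\rangle\neq G(q)$, so $\PP_{2,3}(G(q))=0$; this recovers the corresponding assertion of \cite{LiSh0}. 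When $p=2$ and $(r,s)=(3,3)$: a pair of type $(A,A)$ consists of quadratic elements and a pair of type $(B,B)$ has $e_x+e_y=4$, so by the two obstructions just described neither generates; only the mixed pairs $(A,B)$ and $(B,A)$ — a proportion tending to $\tfrac12$ of all ordered pairs from $I_3(G(q))$ — can generate, and for these $\Delta$ is nonempty by Theorem \ref{t:sp4even1}, since its exceptional cases all require at least three generators or two quadratic elements. Hence $\PP_{3,3}(G(q))\to\tfrac12$.

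Finally, for $p\geq 5$: the involutions of type $(-I_2,I_2)$ form a proportion $O(q^{-2})$ of $I_2(G(q))$ and never give a generating pair with an order-$3$ element, by case (ii) of Theorem \ref{t:sp4odd} (an order-$3$ element is non-regular), so the limit of $\PP_{2,3}$ is controlled by pairs with $x$ of the quadratic type $(\l I_2,\l^{-1}I_2)$. Such a pair with $y$ of type $A$ fails by case (i) (two quadratic elements), while one with $y$ of type $B$ has $\Delta$ nonempty — none of the cases (i)--(iv) of Theorem \ref{t:sp4odd} applies, since $x$ is not of type $(-I_2,I_2)$, $B$ is non-quadratic, and there are only two generators — and generates. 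As $A$ and $B$ are asymptotically equinumerous, $\PP_{2,3}(G(q))\to\tfrac12$. For $(r,s)=(3,3)$ the ordered pairs from $I_3(G(q))$ split into four asymptotically equal families $(A,A),(A,B),(B,A),(B,B)$; the first fails, and for each of the other three $\Delta$ is nonempty by Theorem \ref{t:sp4odd} (the cases (i)--(iv) are again excluded), so $\PP_{3,3}(G(q))\to\tfrac34$. I expect the main obstacle to be the bookkeeping in the class enumeration — handling the split/twisted distinctions, confirming that each dominant type is a single class of size $\sim q^{6}$, and in each case deciding emptiness of the relevant algebraic $\Delta$ via Theorems \ref{t:sp4odd} and \ref{t:sp4even1} — so that the asymptotic weights are exactly $\tfrac14$ and $\tfrac12$ and the stated limits emerge.
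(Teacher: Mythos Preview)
Your proposal is correct and follows essentially the same strategy as the paper: enumerate the conjugacy classes of elements of order $r$ and $s$ modulo the center, identify those of maximal dimension, decide for each pair whether $\Delta$ is empty via Theorems \ref{t:sp4odd} and \ref{t:sp4even1}, and invoke \cite[Lemma 6.4]{Ger} together with the asymptotic $|C(q)|\sim q^{\dim C}$ to read off the limit as a weighted average. The one genuine difference is your treatment of $p=2$: for the $(B,B)$ pair and for the $(x,B)$ pair with $x$ an involution, you use the orthogonal obstruction $e_1+e_2\geqs n$ from Lemma \ref{l:oinsp} directly, whereas the paper argues via the graph automorphism, observing that every involution acts quadratically on \emph{both} restricted $4$-dimensional modules $L(\omega_1),L(\omega_2)$ while each order-$3$ element acts quadratically on exactly one of them, so Lemma \ref{l:quadratic} applies on the appropriate module. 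Both routes are short and self-contained; yours has the advantage of staying on the natural module, while the paper's makes the symmetry under triality-type automorphisms explicit.
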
 

\begin{proof}  
Let $C = x^G$ and $D = y^G$ be conjugacy classes of maximal dimension, where $x$ and $y$ have order $2$ and $3$ modulo $Z(G)$, respectively. Note that $C \cap G(q)$ and $D \cap G(q)$ are nonempty.

If $p \ne 2$ then $x = (\l I_2, \l^{-1}I_2)$ with $\l^2=-1$. Similarly, if $p \ne 3$ then $y = (\l I_2, \l^{-1}I_2)$ or $(I_2, \l, \l^{-1})$ up to conjugacy, where $\l^3=1$. In particular, if $p \geqs 5$ and $X = C \times D$ then Theorem \ref{t:main2} implies that $\Delta$ is nonempty if and only if $D$ is the class of elements of the form $(I_2,\l, \l^{-1})$. Therefore, by arguing as in the proof of Theorem \ref{t:main4}, we deduce that 
\[
\lim_{q \to \infty}\PP_{2,3}(G(q)) = \frac{1}{2}
\]
if $p \geqs 5$. Similarly, by considering $X = D_1 \times D_2$ where the $D_i$ are classes of elements of order $3$ of maximal dimension, we deduce that $\Delta$ is nonempty unless $D_1 = D_2$ is the class of quadratic elements of order $3$, whence
\[
\lim_{q \to \infty}\PP_{3,3}(G(q)) = \frac{3}{4}
\]
for $p \ne 3$.

Next assume $p=3$. As above, $x$ is quadratic and we note that $y$ is also quadratic, with Jordan form $(J_2^2)$. Since $G$ is not topologically generated by two quadratic elements, we deduce that $\PP_{2,3}(G(q)) = \PP_{3,3}(G(q))=0$ for all $q=3^f$.

Finally, let us assume $p=2$ and recall that $G$ has two $4$-dimensional irreducible restricted $kG$-modules, denoted $V_j = L(\omega_j)$ for $j=1,2$. First note that $x$ acts quadratically on both modules (with Jordan form $(J_2^2)$). Similarly, $y$ acts quadratically on exactly one of the two modules and we deduce that $\PP_{2,3}(G(q))=0$. Finally, if $D_1$ and $D_2$ denote  
the two classes of elements of order $3$, then $\Delta$ is nonempty if $X = D_1 \times D_2$ or $D_2 \times D_1$, and empty if $X = D_i \times D_i$ for $i=1,2$ (see Theorem \ref{t:sp4even1}). We conclude that $\PP_{3,3}(G(q)) \to 1/2$ when $p=2$.
\end{proof} 

Finally, let us turn to Corollary \ref{c:sylow}, which gives an asymptotic version of Conjecture \ref{c:syl} on the generation of finite simple groups by two Sylow subgroups. 

\begin{proof}[Proof of Corollary \ref{c:sylow}]
Let $G$ be a finite simple group and let $r,s$ be prime divisors of $|G|$ with $r \leqs s$. Clearly, if $\mathbb{P}_{r,s}(G)>0$ then $G$ is generated by a pair of Sylow subgroups corresponding to the primes $r$ and $s$. Therefore, by combining Theorems \ref{t:main4} and \ref{t:sp4rs}, the proof of Corollary \ref{c:sylow} is reduced to the following cases: 
\begin{itemize}\addtolength{\itemsep}{0.2\baselineskip}
\item[{\rm (a)}] $(r,s)=(3,3)$ and $G = \Sp_4(q)$ with $q=3^f$; or
\item[{\rm (b)}] $(r,s)=(2,3)$ and $G = \Sp_4(q)$ with $q=2^f$ or $3^f$; or 
\item[{\rm (c)}] $(r,s)=(2,2)$.
\end{itemize}

First consider cases (a) and (b). Write $q=p^f$ where $p$ is a prime and note that $p \in \{r,s\}$. In both cases, a maximal subgroup of $G$ contains a Sylow $p$-subgroup of $G$ if and only if it is 
a parabolic subgroup. In particular, there are only two maximal subgroups of $G$ containing a fixed Sylow $p$-subgroup. The probability that a randomly chosen element of a given nontrivial conjugacy class is contained in a fixed maximal parabolic subgroup tends to $0$ as $f$ tends to infinity and the desired result follows. 

Finally, consider case (c).  By the main theorem of \cite{Gur}, it follows that every nonabelian finite simple group $G$ can be generated by a Sylow $2$-subgroup and an involution. The result follows.
\end{proof}

\begin{rem}\label{r:sylow}
Fix primes $r$ and $s$ and let $(G_i)$ be a sequence of finite simple groups, with $|G_i|$ tending to infinity, such that each $|G_i|$ is divisible by $r$ and $s$. In a sequel we will prove that with probability tending to $1$, $G_i = \la P, Q \ra$ for randomly chosen Sylow subgroups $P$ and $Q$ corresponding to the primes $r$ and $s$. Let us briefly outline the main steps: 
\begin{itemize}\addtolength{\itemsep}{0.2\baselineskip}
\item[{\rm (a)}] By applying Theorems \ref{t:main4} and \ref{t:sp4rs}, we can reduce the problem to the case where $r=s=2$. 
\item[{\rm (b)}] Suppose $G = A_n$. Fix a Sylow $2$-subgroup $P$ of $G$ and note that $P$ fixes at most one subset of $\{1, \ldots, n\}$ of a given size. The probability that a random conjugate of $P$ fixes the same subset of size $k$ is either $0$ or $\binom{n}{k}^{-1}$, and 
clearly the sum of these probabilities for $1 \leqs k < n$ goes to $0$ as $n \to \infty$. Therefore, with probability tending to $1$ with $n$, the subgroup of $G$ generated by two random Sylow $2$-subgroups acts transitively on $\{1, \ldots, n\}$. By a classical theorem of Jordan (see \cite[Example 3.3.1]{DM}, for example), if $n \geqs 9$ then $G$ has no proper primitive subgroup containing a double transposition. Therefore, the only obstruction to randomly generating $G$ by a pair of Sylow $2$-subgroups is the possibility that they generate a transitive imprimitive subgroup. But for any divisor $m$ of $n$, a Sylow $2$-subgroup of $G$ stabilizes at most one partition of $\{1, \ldots, n\}$ into parts of size $m$. Therefore, the probability that two random Sylow $2$-subgroups generate an imprimitive subgroup goes to $0$ as $n \to \infty$.
\item[{\rm (c)}] Finally, let $G$ be a group of Lie type over $\mathbb{F}_q$ of twisted Lie rank $\ell$. If $\ell$ is increasing, then the desired result follows from \cite[Theorems 3.1 and 3.2]{King}.
\item[{\rm (d)}]  Now assume $\ell$ is fixed and $q$ tends to infinity. Suppose $q$ is even and let $P$ be a Sylow $2$-subgroup of $G$. By a lemma of Tits (see \cite[1.6]{Seitz}), there are precisely $\ell$ maximal subgroups of $G$ that contain $P$; one for each conjugacy class of maximal parabolic subgroups of $G$. Therefore, the probability that $P$ and a random conjugate of any given nontrivial element generate $G$ tends to $1$ as $q \to \infty$. 
\item[{\rm (e)}] Finally, suppose $\ell$ is fixed, $q=p^f$ is odd and $f$ tends to infinity. Let $\bar{G}$ be the corresponding simply connected simple algebraic group over an algebraically closed field of characteristic $p$ that is not algebraic over a finite field. Here the key step is to extend our results on topological generation by establishing the existence of conjugacy classes $C_1$ and $C_2$ in $\bar{G}$ containing elements of order $2$ and $4$ (modulo the center of $\bar{G}$), respectively, with several desirable properties. In particular, we will show that there exists a tuple $(y_1,y_2) \in C_1 \times C_2$ such that $\la y_1, y_2 \ra$ is Zariski dense in $\bar{G}$. From here, it is relatively straightforward to complete the argument. 
\end{itemize}
\end{rem}
 
\section{Proof of Corollary \ref{c:cor2}}\label{s:corol}

In this final section, we present a proof of Corollary \ref{c:cor2}, which is another consequence of our main results.   

As in the corollary, let $G$ be a simple classical algebraic group over an algebraically closed field $k$ of characteristic $p \geqs 0$ that is not algebraic over a finite field. Let $V$ be the natural module for $G$ and set $n = \dim V$. Recall that $n \geqs M$, where $M$ is the integer defined in \eqref{e:M}. Define $X$ as in \eqref{e:conj}, where each $x_i$ has prime order modulo $Z(G)$, and let $d_i$ be the maximal dimension of an eigenspace of $x_i$ on $V$. Let us assume there exists $y \in X$ such that $G(y)$ acts irreducibly on $V$.

First observe that the existence of such an element $y$ implies that $G(x)$ does not generically fix a $1$-dimensional subspace of $V$ and thus $\sum_i d_i \leqs n(r-1)$.    

Suppose $G=\Sp_{n}(k)$, $n \geqs 4$ and $p =2$. Let $e_i = \dim V^{x_i}$ be the dimension of the $1$-eigenspace of $x_i$ on $V$. If $\sum_i e_i = n(r-1)$ then by arguing as in the proof of Lemma \ref{l:oinsp} we see that $G(x)$ generically fixes a
$1$-dimensional subspace of the indecomposable orthogonal $kG$-module $W$ of dimension $n+1$ (with socle of dimension $n$). Since $G(x)$ does not
generically fix a $1$-dimensional subspace of the socle of $W$, it follows that $G(x)$ generically fixes a complement to the socle and is therefore contained in an orthogonal subgroup ${\rm O}_n(k)$. 

Therefore, we have $\sum_i d_i \leqs n(r-1)$ and we may assume $\sum_i e_i < n(r-1)$ if $G = \Sp_n(k)$ and $p=2$. Then by Theorem \ref{t:main2}, $\Delta$ is nonempty unless we are in one of the exceptional cases recorded in parts (i) and (ii) in the statement of the theorem. If (i) holds, in which case $r=2$ and the $x_i$ are quadratic, then Lemma \ref{l:quadratic} implies that each $G(x)$ acts reducibly on $V$. Similarly, by carefully inspecting the proof of Theorem \ref{t:main2}, we find that each $G(x)$ acts reducibly on $V$ whenever we are in any of the exceptional cases in part (ii) of the theorem. The result follows.

\vs

This completes the proof of Corollary \ref{c:cor2}.

\end{document}